\numberwithin{equation}{section}
\theoremstyle{plain}
\newtheorem{theorem}{Theorem}[section]
\newtheorem{lemma}[theorem]{Lemma}
\newtheorem{proposition}[theorem]{Proposition}
\newtheorem{corollary}[theorem]{Corollary}
\theoremstyle{definition}
\newtheorem{remark}[theorem]{Remark}
\newtheorem{example}[theorem]{Example}
\newtheorem*{question}{Question}
\DeclareMathOperator{\Pol}{Pol}
\DeclareMathOperator{\Clo}{Clo}
\DeclareMathOperator{\range}{Im}
\DeclareMathOperator{\Aut}{Aut}
\newcommand{\vect}[1]{\ensuremath{\mathbf{#1}}}
\newcommand{\card}[1]{\ensuremath{\lvert{#1}\rvert}}
\newcommand{\subf}[1][]{\ifthenelse{\equal{#1}{}}{\ensuremath{\leq}}{\ensuremath{\leq_{#1}}}}
\newcommand{\fequiv}[1][]{\ifthenelse{\equal{#1}{}}{\ensuremath{\equiv}}{\ensuremath{\equiv_{#1}}}}
\newcommand{\cl}[1]{\ensuremath{\mathcal{#1}}}
\newcommand{\al}[1]{\ensuremath{\mathbf{#1}}}
\newcommand{\GL}{\ensuremath{\mathrm{GL}}}
\newcommand{\AGL}{\ensuremath{\mathrm{AGL}}}
\newcommand{\FF}{\ensuremath{\mathfrak{F}}} % our filter
\newcommand{\FFF}{\ensuremath{\mathbb{F}}}
\newcommand{\hh}{\ensuremath{\text{\sf h}}}
\newcommand{\leaf}{\ensuremath{\mathrm{min}}}
\newcommand{\ttop}{\ensuremath{\mathrm{max}}}
\newcommand{\iso}[1]{\ensuremath{(#1)^{\mathrm{iso}}}}
\newcommand{\zero}{\ensuremath{\text{\bf 0}}}
\newcommand{\one}{\ensuremath{\text{\bf 1}}}
\newcommand{\groupone}{\ensuremath{\texttt{1}}}
\newcommand{\mmod}{\ensuremath{\,\,\mathrm{mod}\,\,}}
\renewcommand{\phi}{\varphi}
\renewcommand{\epsilon}{\varepsilon}
\begin{document}

%%%%%%%%%%%%%%%%%%%%%%%%%%%%%%%%%%%%%%%%%%%%%%%%%%%%%%%%%%%%%%%%%%%
%%  FRONT MATTER                                                 %%
%%%%%%%%%%%%%%%%%%%%%%%%%%%%%%%%%%%%%%%%%%%%%%%%%%%%%%%%%%%%%%%%%%%

\title{Clones with finitely many relative ${\mathcal R}$-classes}

\author{Erkko Lehtonen}
\address[E. Lehtonen]{Université du Luxembourg \\
Faculté des Sciences, de la Technologie et de la Communication \\
6, rue Richard Coudenhove-Kalergi \\
L--1359 Luxembourg \\
Luxembourg}
\email{erkko.lehtonen@uni.lu}

\author{\'Agnes Szendrei}
\address[\'A. Szendrei]{Department of Mathematics \\
University of Colorado at Boulder \\
Campus Box 395 \\
Boulder, CO 80309-0395 \\
USA
\and
Bolyai Institute \\
Aradi v\'ertan\'uk tere 1 \\
H--6720 Szeged \\
Hungary }
\email{szendrei@euclid.colorado.edu}

\thanks{This material is based upon work supported by
the Academy of Finland grant no.\ 120307
and by
the Hungarian National Foundation for Scientific Research (OTKA) grants no.\  T~048809, K60148, and K77409.
}

\subjclass[2000]{Primary: 08A40.}

\keywords{Green's $\mathcal{R}$-relation, clone lattice, maximal clones}

\begin{abstract}
For each clone $\cl{C}$ on a set $A$ there is an associated equivalence relation
analogous to Green's ${\mathcal R}$-relation, which relates two operations on $A$ if and only if each one is a substitution instance of the other using operations from $\cl{C}$. 
We study the clones for which there are only finitely many relative ${\mathcal R}$-classes.
\end{abstract}

\maketitle

%%%%%%%%%%%%%%%%%%%%%%%%%%%%%%%%%%%%%%%%%%%%%%%%%%%%%%%%%%%%%%%%%%%
%%  MAIN MATTER                                                  %%
%%%%%%%%%%%%%%%%%%%%%%%%%%%%%%%%%%%%%%%%%%%%%%%%%%%%%%%%%%%%%%%%%%%

\section{Introduction}
\label{intro}

Green's relations play a central role
in semigroup theory. Two elements $a,b$ of a monoid $M$ are 
related by Green's ${\mathcal R}$-relation
if and only if they generate the same right ideal $aM=bM$.
In particular, if $M$ is a transformation monoid on a set $A$, then
two elements $f=f(x)$ and $g=g(x)$ of $M$ are 
${\mathcal R}$-related exactly when
$f\bigl(h_1(x)\bigr)=g(x)$ and
$g\bigl(h_2(x)\bigr)=f(x)$ for some
$h_1,h_2\in M$, that is,
each one of $f,g$ is a substitution instance of the other
by transformations from $M$.
For example, if $M=T_A$ is the full transformation monoid on $A$, then
$f\,{\mathcal R}\,g$ if and only if $f,g$ have the same range.

Henno~\cite{Henno} generalized Green's relations to
Menger algebras (essentially, abstract clones, the multi-variable versions
of monoids), and described Green's relations on the clone
$\cl{O}_A$ of all operations on $A$ for each set $A$.
He proved that two finitary operations on $A$ are ${\mathcal R}$-related
if and only if they have the same range.

Relativized versions of Green's ${\mathcal R}$-relation on the clone 
$\cl{O}_{\{0,1\}}$ of Boolean functions 
have been used in computer science to classify Boolean functions.
In~\cite{Wang} and~\cite{WW} a Boolean function $g$ is defined to be
a {\it minor} of another Boolean function $f$ if and only if
$g$ can be obtained from $f$ by substituting for each variable of $f$ 
a variable, a negated variable,
or one of the constants $0$ or $1$.
A more restrictive notion of Boolean minor, namely
when negated variables are not allowed,
is employed in~\cite{FH} and~\cite{Zverovich}, while in the paper
\cite{Harrison} two $n$-ary Boolean functions are considered
equivalent if they are substitution
instances of each other with respect to
the general linear group $\GL(n,\FFF_2)$
or the affine general linear group $\AGL(n,\FFF_2)$
where $\FFF_2$ is the two-element field.

The notions of `minor' and `${\mathcal R}$-equivalence'
for operations on a set $A$
can be defined relative to any subclone $\cl{C}$ of $\cl{O}_A$
as follows:
for $f,g\in\cl{O}_A$, $g$ is a $\cl{C}$-minor of $f$
if $g$ can be obtained
from $f$ by substituting operations from $\cl{C}$
for the variables of $f$,
and $g$ is $\cl{C}$-equivalent to $f$ if
$f$ and $g$ are $\cl{C}$-minors of each other.
Thus, for example, Henno's ${\mathcal R}$-relation on $\cl{O}_A$
is nothing else than
$\cl{O}_A$-equivalence, and the concepts of Boolean minor
mentioned in the preceding paragraph
are the special cases of the notion of
$\cl{C}$-minor where $\cl{C}$ is the essentially unary clone of Boolean functions generated by negation and the two constants, or by the two constants only.
Further applications of ${\mathcal C}$-minors and ${\mathcal C}$-equivalence
where ${\mathcal C}$ is a clone of essentially unary operations can be found
in~\cite{CP}, \cite{EFHH}, and~\cite{Pippenger}.

The question we are interested in is the following:

\begin{question}
For which clones $\cl{C}$ are there only finitely many relative 
${\mathcal R}$-classes?
\end{question}

\noindent
That is, we want to know for which clones $\cl{C}$ it is the case that
the $\cl{C}$-equivalence relation on $\cl{O}_A$ 
has only finitely many equivalence classes.
Let $\FF_A$ denote the set of all such clones on $A$.
It is easy to see that $\cl C$-equivalent operations have the same range, 
therefore if $A$ is infinite, then there will be infinitely many
$\cl{C}$-equivalence classes for every clone $\cl{C}$, so $\FF_A$ is empty.
If $A$ is finite, then the result of Henno~\cite{Henno} mentioned above
implies that $\cl{O}_A\in\FF_A$.
It is not hard to see that $\FF_A$ is an order filter
(up-closed set) in the lattice of all clones on $A$ 
(Proposition~\ref{basic_props2}). 
Moreover, if $|A|>1$ then the clone $\cl{P}_A$ of projections fails to 
belong to $\FF_A$, because $\cl{P}_A$-equivalent 
operations have the same essential arity (i.e., depend on the same number 
of variables), and on a set with more than one element there exist operations 
of arbitrarily large essential arity.
Thus the order filter $\FF_A$ is proper.

The results of this paper show that the family $\FF_A$ of clones
is quite restricted. Every clone $\cl{C}$ in $\FF_A$ has to be `large'
quantitatively in the sense that it contains a lot of $n$-ary operations 
for each $n$ (Proposition~\ref{BigClones}), 
and it has to be `large' in the sense that
there are strong restrictions on the relations that are invariant
with respect to the operations in $\cl{C}$ (Corollary~\ref{basic_props4}).

There is a rich literature of classification results for `large' subclones
of $\cl{O}_A$ when $A$ is finite (see~\cite{Lau2006} and the references there)
where `large' is usually taken to mean `near the top of the lattice of clones
on $A$'.
Our interest in the order filter $\FF_A$ stems from the fact that
the property of being in $\FF_A$ is a different kind of `largeness'.
Since the family $\FF_A$ is quite restricted, the clones in $\FF_A$
may be classifiable. At the same time, $\FF_A$ contains interesting 
families of clones: e.g., all discriminator clones (\cite{LS}, see 
Theorem~\ref{discr})
and all clones determined by a chain of 
equivalence relations on $A$
together with a set of invariant permutations 
and an arbitrary family of subsets of $A$ 
(Theorem~\ref{mainthm-eqrels}).

Using Rosenberg's description of the maximal clones
$\cl{M}=\Pol\rho$ on a finite set $A$ (see Theorem~\ref{thm-Rosenberg}) 
we determine which maximal clones belong to 
$\FF_A$ (see
Theorem~\ref{thm-max-summary} and
Table~\ref{table-maxcl}). 
Furthermore, for each maximal clone ${\mathcal M}$ 
that belongs to $\FF_A$ we find families of subclones
of ${\mathcal M}$ that also belong to $\FF_A$.
We also investigate which intersections of maximal clones
are in $\FF_A$.

%%%%%%%%%%%%%%%%%%%%%%
\begin{table}
\begin{tabular}{|l|c|c|}
\hline
\multicolumn{1}{|c|}{$\rho$ ($h$-ary)} & $\Pol \rho \stackrel{?}{\in} \FF_A$ & Proof\\
\hline
bounded partial order & no & Thm~\ref{order_affine}\\
prime permutation & yes & Cor~\ref{discr-max}\\
nontrivial equivalence relation & yes & Thm~\ref{mainthm-eqrels}\\
prime affine relation & no & Thm~\ref{order_affine}\\
central relation & & \\
\quad $h = 1$ & yes & Cor~\ref{discr-max}\\
\quad $2 \leq h \leq |A| - 2$ & no & Thm~\ref{thm-central-r}\\
\quad $h = |A| - 1$ & yes & Thm~\ref{thm-centralk-1}\\
$h$-regular relation &  & \\
\quad $h < |A|$ & no & Thm~\ref{thm-hreg}\\
\quad $h = |A|$ & yes & Thm~\ref{thm-slup}\\
\hline
\end{tabular}

~

\caption{The membership of the maximal clones in $\FF_A$.}
\label{table-maxcl}
\end{table}
%%%%%%%%%%%%%%%%%%%%%%

%%%%%%%%%%%%%%%%%%%%%%%%%%%%%%%%%%%%%%%%%%%%%%%
\section{Preliminaries}
\label{sec:prelim}

Let $A$ be a fixed nonempty set. If $n$ is a positive integer, then by an $n$-ary \emph{operation} on $A$ we mean a function $A^n\to A$, and we will refer to $n$ as the \emph{arity} of the operation. The set of all $n$-ary operations on $A$ will be denoted by $\cl O_A^{(n)}$, and we will write $\cl O_A$ for the set of all finitary operations on $A$. For $1 \leq i \leq n$ the $i$-th $n$-ary \emph{projection} is the operation $p_i^{(n)} \colon A^n \to A,\ (a_1, \ldots, a_n) \mapsto a_i$. 

For arbitrary positive integers $m$ and $n$ there is a one-to-one correspondence between
the functions $f \colon A^n \to A^m$ and the $m$-tuples  $\vect{f} = (f_1, \ldots, f_m)$ 
of functions $f_i\colon A^n \to A$ ($i = 1, \ldots, m$) via the correspondence
\[
f\mapsto \vect{f} = (f_1, \ldots, f_m)
\quad\text{with}\quad
f_i = p_i^{(m)} \circ f 
\text{ for all $i = 1, \ldots, m$}.
\]
In particular, $\vect{p}^{(n)} = (p_1^{(n)}, \ldots, p_n^{(n)})$ corresponds to the identity function $A^n \to A^n$. {}From now on we will identify each function $f \colon A^n \to A^m$ with the corresponding $m$-tuple $\vect{f} = (f_1, \ldots, f_m) \in (\cl{O}_A^{(n)})^m$ of $n$-ary operations. Using this convention the \emph{composition} of two functions $\vect{f} = (f_1, \ldots, f_m) \colon A^n \to A^m$ and $\vect{g} = (g_1, \ldots, g_k) \colon A^m \to A^k$ can be 
described as follows:
\[
\vect{g} \circ \vect{f} =
(g_1 \circ \vect{f}, \ldots, g_k \circ \vect{f}) =
\bigl(g_1(f_1, \ldots, f_m), \ldots, g_k(f_1, \ldots, f_m)\bigr)
\]
where
\[
g_i(f_1, \ldots, f_m)(\vect{a}) = 
g_i \bigl(f_1(\vect{a}), \ldots, f_m(\vect{a})\bigr)
\qquad
\text{for all $\vect{a} \in A^n$ and for all $i$}.
\]
A \emph{clone} on $A$ is a subset $\cl C$ of $\cl{O}_A$ that contains the projections and is closed under composition; that is, $p_i^{(n)}\in\cl C$  for all $1 \leq i \leq n$ and
$g \circ \vect{f} \in \cl{C}^{(n)}$ whenever $g \in \cl C^{(m)}$ and $\vect{f} \in (\cl{C}^{(n)})^m$  ($m,n\ge1$). 
The clones on $A$ form a complete lattice under inclusion. 
Therefore for each set $F \subseteq \cl{O}_A$ of operations there exists a smallest clone that contains $F$, which will be denoted by $\langle F \rangle$ and will be referred to as the \emph{clone generated by} $F$.

Clones can also be described via invariant relations.
For an $n$-ary operation $f \in \cl{O}_A^{(n)}$ and an $r$-ary relation $\rho$ on $A$ we say that
$f$ \emph{preserves}  $\rho$ (or $\rho$ is \emph{invariant} under $f$, or $f$ is a \emph{polymorphism} of $\rho$), if whenever $f$ is applied coordinatewise to $r$-tuples from $\rho$, the resulting $r$-tuple belongs to $\rho$.
If $\rho$ is an $r$-ary relation on $A$ and $n$ is a positive integer,
$\rho^n$ will denote the $r$-ary relation ``coordinatewise $\rho$-related''
on $A^n$; more formally, for arbitrary $n$-tuples
$\vect{a}_i=(a_{i1},\ldots,a_{in})\in A^n$ ($1\le i\le r$)
\[
(\vect{a}_1,\ldots,\vect{a}_r)\in\rho^n
\quad\iff\quad
(a_{1j},\ldots,a_{rj})\in\rho
\text{ for all $j$ ($1\le j\le n$)}.
\] 
We will say that $\vect{f}=(f_1,\ldots,f_m)\in(\cl{O}_A^{(n)})^m$
\emph{preserves} an $r$-ary relation $\rho$ on $A$ if each
$f_i$ ($1\le i\le m$) does; that is,
\[
(\vect{a}_1,\ldots,\vect{a}_r)\in\rho^n
\quad\implies\quad
\bigl(\vect{f}(\vect{a}_1),\ldots,\vect{f}(\vect{a}_r)\bigr)\in\rho^m
\text{ for all $\vect{a}_1,\ldots,\vect{a}_r\in A^n$}.
\]
For any family $R$ of (finitary) relations on $A$, the set $\Pol R$ of all operations $f\in\cl{O}_A$
that preserve every relation in $R$ is easily seen to be a clone on $A$.
Moreover, if $A$ is finite, then it is a well-known fact that every clone on $A$ is of the form
$\Pol R$ for some family of relations on $A$ (see, e.g., 
\cite{BKKR,Geiger,Lau2006,PK,Szendrei}).
If $R=\{\rho\}$, we will write $\Pol\rho$ for $\Pol\,\{\rho\}$.  

Throughout the paper 
we will use the following additional notation 
concerning operations and relations.
The constant tuple $(a, \dotsc, a)$ 
of any length is denoted by $\bar a$
(the length will be clear from the context). 
If $\theta$ is an equivalence relation on $A$,
then the equivalence class containing $a\in A$ is denoted by
$a/\theta$.
For any operation $f$ on $A$ that preserves $\theta$, 
$f^\theta$ denotes the natural action of $f$ on the set $A/\theta$
of $\theta$-classes.
Furthermore, for any set $F$ of operations contained in $\Pol \theta$ we write $F^\theta$ for the set $\{f^\theta : f \in F\}$. 
The range of an arbitrary function $\phi$ will be denoted by $\range\phi$.

Now let $\cl{C}$ be a fixed clone on a set $A$ of any cardinality. 
For arbitrary operations $f \in \cl{O}_A^{(n)}$ and $g \in \cl{O}_A^{(m)}$ we say that 
\begin{itemize}
\item
$f$ is a \emph{$\cl{C}$-minor} of $g$, in symbols $f \subf[\cl{C}] g$, if $f = g \circ \vect{h}$ for some $\vect{h} \in (\cl C^{(n)})^m$; 
\item
$f$ and $g$ are \emph{$\cl{C}$-equivalent,} in symbols $f \fequiv[\cl{C}] g$, if $f \subf[\cl{C}] g$ and $g \subf[\cl{C}] f$.
\end{itemize}
It is easy to verify (see~\cite{LS}) that $\subf[\cl{C}]$ is a quasiorder on $\cl{O}_A$, and
hence $\fequiv[\cl{C}]$, the intersection of $\subf[\cl{C}]$ with its converse, is an 
equivalence relation on $\cl{O}_A$.

$\FF_A$ will denote the collection of all clones $\cl C$ on $A$ such that the equivalence relation $\fequiv[\cl{C}]$ has only finitely many equivalence classes. As we discussed in the Introduction, if $A$ is infinite, then $\FF_A=\emptyset$, while if $A$ is finite and $|A|>1$, then the clone $\cl{O}_A$ of all operations is in $\FF_A$, and the clone $\cl{P}_A$ of projections is not.

{}From now on we will assume that $A$ is finite.
The next proposition contains some useful basic facts about $\FF_A$.

\begin{proposition}[\cite{LS}]
\label{basic_props2}
Let $\cl{C}$ be a clone on a finite set $A$.
\begin{enumerate}[\rm(i)]
\item
$\cl{C}\in\FF_A$  if and only if there exists an integer $d > 0$ such that every operation on $A$ is $\cl{C}$-equivalent to a $d$-ary operation on $A$.
\item
$\FF_A$ is an order filter in the lattice of all clones on $A$; that is, if
$\cl{C}\in\FF_A$, then $\cl{C}'\in\FF_A$ for every clone $\cl{C}'$ that contains $\cl{C}$.
\end{enumerate}
\end{proposition}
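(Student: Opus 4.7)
The approach is to exploit the fact that a clone always contains projections, so dummy variables can be freely added to or removed from any operation without changing its $\fequiv[\cl{C}]$-class.

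For part (i), I would first handle the easy backward implication: if every operation on $A$ is $\cl{C}$-equivalent to some $d$-ary operation, then since $A$ is finite there are only $\card{A}^{\card{A}^d}$ operations of arity $d$, which bounds the number of $\fequiv[\cl{C}]$-classes.

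The forward direction requires some work. Assume finitely many classes $C_1,\dots,C_k$, pick one representative $f_i \in C_i$ of arity $n_i$, and set $d = \max_i n_i$. An arbitrary $f \in \cl{O}_A$ lies in some $C_i$, so $f \fequiv[\cl{C}] f_i$. It then suffices to show that $f_i$ is itself $\cl{C}$-equivalent to a $d$-ary operation. The natural candidate is $\tilde f_i(x_1,\dots,x_d) = f_i(x_1,\dots,x_{n_i})$, obtained by adding $d-n_i$ dummy variables. The verification $f_i \fequiv[\cl{C}] \tilde f_i$ then reduces to writing each as a composition of the other with projections: the identities
\[
\tilde f_i = f_i \circ \bigl(p_1^{(d)}, \dots, p_{n_i}^{(d)}\bigr)
\quad\text{and}\quad
f_i = \tilde f_i \circ \bigl(p_1^{(n_i)}, \dots, p_{n_i}^{(n_i)}, p_{n_i}^{(n_i)}, \dots, p_{n_i}^{(n_i)}\bigr)
\]
exhibit both $\cl{C}$-minor relations, since projections belong to every clone and $\tilde f_i$ genuinely ignores its last $d-n_i$ arguments.

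For part (ii), suppose $\cl{C} \subseteq \cl{C}'$ and $\cl{C} \in \FF_A$. If $f \subf[\cl{C}] g$ via some $\vect{h} \in (\cl{C}^{(n)})^m$, then the same $\vect{h}$ lies in $(\cl{C}'^{(n)})^m$, so $f \subf[\cl{C}'] g$. Hence $\fequiv[\cl{C}]$ refines $\fequiv[\cl{C}']$, and finitely many $\cl{C}$-classes force finitely many $\cl{C}'$-classes. No step here is substantively difficult; the only subtlety lies in the bookkeeping of the dummy-variable composition in (i), which is precisely what membership in a clone is designed to support.
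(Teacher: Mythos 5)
Your proof is correct. The paper itself does not prove Proposition~\ref{basic_props2} but cites it from~\cite{LS}, so there is no in-text argument to compare against; what you have written is the natural argument and fills the gap properly. Both directions of (i) and the monotonicity argument in (ii) are sound: the dummy-variable trick rests only on projections being in every clone, the bound $\card{A}^{\card{A}^d}$ correctly caps the number of classes in the converse direction, and the observation that $\cl{C}\subseteq\cl{C}'$ implies $\fequiv[\cl{C}]\subseteq\fequiv[\cl{C}']$ (so the latter has no more classes than the former) settles (ii).
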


It is well known that every clone on $A$ other than $\cl{O}_A$ is contained in a maximal clone.
Since $\cl{O}_A\in\FF_A$ and $\FF_A$ is an order filter of clones on $A$, it is natural to ask
which maximal clones belong to $\FF_A$. 
To answer this question we will use Rosenberg's description of the maximal clones.

\begin{theorem}[Rosenberg~\cite{Rosenberg}]
\label{thm-Rosenberg}
For each finite set $A$ with $\card{A} \geq 2$ the maximal clones on $A$ are
the clones of the form $\Pol \rho$ where $\rho$ is a relation of one of the following six types:
\begin{enumerate}[\indent\rm(1)]
\item a bounded partial order on $A$,
\item a prime permutation on $A$,
\item a prime affine relation on $A$,
\item a nontrivial equivalence relation on $A$,
\item a central relation on $A$,
\item an $h$-regular relation on $A$.
\end{enumerate}
\end{theorem}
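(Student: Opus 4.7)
The plan is to sketch a proof of this classical theorem; the complete argument due to Rosenberg is substantial, so I can only outline the strategy, which splits naturally into three stages.

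First, I would reduce to a single invariant relation. By the Galois correspondence between clones and relations on a finite set (already recorded in Section~\ref{sec:prelim}), every clone on $A$ equals $\Pol R$ for some family $R$ of finitary relations. If $\cl{M}$ is a maximal clone then $\cl{M}\neq\cl{O}_A$, so there is a relation $\rho$ which $\cl{M}$ preserves but some operation does not. Thus $\cl{M}\subseteq\Pol\rho\subsetneq\cl{O}_A$, and maximality of $\cl{M}$ forces $\cl{M}=\Pol\rho$ for this single $\rho$.

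Second, I would replace $\rho$ by a \emph{critical} witness: among all $\rho'$ with $\Pol\rho'=\cl{M}$, pick one of smallest arity and, subject to that, smallest cardinality. Combined with the maximality of $\cl{M}$, this minimality produces strong closure and irreducibility properties. For instance, every diagonal fibre, coordinate permutation, and nontrivial projection of $\rho$ must give a relation whose $\Pol$ is either $\cl{O}_A$ or again $\cl{M}$; moreover $\rho$ cannot be primitively positive definable from a structurally simpler invariant relation. These constraints cut down the possible shapes of $\rho$ drastically.

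Third, I would run a case analysis on the arity $h$ of $\rho$ together with its basic logical features (reflexive, symmetric, transitive, functional, affine). For $h=1$ the critical relations are the proper nonempty subsets of $A$, i.e.\ the unary central relations. For $h=2$ they split into bounded partial orders, nontrivial equivalence relations, fixed-point-free permutations of prime order, and binary central relations. For $h\geq 3$ the only surviving families are the $h$-ary central relations and the $h$-regular relations. Prime affine relations appear through a separate analysis of the elementary Abelian $p$-group structures preserved by $\cl{M}$.

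The hard part is the third stage. To exclude relation types outside the list, one invokes for every operation $f\notin\cl{M}$ the consequence $\langle\cl{M}\cup\{f\}\rangle=\cl{O}_A$ of maximality, and translates this algebraic closure statement into combinatorial constraints on $\rho$. Conversely, showing that each $\Pol\rho$ of the six listed types is actually maximal is done via Slupecki-style arguments tailored to that type of relation. Both directions are intricate, and this is where the bulk of Rosenberg's original proof is spent.
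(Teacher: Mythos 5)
The paper does not prove this statement; it is quoted verbatim as Rosenberg's theorem with a citation to \cite{Rosenberg}, and the rest of the paper treats it as a black box. There is therefore no ``paper's own proof'' to compare your attempt against.

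Your sketch is a fair description of the global architecture of the classical argument (Galois connection $\Rightarrow$ $\cl{M}=\Pol\rho$ for a single $\rho$; minimize $\rho$; case analysis on arity and combinatorial shape; separately verify maximality for each listed type), and the first stage is carried out correctly: since $\cl{M}\neq\cl{O}_A$ and every clone on a finite set is $\Pol R$ for some $R$, there is a $\rho\in R$ with $\cl{M}\subseteq\Pol\rho\subsetneq\cl{O}_A$, and maximality forces equality. However, what you have written is not a proof. The second stage is only gestured at --- ``smallest arity, then smallest cardinality'' by itself does not yield the closure and irreducibility properties you list; Rosenberg (and later simplifications by Quackenbush, Lau, and others) need a considerably more refined notion of ``reduced'' relation, including closure under diagonals, permutations of coordinates, and certain primitive-positive constructions, and these need to be proved, not asserted. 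The third stage is entirely deferred: the claim that the only surviving shapes at each arity are the ones on Rosenberg's list is exactly the content of the theorem, and the converse direction (each $\Pol\rho$ of the six types is maximal) likewise requires six separate nontrivial arguments. You acknowledge this yourself, which is honest, but it means the proposal is an annotated table of contents for Rosenberg's paper rather than a proof. For the purposes of this paper the theorem is, correctly, being cited rather than reproved, and citation is the right way to handle it; if you do want to write out a proof, you would need to fill in the reduction lemmas of stage two and the full case analysis of stage three, which together run to many pages.
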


Here a partial order on $A$ is called 
\emph{bounded} if it has both a least and a greatest element. 
A \emph{prime permutation} on $A$ is (the graph of) a fixed point free permutation on $A$ in which all cycles are of the same prime length, and 
a \emph{prime affine relation} on $A$ is the graph of the ternary operation $x-y+z$  for some elementary abelian $p$-group $(A;+,-,0)$ on $A$ ($p$ prime).
An equivalence relation on $A$ is called \emph{nontrivial} 
if it is neither the equality relation ${\zero}_A$ on $A$
nor the full relation ${\one}_A$ on $A$. 

To describe central relations and $h$-regular relations 
we call an $h$-ary relation $\rho$ on $A$ \emph{totally reflexive} if 
$\rho$ contains all 
$h$-tuples from $A^h$ 
whose coordinates are not pairwise distinct, and
\emph{totally symmetric} if $\rho$ is invariant under any permutation of its coordinates.
We say that $\rho$ is a \emph{central relation} on $A$ if $\emptyset
\neq \rho \neq A^h$, $\rho$ is totally reflexive and totally
symmetric, and there exists an element $c\in A$ such that
$\{c\}\times A^{h-1}\subseteq\rho$. The elements $c$ with this property are called the
\emph{central elements} of $\rho$.
Note that the arity $h$ of a central relation on $A$ has to satisfy $1\le h\le|A|-1$, 
and the unary central relations are just the nonempty proper subsets of $A$.
 
For an integer $h \geq 3$ a family $T = \{\theta_1, \dotsc,
\theta_r\}$ ($r\ge1$) of equivalence relations on $A$ is called
\emph{$h$-regular} if each $\theta_i$ ($1 \leq i \leq r$) has exactly
$h$ blocks, and for arbitrary blocks $B_i$ of $\theta_i$ ($1 \leq i
\leq r$) the intersection $\bigcap_{i=1}^r B_i$ is nonempty. 
To each  $h$-regular family $T = \{\theta_1, \dotsc, \theta_r\}$  of equivalence relations on $A$
we associate  an $h$-ary relation $\lambda_T$ on $A$ as follows:
\begin{multline*}
\lambda_T = \{(a_1, \dotsc, a_h) \in A^h :
\text{for each $i$, $a_1, \dotsc, a_h$ is not a transversal}\\
\text{for the blocks of $\theta_i$}\}.
\end{multline*}
Relations of the form $\lambda_T$ are called \emph{$h$-regular} (or \emph{$h$-regularly generated}) \emph{relations}.
It is clear from the definition that $h$-regular relations are totally reflexive and totally symmetric, their arity $h$ satisfies $3\le h\le|A|$, and
$h=|A|$ holds if and only if $T$ is the one-element family consisting of the
equality relation.

We conclude this section by summarizing earlier known results
proving 
some of the maximal clones from Theorem~\ref{thm-Rosenberg} 
to belong or not to belong to $\FF_A$.

\begin{theorem}[\cite{LS}]
\label{discr}
Let $A$ be a finite set with $|A|\ge2$.
\begin{enumerate}[\indent\rm(i)]
\item
The clone on $A$ generated by the ternary discriminator function
\[
t_A(x,y,z) =
\begin{cases}
z, & \text{if $x = y$,} \\
x, & \text{otherwise}
\end{cases}
\quad
(x, y, z \in A)
\]
is a minimal member of $\FF_A$. 
Hence every clone containing $t_A$ belongs to $\FF_A$.
\item
If $|A|=2$, then a clone is in $\FF_A$ if and only if it contains $t_A$.
\end{enumerate}
\end{theorem}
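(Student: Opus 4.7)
The plan is to prove (i) and (ii) in sequence, with (ii) following from (i) together with Post's classification of clones on the two-element set. Part (i) has two logically independent content: showing $\langle t_A \rangle \in \FF_A$, from which the concluding ``hence'' clause follows at once via Proposition~\ref{basic_props2}(ii); and showing the minimality of $\langle t_A \rangle$ inside $\FF_A$.

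For $\langle t_A \rangle \in \FF_A$, the plan is to invoke Proposition~\ref{basic_props2}(i) and exhibit, for each $f \in \cl O_A^{(n)}$, a $\langle t_A \rangle$-equivalent operation of arity bounded by a function of $|A|$ alone. The key ingredient is the standard structural description of the discriminator clone: its $n$-ary members are precisely the conservative operations (those with $f(\vec a) \in \{a_1,\ldots,a_n\}$) that are additionally pattern-determined (the output index depends only on the kernel partition of the input tuple). Given this, one builds an operation $\tilde f$ of bounded arity (say $|A|^{|A|}$) that records the values of $f$ on one representative tuple from each kernel type. The non-trivial step is producing explicit discriminator terms $h_1,\ldots,h_n$ of bounded arity serving as pattern selectors, so that $f(\vec y) = \tilde f(h_1(\vec y),\ldots,h_n(\vec y))$ witnesses $f$ as a $\langle t_A \rangle$-minor of $\tilde f$; the reverse direction is easy since the relevant representative tuples can be encoded as substitutions into $f$.

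For the minimality clause of (i), the plan is to analyse the proper subclones $\cl{C}\subsetneq\langle t_A\rangle$ and for each construct an infinite sequence of pairwise non-$\cl C$-equivalent operations. The intuition is that removing any of the ``selector'' power of the discriminator destroys arity compression; the essentially unary case parallels the observation about $\cl P_A$ in the introduction (operations of arbitrarily large essential arity on $A$ cannot be collapsed). For (ii), on $A=\{0,1\}$, the ``if'' direction is immediate from (i). For ``only if'', I first verify by direct computation that $t_A$ is self-dual and lies in $T_0,T_1$ but fails to be monotone or affine; by Post's classification the maximal clones on $\{0,1\}$ are precisely these five, so any clone not containing $t_A$ is contained in the monotone clone $M$ or the affine clone $L$. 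By Proposition~\ref{basic_props2}(ii), it suffices to show $M,L\notin\FF_{\{0,1\}}$. For $L$, the conjunctions $x_1\wedge\cdots\wedge x_n$ ($n\ge 1$) form an infinite family of pairwise non-$L$-equivalent operations, since writing $x_1\wedge\cdots\wedge x_m$ as a product of $n<m$ affine $\FFF_2$-functions is impossible by a codimension argument for affine subspaces. For $M$, an analogous argument using a suitable family of non-monotone operations (for instance parities, or ``exactly-$k$'' symmetric Boolean functions) yields an infinite family.

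The main technical hurdle I expect is the arity-compression step for $\langle t_A\rangle\in\FF_A$: constructing the explicit discriminator terms $h_1,\ldots,h_n$ realising both directions of $\fequiv[\langle t_A\rangle]$ between $f$ and $\tilde f$. This requires a careful orchestration of nested applications of $t_A$ to select appropriate coordinates according to the kernel pattern of the input, and verifying that the bounded-arity operation $\tilde f$ genuinely captures $f$ up to $\langle t_A\rangle$-equivalence in both directions is the technical heart of the argument.
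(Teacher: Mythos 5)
This theorem is only cited from~\cite{LS} in the present paper, which does not reprove it, so there is no in-paper argument to compare against; evaluating the proposal on its own terms, it contains a genuine error in part (ii) and two unfilled gaps in part (i).

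The error in (ii) is the claim that every clone on $\{0,1\}$ not containing $t_A$ lies below $M$ or $L$, which you derive from the fact that $t_A$ belongs to $T_0$, $T_1$, $S$ and not to $M$, $L$. That inference is a non sequitur---a clone can sit below $T_0$, fail to contain $t_A$, and still not lie below $M$ or $L$---and the claim is in fact false. Let $\cl C$ be the set of all Boolean functions $g$ for which some coordinate $i$ forces the output to be $1$, i.e.\ $g(a_1,\dots,a_n)=1$ whenever $a_i=1$. This $\cl C$ is a clone: projections have such a coordinate, and if $j$ works for $g$ and $i_\ell$ works for each $h_\ell$, then $i_j$ works for $g(h_1,\dots,h_n)$. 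The function $f(x,y,z)=x\vee(\neg y\wedge z)$ lies in $\cl C$ via $i=1$; it is not monotone, as $f(0,0,1)=1>0=f(0,1,1)$ while $(0,0,1)\le(0,1,1)$; and it is not affine, since its truth table contains exactly five ones. But $t_A\notin\cl C$: the tuples with $t_A=1$ are $(0,0,1),(1,0,0),(1,0,1),(1,1,1)$, and every coordinate position takes the value $0$ in at least one of them. So $\langle f\rangle\subseteq\cl C$ is a clone that is not below $M$ or $L$ and does not contain $t_A$, refuting the claim, and the reduction of (ii) to ``$M,L\notin\FF_{\{0,1\}}$'' via Proposition~\ref{basic_props2}(ii) collapses. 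In Post's lattice there are two infinite descending chains of such separating clones, one inside $T_0$ and one inside $T_1$; none is below $M$ or $L$ and none contains $t_A$, so a correct proof of (ii) has to deal with these, which your argument does not.

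Part (i) is plausibly structured but not carried out. For $\langle t_A\rangle\in\FF_A$ you correctly invoke the description of $\langle t_A\rangle$ as the conservative, pattern-determined operations and reduce to Proposition~\ref{basic_props2}(i), but you leave open the construction of the discriminator terms witnessing $f\fequiv[\langle t_A\rangle]\tilde{f}$, which is precisely what would yield the bounded arity; as written, no bound on the number of $\fequiv[\langle t_A\rangle]$-classes is obtained. For minimality of $\langle t_A\rangle$ in $\FF_A$ the sketch says only that one should ``analyse the proper subclones and construct an infinite non-equivalent sequence for each,'' which restates the goal without identifying a mechanism and is inadequate given the abundance of proper subclones of $\langle t_A\rangle$ already for $|A|=3$.
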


It is well known and easy to check that every maximal clone determined by a prime permutation on $A$ or by a  
proper subset of $A$ contains $t_A$.
Therefore we get the following corollary.

\begin{corollary}
\label{discr-max}
Every maximal clone determined by a prime permutation on $A$ or by a  
proper subset of $A$ (i.e., a unary central relation on $A$) belongs to $\FF_A$.\end{corollary}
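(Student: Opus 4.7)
The plan is to prove the corollary by verifying the claim asserted just before its statement, namely that every maximal clone of the two specified types contains the ternary discriminator $t_A$, and then invoking Theorem~\ref{discr}(i), which guarantees $\FF_A$-membership for any clone containing $t_A$.

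First I would handle the unary central case. Let $\rho = B$ be a nonempty proper subset of $A$. Given any $x,y,z\in B$, the value $t_A(x,y,z)$ is either $z$ (when $x=y$) or $x$ (otherwise), and in either case it lies in $B$. Hence $t_A\in\Pol\rho$, so $\Pol\rho$ contains $\langle t_A\rangle$ and therefore belongs to $\FF_A$ by Theorem~\ref{discr}(i).

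Next I would handle a prime permutation $\pi$ on $A$, represented by its graph $\rho=\{(a,\pi(a)):a\in A\}$. Take three pairs $(x_i,\pi(x_i))\in\rho$ for $i=1,2,3$; I need to show that
\[
\bigl(t_A(x_1,x_2,x_3),\,t_A(\pi(x_1),\pi(x_2),\pi(x_3))\bigr)\in\rho,
\]
i.e., $t_A(\pi(x_1),\pi(x_2),\pi(x_3))=\pi(t_A(x_1,x_2,x_3))$. If $x_1=x_2$, then $\pi(x_1)=\pi(x_2)$, and both sides equal $\pi(x_3)$. If $x_1\neq x_2$, then because $\pi$ is a bijection we also have $\pi(x_1)\neq\pi(x_2)$, and both sides equal $\pi(x_1)$. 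Thus $t_A\in\Pol\rho$, and another appeal to Theorem~\ref{discr}(i) gives $\Pol\rho\in\FF_A$.

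The only subtlety — which is not really an obstacle — is making sure the discriminator case-split is compatible with the relation being preserved; this is immediate for a unary relation (the output of $t_A$ is always one of its inputs) and for the graph of a permutation (because bijectivity propagates the relevant equality/inequality from the first coordinate to the second). No use is made of the primality of the permutation or of the specific cardinality of $B$, so the argument in fact covers the graph of every permutation on $A$ and every nonempty proper subset of $A$.
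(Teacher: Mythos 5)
Your proof is correct and follows exactly the route the paper indicates: verify that $t_A$ preserves each of the two types of relations, and then invoke Theorem~\ref{discr}(i). The paper simply asserts the containment $t_A\in\Pol\rho$ as "well known and easy to check"; you have supplied the (correct) short verification.
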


\begin{theorem}[\cite{ULM}]
\label{order_affine}
If $A$ is a finite set with $|A|\ge2$, then
the maximal clones determined by bounded partial orders or by prime affine relations do not belong to $\FF_A$.
\end{theorem}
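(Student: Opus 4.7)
The plan is to invoke Proposition~\ref{basic_props2}(i) in each case: for every positive integer $d$, I construct an operation that is not $\Pol\rho$-equivalent to any $d$-ary operation, which excludes $\Pol\rho$ from $\FF_A$. It suffices to exhibit an operation that is not even a $\Pol\rho$-minor of any $d$-ary operation, i.e., to rule out every factorization $f = g \circ \vect{h}$ with $g \in \cl{O}_A^{(d)}$ and $\vect{h} = (h_1, \ldots, h_d)$ a tuple of $n$-ary operations lying in $\Pol\rho$.

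For a prime affine relation $\rho$, the clone $\Pol\rho$ is the clone of affine operations on the elementary abelian $p$-group $A$, i.e., maps of the form $\vect{x} \mapsto L(\vect{x}) + c$ with $L$ an $\mathbb{F}_p$-linear map and $c \in A$. Pick $1 \in A$ with $1 \neq 0$ and, for each $n$, let $f_n \colon A^n \to A$ be the characteristic function of $\{\bar 0\}$: $f_n(\bar 0) = 1$ and $f_n(\vect{x}) = 0$ for $\vect{x} \neq \bar 0$. Suppose $f_n = g \circ \vect{h}$ with each $h_i$ affine. Then $f_n$ is constant on every fiber of the affine map $\vect{h} \colon A^n \to A^d$, so the fiber containing $\bar 0$ is a subset of $f_n^{-1}(1) = \{\bar 0\}$ and therefore equals $\{\bar 0\}$. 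But each nonempty fiber of $\vect{h}$ is a coset of $\ker L$, where $L$ is the linear part of $\vect{h}$, and hence has size at least $|A|^{n-d}$; for $n > d$ this is at least $|A| \geq 2$, a contradiction.

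For a bounded partial order $\leq$ on $A$ with minimum $0$ and maximum $1$, $\Pol\leq$ is the clone of monotone operations. For each $n$, define $f_n \colon A^n \to A$ by $f_n(\vect{x}) = 1$ if the number of indices $i$ with $x_i = 1$ is odd, and $f_n(\vect{x}) = 0$ otherwise. Consider the chain $\bar 0 = \vect{x}^{(0)} < \vect{x}^{(1)} < \cdots < \vect{x}^{(n)} = \bar 1$ in $A^n$, where $\vect{x}^{(i)}$ has its first $i$ coordinates equal to $1$ and the rest equal to $0$; along this chain the values of $f_n$ alternate $0, 1, 0, 1, \ldots$. If $f_n = g \circ \vect{h}$ with $\vect{h}$ monotone, then $\vect{h}(\vect{x}^{(0)}) \leq \vect{h}(\vect{x}^{(1)}) \leq \cdots \leq \vect{h}(\vect{x}^{(n)})$ is weakly increasing in $A^d$. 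Since $g$ takes different values on consecutive terms, these $n + 1$ terms must be pairwise distinct, yielding a strictly increasing chain of $n + 1$ elements in $A^d$. But the longest chain in $A^d$ contains at most $dh + 1$ elements, where $h$ is the height of $A$, so $n \leq dh$; taking $n > dh$ gives a contradiction.

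The main technical subtlety is the bounded partial order case: because a single monotone function can have singleton fibers (e.g., the conjunction $\vect{x} \mapsto 1$ iff $\vect{x} = \bar 1$), the fiber-size argument from the affine case does not transfer, and one must instead find an operation that is incompressible along chains. The parity-of-ones function sidesteps this difficulty by forcing the maximum possible alternation along a single monotone chain in $A^n$, against which the bounded chain length of $A^d$ provides the obstruction.
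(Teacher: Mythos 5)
Your proof is correct, and it takes a genuinely different route from what the paper does. The paper does not reprove Theorem~\ref{order_affine} directly (it cites~\cite{ULM}); instead, Remark~\ref{derive2.5} rederives it from the general machinery of Section~\ref{sec:2neccond}: for bounded partial orders the argument restricts to $\{0,1\}$ and applies Corollary~\ref{basic_props4} together with the fact that the clone of monotone Boolean functions is not in $\FF_{\{0,1\}}$ (which itself rests on Theorem~\ref{discr}(ii)), and for prime affine relations it counts the $n$-ary affine operations and applies the growth-rate criterion of Corollary~\ref{PolyClones}. Your proof bypasses all of this and works directly from Proposition~\ref{basic_props2}(i) by exhibiting, for each $d$, a concrete operation that cannot be a $\cl{C}$-minor of any $d$-ary operation: for the affine clone the indicator of $\{\bar 0\}$ has a singleton fiber, which cannot be a fiber of an affine map $A^n\to A^d$ once $n>d$ (since fibers of such maps are cosets of $\ker L$ of size at least $|A|^{n-d}$); for the monotone clone the parity-of-ones function alternates along a chain of length $n$ in $A^n$, and a monotone $\vect{h}$ would have to send that chain injectively to a weakly (hence strictly) increasing chain of $n+1$ elements in $A^d$, impossible once $n$ exceeds $d$ times the height of $(A;\le)$. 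Your approach is self-contained and elementary, needing neither the restriction lemma nor the counting corollaries, and your closing observation that the fiber-size obstruction fails for monotone functions (which admit singleton fibers) while the chain-length obstruction succeeds is a nice diagnostic of why two different invariants are needed. The paper's route, by contrast, buys economy by reusing machinery that is developed anyway for the other maximal clones, and for the affine case its counting bound $|\cl{C}^{(n)}|\le k^{rn+1}$ is the more reusable observation.
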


%%%%%%%%%%%%%%%%%%%%%%%%%%%%%%%%%%%%%%%%%%%%%%%
\section{Two necessary conditions}
\label{sec:2neccond}

In this section we establish some necessary conditions for a clone
$\cl{C}$ on a finite set $A$ to belong to $\FF_A$.
The first condition shows that for $\cl{C}\in\FF_A$ it is necessary that
for each subset $B$ of $A$, the operations from $\cl{C}$ restrict to
$B$ so that the restrictions that are operations on $B$
form a clone belonging to $\FF_B$.

\begin{proposition}
\label{basic_props3}
Let $\cl{C}$ be a clone on a finite set $A$, let $B$ be a nonempty subset of $A$, and let
$\cl{C}_B$ be the clone on $B$ defined as follows:
\[
\cl{C}_B = \{ f|_B : f \in \cl{C} \cap \Pol B \}.
\]
If $\cl{C}\in\FF_A$, then $\cl{C}_B\in\FF_B$.
\end{proposition}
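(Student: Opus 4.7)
I would prove the contrapositive: assuming $\cl{C}_B \notin \FF_B$, I derive $\cl{C} \notin \FF_A$. The case $B = A$ is trivial, so I may assume that $A \setminus B$ contains some element $a$. To each $g \in \cl{O}_B^{(n)}$ I associate the \emph{detection extension} $\hat g \in \cl{O}_A^{(n)}$ defined by $\hat g(\vect{x}) = g(\vect{x})$ for $\vect{x} \in B^n$ and $\hat g(\vect{x}) = a$ otherwise. This operation lies in $\Pol B$, satisfies $\hat g|_{B^n} = g$, and enjoys the crucial \emph{detection property}: $\hat g(\vect{x}) \in B$ if and only if $\vect{x} \in B^n$, simply because the fallback value $a$ lies outside $B$.

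The key claim is that $\hat g_1 \fequiv[\cl{C}] \hat g_2$ implies $g_1 \fequiv[\cl{C}_B] g_2$. To see this, suppose $\hat g_1 = \hat g_2 \circ \vect{h}$ with $\vect{h} \in (\cl{C}^{(n_1)})^{n_2}$. For any $\vect{b} \in B^{n_1}$, the left-hand side equals $g_1(\vect{b}) \in B$, so $\hat g_2\bigl(h_1(\vect{b}), \ldots, h_{n_2}(\vect{b})\bigr) \in B$, and the detection property applied to $\hat g_2$ forces $h_k(\vect{b}) \in B$ for every $k$. Thus each $h_k$ preserves $B$, so $h_k \in \cl{C} \cap \Pol B$ and $h_k|_B \in \cl{C}_B^{(n_1)}$; moreover $g_1(\vect{b}) = g_2\bigl(h_1|_B(\vect{b}), \ldots, h_{n_2}|_B(\vect{b})\bigr)$, giving $g_1 \subf[\cl{C}_B] g_2$. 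The reverse inequality follows symmetrically.

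If $\cl{C}_B \notin \FF_B$, then there exist infinitely many pairwise $\cl{C}_B$-inequivalent operations $g_1, g_2, \ldots \in \cl{O}_B$; by the contrapositive of the key claim, the associated detection extensions $\hat g_1, \hat g_2, \ldots \in \cl{O}_A$ are pairwise $\cl{C}$-inequivalent, contradicting $\cl{C} \in \FF_A$. The entire argument hinges on the detection property of $\hat g$: it funnels the ``$B$-ness'' of the output back to the inputs, automatically converting the $\cl{C}$-witnesses of an equivalence into $\cl{C}_B$-witnesses upon restriction to $B$. Beyond spelling this out carefully I do not anticipate a substantial obstacle.
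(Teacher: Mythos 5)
Your proposal is correct and follows essentially the same approach as the paper's proof: the ``detection extension'' $\hat g$ is exactly the paper's operation $f_i$ (sending tuples outside $B^n$ to a fixed element of $A \setminus B$), and the key claim with its detection-property argument is the same as the paper's argument that $f_i \fequiv[\cl C] f_j$ forces $\vect{h}$ to preserve $B$ and restrict to a $\cl{C}_B$-witness.
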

\begin{proof}
We will prove the contrapositive,
so suppose that $\cl{C}_B\notin\FF_B$.
Our goal is to show that $\cl{C}\notin\FF_A$.  
Since $\cl{C}_A=\cl{C}$,
there is nothing to prove if $B = A$.
Therefore let us assume
that $B$ is a proper subset of $A$, and let $0 \in A \setminus B$. 
Using the assumption $\cl{C}_B\notin\FF_B$, 
select representatives $g_i$ ($i = 1, 2, \ldots$)
of infinitely many different $\fequiv[\cl{C}_B]$-classes. Define $f_i$ on $A$ such that $f_i(\vect{x}) = g_i(\vect{x})$ if all coordinates of the tuple $\vect{x}$ are in $B$ and $f_i(\vect{x}) = 0$ otherwise. 
We will prove $\cl{C}\notin\FF_A$ by showing that the operations $f_i$ ($i = 1, 2, \ldots$) belong to pairwise different $\fequiv[\cl{C}]$-classes.

Suppose that there exist operations $f_i,f_j$ ($i\not=j$) such that 
$f_i \fequiv[\cl{C}] f_j$,
that is, $f_i = f_j \circ \vect{h}$ and $f_j = f_i \circ \vect{h}'$ 
for some $\vect{h} \in (\cl{C}^{(m)})^{n}$ and 
$\vect{h}' \in (\cl{C}^{(n)})^{m}$ 
where $m$ is the arity of $f_i$ and $n$ is the arity of $f_j$. 
Let $\vect{x} \in B^m$. 
Then $f_i(\vect{x}) = g_i(\vect{x}) \in B$, so 
$f_j(\vect{h}(\vect{x})) = (f_j \circ \vect{h})(\vect{x})=f_i(\vect{x}) 
\in B$. 
Since $f_j(\vect{y}) = 0 \notin B$ if $\vect{y} \notin B^n$, we get that $\vect{h}(\vect{x}) \in B^n$. This shows that $\vect{h}$ preserves $B$, hence
$\vect{h}|_B \in (\cl{C}_B^{(m)})^n$.
Similarly, by interchanging the roles of $f_i$ and $f_j$
we conclude that $\vect{h}'$ also preserves $B$, and
$\vect{h}'|_B \in (\cl{C}_B^{(n)})^m$.
By construction, $f_i$, $f_j$ preserve $B$ as well, 
therefore $f_i|_B = f_j|_B \circ \vect{h}|_B$
and
$f_j|_B = f_i|_B \circ \vect{h}'|_B$.    
This implies that
$g_i = g_j \circ \vect{h}|_B$ and $g_j = g_i \circ \vect{h}'|_B$.
Hence $g_i\fequiv[\cl{C}_B] g_j$, which
contradicts the choice of the operations $g_i,g_j$.
\end{proof}

\begin{corollary}
\label{basic_props4}
Let $\rho$ be a relation on a finite set $A$. 
If $A$ has a nonempty subset $B$ such that for the clone determined by the
restriction $\rho|_B$ of $\rho$ to $B$ we have 
that $\Pol \rho|_B\notin\FF_B$, then
$\Pol \rho\notin\FF_A$.
\end{corollary}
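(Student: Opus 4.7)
The plan is to chain together a clone inclusion with Proposition~\ref{basic_props3}. Set $\cl{C} = \Pol \rho$ and let $\cl{C}_B = \{f|_B : f \in \cl{C} \cap \Pol B\}$ be the clone on $B$ defined in Proposition~\ref{basic_props3}. The two ingredients I need are the containment $\cl{C}_B \subseteq \Pol(\rho|_B)$ and the fact that $\FF_B$ is an order filter.

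First I would check that $\cl{C}_B \subseteq \Pol(\rho|_B)$. Let $f \in \cl{C} \cap \Pol B$ be $n$-ary; since $f$ preserves $B$, the restriction $f|_B$ is a genuine $n$-ary operation on $B$. Writing $h$ for the arity of $\rho$, take any tuples $\vect{a}_1,\dots,\vect{a}_h \in B^n$ with $(\vect{a}_1,\dots,\vect{a}_h)\in (\rho|_B)^n$, i.e., with each coordinate $h$-tuple lying in $\rho \cap B^h$. Applying $f$ coordinatewise produces an $h$-tuple that belongs to $\rho$ because $f \in \Pol\rho$, and whose entries lie in $B$ because $f \in \Pol B$. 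Hence the resulting $h$-tuple lies in $\rho|_B$, so $f|_B \in \Pol(\rho|_B)$.

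Next, by Proposition~\ref{basic_props2}(ii), $\FF_B$ is up-closed in the lattice of clones on $B$. Combined with $\cl{C}_B \subseteq \Pol(\rho|_B)$, the hypothesis $\Pol(\rho|_B) \notin \FF_B$ forces $\cl{C}_B \notin \FF_B$ (otherwise $\Pol(\rho|_B)$ would be in $\FF_B$ by up-closure). Finally, Proposition~\ref{basic_props3} applied to $\cl{C} = \Pol \rho$, read in contrapositive form, gives $\Pol \rho \notin \FF_A$, which is the desired conclusion.

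There is no real obstacle: the substantive content is contained in Proposition~\ref{basic_props3}, and this corollary is a routine combination of that proposition with the order-filter property of $\FF_B$, the only thing to verify from scratch being the elementary clone inclusion $\cl{C}_B \subseteq \Pol(\rho|_B)$.
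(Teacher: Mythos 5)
Your proof is correct and follows essentially the same route as the paper's: establish the clone inclusion $\cl{C}_B \subseteq \Pol(\rho|_B)$, invoke the order-filter property of $\FF_B$ (Proposition~\ref{basic_props2}(ii)), and then apply Proposition~\ref{basic_props3} in contrapositive. The only cosmetic difference is that you verify the inclusion by an explicit coordinatewise calculation, whereas the paper notes more tersely that an operation preserving both $\rho$ and $B$ preserves their intersection $\rho|_B$.
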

\begin{proof}
Let $\cl{C} = \Pol \rho$, and let
$\cl{C}_B$ be the clone defined in Proposition~\ref{basic_props3}.
First we will show that $\cl{C}_B\subseteq\Pol\rho|_B$. 
Indeed,
every operation in $\cl{C}_B$ is of the form $f|_B$ for some
$f\in\cl{C}\cap\Pol{B}=\Pol\{\rho,B\}$.
Since $f$ preserves $\rho$ and $B$, it also preserves
$\rho\cap B^2=\rho|_B$.
Thus $f|_B$ also preserves $\rho|_B$, that is,
$f|_B\in\Pol\rho|_B$.

If $\Pol \rho|_B\notin\FF_B$, then
the fact that $\cl{C}_B$ is a subclone of $\Pol \rho|_B$
implies by Proposition~\ref{basic_props2}~(ii) that
$\cl{C}_B\notin\FF_B$.
Therefore it follows from
Proposition~\ref{basic_props3}
that $\Pol\rho=\cl{C}\notin\FF_A$, as claimed.
\end{proof}

The second necessary condition for $\cl{C}\in\FF_A$ is a quantitative condition 
indicating that the clones in $\FF_A$ are large in the sense that they must have a lot of
$n$-ary operations for each $n$. 

\begin{proposition}
\label{BigClones}
Let $A$ be a $k$-element set.
If $\cl{C}\in\FF_A$, then there exists a positive constant $c$
such that $\card{\cl{C}^{(n)}} \geq c k^{k^n \!/ n}$ for all $n \geq 1$.
\end{proposition}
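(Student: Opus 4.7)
The plan is to turn the qualitative statement of Proposition~\ref{basic_props2}(i) into a counting argument. By (i), since $\cl{C}\in\FF_A$, there is a fixed integer $d>0$ such that every operation on $A$ is $\cl{C}$-equivalent to some $d$-ary operation. In particular, for any $n\geq 1$ and any $f\in\cl{O}_A^{(n)}$, there exists $g\in\cl{O}_A^{(d)}$ with $f\subf[\cl{C}] g$, i.e., $f=g\circ\vect{h}$ for some $\vect{h}\in(\cl{C}^{(n)})^d$. This says precisely that the composition map
\[
\cl{O}_A^{(d)}\times(\cl{C}^{(n)})^d\longrightarrow\cl{O}_A^{(n)},\qquad (g,\vect{h})\mapsto g\circ\vect{h},
\]
is surjective.

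Next, I would compare cardinalities on both sides. With $k=\card{A}$ we have $\card{\cl{O}_A^{(n)}}=k^{k^n}$ and $\card{\cl{O}_A^{(d)}}=k^{k^d}$, so surjectivity yields
\[
k^{k^n}\leq k^{k^d}\cdot\card{\cl{C}^{(n)}}^d,
\qquad\text{hence}\qquad \card{\cl{C}^{(n)}}\geq k^{(k^n-k^d)/d}.
\]

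Finally, I would compare the exponent on the right to $k^n/n$. Writing
\[
\frac{k^n-k^d}{d}=\frac{k^n}{n}+\frac{k^n(n-d)}{nd}-\frac{k^d}{d},
\]
one sees that for $n\geq d$ the middle summand is nonnegative, so $(k^n-k^d)/d\geq k^n/n-k^d/d$, and therefore
\[
\card{\cl{C}^{(n)}}\geq k^{-k^d/d}\cdot k^{k^n/n}\qquad\text{for all }n\geq d.
\]
For the finitely many values $n=1,\dots,d-1$ I would use the trivial bound $\card{\cl{C}^{(n)}}\geq 1$ (since $\cl{C}$ contains the projections) together with the fact that $k^{k^n/n}$ assumes only finitely many values on this range, and absorb these cases by shrinking the constant. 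Setting $c$ to be the minimum of $k^{-k^d/d}$ and of $k^{-k^n/n}$ for $n<d$ gives a single positive constant that works for every $n\geq 1$.

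There is no substantial obstacle in this argument; the only care is in bookkeeping the constant so that the bound also covers the finitely many small $n$, and in noting that the $d$-ary operation $g$ representing the $\cl{C}$-class of $f$ need not itself lie in $\cl{C}$, which is why the coarser factor $k^{k^d}=\card{\cl{O}_A^{(d)}}$ appears on the right.
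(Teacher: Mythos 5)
Your proof is correct, and it takes a genuinely different route from the paper's. The paper works with $\mu$, the number of $\fequiv[\cl{C}]$-classes: by pigeonhole, some class $B$ contains at least $k^{k^n}/\mu$ $n$-ary operations, and since all of these are $n$-ary $\cl{C}$-minors of any fixed $f\in B^{(n)}$, the map $(\cl{C}^{(n)})^n\to\cl{O}_A^{(n)}$, $\vect{g}\mapsto f\circ\vect{g}$ has image of size at least $k^{k^n}/\mu$, giving $\card{\cl{C}^{(n)}}^n\ge k^{k^n}/\mu$ and hence $\card{\cl{C}^{(n)}}\ge k^{k^n/n}/\mu^{1/n}\ge k^{k^n/n}/\mu$. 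You instead invoke Proposition~\ref{basic_props2}(i) to obtain a uniform arity $d$, observe that the composition map $\cl{O}_A^{(d)}\times(\cl{C}^{(n)})^d\to\cl{O}_A^{(n)}$ is surjective, and compare cardinalities, yielding $\card{\cl{C}^{(n)}}\ge k^{(k^n-k^d)/d}$; you then massage the exponent to get $k^n/n$. Both arguments work; the paper's pigeonhole-then-$n$-th-root version is slightly shorter because it produces a constant $c=1/\mu$ directly without having to split off the small values of $n$ or compare $(k^n-k^d)/d$ against $k^n/n$, while yours is perhaps conceptually cleaner (a single surjection rather than a largest fiber) but needs more bookkeeping to assemble the final constant. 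One small point of interest: your version leans on part~(i) of Proposition~\ref{basic_props2}, whereas the paper's argument uses only the raw definition of $\FF_A$ (finitely many classes) and is in that sense self-contained.
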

\begin{proof}
Denote the number of $\fequiv[\cl{C}]$-classes by $\mu$. For every $n\geq 1$
the number of $n$-ary operations on $A$ is $k^{k^n}$, therefore 
there must be a $\fequiv[\cl{C}]$-class $B$ such that $\card{B^{(n)}} \geq k^{k^n} / \mu$. Any $f \in B^{(n)}$ has at most $\card{\cl{C}^{(n)}}^n$ $n$-ary $\cl{C}$-minors, so we have that
\[
\card{\cl{C}^{(n)}}^n \geq 
\card{ \{f \circ \vect{g} : \vect{g} \in (\cl{C}^{(n)})^n\} }
\geq \card{B^{(n)}} \geq k^{k^n} / \mu.
\]
It follows that
\[
\card{\cl{C}^{(n)}} \geq (k^{k^n} / \mu)^{1/n} = k^{k^n \!/ n} / \mu^{1/n} \geq k^{k^n \!/ n} / \mu.
\]
The claim now follows by letting $c = 1 / \mu$.
\end{proof}

\begin{corollary}
\label{SmallClones}
Let $\cl{C}$ be a clone on a $k$-element set $A$. If $\card{\cl{C}^{(n)}} \leq k^{p(n)}$ for all $n$, where $p \colon \mathbb{N} \to \mathbb{N}$ is a function such that 
$\displaystyle \lim_{n \to \infty} \frac{p(n)}{k^n/n} = 0$,
then $\cl C\notin\FF_A$.
\end{corollary}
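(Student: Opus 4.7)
The plan is to derive the corollary as the contrapositive of Proposition~\ref{BigClones}. Assume $\cl{C}\in\FF_A$; I want to contradict the hypothesis $\lim_{n\to\infty} p(n)/(k^n/n) = 0$. By Proposition~\ref{BigClones} there is a positive constant $c$ such that $\card{\cl{C}^{(n)}} \geq c\,k^{k^n/n}$ for every $n\geq 1$, while the hypothesis of the corollary asserts $\card{\cl{C}^{(n)}} \leq k^{p(n)}$ for every $n$. Combining these two bounds yields $c\,k^{k^n/n} \leq k^{p(n)}$ for all $n$.

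Next, I would take logarithms in base $k$. This gives
\[
\frac{k^n}{n} + \log_k c \;\leq\; p(n) \qquad \text{for all $n\geq 1$,}
\]
which can be rearranged as
\[
1 + \frac{\log_k c}{k^n/n} \;\leq\; \frac{p(n)}{k^n/n}.
\]
As $n\to\infty$ the term $\log_k c / (k^n/n)$ tends to $0$, so the left-hand side tends to $1$. Hence $\liminf_{n\to\infty} p(n)/(k^n/n) \geq 1$, which contradicts the assumption that this limit is $0$. This contradiction forces $\cl{C}\notin\FF_A$.

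Conceptually there is no hard step here: the entire content of the corollary is a routine asymptotic rearrangement of the quantitative lower bound supplied by Proposition~\ref{BigClones}. The only small point to be careful about is keeping track of the multiplicative constant $c$ when passing to logarithms, which is why I would divide through by $k^n/n$ before taking the limit rather than trying to absorb $c$ into $p(n)$ directly.
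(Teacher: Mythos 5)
Your proof is correct and matches the paper's own argument essentially line for line: apply Proposition~\ref{BigClones} to get the lower bound, combine with the hypothesis, take $\log_k$, divide by $k^n/n$, and pass to the limit to get a contradiction. The only (cosmetic) difference is that you phrase the contradiction via $\liminf$ whereas the paper says $1\leq 0$ directly after taking limits.
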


\begin{proof}
Suppose that the assumptions of the corollary hold, but  $\cl C\in\FF_A$. Proposition~\ref{BigClones} implies then that for some positive constant $c$ we have $ck^{k^n/n} \leq \card{\cl{C}^{(n)}}$ for all $n$. Hence $ck^{k^n/n} \leq k^{p(n)}$ for all $n$. Since $k > 1$, we get that $\log_k c + k^n/n \leq p(n)$ for all $n$, or equivalently,
$\displaystyle \frac{\log_k c}{k^n/n} + 1 \leq \frac{p(n)}{k^n/n}$ for all $n$.
Taking the limit of both sides as 
$n \to \infty$ 
we get that $1 \leq 0$, a contradiction.
\end{proof}

Every polynomial function $p$ satisfies the condition $\displaystyle \lim_{n \to \infty} \frac{p(n)}{k^n/n} = 0$, hence the following statement
is a special case of Corollary~\ref{SmallClones}.

\begin{corollary}
\label{PolyClones}
If $\cl{C}$ is a clone on a $k$-element set $A$ such that 
for some polynomial function $p$ we have $\card{\cl{C}^{(n)}} \leq k^{p(n)}$ 
for all $n$, 
then $\cl C\notin\FF_A$.
\end{corollary}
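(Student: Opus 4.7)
The plan is to obtain Corollary~\ref{PolyClones} as an immediate instance of Corollary~\ref{SmallClones}. The only thing to check is that every polynomial $p$ satisfies the growth hypothesis $\displaystyle\lim_{n\to\infty}\frac{p(n)}{k^n/n}=0$. Write $p$ as a polynomial of degree $d\ge 0$; then $|p(n)|\le Cn^d$ for some constant $C>0$ and all sufficiently large $n$. Since $|A|\ge 2$, we have $k\ge 2$, so
\[
0\le\frac{p(n)}{k^n/n}\le\frac{Cn^{d+1}}{k^n}\longrightarrow 0\quad\text{as }n\to\infty,
\]
by the standard fact that exponential functions with base $>1$ dominate polynomial functions (which can be proved by $d+2$ applications of l'Hospital's rule, or by the root test).

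With the limit verified, the hypotheses of Corollary~\ref{SmallClones} are met for our polynomial $p$, and that corollary yields $\cl{C}\notin\FF_A$, as required. I do not anticipate any real obstacle: the statement is essentially a rewording of Corollary~\ref{SmallClones}, and the limit computation above is elementary. (Note that the case $k=1$ is implicitly excluded since Proposition~\ref{BigClones}, on which Corollary~\ref{SmallClones} rests, requires $k>1$; on a one-element set the corollary would in any case be vacuous for the membership question since there is only one clone.)
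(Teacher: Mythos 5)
Your proposal is correct and coincides with the paper's own argument: the paper also deduces Corollary~\ref{PolyClones} from Corollary~\ref{SmallClones} by observing that every polynomial $p$ satisfies $\lim_{n\to\infty}p(n)/(k^n/n)=0$. You simply spell out the elementary limit computation which the paper leaves to the reader.
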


\begin{remark}
\label{notconverse}
The converse of Proposition~\ref{BigClones} is not true, that is, there
exist clones $\cl C\notin\FF_A$ that 
satisfy the conclusion of Proposition~\ref{BigClones}. 
For example, if $A=\{0,1\}$ is a $2$-element set and $\le$ is the 
natural order $0\le1$ on $A$, then it follows from part (ii) of 
Theorem~\ref{discr} that the clone $\cl{M}:=\Pol\le$ 
of all monotone Boolean functions is not in $\FF_A$. However, 
Gilbert~\cite{gilbert} proved that 
\[|\cl{M}^{(n)}|\ge 2^{\binom{n}{\lfloor{n/2}\rfloor}}
\qquad\text{for all $n\ge1$.}
\]
If $n\ge2$, then $\binom{n}{\lfloor{n/2}\rfloor}\ge
\binom{n}{k}$ for all $0<k<n$ and
$\binom{n}{\lfloor{n/2}\rfloor}\ge\binom{n}{0}+\binom{n}{n}$, therefore
$\binom{n}{\lfloor{n/2}\rfloor}\ge2^n/n$. 
Hence $|\cl{M}^{(n)}|\ge2^{2^n/n}$ holds for all $n\ge2$.
For $n=1$ we have $|\cl{M}^{(1)}|=3\ge \frac{1}{2}\cdot 2^{2^1/1}$.
Thus $|\cl{M}^{(n)}|\ge \frac{1}{2}\cdot 2^{2^n/n}$ for all $n\ge1$,
which shows that the clone $\cl{C}=\cl{M}$ satisfies
the conclusion of Proposition~\ref{BigClones}.

Another example is the clone $\cl{B}_{k-2}$ 
on a $k$-element set $A$ with $k\ge4$ that
consists
of all essentially at most unary operations and 
all operations whose range has at most $k-2$ 
elements (see Section~\ref{sec:h-reg}). 
We will show in Theorem~\ref{thm-slup} that 
$\cl{B}_{k-2}\notin\FF_A$.
On the other hand,
\[
\card{\cl{B}_{k-2}^{(n)}} \geq (k-2)^{k^n}
= k^{{k^n}\log_k (k-2)} = k^{d k^n},
\]
where $1/2 \leq d = \log_k (k-2) < 1$.
Now if we choose $c = k^{-k/2}$ ($0<c<1$)
then for $n=1$ we have that
\[
k^{d k^1} \geq k^{k/2} = k^{-k/2} k^k = c k^{k^1 \!/ 1},
\]
and for $n \geq 2$ we have that $dn \geq 1$ and so
\[
k^{d k^n} \geq k^{d k^n \!/ dn} = k^{k^n \!/ n} \geq c k^{k^n \!/n}.
\]
Thus, $\card{\cl{B}_{k-2}^{(n)}} \geq c k^{k^n \!/n}$ for all $n \geq 1$,
proving that the conclusion of Proposition~\ref{BigClones} 
holds for $\cl{B}_{k-2}$.
\end{remark}

\begin{remark}
\label{derive2.5}
Theorem~\ref{order_affine} can be derived from Corollaries~\ref{basic_props4},
\ref{PolyClones}, and the fact that 
$\cl{M}\notin\FF_{\{0,1\}}$ holds
for the clone $\cl{M}$ of monotone Boolean functions 
(see Remark~\ref{notconverse}).
Indeed, let first $\Pol{\le}$ be a maximal clone on $A$ determined by 
a bounded partial order $\le$.
We may assume without loss of generality that $A=\{0,1,2,\ldots,k-1\}$
where $k\ge2$ and the least and greatest elements of $\le$ are $0$ and $1$.
Thus $\Pol{\le}|_{\{0,1\}}=\cl{M}\notin\FF_{\{0,1\}}$, 
so Corollary~\ref{basic_props4} implies that $\Pol{\le}\notin\FF_A$.

Next let $\cl{C}$ be a maximal clone on $A$ determined by a
prime affine relation. 
In this case $|A|=q^r$ for some prime $q$ and some positive integer $r$. 
Moreover, there exists an elementary abelian $q$-group $(A;+)$ such that
the $n$-ary operations in $\cl{C}$ are exactly the operations
$\sum_{i=1}^n M_ix_i+a$ where $a\in A$ and each $M_i$ is an $r\times r$ 
matrix over the $q$-element field. 
Thus, using the notation $k:=|A|=q^r$ 
we get that 
$|\cl{C}^{(n)}|\le(q^{r^2})^n q^r=k^{rn+1}$.
Hence Corollary~\ref{PolyClones} implies that $\cl{C}\notin\FF_A$
\end{remark}

We conclude this section by two further applications of 
Propositions~\ref{basic_props3}, \ref{BigClones} and their corollaries.
Recall that \emph{Burle's clone} on a finite set $A$ is the subclone 
of $\cl{O}_A$ that consists
of all essentially at most unary operations and all 
{\it quasilinear operations}, i.e., all 
operations of the form $g \bigl(h_1(x_1) \oplus \dotsb \oplus h_n(x_n)\bigr)$ where $h_1, \dotsc, h_n \colon A \to \{0,1\}$, $g \colon \{0,1\} \to A$ are arbitrary mappings and $\oplus$ denotes addition modulo $2$.
We will denote Burle's clone by $\cl{B}_1$ (see Section~\ref{sec:h-reg}). 

\begin{corollary}
\label{burle}
If $A$ is a finite set with at least two elements, 
then $\cl{B}_1\notin\FF_A$.
\end{corollary}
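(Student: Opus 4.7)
My plan is to invoke Corollary~\ref{PolyClones} by showing that $|\cl{B}_1^{(n)}|$ grows only as $k^{O(n)}$, where $k=|A|$. This approach is more direct than routing through Proposition~\ref{basic_props3}, although restricting to a two-element subset $B\subseteq A$ and observing that $(\cl{B}_1)_B$ is contained in the affine Boolean clone (which fails to lie in $\FF_B$ by the same polynomial-size argument applied in $|B|=2$) would also work.

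First I would split $\cl{B}_1^{(n)}$ into its two defining parts and count each separately. The essentially-at-most-unary $n$-ary operations are either one of the $k$ constants or of the form $u(x_i)$ for some index $1\le i\le n$ and some unary $u\colon A\to A$, giving at most $nk^k+k$ of them. For the quasilinear $n$-ary operations, every such operation admits by definition a representation $g(h_1(x_1)\oplus\cdots\oplus h_n(x_n))$ with $g\colon\{0,1\}\to A$ and each $h_i\colon A\to\{0,1\}$; since there are $k^2$ choices for $g$ and $2^k$ choices for each $h_i$, the number of quasilinear $n$-ary operations is bounded by $k^2\cdot 2^{kn}$. This representation is not injective, but an upper count is all I need.

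Adding the two estimates and using $2^{kn}\le k^{kn}$ (valid because $k\ge 2$), I obtain
\[
|\cl{B}_1^{(n)}|\;\le\; nk^k+k+k^{kn+2}\;\le\; k^{p(n)}
\]
for a suitable linear polynomial $p$ (for instance $p(n)=kn+k+3$). Corollary~\ref{PolyClones} now delivers $\cl{B}_1\notin\FF_A$ immediately. I do not foresee a substantive obstacle: the proof is essentially a routine application of the quantitative necessary condition from Proposition~\ref{BigClones}, the only care needed being that one counts representations of quasilinear functions rather than distinct functions, and that the bound $2^{kn}\le k^{kn}$ does require $k\ge 2$, which is part of the hypothesis.
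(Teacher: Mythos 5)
Your proof is correct and takes essentially the same approach as the paper's: bounding $|\cl{B}_1^{(n)}|$ by summing the counts of essentially-at-most-unary operations and of representations of quasilinear operations, then invoking Corollary~\ref{PolyClones}. The only organizational difference is that the paper splits off $|A|=2$ as a separate case (routing it through the identification of $\cl{B}_1$ with the prime affine maximal clone via Remark~\ref{derive2.5}), whereas your bound $|\cl{B}_1^{(n)}|\le k^{kn+k+3}$ is carried out uniformly for all $k\ge 2$; the paper also records, for $k\ge 3$, the alternative route through Corollary~\ref{basic_props4} that you sketch in your opening paragraph.
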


\begin{proof}
If $\card{A}=2$, then
Burle's clone is the unique maximal clone determined by a prime 
affine relation. 
As discussed in Remark~\ref{derive2.5}, 
in this case $\cl{B}_1\notin\FF_A$ can be proved using 
Corollary~\ref{PolyClones}. 
{}From now on let $\card{A}=k\ge3$, and assume without loss of generality that 
$A=\{0,1,2,\ldots,k-1\}$.
In this case we can employ either one of Corollaries~\ref{basic_props4} and
\ref{PolyClones} to prove that $\cl{B}_1\notin\FF_A$.

First we will discuss the proof that 
relies on Corollary~\ref{basic_props4}.
It is well known that $\cl{B}_1=\Pol\beta$ where $\beta$ is the $4$-ary
relation on $A$ that consists of all tuples of the form 
$(x,x,y,y)$, $(x,y,x,y)$, and $(x,y,y,x)$ with $x,y\in A$.
Since $\beta|_{\{0,1\}}$ is the unique prime affine relation on $\{0,1\}$,
our argument in Remark~\ref{derive2.5} shows that
$\Pol\beta|_{\{0,1\}}\notin\FF_{\{0,1\}}$.
Thus Corollary~\ref{basic_props4} yields that 
$\cl{B}_1=\Pol\beta\notin\FF_A$.

To get the same conclusion using Corollary~\ref{PolyClones}
we have to estimate the number of $n$-ary operations in $\cl{B}_1$.
The number of functions $A \to \{0, 1\}$ is $2^k$, 
and the number of functions $\{0, 1\} \to A$ is $k^2$, 
so the number of $n$-ary quasilinear operations on $A$ is at most 
$k^2 (2^k)^n$. The number of functions $A \to A$ is $k^k$, 
so the number of $n$-ary, essentially at most unary operations on $A$ 
is at most $n k^k$. Thus,
\[
\card{\cl{B}_1^{(n)}} \leq
k^2 (2^k)^n + n k^k \leq
k^k (k^k)^n + (k^k)^n k^k \leq
k k^k (k^k)^n = k^{kn + k + 1},
\]
where the second inequality holds because $k > 2$ and hence 
$n \leq (k^k)^n$ for all $n \geq 1$. 
It follows from Corollary~\ref{PolyClones} that $\cl{B}_1\notin\FF_A$.
\end{proof}

In the proof of Theorem~\ref{mainthm-eqrels} we will see an application of
Corollary~\ref{SmallClones} where the function $p$ is not a polynomial.

Our last application answers a question on minimal clones raised by P.~Mayr.
Recall that a clone $\cl{C}$ on $A$ is called \emph{minimal} if 
$\cl{C}$ is not the clone $\cl{P}_A$ of projections, and 
$\cl{P}_A$ is the only proper subclone of $\cl{C}$.
Equivalently, $\cl{C}$ is a minimal clone on $A$ if and only if
$\cl{C}\setminus\cl{P}_A\not=\emptyset$ and $\langle f\rangle=\cl{C}$
for all $f\in\cl{C}\setminus\cl{P}_A$.

\begin{corollary}
\label{min-clones}
If $A$ is a finite set with at least two elements, then no minimal clone 
on $A$ belongs to $\FF_A$.
\end{corollary}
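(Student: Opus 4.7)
The plan is to invoke Rosenberg's classification of minimal clones on finite sets: every such clone is $\cl{C}=\langle f\rangle$ with $f$ of one of five types---(i) unary, (ii) binary idempotent, (iii) ternary majority, (iv) the operation $x+y+z$ of an elementary abelian $2$-group structure on $A$, or (v) an $h$-ary semiprojection with $h\ge3$. For each type I will show $\cl{C}\notin\FF_A$, either by a polynomial upper bound on $|\cl{C}^{(n)}|$ combined with Corollary~\ref{PolyClones}, or by passing to a $2$-element subset via Proposition~\ref{basic_props3} and Theorem~\ref{discr}(ii).

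For types (i), (iv), (v) the polynomial estimate is direct. In (i) every $n$-ary operation in $\cl{C}$ has the form $u(x_i)$ with $u\in\cl{C}^{(1)}$, yielding $|\cl{C}^{(n)}|\le n|A|^{|A|}$. In (iv) every such operation is a linear combination $\sum_{i\in S}x_i$ over $\FFF_2$ with $|S|$ odd, yielding $|\cl{C}^{(n)}|\le 2^n\le|A|^n$. In (v) one invokes the classical fact that in a minimal clone generated by an $h$-ary semiprojection every non-projection term is itself an $h$-ary semiprojection on some $h$ of the input variables; this gives $|\cl{C}^{(n)}|\le n+\binom{n}{h}|A|^{|A|^h}$. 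In each case $|\cl{C}^{(n)}|\le|A|^{p(n)}$ for a polynomial $p$, so Corollary~\ref{PolyClones} applies.

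For types (iii) and~(ii) I descend to a $2$-element subset $B\subseteq A$. A ternary majority preserves every such $B$ by pigeonhole, so $\cl{C}\subseteq\Pol B$ and $\cl{C}_B=\langle f|_B\rangle$ is the self-dual monotone clone on $B\cong\{0,1\}$; this clone does not contain the discriminator $t_B$, so $\cl{C}_B\notin\FF_B$ by Theorem~\ref{discr}(ii), and Proposition~\ref{basic_props3} gives $\cl{C}\notin\FF_A$. For a binary idempotent $f$ preserving some $2$-element $B$, the restriction $f|_B$ is one of $p_1,p_2,\wedge,\vee$, so $\cl{C}_B$ is $\cl{P}_B$ or a Boolean semilattice clone, again avoiding $t_B$, and the argument concludes as before. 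The principal obstacle is the residual sub-case of (ii): a binary idempotent $f$ preserving no $2$-element subset of $A$ (as happens for Steiner-quasigroup operations on a $3$-element set). To handle this, I plan to apply Proposition~\ref{basic_props3} with $B$ a minimal $f$-closed subset of $A$ of size $\ge2$ (necessarily of size $\ge3$ here, with $f|_B$ again preserving no $2$-element subset of $B$), and close the argument by strong induction on $|A|$, where the inductive hypothesis must be strengthened so as to apply to every clone on a finite set generated by a single binary idempotent non-projection operation.
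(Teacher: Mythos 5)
Your route differs substantially from the paper's, which does not invoke Rosenberg's classification of minimal clones at all. Instead the paper takes $A$ of smallest size admitting a minimal clone $\cl{C}\in\FF_A$, fixes any $2$-element $B\subseteq A$, and uses minimality of $\cl{C}$ to see that $\cl{C}\cap\Pol B\in\{\cl{P}_A,\cl{C}\}$, whence $\cl{C}_B$ is either $\cl{P}_B$ or a minimal clone on $B$; since Proposition~\ref{basic_props3} gives $\cl{C}_B\in\FF_B$ while $\cl{P}_B\notin\FF_B$, minimality of $|A|$ forces $|A|=2$, and Post's lattice finishes. This single dichotomy handles all of Rosenberg's types uniformly.

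Your case analysis has two genuine gaps. In case~(v), the ``classical fact'' that every non-projection of a minimal semiprojection clone is an $h$-ary semiprojection on $h$ of its inputs does not follow from Swierczkowski's lemma (which applies only to operations all of whose identification minors are projections), and it is not a statement I can confirm as folklore; you would need to prove it or give a precise citation before the polynomial count $|\cl{C}^{(n)}| \le n + \binom{n}{h}|A|^{|A|^h}$ is available. More seriously, the fallback for binary idempotents preserving no $2$-element subset does not work as described: a minimal $f$-closed subset of size $\ge 2$ may equal $A$ itself (your Steiner-quasigroup example is such a case), so there is no proper subset to descend to; and the proposed ``strengthened inductive hypothesis'' over \emph{all} clones generated by a binary idempotent discards exactly the minimality of $\cl{C}$ that you rely on elsewhere to control $\cl{C}\cap\Pol B$. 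The repair is already in your toolkit: if $\cl{C}=\langle f\rangle$ is minimal and $f$ preserves no $2$-element $B$, then $\cl{C}\cap\Pol B$ is a proper subclone of $\cl{C}$, hence equals $\cl{P}_A$, so $\cl{C}_B=\cl{P}_B\notin\FF_B$ and Proposition~\ref{basic_props3} concludes. Pursued systematically, this observation rediscovers the paper's argument and renders the Rosenberg type analysis unnecessary.
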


\begin{proof}
Assume that the statement is false, and let $A$ be a finite 
set of minimum size $\card{A}\ge2$ such that $\FF_A$ contains a minimal
clone $\cl{C}$. 
Let $B$ be any $2$-element subset of $A$.
Since $\cl{C}$ is a minimal clone, the clone $\cl C\cap\Pol{B}$ 
is either $\cl{P}_A$ or $\cl{C}$. 
Hence the clone $\cl C_B=\{f|_B:f\in\cl{C}\cap\Pol{B}\}$ 
defined in Proposition~\ref{basic_props3} 
is either $\cl{P}_B$ or a minimal clone on $B$.
By Proposition~\ref{basic_props3}, 
the assumption $\cl{C}\in\FF_A$ implies that
$\cl{C}_B\in\FF_B$.
However, as we discussed in the introduction, 
$\cl{P}_B\notin\FF_B$. 
Therefore $\cl{C}_B$ is a minimal clone on $B$
that is a member of $\FF_B$. 
The minimality of $A$ implies that $B=A$ and hence $\card{A}=2$.
It is well known from~\cite{Post} 
that there are seven minimal clones on a $2$-element set,
and each one of them is either a subclone of the maximal clone $\cl{M}$ of
all monotone Boolean functions, or a subclone of
the maximal clone $\cl{B}_1$ of all linear Boolean functions.
Therefore Theorem~\ref{discr} or Theorem~\ref{order_affine} (see also
Remark~\ref{derive2.5}) implies that $\cl{C}\notin\FF_A$. 
This contradicts our assumption on $\cl{C}$, and hence proves 
Corollary~\ref{min-clones}.
\end{proof}

%%%%%%%%%%%%%%%%%%%%%%%%%%%%%%%%%%%%%%%%%%%%%%%
\section{Equivalence relations}
\label{sec:eqrels}

Let $E$ be a set of equivalence relations on a finite set $A$. 
Our aim in this section is to show that $\Pol E \in \FF_A$ 
if and only if $E$ is a chain (with respect to inclusion).
We will in fact prove the following stronger theorem.

\begin{theorem}
\label{mainthm-eqrels}
Let $A$ be a finite set, and let
$E$ be a set of equivalence relations on $A$,
$\Gamma$ a set of permutations of $A$, and 
$\Sigma$ a set of nonempty subsets of $A$.
The clone $\Pol(E,\Gamma,\Sigma)$ is a member of $\FF_A$
if and only if
\begin{enumerate}[\indent\rm(a)]
\item
$E$ is a chain (i.e., any two members of $E$ are comparable), and
\item
$\Gamma\subseteq\Pol{E}$.
\end{enumerate}
\end{theorem}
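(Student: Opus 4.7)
My plan addresses the two implications separately. Necessity is handled via the machinery of Section~\ref{sec:2neccond}; sufficiency requires a hierarchical compression argument tailored to chains.

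\emph{Necessity.} I argue the contrapositive. Suppose first that (a) fails, so there exist incomparable $\theta_1, \theta_2 \in E$. Since $\cl C := \Pol(E,\Gamma,\Sigma) \subseteq \Pol\{\theta_1,\theta_2\}$, Proposition~\ref{basic_props2}(ii) reduces matters to showing $\Pol\{\theta_1,\theta_2\} \notin \FF_A$. I establish this via Corollary~\ref{basic_props4} applied to any transversal $B \subseteq A$ for $\alpha := \theta_1 \cap \theta_2$. On such $B$, the restrictions $\theta_1|_B$ and $\theta_2|_B$ intersect to $\zero_B$ (by transversality) and both are non-trivial: incomparability forces $\theta_i \neq \alpha$ for each $i$, so some $\theta_i$-class contains at least two distinct $\alpha$-classes and is witnessed by two distinct $\theta_i|_B$-related elements of $B$. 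A straightforward count using that each $f|_B \in \Pol\{\theta_1|_B, \theta_2|_B\}$ is determined by its two quotient actions yields $|\Pol\{\theta_1|_B,\theta_2|_B\}^{(n)}| \leq |B/\theta_1|_B|^{|B/\theta_1|_B|^n} \cdot |B/\theta_2|_B|^{|B/\theta_2|_B|^n}$; since both quotients are strictly smaller than $|B|$, this is asymptotically negligible compared to $|B|^{|B|^n/n}$, and Corollary~\ref{SmallClones} rules out $\FF_B$ membership. If instead (b) fails, pick $\gamma \in \Gamma$ and $\theta \in E$ with $\gamma \notin \Pol\theta$. Every $f \in \cl C$ commutes with $\gamma$ (as a polymorphism of its graph) and preserves $\theta$; a direct check then shows $f$ also preserves the equivalence $\gamma(\theta) := \{(\gamma a, \gamma b) : (a,b)\in\theta\}$. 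Since $\gamma$ is a bijection, $\theta$ and $\gamma(\theta)$ share the same block-size multiset, and $\gamma(\theta) \neq \theta$ (equivalent to $\gamma \notin \Pol\theta$) forces incomparability; the (a)-argument applied to $\theta, \gamma(\theta)$ gives $\cl C \notin \FF_A$.

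\emph{Sufficiency.} Assume (a), (b), and order $E$ as a chain $\theta_1 \subsetneq \cdots \subsetneq \theta_m$ of nontrivial equivalences. By Proposition~\ref{basic_props2}(i), it suffices to produce a uniform arity bound $d$ for $\cl C := \Pol(E,\Gamma,\Sigma)$. I would proceed by induction on $|A|$, treating $m = 0$ as a separate base: here $\Pol(\Gamma,\Sigma)$ contains the discriminator $t_A$ (which preserves every subset and commutes with every permutation), so Theorem~\ref{discr}(i) gives the conclusion. For $m \geq 1$, set $\theta = \theta_1$. The $\cl C$-equivalence class of any $f \in \cl O_A^{(n)}$ is determined by (i) its induced quotient operation $f^\theta$, living in a clone on $A/\theta$ (strictly smaller than $A$) of the same form as $\cl C$ with the shorter chain $\{\theta_i/\theta : i \geq 2\}$; and (ii) the action of $f$ on a single $\theta$-block $B$ (also strictly smaller than $A$), controlled by a clone on $B$ of the same form covered by the theorem. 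Both (i) and (ii) fall under the induction on $|A|$, and combining their arity bounds yields the required $d$ for $\cl C$.

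The hardest step, where I would concentrate the main effort, is constructing explicit $\cl C$-minor witnesses $\vect h, \vect h'$ realizing $f \fequiv[\cl C] g$ whenever $f$ and $g$ share the data in both (i) and (ii). The witnesses must respect every $\theta_i$ simultaneously, commute with every $\gamma \in \Gamma$, and preserve every $\sigma \in \Sigma$. Here the chain hypothesis (a) is essential: because the $\theta_i$ are nested, identifications of variables produced at the finest level lift consistently to all coarser ones, so block-level witnesses from the inductive hypothesis glue with quotient witnesses without inconsistency. This consistency is precisely what fails for incomparable equivalences, producing the product obstruction that powers the necessity argument. Hypothesis (b) plays the complementary role of ensuring that $\Gamma$-translates of $\theta_i$-structures remain $\theta_i$-structures, so the witnesses can be chosen to commute with $\Gamma$ while respecting the chain and the family $\Sigma$.
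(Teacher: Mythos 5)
Your necessity argument is correct, and differs from the paper's only in a minor technical choice. For the failure of (a) you restrict to a transversal $B$ of $\alpha := \theta_1 \cap \theta_2$ and bound $\card{\Pol\{\theta_1|_B, \theta_2|_B\}^{(n)}}$ directly; the paper instead passes to the quotient $A/\alpha$ and lifts a sequence of pairwise inequivalent operations back to $A$. Both routes give the same estimate, and your appeal to Corollaries~\ref{basic_props4} and~\ref{SmallClones} is sound. (One should note, as a small completeness point, that $\theta_i|_B \neq \one_B$: since $\alpha \subseteq \theta_i$ and $B$ is a transversal for $\alpha$, $\theta_i|_B = \one_B$ would force $\theta_i = \one_A$, contradicting incomparability.) The failure of (b) is handled exactly as in the paper.

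The sufficiency sketch, however, has a genuine gap at the decomposition step, and it is a gap in the statement of the plan, not merely in the unwritten details. You assert that the $\fequiv[\cl C]$-class of $f$ is determined by (i) the quotient operation $f^\theta$ and (ii) the action of $f$ on a single $\theta$-block $B$. This is false. Take $A = \{0,1,2,3,4,5\}$, $\theta$ with blocks $\{0,1\}$, $\{2,3\}$, $\{4,5\}$, $E = \{\theta\}$, $\Gamma = \Sigma = \emptyset$, so $\cl C = \Pol\theta$. Let $f$ be the identity on $\{0,1,2,3\}$ with $f(4) = f(5) = 4$, and let $g$ act as the identity on $\{0,1\}$, as $(2\ 3)$ on $\{2,3\}$, and as $(4\ 5)$ on $\{4,5\}$. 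Then $f^\theta = g^\theta = \id_{A/\theta}$ and $f|_{\{0,1\}} = g|_{\{0,1\}} = \id$, so $f$ and $g$ share the data in (i) and (ii); but $\range f = \{0,1,2,3,4\} \ne A = \range g$, so $f \not\fequiv[\cl C] g$. Any correct compression has to record the behaviour of $f$ on \emph{all} blocks and on products of \emph{distinct} blocks; this is exactly what the paper's ${\mathbb S}/{\sim}$-labeled $\Gamma$-tree $\al P_f(E)$ encodes, and the substance of the proof is Lemma~\ref{equiv-lessthan} (translating $\subf[\cl C]$ into label-increasing $\Gamma$-homomorphisms) together with Lemma~\ref{bound-on-reduced} (finitely many cores of a fixed depth), neither of which has an analogue in your outline. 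There is also a smaller obstruction to the proposed induction: $\cl C^\theta$ is in general only a subclone of $\Pol(E/\theta, \Gamma^\theta, \Sigma^\theta)$, since a lift of an operation preserving $\sigma^\theta$ need not preserve $\sigma$ when $\sigma$ is not $\theta$-saturated; so the inductive hypothesis on $A/\theta$ does not directly speak to the clone you actually need to control.
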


For any set $E$ of equivalence relations on $A$
we call a permutation $\gamma$ of $A$ \emph{$E$-invariant}
if $\gamma\in\Pol{E}$, 
that is, if $\gamma$ is an
automorphism of the relational structure $(A;E)$.
Therefore we denote the group of
$E$-invariant permutations of $A$ by $\Aut{E}$.
Furthermore, we denote the 
set of all nonempty subsets of $A$ by ${\mathcal P}^+(A)$.
Thus, in Theorem~\ref{mainthm-eqrels}, $\Sigma$ is an arbitrary
subset of ${\mathcal P}^+(A)$ and (b) requires that 
$\Gamma\subseteq\Aut{E}$.

\begin{proof}[Proof of Theorem~\ref{mainthm-eqrels}]
{\it Necessity.}
Let $\cl C=\Pol(E,\Gamma,\Sigma)$ and $k=|A|$.
We want to show that if (a) or (b) fails, then $\cl C\notin\FF_A$.
Assume first that (a) fails, that is, $E$ contains
equivalence relations
$\alpha$ and $\beta$ such that $\alpha \not\subseteq \beta$ and $\beta \not\subseteq \alpha$. Clearly, $\cl C\subseteq \Pol(\alpha, \beta)$, therefore
in view of Proposition~\ref{basic_props2}~(ii) it suffices to prove 
that the clone $\cl{E} = \Pol(\alpha, \beta)$ fails to belong to $\FF_A$.
Let $\al{A}$ denote the algebra $(A; \cl{E})$.
Since $\cl{E} = \Pol(\alpha, \beta)$, it follows that $\alpha$ and
$\beta$ are congruences of $\al{A}$, and the clones of the
corresponding quotient algebras are $\Clo(\al{A}/\alpha) =
\cl{E}^\alpha$ and $\Clo(\al{A}/\beta) = \cl{E}^\beta$,
the natural actions of $\cl{E}$ on $A/\alpha$ and $A/\beta$.

First we will consider the case when $\alpha \wedge \beta = {\zero}_A$. Then the embedding $\al{A} \to \al{A}/\alpha \times \al{A}/\beta$, $a \mapsto (a/\alpha, a/\beta)$ represents $\al{A}$ as a subdirect product of $\al{A}/\alpha$ and $\al{A}/\beta$. Hence $\cl{E} \to \cl{E}^\alpha \times \cl{E}^\beta$, $h \mapsto (h^\alpha, h^\beta)$ is a clone embedding. This implies that for each $n$,
\[
\card{\cl{E}^{(n)}} \leq \card{{(\cl{E}^\alpha)}^{(n)}} \cdot \card{{(\cl{E}^\beta)}^{(n)}}.
\]
The assumption that $\alpha$ and $\beta$ are incomparable ensures that $\card{\al{A}/\alpha} \leq k - 1$ and $\card{\al{A}/\beta} \leq k - 1$. Thus 
\[
\card{\cl{E}^{(n)}}
\leq (k - 1)^{(k - 1)^n} \cdot (k - 1)^{(k - 1)^n}
= (k - 1)^{2(k - 1)^n}
< k^{2(k - 1)^n}.
\]
Since $\displaystyle \lim_{n \to \infty} \frac{2(k - 1)^n}{k^n/n} = 0$, Corollary~\ref{SmallClones} implies that $\cl{E}\notin\FF_A$.

To prove the statement in the general case let 
$\theta = \alpha \wedge \beta$, 
and consider the algebra $\al{A}/\theta$ and its congruences $\alpha/\theta$ and $\beta/\theta$. Clearly, the clone of $\al{A}/\theta$ is $\Clo(\al{A}/\theta) = \cl{E}^\theta$, and the assumptions ensure that $\alpha/\theta \not\subseteq \beta/\theta$ and $\beta/\theta \not\subseteq \alpha/\theta$. Since $\alpha/\theta \wedge \beta/\theta = {\zero}_{A/\theta}$, the special case established in the preceding paragraph shows that $\cl{E}^\theta\notin\FF_{A/\theta}$.
Hence
there exists an infinite sequence of operations $g_n$ ($n \geq 1$) on $A/\theta$ such that $g_i \not\fequiv[\cl{E}^\theta] g_j$ for all $i \neq j$. Now choose and fix operations $f_n$ ($n \geq 1$) on $A$ such that $g_n = f_n^\theta$ for each $n$. Then $f_n \in \Pol \theta$ ($n \geq 1$) and $f_i^\theta \not\fequiv[\cl{E}^\theta] f_j^\theta$ whenever $i \neq j$. We claim that $f_i \not\fequiv[\cl{E}] f_j$ whenever $i \neq j$. Suppose otherwise, and let $i \neq j$ be such that $f_i \fequiv[\cl{E}] f_j$. Then there exist tuples of operations $\vect{h}$ and $\vect{h}'$ in $\cl{E}$ such that $f_i = f_j \circ \vect{h}$ and $f_j = f_i \circ \vect{h}'$. Since all operations in $\vect{h}$ and $\vect{h}'$ belong to $\cl{E} = \Pol(\alpha, \beta)$, they preserve $\theta = \alpha \wedge \beta$. Hence we get that $f_i^\theta = f_j^\theta \circ \vect{h}^\theta$ and $f_j^\theta = f_i^\theta \circ (\vect{h}')^\theta$, which contradicts the choice of the operations $g_n = f_n^\theta$. Thus there are infinitely many 
$\fequiv[\cl{E}]$-classes, and hence $\cl{E}\notin\FF_A$.
This proves the necessity of condition (a).

Now assume that condition (b) fails, and let 
$\gamma\in\Gamma$ be such that $\gamma\notin\Pol E$, that is,
$\gamma\notin\Pol\rho$ for some $\rho\in E$.
Let $\gamma(\rho)=\{\bigl(\gamma(a),\gamma(b)\bigr):(a,b)\in\rho\}$, and
let $E'=E\cup\{\gamma(\rho)\}$.
Clearly, $\gamma(\rho)$ is an equivalence relation on $A$,
and $\gamma(\rho)\not=\rho$, since $\gamma\notin\Pol\rho$.
As $A$ is finite, and $\rho$ and $\gamma(\rho)$ 
have the same system of block sizes, it follows that 
$\rho$ and $\gamma(\rho)$ are incomparable. 
Hence
$E'$ is a set of equivalence relations that is not a chain.
It is easy to verify that
every operation that preserves both $\gamma$ and $\rho$
also preserves $\gamma(\rho)$. 
Therefore $\cl C\subseteq\Pol E'$, and the 
failure of condition (a) shows that
$\Pol E'\notin\FF_A$. Thus 
Proposition~\ref{basic_props2}~(ii) implies that $\cl C\notin\FF_A$,
establishing the necessity of condition (b).

{\it Sufficiency.}
Given a chain $E$ of equivalence relations, there is a smallest
clone of the form $\Pol(E,\Gamma,\Sigma)$ satisfying the assumptions
of the theorem and also condition (b), namely the clone
$\Pol\bigl(E,\Aut{E},\cl P^+(A)\bigr)$.
Therefore, by Proposition~\ref{basic_props2}~(ii), it suffices to prove
that this clone belongs to $\FF_A$.
This claim, which is the hardest part of Theorem~\ref{mainthm-eqrels},
is stated below as Theorem~\ref{chains-of-eqrels}, and will be
proved separately.
\end{proof}

\begin{theorem}
\label{chains-of-eqrels}
If $E$ is a chain of equivalence relations on a finite set $A$, 
then $\Pol\bigl(E,\Aut{E},{\mathcal P}^+(A)\bigr)\in\FF_A$.
\end{theorem}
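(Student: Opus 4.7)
By Proposition~\ref{basic_props2}(i), it suffices to find $d=d(|A|,E)$ such that every $f\in\cl O_A$ is $\cl C$-equivalent to an operation of arity at most $d$, where $\cl C:=\Pol(E,\Aut E,\cl P^+(A))$. Since $\zero_A$ and $\one_A$ are preserved by every operation and every permutation, enlarging $E$ to contain them leaves $\cl C$ unchanged. Write the number of non-trivial members of the resulting chain as $t\geq 0$ and proceed by induction on $t$.

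For the base case $t=0$, $\Aut E=S_A$ and $\cl C=\Pol(S_A,\cl P^+(A))$. The discriminator $t_A$ is conservative (its values lie in $\{x,z\}$, so every nonempty subset of $A$ is preserved) and satisfies $t_A(\sigma x,\sigma y,\sigma z)=\sigma t_A(x,y,z)$ for every $\sigma\in S_A$, so $t_A\in\cl C$; Theorem~\ref{discr}(i) then gives $\cl C\in\FF_A$.

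For the inductive step let $\theta\in E$ be the finest non-trivial equivalence, with blocks $B_1,\dots,B_m$, and establish two subsidiary memberships. First, for each $B_j$ every $g\in\Pol(S_{B_j},\cl P^+(B_j))$ admits an extension $\hat g\in\cl C$ with $\hat g|_{B_j^n}=g$: on tuples lying inside a block $B_k$ in the same $\Aut E$-orbit as $B_j$, transport $g$ via any $\sigma\in\Aut E$ with $\sigma(B_j)=B_k$ (well-defined because the stabilizer of $B_j$ in $\Aut E$ acts on $B_j$ as the full $S_{B_j}$); on tuples spanning several blocks, set $\hat g$ equal to the first projection. A direct check shows $\hat g$ is conservative, preserves every $\theta_i\in E$, and is $\Aut E$-equivariant, so $\cl C_{B_j}\supseteq\Pol(S_{B_j},\cl P^+(B_j))$, which lies in $\FF_{B_j}$ by the base case; hence $\cl C_{B_j}\in\FF_{B_j}$ by Proposition~\ref{basic_props2}(ii). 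Second, an analogous lifting shows $\cl C^\theta\supseteq\Pol(E',\Aut E',\cl P^+(A/\theta))$, where $E'=\{\theta_i/\theta:\theta_i\in E,\ \theta\subseteq\theta_i\}$ is a chain on $A/\theta$ with $t-1$ non-trivial members; the induction hypothesis applied to $(A/\theta,E')$ gives $\cl C^\theta\in\FF_{A/\theta}$.

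To combine these reductions for an arbitrary $f\colon A^n\to A$, note that each tuple $\vect a\in A^n$ is determined by its quotient pattern $\vect a^\theta\in(A/\theta)^n$ together with, for each $B_j$, the subtuple of entries of $\vect a$ lying in $B_j$. I would first apply the $\cl C^\theta$-reduction at the quotient level (compressing the dependence on $\vect a^\theta$ to arity $d^\theta$), and then within each block apply the $\cl C_{B_j}$-reduction (compressing the per-block dependence to arity $d_j$). The result has arity at most $d^\theta+\sum_j d_j$ and is, I claim, $\cl C$-equivalent to $f$. The main obstacle I expect is showing that the witnessing substitution tuples produced separately inside $\cl C^\theta$ and inside each $\cl C_{B_j}$ can be stitched into a single tuple of operations from $\cl C$ itself: this requires an $\Aut E$-equivariant coherent choice of the block-level witnesses (so that the combined substitution commutes with $\Aut E$) and compatibility with the entire chain $E$, not only with $\theta$. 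I expect this gluing to be handled by the block-local discriminator, namely the operation in $\cl C$ that acts as $t_{B_j}$ within each block $B_j$ and as the first projection on triples spanning different $\theta$-blocks; one verifies directly that it is conservative, preserves every $\theta_i\in E$, and commutes with $\Aut E$, and it lets the within-block and cross-block substitutions be interleaved while keeping the overall composition inside $\cl C$.
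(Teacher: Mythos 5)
The high-level plan (base case on $E$ trivial via $t_A$, then induct on the chain length by factoring through the finest nontrivial $\theta$) is a natural first attempt, but the gluing step you flag as the main obstacle is a genuine gap, not a technical inconvenience, and I do not see how to fill it along the lines you sketch.

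The difficulty is that the decomposition you write down --- ``each $\vect a$ is determined by $\vect a^\theta$ together with the subtuple of entries lying in each block $B_j$'' --- is not a product decomposition of $A^n$ in any sense that lets you reduce arity componentwise. Which coordinates of $\vect a$ land in $B_j$, and how many, depends on $\vect a^\theta$ itself, so the ``per-block subtuple'' does not live in a fixed $B_j^{d_j}$ against which the block-level witnesses from $\cl C_{B_j}$ could act. Concretely, the putative bound $d^\theta+\sum_j d_j$ presupposes a small-arity operation whose arguments split into a quotient part and fixed block parts, but there is no such canonical way to feed an $n$-tuple of $A$ into such an operation. Moreover, even when $f$ preserves $\theta$, knowing $f^\theta$ up to $\cl C^\theta$-equivalence and the restrictions $f|_{B_j^n}$ up to $\cl C_{B_j}$-equivalence does not determine the $\cl C$-class of $f$: the value $f(\vect a)$ inside its $\theta$-block depends on the full tuple, not just on the coordinates lying in a single block. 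And for a general $f$ that does not preserve $\theta$ (the theorem quantifies over all $f\in\cl O_A$), the map $f^\theta$ is not even defined, so the ``compress the quotient dependence'' step has no referent. Your two subsidiary memberships ($\cl C_{B_j}\in\FF_{B_j}$ and $\cl C^\theta\in\FF_{A/\theta}$) are fine and the block-local discriminator you mention does belong to $\cl C$, but these give restriction/quotient information, and restriction/quotient do not compose back into a statement about $\fequiv[\cl C]$ on all of $\cl O_A$.

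The paper's proof captures exactly the combinatorial structure your gluing step is missing: it associates to each $n$-ary $f$ a labeled $\Gamma$-tree $\al P_f(E)$ of depth $|E|+1$ whose nodes at level $i$ are the blocks of $(\rho_i)^n$ and whose leaf-labels record $f$ on pointed $\Gamma$-orbits, proves that $f\subf[\cl C]g$ iff a label-increasing $\Gamma$-homomorphism $\al P_f(E)\nearrow\al P_g(E)$ exists, and then passes to cores, which are bounded in number independently of $n$. The tree does the bookkeeping that your product decomposition cannot: it tracks, for every refinement level, which block each coordinate lands in, and the core construction collapses the redundancy. Your base case and the two subsidiary lemmas would reappear as the depth-$1$ case and as the block/quotient subtrees in the recursive bound on cores (Lemma~\ref{bound-on-reduced}), so the intuition that an induction over the chain should work is correct --- but the right carrier of the induction is the tree, and without that object the combination step does not go through.
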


\begin{remark}
\label{2/3-minority}
For every chain $E$ of equivalence relations on $A$,
the clone $\Pol\bigl(E,\linebreak[1]\Aut{E},{\mathcal P}^+(A)\bigr)$
contains a $2/3$-minority operation, i.e., a ternary operation $m$
such that
\begin{equation}
\label{eq:2/3-minority}
m(x,x,y)=y,\quad m(x,y,x)=x,\quad \text{and}\quad m(x,y,y)=x
\end{equation}
for all $x,y\in A$.
To define such an operation let $\theta(a,b)$ denote the least
equivalence relation $\epsilon\in E\cup\{{\zero}_A,{\one}_A\}$ such that
$a\,\epsilon\,b$ ($a,b\in A$).
It is clear that if $a,b,c\in A$ and 
$\theta(a,b)\le\theta(a,c),\theta(b,c)$, then
$\theta(a,c)=\theta(b,c)$.
We will write $a\sim b\sim c$ to denote that 
$\theta(a,b)=\theta(a,c)=\theta(b,c)$, and
$a\sim b\not\sim c$ to denote that
$\theta(a,b)<\theta(a,c)=\theta(b,c)$.
Since $E\cup\{{\zero}_A,{\one}_A\}$ is a chain, 
it follows that exactly one of the following
conditions holds for any triple $(a,b,c)\in A^3$:

\begin{center}
(i)~$a\sim b\sim c$,\quad 
(ii)~$a\sim b\not\sim c$,\quad
(iii)~$a\sim c\not\sim b$,\quad
(iv)~$b\sim c\not\sim a$.
\end{center}

We define a ternary operation $m$ on $A$ as follows:
\[
m(x,y,z)=
\begin{cases}
z & \text{if $x\sim y\sim z$ or $x\sim y\not\sim z$,}\\
x & \text{if $x\sim z\not\sim y$ or $y\sim z\not\sim x$}
\end{cases}
\qquad
(x,y,z\in A).
\]
For any $x,y\in A$ we have 
$x\sim x\sim y$ if $x=y$ and $x\sim x\not\sim y$
if $x\not=y$. 
Hence, in either case, 
the definition of $m$ shows that
the equalities in \eqref{eq:2/3-minority}
hold, which proves that
$m$ is a 2/3-minority operation.
Since on any input triple the value of $m$ equals one of the
inputs, it follows that 
$m$ preserves all nonempty subsets of $A$.
If $\gamma\in\Aut{E}$, then 
$\theta(a,b)=\theta\bigl(\gamma(a),\gamma(b)\bigr)$
holds for all $a,b\in A$. 
Consequently, for each one of conditions (i)--(iv),
a triple $(a,b,c)\in A^3$
satisfies this condition if and only if 
the triple $\bigl(\gamma(a),\gamma(b),\gamma(c)\bigr)$ does.
This implies that $m$ preserves all permutations $\gamma\in\Aut{E}$.

Finally, to see that $m$ preserves all equivalence relations in $E$ let
$\rho\in E$, and let $(a,b,c)\,\rho^3\,(a',b',c')$.
As we will now show, the
latter assumption implies that 
\begin{align}
\theta(a,b)\vee\rho & {}=\theta(a',b')\vee\rho,\notag\\
\theta(a,c)\vee\rho & {}=\theta(a',c')\vee\rho,\label{xyz}\\
\theta(b,c)\vee\rho & {}=\theta(b',c')\vee\rho.\notag
\end{align}
Indeed, by our assumption 
we have that $a\,\rho\,a'$ and $b\,\rho\,b'$, therefore
$$
(a',b')\in\rho\circ\theta(a,b)\circ\rho\subseteq\theta(a,b)\vee\rho.
$$
Here $\theta(a,b)\vee\rho$ is the larger one of $\theta(a,b)$ and
$\rho$ in the chain $E$, so $(a',b')\in\theta(a,b)\vee\rho$
implies that
the least equivalence relation
$\theta(a',b')$ in $E$ containing the pair $(a',b')$ satisfies 
$\theta(a',b')\le\theta(a,b)\vee\rho$.
Hence $\theta(a',b')\vee\rho\le\theta(a,b)\vee\rho$.
By interchanging the roles of $a,b$ and $a',b'$ we get
the reverse inclusion $\theta(a,b)\vee\rho\le\theta(a',b')\vee\rho$,
which proves the first equality in (\ref{xyz}).
The second and third equalities can be proved similarly.

Our goal is to verify that the assumption
$(a,b,c)\,\rho^3\,(a',b',c')$ implies that\linebreak[4] $m(a,b,c)\,\rho\,m(a',b',c')$.
If $a\,\rho\,b\,\rho\,c$ or ${a'}\,\rho\,\,{b'}\,\rho\,\,{c'}$, then
by the assumption\linebreak[4]
$(a,b,c)\,\rho^3\,(a',b',c')$
all six elements $a,b,c,a',b',c'$ lie in the same $\rho$-class, so
$m(a,b,c)$ and $m(a',b',c')$, too, lie in that $\rho$-class,
because $m(a,b,c)\in\{a,c\}$ and $m(a',b',c')\in\{a',c'\}$.
Thus $m(a,b,c)\,\rho\,m(a',b',c')$ holds in this case.

Now assume for the rest of the proof that 
\begin{equation}
\label{eq:assumption}
\text{$a,b,c$ are not
all $\rho$-related, and $a',b',c'$ are not all $\rho$-related.}
\end{equation}
We want to prove that 
\begin{enumerate}
\item[$(*)$]
for each one of conditions (i)--(iv),
$(a,b,c)$ satisfies this condition if and only if 
$(a',b',c')$ does.
\end{enumerate}
By the definition of $m$, this will imply that 
$\bigl(m(a,b,c),m(a',b',c')\bigr)=(a,a')$ or $(c,c')$,
hence $m(a,b,c)\,\rho\,m(a',b',c')$.
Since statement $(*)$ is invariant under 
performing the same permutation on the coordinates of the two triples,
and since the roles of the two triples are symmetric, $(*)$ will follow
if we show that $a\sim b\sim c$ implies $a'\sim b'\sim c'$,
and  $a\sim b\not\sim c$ implies $a'\sim b'\not\sim c'$.
So, let us assume first that $a\sim b\sim c$, that is,
$\theta(a,b)=\theta(a,c)=\theta(b,c)$.
Since $E\cup\{{\zero}_A,{\one}_A\}$ is a chain, 
our assumption \eqref{eq:assumption}
forces that
$\theta(a,b)=\theta(a,c)=\theta(b,c)>\rho$.
Therefore \eqref{xyz} implies that 
$$
\rho<\theta(a,b)=\theta(a,c)=\theta(b,c)=
\theta(a',b')\vee\rho=\theta(a',c')\vee\rho=\theta(b',c')\vee\rho.
$$
The inequality $\rho<\theta(a',b')\vee\rho$ shows that $\rho<\theta(a',b')$.   
Similarly, $\rho<\theta(a',c')$ and $\rho<\theta(b',c')$.
Now the displayed equalities imply that    
$\theta(a',b')=\theta(a',c')=\theta(b',c')\bigl(=\theta(a,b)\bigr)$,
and hence $a'\sim b'\sim c'$.
Next let us assume that $a\sim b\not\sim c$.
Thus, $\theta(a,c)=\theta(b,c)>\theta(a,b)$, and
since $E\cup\{{\zero}_A,{\one}_A\}$ is a chain, we get 
from our assumption \eqref{eq:assumption} that
$\theta(a,c)=\theta(b,c)>\rho$.
This inequality, combined with the second and third equalities in
\eqref{xyz} yields, as before, that
$$
\theta(a',c')=\theta(b',c')=\theta(a,c)=\theta(b,c)>\rho.
$$
The same holds with $\rho$ replaced by $\theta(a,b)$, since
$\theta(a,c)>\theta(a,b)$.
Therefore $\rho$ can also be replaced by $\alpha:=\theta(a,b)\vee\rho$, 
the larger one of $\theta(a,b)$ and $\rho$.
Hence
$$
\theta(a',c')=\theta(b',c')>\alpha.
$$
Making use of \eqref{xyz} again 
we also get that $\alpha\ge\theta(a',b')$, because
$$
\alpha=\theta(a,b)\vee\rho=\theta(a',b')\vee\rho\ge\theta(a',b').
$$
Thus $a'\sim b'\not\sim c'$, which completes the proof
of $(*)$, and thereby establishes the existence of a $2/3$-minority
operation in 
the clone $\Pol\bigl(E,\Aut{E},{\mathcal P}^+(A)\bigr)$
for every chain $E$ of equivalence relations on $A$.
\end{remark}

\begin{remark}
If $E=\emptyset$ (or $E\subseteq\{{\zero}_A,{\one}_A\}$), 
then $\Aut{E}$ is the full symmetric group on $A$, 
the $2/3$-minority operation $m$ defined in Remark~\ref{2/3-minority}
is the ternary descriminator $t_A$ on $A$, and
$\Pol\bigl(E,\Aut{E},{\mathcal P}^+(A)\bigr)$ is the clone
generated by $t_A$.
Therefore Theorem~\ref{chains-of-eqrels} includes 
the statement $\langle t_A\rangle\in\FF_A$ from
Theorem~\ref{discr}~(i) as a special case.
\end{remark}

Let $E$ be a chain of equivalence relations on $A$, 
let $\Gamma=\Aut{E}$, and
let $\cl C=\Pol\bigl(E,\Aut{E}, \linebreak[0] {\mathcal P}^+(A)\bigr)$.
We will prove Theorem~\ref{chains-of-eqrels}
by associating to each operation on $A$ a 
finite structure 
of bounded size in such a way that if two operations have isomorphic 
structures associated to them, 
then they are in the same $\fequiv[\cl C]$-class.
This finite structure, to be defined in detail below,
 will be a $\Gamma$-set with 
a tree structure on it, and the leaves of the tree will have a labeling
that is compatible with the action of $\Gamma$.

Let $G$ be an arbitrary group.
A \emph{$G$-set} is a unary algebra $(U;G)$
such that each $g\in G$ acts on $U$ by a permutation
$U\to U$, $u\mapsto g\cdot u$, and for any $g,g'\in G$ and
$u\in U$, we have $gg'\cdot u=g\cdot (g'\cdot u)$.
Since each $g\in G$ acts by a permutation of $U$, it follows that
the neutral element $\groupone$ of $G$ acts by the identity
permutation, that is, $\groupone\cdot u=u$ holds for all $u\in U$.
Consequently, for any $g\in G$, 
the actions of $g$ and $g^{-1}$ are inverses of each other.
If there is no danger of confusion, we will write $gu$ instead of
$g\cdot u$.
For any element $u\in U$, \emph{the stabilizer of $u$ in $G$}
is the subgroup $G_u:=\{g\in G:gu=u\}$ of $G$.
For $u\in U$
the subalgebra $Gu:=\{gu:g\in G\}$ of $(U;G)$ generated by
$u\in U$ is called the \emph{$G$-orbit} of $u$.
It is well known and easy to check that the $G$-orbits
of $(U;G)$ are minimal subalgebras, and therefore they
partition $U$.
If $(U;G)$ and $(V;G)$ are $G$-sets, then a mapping 
$\phi\colon U\to V$ is a \emph{homomorphism} 
$(U;G)\to(V;G)$ of $G$-sets, if $\phi(gu)=g\cdot\phi(u)$
holds for all $u\in U$ and $g\in G$.
By a \emph{pointed $G$-set} $(U;u,G)$ we mean a $G$-set $(U;G)$ with a 
distinguished element $u\in U$. 
If $U=Gu$ is a $G$-orbit, we will call the pointed $G$-set
$(U;u,G)$ as well as the pointed set $(U;u)$ (if the $G$-set structure 
is irrelevant) a \emph{pointed $G$-orbit}.
A homomorphism $(U;u,G)\to(V;v,G)$ between pointed $G$-sets 
is a homomorphism $\phi\colon (U;G)\to(V;G)$ between the underlying $G$-sets
such that $\phi(u)=v$.
If $(U;u,G)$ and $(V;v,G)$ are pointed $G$-orbits, 
that is, $U=Gu$ and $V=Gv$, then
a homomorphism $\phi\colon(U;u,G)\to(V;v,G)$ 
exists between them if and only if $G_u\subseteq G_v$; moreover, $\phi$
is uniquely determined: $\phi\colon U=Gu\to Gv=V$,
$gu\mapsto gv$ for all $g\in G$. 
We will denote this homomorphism (if it exists) by $\chi_{u,v}$.
Clearly, $\chi_{u,v}=\chi_{gu,gv}$ for all $g\in G$, and
$\chi_{u,v}$ is an isomorphism if and only if $G_u=G_v$.

By a \emph{tree} we mean a finite partial algebra $\al P=(P;{}^*,1_P)$ 
where ${}^*\colon P\setminus\{1_P\}\to P$ is a function, called
the \emph{successor function},
such that the distinguished element $1_P$ can be obtained from
any other element $a\in P\setminus\{1_P\}$ by repeated application of 
${}^*$.
Denoting the $i$-th power of ${}^*$ by ${}^{*^i}$ we get that
for each $a\in P$ there is a unique integer
$d\ge0$ such that $a^{*^d}=1_P$, which will be called the \emph{depth} of $a$.
The only element of depth $0$ is $1_P$.
An element $a$ of $\al P$ will be called a \emph{leaf} if
it is not in the range of the successor function.
We will denote the set of leaves of $\al P$ by $\al P_\leaf$. 
If every leaf of $\al P$ has the same depth $d$, 
we will say that \emph{the tree $\al P$ has uniform depth $d$}.

If $\al P=(P;{}^*,1_P)$ and $\al Q=(Q;{}^*,1_Q)$ are trees, 
we will call a function
$\phi\colon P\to Q$ a \emph{homomorphism} 
$\al P\to\al Q$ of trees if
\begin{enumerate}
\item[(H0)]
$\phi(1_P)=1_Q$,
\item[(H1)]
$\phi$ maps leaves to leaves, that is, 
$\phi(\al P_\leaf)\subseteq \al Q_\leaf$, and
\item[(H2)]
$\phi(a^*)=\phi(a)^*$ for all $a\in P\setminus\{1_P\}$. 
\end{enumerate}
An \emph{automorphism} of $\al P$
is a bijective homomorphism 
$\al P\to\al P$.

A tree $\al Q=(Q;{}^*,1_Q)$ is a \emph{subtree} of another tree
$\al P=(P;{}^*,1_P)$ if $Q\subseteq P$ and the identity function
$Q\to P$, $q\mapsto q$ is a homomorphism 
$\al Q\to\al P$.
Thus $\al Q$ is a subtree of 
$\al P$ if and only if $Q\subseteq P$, $1_Q=1_P$,
$\al Q_\leaf\subseteq \al P_\leaf$, and the successor function
of $\al Q$ is the restriction to $Q\setminus\{1_Q\}$
of the successor function of $\al P$.

Let $G$ be a group. We define a
$G$-tree to be a tree 
on which $G$ acts by automorphisms; more precisely,
a \emph{$G$-tree} is a structure 
$\al P=(P;{}^*,1_P,G)$ such that
$(P;{}^*,1_P)$ is a tree, 
$(P;G)$ is a $G$-set, and for each
$g\in G$ the permutation $a\mapsto ga$ of $P$
is an automorphism of the tree $(P;{}^*,1_P)$.
The assumption that $G$ acts by tree automorphisms
implies that in every $G$-tree $\al P=(P;{}^*,1_P,G)$,
\[
g\cdot1_P=1_P
\quad \text{for all $g\in G$,}
\]
and
\[
a^{*^d}=1_P
\,\,\,\iff\,\,\,
(ga)^{*^d}\,\bigl({}=g a^{*^d}\bigr){}=1_P
\quad
\text{for all $a\in P\setminus\{1_P\}$ and $g\in G$.}
\]
Therefore each $G$-orbit
$Ga$ of $\al P$
consists of elements of the same depth.
Similarly, if $a$ is a leaf, then so are all elements in 
the $G$-orbit $Ga$ of $a$.
Thus the leaves of $\al P$ form a $G$-set $(\al P_\leaf;G)$.

For arbitrary $G$-trees
$\al P=(P;{}^*,1_P,G)$ and $\al Q=(Q;{}^*,1_Q,G)$ 
a \emph{$G$-ho\-mo\-mor\-phism} $\al P\to\al Q$
is a mapping $\phi\colon P\to Q$ that is a 
homomorphism $(P;{}^*,1_P)\to(Q;{}^*,1_Q)$
of trees and also a homomorphism $(P;G)\to(Q;G)$ of $G$-sets;
that is, in addition to (H0), (H1), and (H2), $\phi$ also satisfies
\begin{enumerate}
\item[(H3)]
$\phi(g a)=g\cdot\phi(a)$ for all $a\in P$ and $g\in G$.
\end{enumerate}
A $G$-tree $\al Q=(Q;{}^*,1_Q,G)$ is a \emph{$G$-subtree} of
$\al P=(P;{}^*,1_P,G)$ if 
$Q\subseteq P$ and the identity function
$Q\to P$, $q\mapsto q$ is a $G$-homomorphism 
$\al Q\to\al P$. Thus
$\al Q$ is a $G$-subtree of
$\al P$ if and only if
$(Q;{}^*,1_Q)$ is a subtree of $(P;{}^*,1_P)$ and 
the action of each $g\in G$ on $Q$ is the restriction to $Q$
of the action of $g$ on $P$.
Hence, if $\al P=(P;{}^*,1_P,G)$ is a $G$-tree,
then a subtree $(Q;{}^*,1_P)$ of $(P;{}^*,1_P)$   
is (the underlying tree of) a $G$-subtree of
$\al P$ if and only if $Q$ is a union of $G$-orbits of 
$\al P$.

Next we will introduce the concept of a labeled $G$-tree.
The labels will come from a structure $(S;\le,G)$ 
where $(S;\le)$ is a partially ordered set on which $G$ acts
by automorphisms; more precisely, 
$(S;\le,G)$ is a structure such that
$(S;\le)$ is a partially ordered set,
$(S;G)$ is a $G$-set, and for each
$g\in G$, the permutation $s\mapsto gs$ of $S$
is an automorphism of $(S;\le)$.
If $\al P=(P;{}^*,1_P,G)$ is a $G$-tree, then an
\emph{$S$-labeling of the leaves of $\al P$} is a 
homomorphism $\ell\colon (\al P_\leaf;G)\to (S;G)$ 
of $G$-sets.
An \emph{$S$-labeled $G$-tree} is a structure
$(\al P;\ell)=(P;{}^*,1_P,G;\ell)$ where
$\al P=(P;{}^*,1_P,G)$ is a $G$-tree and $\ell$ is an $S$-labeling of the
leaves of $\al P$.
If the labeling $\ell$ is understood, we will write $\al P$
instead of $(\al P;\ell)$.

For arbitrary $S$-labeled $G$-trees $\al P=(P;{}^*,1_P,G;\ell_P)$ 
and $\al Q=(Q;{}^*,1_Q,G; \linebreak[0] \ell_Q)$ 
a \emph{label-preserving $G$-homomorphism} $\al P\to\al Q$
is a $G$-homomorphism $\phi\colon(P;\linebreak[0]{}^*,1_P,G) \to (Q;{}^*,1_Q,G)$
with the additional property that
\begin{enumerate}
\item[(H4)]
$\ell_P(a)=\ell_Q\bigl(\phi(a)\bigr)$ for all $a\in \al P_\leaf$,
\end{enumerate}
and 
a \emph{label-increasing $G$-homomorphism} $\al P\nearrow\al Q$
is a $G$-homomorphism $\phi\colon(P;\linebreak[0]{}^*,1_P,G)\to(Q;{}^*,1_Q,G)$
with the additional property that
\begin{enumerate}
\item[(H5)]
$\ell_P(a)\le\ell_Q\bigl(\phi(a)\bigr)$ for all $a\in \al P_\leaf$.
\end{enumerate}
Clearly, every label-preserving $G$-homomorphism is a
label-increasing $G$-ho\-mo\-mor\-phism.
Moreover, the composition of label-preserving
$G$-homomorphisms is a la\-bel-pre\-serv\-ing
$G$-homomorphism, and the same holds for label-increasing
$G$-ho\-mo\-mor\-phisms.
An {\it isomorphism between $S$-labeled $G$-trees}
is a bijective, la\-bel-pre\-serv\-ing $G$-ho\-mo\-mor\-phism. 
As usual, if there exists an isomorphism $\al P\to\al Q$ 
between two $S$-labeled $G$-trees $\al P$ and $\al Q$, then
$\al P$ and $\al Q$ are said to be {\it isomorphic}; is symbols:
$\al P\cong\al Q$.

An $S$-labeled $G$-tree $\al Q=(Q;{}^*,1_Q,G;\ell_Q)$ 
is an \emph{$S$-labeled $G$-subtree} of
$\al P=(P;{}^*,1_P,G;\ell_P)$ if 
$Q\subseteq P$ and the identity function
$Q\to P$, $q\mapsto q$ is a label-preserving $G$-homomorphism 
$\al Q\to\al P$; or equivalently,
if 
$(Q;{}^*,1_Q,G)$ is a $G$-subtree of $(P;{}^*,1_P,G)$ and 
$\ell_Q$ is the restriction of $\ell_P$ to $\al Q_\leaf$.

The main examples of labeled trees we will be concerned with
are obtained from chains $E$ of equivalence relations as follows.
Let $E=\{\rho_i:1\le i\le r\}$, say, 
$\rho_0:={\zero}_A<\rho_1<\cdots<\rho_{r-1}<\rho_{r}<{\one}_A=:\rho_{r+1}$, 
and let $\Gamma:=\Aut{E}$.
Since $\Gamma$ is a group of permutations on $A$, 
$(A;\Gamma)$ becomes a $\Gamma$-set with the natural action
defined by $\gamma a=\gamma(a)$ for all $a\in A$ and
$\gamma\in\Gamma$.
For each integer $n\ge1$, the $n$-th power of $(A;\Gamma)$
is the $\Gamma$-set $(A^n;\Gamma)$
where $\Gamma$ acts coordinatewise on $n$-tuples
in $A^n$; that is, 
$\gamma\vect{a}=\bigl(\gamma(a_1),\ldots,\gamma(a_n)\bigr)$
for all $\vect{a}=(a_1,\ldots,a_n)\in A^n$.
Since each permutation $\gamma\in\Gamma$ is $\rho_i$-invariant for all
$i$ ($0\le i\le r+1$), these equivalence relations are congruences
of $(A;\Gamma)$, and for each $n\ge1$,
the equivalence relations $(\rho_i)^n$ are congruences of
$(A^n;\Gamma)$.
Hence we get quotient $\Gamma$-sets 
$(A^n;\Gamma)/(\rho_i)^n=(A^n/(\rho_i)^n;\Gamma)$
whose elements are the blocks of $(\rho_i)^n$, and $\Gamma$
acts on them the natural way: if $B$ is a block of $(\rho_i)^n$
and $\gamma\in\Gamma$,
then 
$\gamma B$ is the block $\{\gamma\vect{x}:\vect{x}\in B\}$
of $(\rho_i)^n$. 
Thus the
$\Gamma$-orbit of any block $B$ of $(\rho_i)^n$ 
is the set
$\Gamma B=\{\gamma B:\gamma\in\Gamma\}$.
For $i=0$ we will identify $A^n/(\rho_0)^n=A^n/{\zero}_{A^n}$ with
$A^n$, and accordingly, if $B=\{\vect{x}\}$, then we will write
$\Gamma\vect{x}$ for $\Gamma\{\vect{x}\}$.

For each integer $n\ge1$ we define a $\Gamma$-tree 
$(P_n(E);{}^*,1_{P_n(E)},\Gamma)$ of uniform depth $r+1$ 
associated to $E$ as follows: 
\begin{itemize}
\item
$P_n(E):=\{(i,B): 0\le i\le r+1,\ 
\text{$B$ is a block of $(\rho_i)^n$ on $A^n$}\}$,
\item
$1_{P_n(E)}:=(r+1,A^n)$,
\item
the successor of each element $(i,B)$ ($0\le i\le r$)
is defined by
$(i,B)^*:=(i+1,C)$ where $C$ is the unique block of $\rho_{i+1}$ with
$B\subseteq C$, and
\item
$\gamma\cdot(i,B):=(i,\gamma B)$ for all $(i,B)\in P_n(E)$.
\end{itemize}
It is clear that $(P_n(E);{}^*,1_{P_n(E)},\Gamma)$ 
is indeed a $\Gamma$-tree of uniform depth $r+1$.
 
\begin{example}
\label{ex-tree}
Figure~1 depicts the $\Gamma$-tree
$(P_n(E);{}^*,1_{P_n(E)},\Gamma)$ for the case when $n=1$, 
$A=\{1,2,3,4\}$, $E=\{\rho_1,\rho_2\}$, and $\rho_1$ has blocks 
$\{1\}$, $\{2\}$, $\{3,4\}$, while $\rho_2$ has blocks 
$\{1,2\}$, $\{3,4\}$. 
%%%%%%%%%%%%%%%%%%%%%%
%\begin{center}
\begin{figure}
\setlength{\unitlength}{1mm}
\begin{picture}(110,60)
\thicklines
\put(55,51){\oval(18,10)}
\put(49,52.5){$1_{P_1(E)}=$}
\put(50.7,48){$(3,A)$}
\put(25,36){\oval(18,8)}
\put(17.8,35){$(2,\{1,2\})$}
\put(85,36){\oval(18,8)}
\put(77.8,35){$(2,\{3,4\})$}
\put(10,20){\oval(18,8)}
\put(4.4,19){$(1,\{1\})$}
\put(40,20){\oval(18,8)}
\put(34.4,19){$(1,\{2\})$}
\put(85,20){\oval(18,8)}
\put(77.8,19){$(1,\{3,4\})$}
\put(10,4){\oval(18,8)}
\put(6,3){$(0,1)$}
\put(40,4){\oval(18,8)}
\put(36,3){$(0,2)$}
\put(70,4){\oval(18,8)}
\put(66,3){$(0,3)$}
\put(100,4){\oval(18,8)}
\put(96,3){$(0,4)$}
\put(25,40){\line(4,1){24.3}}
\put(85,40){\line(-4,1){24.3}}
\put(10,24){\line(3,2){11.9}}
\put(40,24){\line(-3,2){11.9}}
\put(85,24){\line(0,1){8}}
\put(10,8){\line(0,1){8}}
\put(40,8){\line(0,1){8}}
\put(70,8){\line(3,2){11.9}}
\put(100,8){\line(-3,2){11.9}}
\put(19,3){$\leftarrow$}
\put(27.5,3){$\rightarrow$}
\put(22.5,4){\tiny\text{$(1\,2)$}}
\put(19,19){$\leftarrow$}
\put(27.5,19){$\rightarrow$}
\put(22.5,20){\tiny\text{$(1\,2)$}}
\put(79,3){$\leftarrow$}
\put(87.5,3){$\rightarrow$}
\put(82.5,2.3){\tiny\text{$(3\,4)$}}
\end{picture}
\caption{}
\end{figure}
%\end{center}
%%%%%%%%%%%%%%%%%%%%%%
It is easy to see that
$\Gamma=\Aut{E}$ is the $4$-element group generated by the
transpositions $(1\ 2)$ and $(3\ 4)$.
The transposition $(1\ 2)$ acts by switching
$(0,1)$ with $(0,2)$, $(1,\{1\})$ with $(1,\{2\})$, and fixing all
other vertices of the tree, while the transposition
$(3\ 4)$ acts by switching
$(0,3)$ with $(0,4)$ and fixing all other vertices.
\end{example}

We return to the discussion of the
$\Gamma$-trees $(P_n(E);{}^*,1_{P_n(E)},\Gamma)$ 
introduced before the example, where $E$ is an arbitrary 
chain of equivalence relations on a finite set $A$, 
$\Gamma=\Aut{E}$, and $n\ge 1$.
To describe the labelings of the leaves of 
$\bigl(P_n(E);{}^*,1_{P_n(E)},\Gamma\bigr)$
that we will need later on,
we have to first define the appropriate 
partially ordered $\Gamma$-set of labels.
To this end let ${\mathbb S}$ denote the set of all functions
$(\Gamma\vect{y},\vect{y})\to A$ 
whose domains are pointed $\Gamma$-orbits in $A^m$ for some $m\ge1$.
We define an action of $\Gamma$ on ${\mathbb S}$ as follows:
\begin{itemize}
\item
for arbitrary element 
$\mu\colon(U,\vect{y})\to A$
of ${\mathbb S}$ with $U=\Gamma\vect{y}$ 
and for any $\gamma\in\Gamma$, the function $\gamma\mu$
is $\mu$ considered as a function 
$(U,\gamma\vect{y})\to A$.
\end{itemize}
That is, the only difference between $\mu$ and $\gamma\mu$ is
in the distinguished element of the orbit $U$.
Clearly, $\gamma\mu\in{\mathbb S}$ and 
$(\gamma\gamma')\mu=\gamma(\gamma'\mu)$ hold for
all $\gamma,\gamma'\in\Gamma$ and $\mu\in{\mathbb S}$,
so we have obtained a $\Gamma$-set $({\mathbb S};\Gamma)$.

Now we define a quasiorder $\preceq$ on ${\mathbb S}$.
Let $\mu\colon(U,\vect{y})\to A$ and $\nu\colon(V,\vect{z})\to A$
be arbitrary elements of ${\mathbb S}$ where $U=\Gamma\vect{y}$,
$V=\Gamma\vect{z}$, and $\vect{y}\in A^m$, $\vect{z}\in A^n$.
For any tuple $\vect{x}\in A^m$ let 
$\vect{x}^\flat$ denote the set of coordinates of $\vect{x}$.
We define $\mu\preceq\nu$ by the following condition:
\begin{itemize}
\item
$\mu\preceq\nu$ if and only if $\Gamma_\vect{y}\subseteq\Gamma_\vect{z}$,
$\vect{y}^\flat\supseteq\vect{z}^\flat$, and 
$\mu=\nu\circ\chi_{\vect{y},\vect{z}}$ where
$\chi_{\vect{y},\vect{z}}$       
is the unique homomorphism
$(\Gamma\vect{y};\vect{y},\Gamma)\to(\Gamma\vect{z};\vect{z},\Gamma)$,
$\gamma\vect{y}\mapsto\gamma\vect{z}$
of pointed $\Gamma$-sets.
\end{itemize}
$\sim$ will denote the intersection of $\preceq$ with its converse.
It follows from the definitions of $\sim$ and $\preceq$ that
$\mu\sim\nu$ if and only if
$\Gamma_\vect{y}=\Gamma_\vect{z}$, 
$\vect{y}^\flat=\vect{z}^\flat$, and 
$\mu=\nu\circ\chi_{\vect{y},\vect{z}}$,
$\nu=\mu\circ\chi_{\vect{z},\vect{y}}$.
The equality $\Gamma_\vect{y}=\Gamma_\vect{z}$
implies that $\chi_{\vect{y},\vect{z}}$ and
$\chi_{\vect{z},\vect{y}}$ are mutually inverse isomorphisms
between the pointed $\Gamma$-sets $(\Gamma\vect{y};\vect{y},\Gamma)$
and $(\Gamma\vect{z};\vect{z},\Gamma)$.
Therefore
\begin{itemize}
\item
$\mu\sim\nu$ if and only if $\Gamma_\vect{y}=\Gamma_\vect{z}$,
$\vect{y}^\flat=\vect{z}^\flat$, and 
$\mu=\nu\circ\chi_{\vect{y},\vect{z}}$ where
$\chi_{\vect{y},\vect{z}}$       
is the unique isomorphism
$(\Gamma\vect{y};\vect{y},\Gamma)\to(\Gamma\vect{z};\vect{z},\Gamma)$,
$\gamma\vect{y}\mapsto\gamma\vect{z}$
of pointed $\Gamma$-sets.
\end{itemize}
 
The next lemma summarizes some elementary
consequences of these definitions that
we will need later on.

\begin{lemma}
\label{basic_prec}
Let $({\mathbb S};\Gamma)$ be the $\Gamma$-set, and let
$\preceq$ and $\sim$ be the relations on ${\mathbb S}$
defined above.
\begin{enumerate}[\indent\rm(1)]
\item
$\preceq$ is a quasiorder, i.e., it is reflexive and transitive.
\item
$\sim$ is an equivalence relation, and $\preceq$ induces a partial order
$\le$ on the quotient set ${\mathbb S}/{\sim}$ by
\[
\mu/{\sim}\le\nu/{\sim}
\quad\iff\quad
\mu\preceq\nu
\qquad
\text{for all $\mu,\nu\in{\mathbb S}$}.
\]
\item
$\Gamma$ acts on ${\mathbb S}$ by automorphisms of the relational 
structure $({\mathbb S};\preceq,\sim)$.
\item
The quotient structure $({\mathbb S}/{\sim};\le,\Gamma)$ is a 
partially ordered set
on which $\Gamma$ acts by automorphisms of 
$({\mathbb S}/{\sim};\le)$.
\item
The number of $\sim$-classes of ${\mathbb S}$ is at most 
$|A|^{|A|+2|\Gamma|}$, hence ${\mathbb S}/{\sim}$ is finite.
\end{enumerate}
\end{lemma}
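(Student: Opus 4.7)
The plan is to dispatch parts (1)--(4) by routine definitional chases and then tackle (5) by a direct enumeration of the data that determines a $\sim$-class.

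For (1), reflexivity comes from $\chi_{\vect{y},\vect{y}} = \id$, and transitivity follows from three facts: stabilizer containment and reverse coordinate-set containment are transitive, and $\chi_{\vect{z},\vect{w}} \circ \chi_{\vect{y},\vect{z}} = \chi_{\vect{y},\vect{w}}$, which is immediate from checking that both sides send $\delta\vect{y} \mapsto \delta\vect{w}$ for every $\delta \in \Gamma$. Part (2) is then the standard algebraic fact that the intersection of a quasiorder with its converse is an equivalence whose quotient inherits a partial order. For (3), I would show that the permutation $\mu \mapsto \gamma\mu$ preserves each of the three conditions defining $\preceq$: the identities $\Gamma_{\gamma\vect{y}} = \gamma\Gamma_\vect{y}\gamma^{-1}$ and $(\gamma\vect{y})^\flat = \gamma(\vect{y}^\flat)$ make the stabilizer and coordinate-set conditions $\Gamma$-invariant, while $\chi_{\gamma\vect{y},\gamma\vect{z}} = \chi_{\vect{y},\vect{z}}$ (both rules send $(\delta\gamma)\vect{y} \mapsto (\delta\gamma)\vect{z}$) combined with the fact that $\gamma\mu$ and $\mu$ are literally the same underlying function shows the third condition is preserved as well. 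Preservation of $\sim$ is then automatic, and (4) is a formal consequence of (2) and (3).

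For (5), I would observe that the definition of $\sim$ singles out two invariants of the class $[\mu]_\sim$ of $\mu \colon (\Gamma\vect{y},\vect{y}) \to A$: the stabilizer $H := \Gamma_\vect{y}$ (a subgroup of $\Gamma$) and the coordinate set $S := \vect{y}^\flat$ (a subset of $A$). For any fixed pair $(H,S)$ admitting a representative $\vect{y}_0$, every element of ${\mathbb S}$ with these invariants is $\sim$-equivalent (via the canonical isomorphism $\chi_{\vect{y}_0,\vect{y}}$) to some $\mu \colon (\Gamma\vect{y}_0,\vect{y}_0) \to A$, and distinct such functions give distinct $\sim$-classes. Since $|\Gamma\vect{y}_0| = [\Gamma\!:\!H] \le |\Gamma|$, there are at most $|A|^{|\Gamma|}$ such functions. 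Combining with the bounds $2^{|\Gamma|}$ on the subgroups of $\Gamma$ and $2^{|A|}$ on the subsets of $A$, and using $2 \le |A|$ (the case $|A|=1$ being trivial) to absorb the twos,
\[
|{\mathbb S}/{\sim}| \le 2^{|\Gamma|} \cdot 2^{|A|} \cdot |A|^{|\Gamma|} \le |A|^{|\Gamma|} \cdot |A|^{|A|} \cdot |A|^{|\Gamma|} = |A|^{|A|+2|\Gamma|}.
\]
The only subtle point I anticipate is bookkeeping about what data a $\sim$-class actually remembers --- in particular that $S = \vect{y}^\flat$ is an independent invariant, not deducible from the function values of $\mu$ --- but once the invariants $(H,S)$ are isolated the count is mechanical.
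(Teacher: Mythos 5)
Your proposal is correct and follows the same route as the paper's proof: parts (1)--(4) via the identity $\chi_{\vect{y},\vect{y}}=\id$, composition $\chi_{\vect{y},\vect{z}}\circ\chi_{\vect{x},\vect{y}}=\chi_{\vect{x},\vect{z}}$, the conjugation formula for stabilizers, and the invariance $\chi_{\gamma\vect{y},\gamma\vect{z}}=\chi_{\vect{y},\vect{z}}$; and part (5) by parametrizing each $\sim$-class by a subset of $A$, a subgroup of $\Gamma$, and a function from a coset space to $A$. The only difference from the paper is presentational — you organize the counting around a fixed representative orbit $\Gamma\vect{y}_0$ and explicitly flag the $|A|\ge 2$ step that the paper leaves silent — but the bound and the argument are the same.
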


\begin{proof}
Let $\lambda\colon(T,\vect{x})\to A$,
$\mu\colon(U,\vect{y})\to A$, and $\nu\colon(V,\vect{z})\to A$
be arbitrary elements of ${\mathbb S}$ where 
$T=\Gamma\vect{x}$, $U=\Gamma\vect{y}$, $V=\Gamma\vect{z}$, 
and $\vect{x}\in A^l$, $\vect{y}\in A^m$, $\vect{z}\in A^n$.

(1) $\mu\preceq\mu$, since 
$\Gamma_\vect{y}=\Gamma_\vect{y}$,
$\vect{y}^\flat=\vect{y}^\flat$, and 
$\chi_{\vect{y},\vect{y}}$ is the identity function $U\to U$,
so $\mu=\mu\circ\chi_{\vect{y},\vect{y}}$.
Thus $\preceq$ is reflexive. 
To verify that $\preceq$ is transitive, assume that
$\lambda\preceq\mu\preceq\nu$, that is,
$\Gamma_\vect{x}\subseteq\Gamma_\vect{y}\subseteq\Gamma_\vect{z}$,
$\vect{x}^\flat\supseteq\vect{y}^\flat\supseteq\vect{z}^\flat$, and 
$\lambda=\mu\circ\chi_{\vect{x},\vect{y}}$, 
$\mu=\nu\circ\chi_{\vect{y},\vect{z}}$.
Then $\Gamma_\vect{x}\subseteq\Gamma_\vect{z}$,
$\vect{x}^\flat\supseteq\vect{z}^\flat$, and 
$\lambda=\nu\circ(\chi_{\vect{y},\vect{z}}\circ\chi_{\vect{x},\vect{y}})$.
Since $\chi_{\vect{y},\vect{z}}\circ\chi_{\vect{x},\vect{y}}=
\chi_{\vect{x},\vect{z}}$, we get that
$\lambda=\nu\circ\chi_{\vect{x},\vect{z}}$, proving that
$\lambda\preceq\nu$.

(2) is an immediate consequence of (1).

(3) Since $\sim$ is the intersection of $\preceq$ and its converse,
it is enough to prove that $\Gamma$ acts by automorphisms of
$({\mathbb S};\preceq)$. 
To this end we need to show that $\mu\preceq\nu$ implies
$\gamma\mu\preceq\gamma\nu$ for all $\gamma\in\Gamma$.
Let $\mu\preceq\nu$, that is, 
$\Gamma_\vect{y}\subseteq\Gamma_\vect{z}$,
$\vect{y}^\flat\supseteq\vect{z}^\flat$, and 
$\mu=\nu\circ\chi_{\vect{y},\vect{z}}$.
Then 
\begin{gather}
\Gamma_{\gamma\vect{y}}=
\gamma\Gamma_\vect{y}\gamma^{-1}\subseteq\gamma\Gamma_\vect{z}\gamma^{-1}
=\Gamma_{\gamma\vect{z}},\notag\\
(\gamma\vect{y})^\flat=
\gamma(\vect{y}^\flat)\supseteq\gamma(\vect{z}^\flat)= 
(\gamma\vect{z})^\flat,\notag
\end{gather}
and $\gamma\mu=\gamma\nu\circ\chi_{\gamma\vect{y},\gamma\vect{z}}$, 
because $\mu=\nu\circ\chi_{\vect{y},\vect{z}}$,
$\chi_{\vect{y},\vect{z}}=\chi_{\gamma\vect{y},\gamma\vect{z}}$,
and $\mu$, $\gamma\mu$ are the same function $U\to A$ and
$\nu$, $\gamma\nu$ are the same function $V\to A$.
This proves that $\gamma\mu\preceq\gamma\nu$.

(4) is an immediate consequence of (2) and (3).

(5) We saw earlier that
$\mu\sim\nu$ if and only if
$\Gamma_\vect{y}=\Gamma_\vect{z}$, 
$\vect{y}^\flat=\vect{z}^\flat$, and 
$\mu=\nu\circ\chi_{\vect{y},\vect{z}}$
for the unique ismomorphism $\chi_{\vect{y},\vect{z}}$
between the pointed $\Gamma$-sets $(U;\vect{y},\Gamma)$
and $(V;\vect{z},\Gamma)$.
The equality $\Gamma_\vect{y}=\Gamma_\vect{z}$ also
implies that
$(U;\vect{y},\Gamma)$
and $(V;\vect{z},\Gamma)$ are isomorphic
to the pointed $\Gamma$-set $(\Gamma/\Gamma_\vect{y};\Gamma_\vect{y},\Gamma)$
of the left cosets of $\Gamma_\vect{y}$ under 
the natural action of $\Gamma$ by left multiplication.
Therefore
the number of $\sim$-classes in ${\mathbb S}$
is at most the number of triples 
$(\vect{y}^\flat,\Gamma_\vect{y},\sigma)$
where 
$\vect{y}^\flat$ is a subset of $A$,
$\Gamma_\vect{y}$ is a subgroup of $\Gamma$,
and
$\sigma$ is a function $(\Gamma/\Gamma_\vect{y};\Gamma_\vect{y})\to A$.
Hence  the number of $\sim$-classes is at most
$2^{|A|}2^{|\Gamma|}|A|^{|\Gamma|}\le|A|^{|A|+2|\Gamma|}$, as claimed.
\end{proof}

If $g$ is an $n$-ary operation on $A$, we define an 
${\mathbb S}/{\sim}$-labeling $\ell_g$ of the leaves of 
the $\Gamma$-tree $\bigl(P_n(E);{}^*, 1_{P_n(E)},\Gamma\bigr)$
by
\begin{itemize}
\item
$\ell_g\bigl((0,\vect{x})\bigr)=g|_{(\Gamma\vect{x},\vect{x})}/{\sim}$
for all $\vect{x}\in A^n$
\end{itemize}
where $g|_{(\Gamma\vect{x},\vect{x})}$ denotes the restriction
of $g$ to the pointed $\Gamma$-orbit $(\Gamma\vect{x},\vect{x})$;
thus $g|_{(\Gamma\vect{x},\vect{x})}$ is an element of ${\mathbb S}$.
This labeling yields an ${\mathbb S}/{\sim}$-labeled $\Gamma$-tree
$\bigl(P_n(E);{}^*,\linebreak[0] 1_{P_n(E)},\Gamma;\ell_g\bigr)$,
which we will denote by $\al P_g(E)$, 
and will call
\emph{the ${\mathbb S}/{\sim}$-labeled $\Gamma$-tree 
associated to $f$.}

\begin{example}
\label{ex-treelabel}
Let $A$, $E$, and $\Gamma$ be as in Example~\ref{ex-tree}, and let
$g$ be the unary operation on $A$ defined by $g(1)=2$, $g(2)=4$,
$g(3)=4$, and $g(4)=3$.
The ${\mathbb S}/{\sim}$-labeled $\Gamma$-tree $\al P_g(E)$
is obtained from the $\Gamma$-tree 
$\bigl(P_1(E);{}^*,\linebreak[0] 1_{P_1(E)},\Gamma\bigr)$
in Example~\ref{ex-tree} by labeling the leaves via $\ell_g$.
For each leaf $(0,x)$ ($x\in A=\{1,2,3,4\}$), the label of $(0,x)$ is the
equivalence class $\mu_x/{\sim}$ where 
$\mu_x\colon (\Gamma x,x)\to A$ is the restriction of $g$ to the pointed 
$\Gamma$-orbit of $x$; i.e.,
\begin{align*}
&\mu_1\colon (\{1,2\},1)\to A,\quad 1\mapsto 2,\ 2\mapsto 4;\\
&\mu_2\colon (\{1,2\},2)\to A,\quad 1\mapsto 2,\ 2\mapsto 4;\\
&\mu_3\colon (\{3,4\},3)\to A,\quad 3\mapsto 4,\ 4\mapsto 3;\\
&\mu_4\colon (\{3,4\},4)\to A,\quad 3\mapsto 4,\ 4\mapsto 3.
\end{align*}
The functions $\mu_x$ ($x\in A$) belong to pairwise different
$\sim$-classes,  because $x^\flat=\{x\}\not=\{y\}=y^\flat$ 
for distinct elements $x,y\in A$. Therefore 
the labeling $\ell_g$ assigns four distinct labels
to the four leaves.
\end{example}

The next lemma shows the relevance of the 
${\mathbb S}/{\sim}$-labeled $\Gamma$-trees
$\al P_f(E)$ and $\al P_g(E)$
to the problem of determining whether $f\subf[\cl C]g$ holds
for two operations $f,g$ on $A$.

\begin{lemma}
\label{equiv-lessthan}
Let $E$ be a chain of equivalence relations on a finite set $A$, and
let $\cl C=\Pol\bigl(E,\Aut{E},\cl{P}^+(A)\bigr)$.
For arbitrary operations $f,g$ on $A$, 
$f \subf[\cl{C}] g$ if and only if
there exists a label-increasing $\Gamma$-homomorphism 
$\al P_f(E)\nearrow\al P_g(E)$
between the  ${\mathbb S}/{\sim}$-labeled $\Gamma$-trees 
associated to $f$ and $g$. 
\end{lemma}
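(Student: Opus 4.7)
The plan is to exhibit a direct correspondence between tuples $\vect{h}=(h_1,\dots,h_m)\in(\cl{C}^{(n)})^m$ witnessing $f=g\circ\vect{h}$ and label-increasing $\Gamma$-homomorphisms $\al{P}_f(E)\nearrow\al{P}_g(E)$. The key observation is that such a tuple $\vect{h}$ is nothing but a function $A^n\to A^m$, and the natural associated tree map $\phi$ sends each $(i,B)\in P_n(E)$ to $(i,C)\in P_m(E)$, where $C$ is the $(\rho_i)^m$-block containing $\vect{h}(B)$. The three defining preservation properties of $\cl{C}=\Pol(E,\Aut{E},\cl{P}^+(A))$ will correspond, respectively, to: well-definedness of $\phi$ together with its compatibility with the successor function (preservation of $E$); $\Gamma$-equivariance of $\phi$ (preservation of $\Aut{E}$); and the coordinate-set inclusion $\vect{h}(\vect{x})^\flat\subseteq\vect{x}^\flat$ which feeds into the quasiorder $\preceq$ (preservation of $\cl{P}^+(A)$).

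For the forward direction, given $\vect{h}\in(\cl{C}^{(n)})^m$ with $f=g\circ\vect{h}$, the map $\phi$ is well defined because each $h_j\in\Pol\rho_i$, sends $1_{P_n(E)}=(r+1,A^n)$ to $1_{P_m(E)}=(r+1,A^m)$, and sends leaves to leaves; compatibility with ${}^*$ follows because $B\subseteq B'$ implies $\vect{h}(B)\subseteq\vect{h}(B')$. The $\Gamma$-equivariance of $\phi$ reduces to the identity $\vect{h}(\gamma\vect{x})=\gamma\vect{h}(\vect{x})$, which is equivalent to each $h_j$ preserving the graph of $\gamma\in\Aut{E}$. To verify the label-increasing inequality $f|_{(\Gamma\vect{x},\vect{x})}\preceq g|_{(\Gamma\vect{h}(\vect{x}),\vect{h}(\vect{x}))}$ at each leaf $(0,\vect{x})$, I would check its three clauses separately: the stabilizer inclusion $\Gamma_{\vect{x}}\subseteq\Gamma_{\vect{h}(\vect{x})}$ follows from equivariance, the set inclusion $\vect{x}^\flat\supseteq\vect{h}(\vect{x})^\flat$ follows from each $h_j$ preserving the nonempty set $\vect{x}^\flat\in\cl{P}^+(A)$, and the compatibility $f|_{(\Gamma\vect{x},\vect{x})}=g|_{(\Gamma\vect{h}(\vect{x}),\vect{h}(\vect{x}))}\circ\chi_{\vect{x},\vect{h}(\vect{x})}$ unfolds to $f(\gamma\vect{x})=g(\gamma\vect{h}(\vect{x}))$, which holds by $f=g\circ\vect{h}$ combined with equivariance.

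For the converse, a label-increasing $\Gamma$-homomorphism $\phi$ restricts to a function on leaves and thereby determines a tuple $\vect{h}=(h_1,\dots,h_m)\colon A^n\to A^m$ via $\phi((0,\vect{x}))=(0,\vect{h}(\vect{x}))$. I would check that each $h_j$ lies in $\cl{C}$ by verifying the three preservation conditions: if $\vect{x}\,(\rho_i)^n\,\vect{x}'$, then $(0,\vect{x})$ and $(0,\vect{x}')$ share the same $i$-th tree-ancestor, so (as $\phi$ commutes with ${}^*$) do their $\phi$-images, giving $\vect{h}(\vect{x})\,(\rho_i)^m\,\vect{h}(\vect{x}')$; preservation of $\gamma\in\Aut{E}$ is the leaf-level statement of equivariance of $\phi$; and preservation of any nonempty subset $S\subseteq A$ follows because for $\vect{x}\in S^n$ the inclusion $\vect{h}(\vect{x})^\flat\subseteq\vect{x}^\flat\subseteq S$ is exactly the second clause of $\preceq$. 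Finally, evaluating the third clause of $f|_{(\Gamma\vect{x},\vect{x})}\preceq g|_{(\Gamma\vect{h}(\vect{x}),\vect{h}(\vect{x}))}$ at the distinguished element $\vect{x}$ gives $f(\vect{x})=g(\vect{h}(\vect{x}))$, i.e., $f=g\circ\vect{h}$. The whole argument is a matter of matching three clone-theoretic axioms against three ingredients of $\preceq$; the only real subtlety, and the one place where care is needed, is to see that the third clause of $\preceq$ has been engineered precisely so as to encode the composition identity $f=g\circ\vect{h}$ on each pointed $\Gamma$-orbit.
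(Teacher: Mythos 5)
Your proposal is correct and follows essentially the same route as the paper's proof: $\vect{h}$ and the tree map are obtained from one another by the same formulas, the tree-homomorphism axioms are matched against preservation of $E$, $\Gamma$-equivariance against preservation of $\Aut E$, the coordinate-set clause of $\preceq$ against preservation of $\cl{P}^+(A)$, and the composition clause of $\preceq$ against the identity $f=g\circ\vect h$. The only step you leave implicit that the paper spells out is the identification $\vect h|_{(\Gamma\vect u,\vect u)}=\chi_{\vect u,\vect h(\vect u)}$ via the uniqueness of homomorphisms between pointed $\Gamma$-orbits, but this follows immediately from the equivariance you establish, so the argument is complete.
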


\begin{proof}
Let $f$ be $m$-ary and $g$ be $n$-ary.
To prove the forward implication
assume that $f \subf[\cl{C}] g$,
and let $\vect{h}\in(\cl C^{(m)})^n$ be such that $f=g\circ\vect{h}$.
Since $\vect{h}$ preserves the equivalence relations in $E$,
$\vect{h}$ maps each block $B$ of $(\rho_i)^m$ into a block of $(\rho_i)^n$.
Thus $\vect{h}$ induces a map 
$$
\psi\colon P_m(E)\to P_n(E),
\quad
(i,B)\mapsto\bigl(i,\overline{\vect h(B)}\bigr)
$$
where $\overline{\vect{h}(B)}$ denotes the block of $(\rho_i)^n$
containing $\vect{h}(B)$.
We claim that $\psi$ is a label-increasing $\Gamma$-homomorphism 
$\al P_f(E)\nearrow\al P_g(E)$.
Clearly, $\psi$ maps 
$1_{P_m(E)}=(r+1,A^m)$ to $1_{P_n(E)}=(r+1,A^n)$, and  
it maps leaves to leaves.
Furthermore, if $(i,B)\in P_m(E)$
with $0\le i\le r$, then $(i,B)^*=(i+1,C)$
for the unique block $C$ of $(\rho_{i+1})^m$ satisfying $B\subseteq C$.
Therefore $\vect{h}(B)\subseteq \vect{h}(C)$,
so
$\overline{\vect{h}(B)}\subseteq \overline{\vect{h}(C)}$,
which shows that 
$$
\psi\bigl((i,B)^*\bigr)
=\psi\bigl((i+1,C)\bigr)
=\bigl(i+1,\overline{\vect{h}(C)}\bigr)
=\bigl(i,\overline{\vect{h}(B)}\bigr)^*
=\psi((i,B))^*.
$$
Thus $\psi$ is a homomorphism of trees.
Next, if $(i,B)\in P_m(E)$ and $\gamma\in\Gamma$, then
$$
\psi\bigl(\gamma((i,B))\bigr)
=\psi\bigl((i,\gamma(B))\bigr)=\bigl(i,\overline{\vect{h}(\gamma(B))}\bigr)
$$
and
$$
\gamma\bigl(\psi((i,B))\bigr)=
\gamma\bigl((i,\overline{\vect{h}(B)})\bigr)=
\bigl(i,\gamma(\overline{\vect{h}(B)})\bigr)=
\bigl(i,\overline{\gamma(\vect{h}(B))}\bigr).
$$
Since $\vect{h}$ preserves $\gamma$, we have
$\vect{h}\bigl(\gamma(B)\bigr)=\gamma\bigl(\vect{h}(B)\bigr)$,
proving
$\psi\bigl(\gamma((i,B))\bigr)=\linebreak[4]\gamma\bigl(\psi((i,B))\bigr)$.
Hence $\psi$ is a $\Gamma$-homomorphism 
$\bigl(P_m(E);{}^*,1_{P_m(E)},\Gamma\bigr)
\to \bigl(P_n(E);\linebreak[0]{}^*,1_{P_n(E)},\Gamma\bigr)$.

Finally, if $(0,\vect{x})$ is a leaf of $P_m(E)$, 
then using the definition of the labelings
$\ell_f$ and $\ell_g$ and the relationship $f=g\circ\vect{h}$ we
get that 
\begin{align*}
\ell_f\bigl((0,\vect{x})\bigr)
&{}= f|_{(\Gamma\vect{x},\vect{x})}/{\sim},\\
\ell_g\bigl(\psi((0,\vect{x}))\bigr)
&{}= \ell_g\bigl((0,\vect{h}(\vect{x}))\bigr)=
g|_{(\Gamma\vect{h}(\vect{x}),\vect{h}(\vect{x}))}/{\sim},
\end{align*}
and
\[
f|_{(\Gamma\vect{x},\vect{x})}
=(g\circ\vect{h})|_{(\Gamma\vect{x},\vect{x})}
=g|_{(\Gamma\vect{h}(\vect{x}),\vect{h}(\vect{x}))}
           \circ\vect{h}|_{(\Gamma\vect{x},\vect{x})}.
\]
Here $\vect{h}|_{(\Gamma\vect{x},\vect{x})}\colon
  \bigl(\Gamma\vect{x},\vect{x}\bigr)\to
    \bigl(\Gamma\vect{h}(\vect{x}),\vect{h}(\vect{x})\bigr)$
is a homomorphism of pointed $\Gamma$-orbits, since $\vect{h}$ preserves
all permutations $\gamma\in\Gamma$.
Thus $\Gamma_\vect{x}\subseteq\Gamma_{\vect{h}(\vect{x})}$
and  $\vect{h}|_{(\Gamma\vect{x},\vect{x})}=
\chi_{\vect{x},\vect{h}(\vect{x})}$.
In addition, we have 
$\vect{x}^\flat\supseteq\vect{h}(\vect{x})^\flat$, since
$\vect{h}$ preserves all subsets of $A$.
Thus 
\[
f|_{(\Gamma\vect{x},\vect{x})}
=g|_{(\Gamma\vect{h}(\vect{x}),\vect{h}(\vect{x}))}
           \circ \chi_{\vect{x},\vect{h}(\vect{x})}
\preceq g|_{(\Gamma\vect{h}(\vect{x}),\vect{h}(\vect{x}))},
\]
implying that 
$\ell_f\bigl((0,\vect{x})\bigr)\le\ell_g\bigl(\psi((0,\vect{x}))\bigr)$.
This proves that $\psi$ is a label-increasing $\Gamma$-homomorphism
$\al P_f(E)\nearrow\al P_g(E)$, and hence 
concludes the proof of the forward implication.

For the converse, assume that there exists 
a label-increasing $\Gamma$-ho\-mo\-mor\-phism
$\psi\colon \al P_f(E)\nearrow \al P_g(E)$. 
Our goal is to show that $f\subf[\cl C]g$. 
Since $\psi$ is a homomorphism of trees, therefore it 
maps each leaf of $\al P_f(E)$ into a leaf of $\al P_g(E)$.
Hence $\psi$ yields a function $\vect{h}\colon A^m\to A^n$ 
such that $\psi((0,\vect{x}))=(0,\vect{h}(\vect{x}))$ for all
$\vect{x}\in A^m$.
We will establish $f\subf[\cl C]g$ by proving that 
$\vect h\in(\cl C^{(m)})^n$ and $f=g\circ\vect{h}$.

First we will show that $\vect{h}$ preserves all equivalence relations
$\rho_i$ ($1\le i\le r$).
Let $\vect{x},\vect{y}\in A^m$ be such that
$\vect{x}\,\,(\rho_i)^m\,\,\vect{y}$. Then
$\vect{x},\vect{y}$ are in the same block $B$ of $(\rho_i)^m$,
i.e.,
$(0,\vect{x})^{\ast^i}=(i,B)=(0,\vect{y})^{\ast^i}$.
Since $\psi$ is a homomorphism of trees, we get that
\begin{multline*}
\bigl(0,\vect{h}(\vect{x})\bigr)^{\ast^i}
=\psi\bigl((0,\vect{x})\bigr)^{\ast^i}
=\psi\bigl((0,\vect{x})^{\ast^i}\bigr) \\
=\psi\bigl((0,\vect{y})^{\ast^i}\bigr)
=\psi\bigl((0,\vect{y})\bigr)^{\ast^i}
=\bigl(0,\vect{h}(\vect{y})\bigr)^{\ast^i}.
\end{multline*}
Hence $\vect{h}(\vect{x})$ and 
$\vect{h}(\vect{y})$ are in the same block of
$(\rho_i)^n$, that is,
$\vect{h}(\vect{x})\,\,(\rho_i)^n\,\,\vect{h}(\vect{y})$.

Next we show that $\vect{h}$ preserves all permutations $\gamma\in\Gamma$.
Since $\psi$ is a $\Gamma$-ho\-mo\-mor\-phism and $\Gamma$ acts on the leaves
of $\al P_f(E)$ and $\al P_g(E)$
by $\gamma\cdot(0,\vect{u})=(0,\gamma\vect{u})$ for all $\vect{u}$ and 
$\gamma$, we get that
\begin{multline*}
\bigl(0,\vect{h}(\gamma\vect{x})\bigr)
=\psi\bigl((0,\gamma\vect{x})\bigr)
=\psi\bigl(\gamma\cdot(0,\vect{x})\bigr) \\
=\gamma\cdot\bigl(\psi(0,\vect{x})\bigr)
=\gamma\cdot\bigl(0,\vect{h}(\vect{x})\bigr)
=\bigl(0,\gamma\vect{h}(\vect{x})\bigr)
\end{multline*}
for all $\vect{x}\in A^m$ and $\gamma\in\Gamma$.
Hence 
$\vect{h}\bigl(\gamma(\vect{x})\bigr)=\gamma\bigl(\vect{h}(\vect{x})\bigr)$
for all $\vect{x}\in A^m$ and $\gamma\in\Gamma$, 
as claimed.

This also proves that $\vect{h}$ restricts to
every pointed $\Gamma$-orbit $(\Gamma\vect{u},\vect{u})$ in $A^m$ 
as a homomorphism 
$\vect{h}|_{(\Gamma\vect{u},\vect{u})}\colon (\Gamma\vect{u},\vect{u})
\to \bigl(\Gamma\vect{h}(\vect{u}),\vect{h}(\vect{u})\bigr)$
between two pointed $\Gamma$-orbits.
Since such a homomorphism exists only if 
$\Gamma_\vect{u}\subseteq\Gamma_{\vect{h}(\vect{u})}$,
and when it exists, it is uniquely determined, we get that
$\vect{h}|_{(\Gamma\vect{u},\vect{u})}=\chi_{\vect{u},\vect{h}(\vect{u})}$.

Since 
\begin{gather}
\ell_f\bigl((0,\vect{u})\bigr)=f|_{(\Gamma\vect{u},\vect{u})}/{\sim},\notag\\
\ell_g\bigl(\psi((0,\vect{u}))\bigr)=\ell_g\bigl((0,\vect{h}(\vect{u}))\bigr)=
g|_{(\Gamma\vect{h}(\vect{u}),\vect{h}(\vect{u}))}/{\sim},\notag
\end{gather}
and $\psi$ is label-increasing,
we get that  
$f|_{(\Gamma\vect{u},\vect{u})}\preceq 
g|_{(\Gamma\vect{h}(\vect{u}),\vect{h}(\vect{u}))}$.
By the definition of $\preceq$ this means that
$\Gamma_\vect{u}\subseteq\Gamma_{\vect{h}(\vect{u})}$,
$\vect{u}^\flat\supseteq\vect{h}(\vect{u})^\flat$,
and 
$f|_{(\Gamma\vect{u},\vect{u})}=
g|_{(\Gamma\vect{h}(\vect{u}),\vect{h}(\vect{u}))}\circ
\chi_{\vect{u},\vect{h}(\vect{u})}$.
Combining this with the equality
$\vect{h}|_{(\Gamma\vect{u},\vect{u})}=\chi_{\vect{u},\vect{h}(\vect{u})}$
we get that
\[
f|_{(\Gamma\vect{u},\vect{u})}=
g|_{(\Gamma\vect{h}(\vect{u}),\vect{h}(\vect{u}))}\circ
\vect{h}|_{(\Gamma\vect{u},\vect{u})}.
\]
Since $A^m$ is the union of all $\Gamma$-orbits $\Gamma\vect{u}$,
we obtain from the last displayed equality that $f=g\circ\vect{h}$.
The property that $\vect{u}^\flat\supseteq\vect{h}(\vect{u})^\flat$
for all $\vect{u}\in A^m$ shows that $\vect{h}$ preserves all
subsets of $A$.
Thus $\vect h\in(\cl C^{(m)})^n$ and $f=g\circ\vect{h}$,
which proves that $f\subf[\cl C]g$. 
\end{proof}

It follows from Lemma~\ref{equiv-lessthan} that
$f \fequiv[\cl{C}] g$ holds for two operations $f,g$ on $A$
if and only if
there exist label-increasing $\Gamma$-homomorphisms 
$\al P_f(E)\nearrow\al P_g(E)$ and $\al P_g(E)\nearrow\al P_f(E)$
between the ${\mathbb S}/{\sim}$-labeled $\Gamma$-trees 
associated to $f$ and $g$. 
Since the size of $\al P_f(E)$ increases with the arity of $f$,
this lemma alone is not enough to conclude that 
the number of $\fequiv[\cl{C}]$-classes is finite.
We want to replace each $\al P_f(E)$ by an 
${\mathbb S}/{\sim}$-labeled $\Gamma$-tree
$\widehat{\al P}_f$ that is
\begin{itemize}
\item
homomorphically equivalent to $\al P_f(E)$, that is, there exist
label-preserving $\Gamma$-homomorphisms $\al P_f(E)\to\widehat{\al P}_f$ 
and $\widehat{\al P}_f\to\al P_f(E)$, and
\item
as small as possible with this property.
\end{itemize}
The first condition is to ensure that the analog of 
Lemma~\ref{equiv-lessthan} remains true if, instead of $\al P_f(E)$, 
we associate $\widehat{\al P}_f$ to each operation $f$.
The second condition will allow us to prove that, up to isomorphism, 
there are only finitely many $\widehat{\al P}_f$'s, and 
hence it will follow that 
the number of $\fequiv[\cl{C}]$-classes is finite.

The intended relationship between $\widehat{\al P}_f$ and $\al P_f(E)$ 
is captured by the concept of a \emph{core}, which applies to arbitrary
finite structures. For our purposes it will be enough to
discuss cores of $S$-labeled $G$-trees.

Let $\al P=(P;{}^*,1_P,G; \linebreak[0] \ell_P)$ and
$\al Q=(Q;{}^*,1_Q,G; \linebreak[0] \ell_Q)$ 
be $S$-labeled $G$-trees.
We say that 
\begin{enumerate}[\indent\rm(1)]
\item
\emph{$\al Q$ is a core} if every
label-preserving $G$-homomorphism $\al Q\to\al Q$ is onto;
\item
\emph{$\al Q$ is a core of $\al P$} if 
\begin{itemize}
\item
$\al Q$ is \emph{homomorphically equivalent} to $\al P$, that is,
there exist label-pre\-serv\-ing $G$-homomorphisms $\al P\to\al Q$ and
$\al Q\to\al P$, and 
\item
$\al Q$ is minimal with this property (i.e., no proper labeled
$G$-subtree of $\al Q$ is homomorphically equivalent to $\al P$).
\end{itemize}
\end{enumerate}

For the reader's convenience we will state and prove the basic properties
of cores for $S$-labeled $G$-trees. The first one of these properties
is that the two uses of the word `core' in the definitions above 
are compatible:
every core of an $S$-labeled $G$-tree [in the sense of (2)] 
is actually a core [in the sense of (1)].
We will use this propery later on without further reference.
The second and third properties show that every $S$-labeled $G$-tree
has a core (in fact, it has one among its $S$-labeled $G$-subtrees),
and the core is uniquely determined, up to isomorphism.

\begin{lemma}
\label{reduction}
Let $\al P$
be an $S$-labeled $G$-tree.
\begin{enumerate}[\indent\rm(1)]
\item
Every core of $\al P$ is a core.
\item
If $\widehat{\al P}$ is minimal, with respect to inclusion, 
among all $S$-labeled $G$-subtrees $\al P'$ of $\al P$ 
for which there exists a label-preserving $G$-ho\-mo\-mor\-phism
$\al P\to\al P'$, then $\widehat{\al P}$ is a core of $\al P$. 
\item
Any two cores of $\al P$ are isomorphic.
\end{enumerate}
\end{lemma}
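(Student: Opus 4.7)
My plan is to prove the three parts in order, with (3) deduced from (1) by a standard finiteness argument. The technical fact I will use throughout is that the image of a label-preserving $G$-endomorphism $\sigma$ of an $S$-labeled $G$-tree $\al Q$ inherits an $S$-labeled $G$-subtree structure: it contains $1_{\al Q}$ by (H0), is closed under the successor function and the $G$-action by (H2) and (H3), inherits the labeling from $\al Q$, and its leaves are leaves of $\al Q$ because any non-leaf $a=a'^*$ of $\al Q$ is sent to the non-leaf $\sigma(a)=\sigma(a')^*$. I will record this observation first and then invoke it in the arguments below.

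For (1), fix a core $\widehat{\al P}$ of $\al P$ with witnessing label-preserving $G$-homomorphisms $\phi\colon\al P\to\widehat{\al P}$ and $\psi\colon\widehat{\al P}\to\al P$, and let $\sigma$ be a label-preserving $G$-endomorphism of $\widehat{\al P}$. By the observation above, $\sigma(\widehat{\al P})$ is an $S$-labeled $G$-subtree of $\widehat{\al P}$, hence of $\al P$; and it is homomorphically equivalent to $\al P$ via $\sigma\circ\phi\colon\al P\to\sigma(\widehat{\al P})$ and the restriction of $\psi$ to $\sigma(\widehat{\al P})$. The minimality clause in the definition of a core of $\al P$ then forces $\sigma(\widehat{\al P})=\widehat{\al P}$, giving surjectivity. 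For (2), the inclusion $\widehat{\al P}\hookrightarrow\al P$ is itself a label-preserving $G$-homomorphism, so together with the hypothesized $\al P\to\widehat{\al P}$ it gives homomorphic equivalence with $\al P$; and any proper $S$-labeled $G$-subtree of $\widehat{\al P}$ homomorphically equivalent to $\al P$ would also be a proper $S$-labeled $G$-subtree of $\al P$ admitting a label-preserving $G$-homomorphism from $\al P$, contradicting the assumed minimality of $\widehat{\al P}$.

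For (3), let $\widehat{\al P}_1$ and $\widehat{\al P}_2$ be two cores of $\al P$. Composing through $\al P$ yields label-preserving $G$-homomorphisms $\phi_i\colon\widehat{\al P}_i\to\widehat{\al P}_{3-i}$, and the composites $\phi_2\circ\phi_1$ and $\phi_1\circ\phi_2$ are label-preserving $G$-endomorphisms of $\widehat{\al P}_1$ and $\widehat{\al P}_2$, each surjective by (1) and hence bijective on the finite underlying sets. Therefore $\phi_1$ itself is a bijection, and a routine check shows that its set-theoretic inverse is again a label-preserving $G$-homomorphism: commutation with the successor function, with the $G$-action, and preservation of labels and of the root are immediate from the corresponding properties of $\phi_1$, while the non-leaf-to-non-leaf argument once more shows that the inverse sends leaves to leaves. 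Hence $\phi_1$ is an isomorphism. The only place requiring care is the closure-under-$*$ subtlety isolated in the first paragraph; beyond that I anticipate no genuine obstacle.
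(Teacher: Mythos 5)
Your argument is correct and takes essentially the same route as the paper: the image of a label-preserving $G$-endomorphism is an $S$-labeled $G$-subtree, minimality forces such endomorphisms to be surjective, and finiteness upgrades the composite surjections to bijections, whence the maps between cores are isomorphisms. Two small notes: in (1) the aside ``hence of $\al P$'' is unjustified (a core of $\al P$ need not be an $S$-labeled $G$-subtree of $\al P$) but also unused, since the minimality clause in the definition of core is relative to $S$-labeled $G$-subtrees of the core itself; and in (3) the check that $\phi_1^{-1}$ is a homomorphism is superfluous under the paper's convention, which defines an isomorphism of $S$-labeled $G$-trees to be a bijective label-preserving $G$-homomorphism (though your verification does confirm that this convention is self-consistent).
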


\begin{proof}
(1)
Let $\al Q$ be a core of $\al P$.
It follows that 
there exist label-preserving $G$-homomorphisms 
$\phi\colon\al P\to\al Q$ and $\psi\colon\al Q\to\al P$. 
To prove that $\al Q$ is a core, we need to show that
every label-preserving $G$-homomorphism $\tau\colon\al Q\to\al Q$
is onto. The range $\al R$ of $\tau$ is an $S$-labeled $G$-subtree of
$\al Q$, therefore the identity embedding $\iota\colon\al R\to\al Q$
is a label-preserving $G$-homomorphism.
Thus $\tau=\iota\circ\tilde\tau$ for some label-preserving
$G$-homomorphism $\tilde\tau\colon\al Q\to\al R$. Hence we have
label-preserving $G$-homomorphisms
$$
\al P\overset{\phi}{\underset{\psi}{\rightleftarrows}}\al Q
\overset{\tilde\tau}{\underset{\iota}{\rightleftarrows}}\al R,
$$
which implies that $\al R$ is homomorphically equivalent to $\al P$,
as witnessed by $\tilde\tau\circ\phi\colon\al P\to\al R$ and
$\psi\circ\iota\colon\al R\to\al P$.
Since $\al Q$ is a core of $\al P$, the $S$-labeled $G$-subtree $\al
R$ of $\al Q$ cannot be proper. Thus $\al R=\al Q$ and $\tau$ is onto.

(2)
Let $\widehat{\al P}$ be minimal, with respect to inclusion, 
among all $S$-labeled $G$-subtrees $\al P'$ of $\al P$ 
for which there exists a label-preserving $G$-homomorphism
$\al P\to\al P'$.
Such a $\widehat{\al P}$ exists, since $\al P$ is finite. Moreover, the
identity embedding $\widehat{\al P}\to\al P$ is a label-preserving
$G$-homomorphism, because $\widehat{\al P}$ is an $S$-labeled $G$-subtree of
$\al P$. Thus $\widehat{\al P}$ is homomorphically equivalent to $\al P$.
The choice that $\widehat{\al P}$ is minimal among the 
$S$-labeled $G$-subtrees $\al P'$
of $\al P$ for which there exists a label-preserving $G$-homomorphism
$\al P\to\al P'$ ensures that 
$\widehat{\al P}$ is also minimal among the $S$-labeled $G$-subtrees
of $\al P$ that are homomorphically equivalent to $\al P$.
This proves that $\widehat{\al P}$ is a core of $\al P$, as claimed.

(3)
Let $\al Q$ and $\al Q'$ be cores of $\al P$. 
Then $\al Q$ and $\al Q'$ are homomorphically equivalent to $\al P$, so
we can choose label-preserving $G$-homomorphisms
$$
\al Q\overset{\phi}{\underset{\psi}{\leftrightarrows}}\al P
\overset{\phi'}{\underset{\psi'}{\rightleftarrows}}\al Q'
$$
witnessing this fact.
Thus we have  label-preserving $G$-homomorphisms
$$
\al Q\overset{\sigma}{\longrightarrow}\al Q',\quad
\al Q\overset{\,\sigma'}{\longleftarrow}\al Q',\quad
\al Q\overset{\sigma'\circ\sigma}{\longrightarrow}\al Q,\quad
\text{and}\quad
\al Q'\overset{\,\sigma\circ\sigma'}{\longleftarrow}\al Q'
$$
where $\sigma=\phi'\circ\psi$ and $\sigma':=\phi\circ\psi'$.
Since $\al Q$ and $\al Q'$  are cores by part (1), the
latter two label-preserving $G$-homomorphisms are onto.
Since $\al Q$ and $\al Q'$ are finite,
they are also one-to-one.
This implies that $\sigma$ and $\sigma'$ are 
both onto and one-to-one, hence they are isomorphisms.
\end{proof}

To prove that for each $d$ there are, 
up to isomorphism, 
only finitely many
$S$-labeled trees of uniform depth $d$ that are cores
(Lemma~\ref{bound-on-reduced}), we need some 
necessary conditions for an $S$-labeled $G$-tree
to be a core
(Corollary~\ref{reduced-trees}). 
These necessary conditions will be derived
from a general lemma on label-preserving
$G$-homomorphisms between $S$-labeled $G$-trees (Lemma~\ref{homs}).

We start with some preparation.
Let $\al P=(P;{}^*,1_P,G;\ell)$ 
be an $S$-labeled $G$-tree.
The set of elements of depth $1$ in $\al P$, that is, the set
of elements $a\in P$ such that $a^*=1_P$, will be denoted
by $\al P_\ttop$.
For any $a\in\al P_\ttop$ the set of all elements
$b\in P$ such that $b^{*^i}=a$ for some 
integer $i\ge0$ will be denoted by $(a]$.
The next lemma summarizes some basic facts on $\al P_\ttop$ and $(a]$
($a\in\al P_\ttop$) that we will need later on.

\begin{lemma}
\label{basic_tree}
Let $\al P=(P;{}^*,1_P,G;\ell)$ 
be an $S$-labeled $G$-tree.
\begin{enumerate}[\indent\rm(1)]
\item
$(a]\cap(b]=\emptyset$ if $a,b$ are distinct elements of $\al P_\ttop$, and
\[
P=\{1_P\}\cup\bigcup\bigl((a]:a\in\al P_\ttop\bigr).
\]
\item
$\al P_\ttop$ and $\al P_\leaf$ are unions of $G$-orbits.
\item
For each $a\in\al P_\ttop$,
\begin{enumerate}[{\rm(i)}]
\item
if $c\in(a]$, $c\not=a$, then $c^*\in(a]$;
\item
if $g\in G$, then $g\cdot(a]=(ga]$;
hence $g\cdot(a]=(a]$ if and only if $g\in G_a$.
\end{enumerate}
\item
For each $a\in\al P_\ttop$, $(a]$ is the underlying set of an
$S$-labeled $G_a$-tree
\[
(a]_{\al P}:=\bigl((a];{}^*,1_{(a]},G_a;\ell\bigr)
\]
where $1_{(a]}=a$, ${}^*$ is the restriction of the successor
function of $\al P$ to the set $(a]\setminus\{a\}$,
the action of each $g\in G_a$ on $(a]$ is obtained by restricting
the action of $g$ to $(a]$, and 
$\ell$ is the restriction of the labeling of 
the leaves of $\al P$ to the leaves of $(a]$.
\item
$\bigl((a]_\al P\bigr)_\leaf=\al P_\leaf\cap(a]$ for each $a\in\al P_\ttop$,
so
if $|P|>1$, then
\[
\al P_\leaf=\bigcup\bigl(((a]_{\al P})_\leaf:a\in\al P_\ttop\bigr).
\]
\end{enumerate}
\end{lemma}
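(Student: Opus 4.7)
The plan is to prove the five parts of Lemma~\ref{basic_tree} in order, each time exploiting the basic structural facts about trees and $G$-trees spelled out just before the lemma. All verifications are routine; the only things to be careful about are the edge case $|P|=1$ and the fact that leaves of $(a]_{\al P}$ must coincide with leaves of $\al P$ lying in $(a]$.

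For part (1), I would use that every $b \in P \setminus \{1_P\}$ has a well-defined depth $d \ge 1$ with $b^{*^d} = 1_P$. Then $a := b^{*^{d-1}} \in \al P_\ttop$ and $b \in (a]$, giving the union. Uniqueness of $a$ for the given $b$ (and hence disjointness of the $(a]$'s) follows because $a$ must be the unique element of depth $1$ on the chain $b, b^*, b^{**}, \ldots, 1_P$, which is determined by the functionality of ${}^*$. Part (2) is immediate from the fact that each $g \in G$ acts as a tree automorphism: by (H0) and (H2), $g$ commutes with ${}^*$ and fixes $1_P$, so an easy induction shows $g$ preserves depth, whence $\al P_\ttop$ is $G$-invariant; and (H1) gives that $\al P_\leaf$ is $G$-invariant.

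For (3)(i), if $c \in (a]$ and $c \neq a$, then $c^{*^i} = a$ for some $i \geq 1$, so $(c^*)^{*^{i-1}} = a$, giving $c^* \in (a]$. For (3)(ii), applying $g$ to $b^{*^i} = a$ yields $(gb)^{*^i} = ga$, showing $g\cdot(a] \subseteq (ga]$; the reverse inclusion follows by applying $g^{-1}$ to the analogous statement for $ga$. Then $g \cdot (a] = (a]$ iff $(ga] = (a]$, and since by part (1) each set $(c]$ for $c \in \al P_\ttop$ contains a unique depth-$1$ element, this forces $ga = a$, i.e.\ $g \in G_a$.

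Part (4) is a check that $(a]_{\al P}$ satisfies every axiom of an $S$-labeled $G_a$-tree: (3)(i) gives closure of ${}^*$ on $(a] \setminus \{a\}$; the tree axiom (reaching $a = 1_{(a]}$ from every element by iterating ${}^*$) is built into the definition of $(a]$; (3)(ii) shows that $G_a$ preserves $(a]$; each $g \in G_a$ still commutes with ${}^*$ and fixes $a$, so it acts on $(a]$ as a tree automorphism; and the restricted labeling is well-defined on the leaves, provided we know those leaves are actual leaves of $\al P$, which is part (5). For (5), I would show both inclusions: if $b \in (a]$ is a leaf of $\al P$ then trivially no preimage under ${}^*$ in $(a]$ exists; conversely, if some $c \in P$ satisfies $c^* = b$ with $b \in (a]$, then $c^{*^{i+1}} = b^{*^i} = a$, so $c \in (a]$, and $b$ fails to be a leaf of $(a]_{\al P}$. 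Finally, when $|P| > 1$ the set $\al P_\ttop$ is nonempty, so $1_P$ lies in the range of ${}^*$ and is not a leaf; thus every leaf of $\al P$ has depth $\geq 1$ and hence lies in some $(a]$, giving the union formula via part (1). No single step is a real obstacle; the main point is to keep straight the small bookkeeping (in particular the $|P|=1$ case in (5), where the union on the right is empty).
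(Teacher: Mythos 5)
Your proof is correct and takes essentially the same approach as the paper: part (1) via the unique depth of each non-root element, parts (2)--(4) as routine consequences of the automorphism property and the definitions, and part (5) via the observation that leaves of $(a]_{\al P}$ coincide with leaves of $\al P$ inside $(a]$, combined with part (1). You spell out more of the routine verifications (in particular the two inclusions in (5) and the $|P|>1$ edge case) than the paper, which dismisses most of (2)--(4) as immediate, but the underlying argument is the same.
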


\begin{proof}
Recall that for each element
$u\in P\setminus\{1_P\}$ there exists a unique positive integer
$d$, the depth of $u$, such that $u^{*^d}=1_P$. 
Thus $u^{*^{d-1}}\in\al P_\ttop$ and $u\in(u^{*^{d-1}}]$,
which proves the displayed equality in (1).
Moreover, if $u\in(a]$ for some $a\in\al P_\ttop$, then
the definitions of $\al P_\ttop$ and $(a]$ yield that $a^*=1_P$
and $u^{*^i}=a$ for some integer $i\ge0$.
Thus $u^{*^{i+1}}=1_P$, and the uniqueness of the depth of $u$
implies that $d=i+1$.
Hence $a=u^{*^{d-1}}$, showing that $u\in(a]$ for a unique $a\in\al P_\ttop$. 
This completes the proof of (1).

(2) and (3) are immediate consequences of the definitions, using also
the fact that
each $g\in G$ acts by automorphisms of the tree $(P; 1_P,{}^*)$.
(3) ensures that ${}^*$ and $g\in G_a$ restrict to $(a]$ as claimed.
The properties of the operations of $\bigl((a];{}^*,1_{(a]},G_a\bigr)$ 
that make it a $G_a$-tree are inherited from $\al P$.
Furthermore,
it follows from the definition of $(a]$ that
the leaves of the tree $((a];1_{(a]},{}^*)$ are exactly 
the leaves of $\al P$ that are in $(a]$.
This establishes the first equality in (5), and also implies that
the restriction of $\ell$ to $(a]$ (also denoted by $\ell$) yields 
an $S$-labeling of the leaves of the $G_a$-tree
$\bigl((a];{}^*,1_{(a]},G_a\bigr)$. 
This proves (4).
Finally, the displayed equality in (5) follows from the equality 
$\bigl((a]_\al P\bigr)_\leaf=\al P_\leaf\cap(a]$
proved earlier and
the displayed equality in (1).
\end{proof}

It follows from the preceding lemma that every $S$-labeled $G$-tree
is the disjoint union of the $S$-labeled $G_a$-trees $(a]_\al P$
($a\in\al P_\ttop$)
with a new top element $1_P$ added.
In the next lemma 
we will use this structure of $S$-labeled $G$-trees to analyze
the label-preserving $G$-homomorphisms between them. 

\begin{lemma}
\label{homs}
Let $\al P=(P;{}^*,1_P,G;\ell_P)$ 
and $\al Q=(Q;{}^*,1_Q,G;\ell_Q)$ 
be $S$-labeled $G$-trees, and let
$\{a_i:1\le i\le t\}$ be a transversal for the $G$-orbits of
$\al P_\ttop$. 
If $b_i$ $(1\le i\le t)$ are elements of 
$\al Q_\ttop$ such that $G_{a_i}=G_{b_i}$ 
for each $i$, then 
\begin{enumerate}
\item[{\rm(1)}]
every family
$\{\psi_i:1\le i\le t\}$ of 
label-preserving $G_{a_i}$-homomorphisms
$\psi_i\colon (a_i]_{\al P}\to(b_i]_{\al Q}$ 
has a unique extension to a label-preserving $G$-ho\-mo\-mor\-phism
$\phi\colon\al P\to\al Q$.
\item[{\rm(2)}]
$\phi$ is onto if and only if
every $G$-orbit of 
$\al Q_\ttop$ contains at least one $b_i$, and
\[
(b_i]=\bigcup(h\cdot\range{\psi_j}: 1\le j\le t,\ h\in G,\ hb_j=b_i)
\]
for each $i$ $(1\le i\le t)$.
\item[{\rm(2)$'$}]
In particular, if 
$\{b_i:1\le i\le t\}$ is a transversal for the $G$-orbits of
$\al Q_\ttop$, then $\phi$ is onto if and only if
each $\psi_i$ is onto. 
\item[{\rm(3)}]
$\phi$ is bijective if and only if 
$\{b_i:1\le i\le t\}$ is a transversal for the $G$-orbits of
$\al Q_\ttop$ and 
each $\psi_i$ is bijective.
\end{enumerate}
\end{lemma}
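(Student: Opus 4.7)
The plan is to exploit the decomposition $P=\{1_P\}\sqcup\bigsqcup_{i=1}^t G\cdot(a_i]$ from Lemma~\ref{basic_tree}(1)(3) to force a unique definition of $\phi$ and then verify the required properties orbit by orbit. For (1), any label-preserving $G$-homomorphism $\phi\colon\al P\to\al Q$ extending the $\psi_i$ must send $1_P\mapsto 1_Q$ by (H0) and must satisfy $\phi(g\cdot x)=g\cdot\phi(x)=g\cdot\psi_i(x)$ for $x\in(a_i]$ and $g\in G$ by (H3), giving uniqueness. For existence I would define $\phi$ by exactly this formula. Well-definedness is the only nontrivial point: if $g\cdot x=g'\cdot x'$ with $x\in(a_i]$, $x'\in(a_j]$, then Lemma~\ref{basic_tree}(3)(ii) gives $g\cdot(a_i]=g'\cdot(a_j]$ so $ga_i$ and $g'a_j$ lie in one $G$-orbit; transversality of $\{a_i\}$ forces $i=j$, and applying ${}^{*^d}$ for the common depth of $a_i$ yields $g a_i=g' a_i$, hence $(g')^{-1}g\in G_{a_i}=G_{b_i}$. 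The $G_{a_i}$-equivariance of $\psi_i$ then gives $\psi_i(x')=\psi_i\bigl((g')^{-1}g\cdot x\bigr)=(g')^{-1}g\cdot\psi_i(x)$, i.e.\ $g\cdot\psi_i(x)=g'\cdot\psi_i(x')$. The remaining verifications---that $\phi$ preserves the successor function, the $G$-action, leaves, and labels---reduce to the corresponding properties of the~$\psi_i$ together with Lemma~\ref{basic_tree}.

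For (2), the analogous decomposition $Q=\{1_Q\}\sqcup\bigsqcup_{b\in\al Q_\ttop}(b]$ shows that $\phi$ is onto iff $\bigcup_{b\in\al Q_\ttop}(b]\subseteq\range\phi$. Since $\phi$ maps $g\cdot(a_i]$ into $(gb_i]$ with image $g\cdot\range\psi_i$, surjectivity onto $\al Q_\ttop$ is equivalent to every $G$-orbit of $\al Q_\ttop$ containing some $b_i$, and surjectivity onto each $(b_i]$ amounts exactly to the displayed covering identity. Statement (2$'$) is an immediate specialization: when $\{b_i\}$ is a transversal, the condition $hb_j=b_i$ forces $j=i$ and $h\in G_{b_i}=G_{a_i}$, and the $G_{a_i}$-equivariance of $\psi_i$ makes $\range\psi_i$ a $G_{a_i}$-invariant subset of $(b_i]$, so $h\cdot\range\psi_i=\range\psi_i$ and the covering identity collapses to $\range\psi_i=(b_i]$.

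For (3), the ``if'' direction combines (2$'$) with an injectivity check: if $\phi(g\cdot x)=\phi(g'\cdot x')$ with $x\in(a_i]$, $x'\in(a_j]$, then $gb_i$ and $g'b_j$ share a $G$-orbit of $\al Q_\ttop$, so transversality of $\{b_i\}$ gives $i=j$; the argument from (1) then shows $(g')^{-1}g\in G_{a_i}$, and bijectivity of $\psi_i$ upgrades the resulting equation $\psi_i\bigl((g')^{-1}g\cdot x\bigr)=\psi_i(x')$ to $g\cdot x=g'\cdot x'$. For the ``only if'' direction, (2) already ensures every $\al Q_\ttop$-orbit meets $\{b_i\}$; if $b_i\ne b_j$ were in the same orbit, say $b_j=hb_i$, then $\phi(a_j)=b_j=h\phi(a_i)=\phi(h a_i)$ with $a_j\ne h a_i$ (since $a_i,a_j$ lie in distinct $G$-orbits by transversality), contradicting injectivity; hence $\{b_i\}$ is a transversal, and then (2$'$) plus $\psi_i=\phi|_{(a_i]}$ makes each $\psi_i$ bijective.

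The main obstacle is the well-definedness step in (1), which is precisely where the hypothesis $G_{a_i}=G_{b_i}$ is essential---it allows one to transport the coincidence $g\cdot x=g'\cdot x'$ through $\psi_i$ using $G_{a_i}$-equivariance. Everything else is routine orbit-by-orbit bookkeeping against Lemma~\ref{basic_tree}.
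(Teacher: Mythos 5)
Your proposal is correct and follows essentially the same route as the paper: uniqueness is forced by (H0) and (H3), existence is given by the explicit formula $\phi(g\cdot x)=g\cdot\psi_i(x)$ with well-definedness resting on the disjointness of the sets $(b]$ over $b\in\al P_\ttop$ (Lemma~\ref{basic_tree}(1)), transversality, and the hypothesis $G_{a_i}=G_{b_i}$, and (2), (2$'$), (3) are deduced from the same range computation and injectivity check. The only difference is presentational: the paper first reduces to the single-orbit case ($t=1$) and then glues the pieces, whereas you define $\phi$ globally at once; the content and the use of $G_{a_i}=G_{b_i}$ are identical.
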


\begin{proof}
Let $a_i,b_i$ ($1\le i\le t$) satisfy the assumptions of the lemma.
Fix an $i$ ($1\le i\le t$), and consider the $G$-orbit
$Ga_i=\{ha_i:h\in G\}$ of $a_i$. 
As we noticed in Lemma~\ref{basic_tree}~(2), $Ga_i\subseteq\al P_\ttop$.
We claim that the subset 
$P_i=\{1_P\}\cup\bigcup_{h\in G}(ha_i]$ 
of $P$ is the underlying set of an $S$-labeled 
$G$-subtree $\al P_i$ of $\al P$.
Indeed, the definition of $\al P_\ttop$ and Lemma~\ref{basic_tree}~(3)
shows that 
the successor of every element
of $P_i\setminus\{1_P\}$ is in $P_i$, and that
$P_i$ is closed under the action of $G$. 
Furthermore, it follows from the first equality in
Lemma~\ref{basic_tree}~(5)
that $(\al P_i)_\leaf=\al P_\leaf\cap\al P_i$.
This proves that 
$P_i$ is the underlying set of an $S$-labeled 
$G$-subtree $\al P_i$ of $\al P$.
In fact, $\al P_i$ is the smallest $S$-labeled $G$-subtree of 
$\al P$ that contains $(a_i]$. 
For, if $\al P'_i$ is an $S$-labeled $G$-subtree of 
$\al P$ such that $(a_i]\subseteq P_i'$, then
$1_P\in P'_i$ by the definition of a subtree, and
$(ha_i]=h\cdot(a_i]\subseteq P'_i$, since
$\al P'_i$ is closed under the action of $G$.
Thus $P_i\subseteq P'_i$.

Similarly, for each $i$ ($1\le i\le t$), the subset 
$Q_i=\{1_Q\}\cup\bigcup_{h\in G}(hb_i]_{\al Q}$ 
of $Q$ is the underlying set of an $S$-labeled 
$G$-subtree $\al Q_i$ of $\al Q$, and
$\al Q_i$ is the smallest $S$-labeled $G$-subtree of 
$\al P$ that contains $(b_i]$. 

(1)
Now assume that $\{\psi_i:1\le i\le t\}$ is a family of
label-preserving $G_{a_i}$-homomorphisms
$\psi_i\colon (a_i]_{\al P}\to(b_i]_{\al Q}$.
First we prove the uniqueness of the extension $\phi$ 
claimed in (1).
Assume $\phi\colon\al P\to\al Q$ is 
a label-preserving $G$-homomorphism that extends all 
$\psi_i$. 
Then $\phi(1_P)=1_P$ and, by Lemma~\ref{basic_tree} (3)(ii), 
for each $h\in G$ and $c\in(ha_i]$ we have $h^{-1}c\in(a_i]$, so
\[
\phi(c)=\phi\bigl(h(h^{-1}c)\bigr)=h\phi(h^{-1}c)=h\psi_i(h^{-1}c).
\]
This proves that $\phi$ is uniquely determined by the $\psi_i$'s.

To prove the existence of $\phi$ we will verify that 
under the assumptions of the lemma,
for each $i$ ($1\le i\le t$),
\begin{enumerate}
\item[(I)$_i$]
the rule
\[
\phi_i(c)=\begin{cases}
        1_Q & \text{if $c=1_P$}\\
        h\psi_i(h^{-1}c) & \text{if $c\in(ha_i]$ ($h\in G$)}
        \end{cases}
\]
defines a label-preserving $G$-homomorphism 
$\phi_i\colon \al P_i\to\al Q_i$
that extends $\psi_i$,
\end{enumerate}
and 
\begin{enumerate}
\item[(II)]
for any family $\{\phi_i:1\le i\le t\}$
of label-preserving $G$-homomorphisms 
$\phi_i\colon\al P_i\to\al Q_i$, the union
$\phi$ of the $\phi_i$'s is a label-preserving 
$G$-ho\-mo\-morphism $\al P\to\al Q$.
\end{enumerate}
We will start with (II).
By Lemma~\ref{basic_tree}~(1)
every element $c$ of $\al P$ other than $1_P$ 
belongs to a subset of the form
$(a]$ for a unique $a\in\al P_\ttop$.
Since $\{a_i:1\le i\le t\}$ is a transversal for the $G$-orbits of
$\al P_\ttop$, the $G$-orbits $Ga_i$ partition $\al P_\ttop$.
Moreover, $(\al P_i)_\ttop=Ga_i$ for each $i$, therefore 
it follows that
every element $c$ of $\al P$ other than $1_P$  
belongs to exactly one of the $G$-subtrees $\al P_i$ of $\al P$.
As for $1_P$, we have $\phi_i(1_P)=1_Q$ 
for each $i$, since $\phi_i$ is a homomorphism
of trees. 
Thus we get that 
$\phi:=\bigcup_{i=1}^t\phi_i$ is a well-defined function $P\to Q$.

To prove that $\phi$ is a label-preserving $G$-homomorphism
$\al P\to\al Q$
we have to verify that it satisfies conditions (H1)--(H4)
((H0) is established already).
Since all $\phi_i$ are label-preserving $G$-homomorphisms,
they satisfy conditions (H1)--(H4).
In particular, $\phi_i$ maps $(\al P_i)_\leaf$ into $(\al Q_i)_\leaf$.
But the displayed equality in Lemma~\ref{basic_tree}~(5) applied to
$\al P$ and each $\al P_i$ shows that
$\al P_\leaf=\bigcup_{i=1}^t (\al P_i)_\leaf$, so (H1) follows for
$\phi$.
Since each $\al P_i$ is an $S$-labeled $G$-subtree of $\al P$,
conditions (H2)--(H4) immediately follow from the corresponding
conditions for the $\phi_i$'s.
This completes the proof of (II).

For each $i$, statement (I)$_i$ is a special case of the general 
statement about the existence of $\phi$, 
namely the special case when 
$\al P_\ttop$ is a single
$G$-orbit $Ga$.
Therefore all (I)$_i$ will be proved if we show the existence of $\phi$
for the case when $t=1$ holds for $\al P$.
To simplify notation, we will omit subscripts; that is, 
we let $a$ be an element of $\al P_\ttop$, and assume that 
$\al P=\{1_P\}\cup\bigcup_{h\in G}(ha]_{\al P}$.
Furthermore, we let $b$ be an element of 
$\al Q_\ttop$ with $G_a = G_b$, and let 
$\psi\colon(a]_{\al P}\to(b]_{\al Q}$ be a label-preserving
$G_a$-homomorphism.
Our goal is to show that 
\[
\phi(c)=\begin{cases}
        1_Q & \text{if $c=1_P$}\\
        h\psi(h^{-1}c) & \text{if $c\in(ha]$ ($h\in G$)}
        \end{cases}
\]
defines a label-preserving $G$-homomorphism 
$\phi\colon \al P\to\al Q$
that extends $\psi$.

First we show that $\phi$ is a well-defined function $P\to Q$.
If $c\in(ha]$, then $h^{-1}c\in(a]$, therefore $\psi(h^{-1}c)$
is defined, and hence so is $h\psi(h^{-1}c)$.
Suppose now that $c\in(ha]$ and $c\in(ga]$. 
Since $ha,ga\in\al P_\ttop$, 
Lemma~\ref{basic_tree}~(1) shows that
$ha=ga$. Thus
$g^{-1}ha=a$, that is, $g^{-1}h\in G_a$.
Hence
\[
g\psi(g^{-1}c)=g\psi\bigl((g^{-1}h)(h^{-1}c)\bigr)
=g(g^{-1}h)\psi(h^{-1}c)=h\psi(h^{-1}c),
\]
where the middle equality holds, because $\psi$ is a $G_a$-homomorphism.
This shows that $\phi$ is well-defined. 
Clearly, $\phi$ is an extension of $\psi$, for if
$c\in(a]$, 
then an application of the definition of $\phi$ 
to $h=\groupone$, the neutral element of $G$,
yields that $\phi(c)=\psi(c)$.  

To prove that $\phi$ is a label-preserving $G$-homomorphism
$\al P\to\al Q$,
we need to check that conditions (H0)--(H4) hold for $\phi$.
(H0) is obvious from the definition of $\phi$, and
(H1) holds, because $\psi$ as well as the actions of 
$h\in G$ map leaves to leaves. 
To show that (H2) holds let $c\in P\setminus\{1_P\}$.
As $c\not=1_P$, we have that $c\in(ha]$ for some $h\in G$.
Assume first that $c=ha$.
Since $\psi$ is a $G_a$-homomorphism 
$(a]_{\al P}\to(b]_{\al Q}$
and $a=1_{(a]}$, $b=1_{(b]}$, therefore we get that $\psi(a)=b$,
Hence 
if $c=ha$, then 
$\phi(c)=h\psi(h^{-1}c)=
h\psi(a)=hb\in\al Q_\ttop$, so $\phi(c)^*=1_Q=\phi(1_P)=\phi(c^*)$.
Now assume that $c\in(ha]$ but $c\not=ha$.
Then $h^{-1}c\in(a]$ and $h^{-1}c\not=a$.
Hence $(h^{-1}c)^*$ in $(a]_{\al P}$ is the same as 
$(h^{-1}c)^*$ in $\al P$, which is equal to $h^{-1}c^*$.
Using this (in the fourth equality below) we get that
\[
\phi(c)^*
=\bigl(h\psi(h^{-1}c)\bigr)^*
=h\bigl(\psi(h^{-1}c)\bigr)^*
=h\psi\bigl((h^{-1}c)^*\bigr)
=h\psi(h^{-1}c^*)
=\phi(c^*),
\]
which completes the proof of (H2).
Next we prove (H3).
Every $g\in G$ acts by tree automorphisms, therefore
$g\cdot 1_P=1_P$ and $g\cdot 1_Q=1_Q$, whence
$\phi(g\cdot 1_P)=\phi(1_P)=1_Q=g\cdot 1_Q=g\phi(1_P)$.
To prove (H3) for elements $c\not=1_P$
let $c\in(ha]$ and $g\in G$.
Then $gc\in(gha]$, hence
$\phi(gc)=gh\psi((gh)^{-1}gc)=gh\psi(h^{-1}c)=g\phi(c)$.
Thus (H3) holds for $\phi$.
Finally, we verify (H4).
Let $c\in\al P_\leaf$. Then $c$ is a leaf in $(ha]_{\al P}$ 
for some $h\in G$,
and hence
$h^{-1}c$ is a leaf in $(a]_{\al P}$.
Since $\psi\colon(a]_{\al P}\to(b]_{\al Q}$ is a label-preserving
$G_a$-homomorphism, 
$\psi(h^{-1}c)$ is a leaf in $(b]_{\al Q}$.
Using the facts that $\ell_P$, $\ell_Q$ are labelings of $\al P$ and $\al Q$,
and their restrictions are the labelings of
$(a]_{\al P}$ and $(b]_{\al Q}$, 
we get that
\begin{multline*}
\ell_Q(\phi(c))
=\ell_Q\bigl(h\psi(h^{-1}c)\bigr)
=h\ell_Q\bigl(\psi(h^{-1}c)\bigr) \\
=h\ell_P(h^{-1}c)
=hh^{-1}\ell_P(c)
=\ell_P(c),
\end{multline*}
proving (H4).
This finishes the proof of statement (1) of the lemma.

(2) We return to the general case; that is, 
$\{a_i:1\le i\le t\}$ is a transversal for the $G$-orbits of
$\al P_\ttop$,  $\{b_i:1\le i\le t\}$ is a subset of 
$\al Q_\ttop$ such that $G_{a_i}=G_{b_i}$ 
for each $i$, $\{\psi_i:1\le i\le t\}$ is a family of
label-preserving $G_{a_i}$-homomorphisms
$\psi_i\colon (a_i]_{\al P}\to(b_i]_{\al Q}$, and
$\phi$ is the unique extension of all $\psi_i$'s
to a label-preserving homomorphism $\phi\colon\al P\to\al Q$
constructed in part (1).
The two-step construction of $\phi$ described in (I)${}_i$ and (II)
above shows that
\[
\phi(c)=\begin{cases}
        1_Q & \text{if $c=1_P$},\\
        h\psi_i(h^{-1}c) & \text{if $c\in(ha_i]$ for some 
                                $1\le i\le t$ and some $h\in G$.}
        \end{cases}
\]
We claim that an element $u$ of $\al Q$ is in the range of 
$\phi$ if and only if either $u=1_Q$ or $u\in h\cdot\range{\psi_i}$
for some $i$ and some $h\in G$.
The necessity of this condition is clear from the description of $\phi$
above. For the sufficiency, let $u=1_Q$ or $u\in h\cdot\range{\psi_i}$.
In the first case, clearly, $u$ is in the range of $\phi$.
In the second case $u=h\psi_i(v)$ for some $v\in(a_i]$,
so for $c=hv$ we have $c\in(ha_i]$ and 
$u=h\psi_i(v)=h\psi_i(h^{-1}c)=\phi(c)$. Thus $u$ is in the range of
$\phi$, as claimed.
This proves that
\[
\range{\phi}=\{1_Q\}\cup\bigcup(G\cdot\range{\psi_j}:1\le j\le t).
\]
Since $\psi_j$ maps $a_j=1_{(a_j]}$ to $1_{(b_j]}=b_j$, we have
$b_j\in\range{\psi_j}\subseteq(b_j]$. It follows that
$\al Q_\ttop\cap G\cdot\range{\psi_j}=Gb_j$ holds for all $j$.
Thus
\[
\al Q_\ttop\cap\range{\phi}
=\bigcup(\al Q_\ttop\cap G\cdot\range{\psi_j}:1\le j\le t)
=\bigcup(Gb_j:1\le j\le t).
\]
For any two distinct elements $b,b'\in\al Q_\ttop$, the subsets
$(b]$ and $(b']$ are disjoint by Lemma~\ref{basic_tree}~(1). Therefore 
$(b_i]$ is disjoint from $h\cdot\range{\psi_j}\bigl(\subseteq(hb_j]\bigr)$
unless $b_i=hb_j$, and hence
$h\cdot\range{\psi_j}\subseteq(b_i]$.
Thus, for each $i$ ($1\le i\le t$),
\begin{align*}
(b_i]\cap\range{\phi}
&{}=(b_i]\cap \bigcup(G\cdot\range{\psi_j}:1\le j\le t)\\
&{}=\bigcup(h\cdot\range{\psi_j}:1\le j\le t,\ h\in G,\ hb_j=b_i).
\end{align*}

Hence, for $\phi$ to map onto $Q$, it is necessary that
$\al Q_\ttop=\bigcup(Gb_j:1\le j\le t)$ and
$(b_i]=\bigcup(h\cdot\range{\psi_j}:1\le j\le t,\ h\in G,\ hb_j=b_i)$
for each $i$ ($1\le i\le t$).
This shows that the conditions in (2) are necessary.
Conversely, assume that $\phi$ satisfies these conditions.
The second one of these conditions implies that $(b_i]\subseteq\range{\phi}$
for all $i$ $(1\le i\le t)$.
Since $\phi$ is a $G$-homomorphism $\al P\to\al Q$, its range
is closed under the actions of all $g\in G$.
Combining this with the condition $\al Q_\ttop=\bigcup(Gb_j:1\le j\le t)$
we obtain that
\begin{align*}
Q\setminus\{1_Q\}
&{}=\bigcup\bigl((b]:b\in\al Q_\ttop\bigr)\\
&{}=\bigcup\bigl((gb_i]:1\le i\le t,\ g\in G\bigr)\\
&{}=\bigcup\bigl(G\cdot(b_i]:1\le i\le t\bigr)
\subseteq \range{\phi}.
\end{align*}
Since $1_Q\in\range{\phi}$, we get that $\phi$ is surjective.
This proves statement (2).

(2)$'$
Now assume that $\{b_i:1\le i\le t\}$
is a transversal for the $G$-orbits of $\al Q_\ttop$.
Then $hb_j=b_i$ holds for some $1\le i,j\le t$ and $h\in G$
only if $i=j$ and $h\in G_{b_i}$.
Therefore 
\[
\bigcup(h\cdot\range{\psi_j}: 1\le j\le t,\ h\in G,\ hb_j=b_i)
=\bigcup(h\cdot\range{\psi_i}: h\in G_{b_i})
=\range{\psi_i}.
\]
Hence the criterion in (2) implies that in this special case
$\phi$ is onto
if and only if all $\psi_i$ are onto.

(3) To prove the necessity of the conditions in (3)
suppose that $\phi$ is bijective. 
First we show that $\{b_i:1\le i\le t\}$
is a transversal for the $G$-orbits of $\al Q_\ttop$.
Since $\phi$ is onto, we get from part (2) of the lemma 
that every $G$-orbit of $\al Q_\ttop$ is represented
by at least one element in $\{b_i:1\le i\le t\}$.
If $\{b_i:1\le i\le t\}$ was not a transversal,
there would exist $1\le j<l\le t$ such that
$b_j=hb_l$ for some $h\in G$.
Hence $\phi(a_j)=b_j=hb_l=\phi(ha_l)$, but $a_j\not=ha_l$
as $\{a_i:1\le i\le t\}$ is a transversal for the $G$-orbits of
$\al P_\ttop$.
This shows that $\{b_i:1\le i\le t\}$
is a transversal for the $G$-orbits of $\al Q_\ttop$.
Since $\phi$ is onto, it follows from part (2)$'$ of the lemma
that each $\psi_i$ is onto; 
$\psi_i$ is also one-to-one, since
$\phi$ extends $\psi_i$ and $\phi$ is one-to-one.
This proves that the conditions in (3) are indeed necessary
for $\phi$ to be bijective.

Conversely, if $\phi$ satisfies the conditions in (3), then
it is clearly onto by the criterion in part (2)$'$.
To verify that $\phi$ is one-to-one, let $c,c'$ be elements in
$\al P$ such that $\phi(c)=\phi(c')$.
It is clear from the description of $\phi$ that
the only element whose $\phi$-image is $1_Q$ is $1_P$.
Therefore if $1_P\in\{c,c'\}$, say 
$c=1_P$, then $\phi(c')=\phi(c)=1_Q$, so $c'=1_P$ and hence $c=c'$.
Assume from now on that $c,c'\not=1_P$.
Then $c\in (ha_i]$ and $c'\in(h'a_{i'}]$ for some $i,i'$
($1\le i,i'\le t$) and $h,h'\in G$.
Since $\phi(c)=h\psi_i(h^{-1}c)\in h\cdot(b_i]=(hb_i]$ and similarly
$\phi(c')\in (h'b_{i'}]$, the assumption
that $\phi(c)=\phi(c')$, combined with Lemma~\ref{basic_tree}~(1),
implies that $hb_i=h'b_{i'}$.
Our assumption that $\{b_i:1\le i\le t\}$
is a transversal for the $G$-orbits of $\al Q_\ttop$
forces that $i=i'$ and $hG_{b_i}=h'G_{b_i}$.
Throughout the lemma we assume $G_{b_i}=G_{a_i}$ for each $i$,
therefore $ha_i=h'a_i$ and both of $c,c'$ belong to
$(ha_i]$.
Thus the equality $\phi(c)=\phi(c')$ can be rewritten as
$h\psi_i(h^{-1}c)=h\psi_i(h^{-1}c')$.
Hence $\psi_i(h^{-1}c)=\psi_i(h^{-1}c')$, and since $\psi_i$
is bijective, $h^{-1}c=h^{-1}c'$, which implies that $c=c'$.
This proves the sufficiency of the conditions in (3), and completes
the proof of the lemma.
\end{proof}

\begin{corollary}
\label{reduced-trees}
If $\al Q=(Q;{}^*,1_Q,G;\ell)$ is
an $S$-labeled $G$-tree that is a core, then
the following hold for arbitrary elements $a,b$ of $\al Q_\ttop$: 
\begin{enumerate}[\indent\rm(i)]
\item
$(a]_{\al Q}$, as an $S$-labeled $G_a$-tree, is a core.
\item
If $G_a=G_b$ and $(a]_{\al Q}\cong(b]_{\al Q}$ as $S$-labeled
$G_a$-trees, then $Ga=Gb$.
\end{enumerate} 
\end{corollary}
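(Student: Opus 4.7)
The plan is to derive both parts from Lemma~\ref{homs}, which lets us assemble a label-preserving $G$-endomorphism of $\al Q$ from prescribed label-preserving $G_{a_i}$-homomorphisms on the summands $(a_i]_{\al Q}$; the core property of $\al Q$ then forces every such endomorphism to be onto, which in turn constrains the local data.

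For (i), I would take an arbitrary label-preserving $G_a$-homomorphism $\tau\colon(a]_{\al Q}\to(a]_{\al Q}$ and aim to show $\tau$ is onto. Extend $a$ to a transversal $\{a_1,\ldots,a_t\}$ for the $G$-orbits of $\al Q_\ttop$ with $a_1=a$, set $b_i:=a_i$ for all $i$ (so $G_{a_i}=G_{b_i}$ trivially), and take $\psi_1:=\tau$ and $\psi_i:=\id_{(a_i]}$ for $i\ge 2$. By Lemma~\ref{homs}(1) these extend uniquely to a label-preserving $G$-homomorphism $\phi\colon\al Q\to\al Q$, and since $\{b_i\}$ is a transversal, Lemma~\ref{homs}(2)$'$ asserts that $\phi$ is onto if and only if each $\psi_i$ is onto. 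As $\al Q$ is a core, $\phi$ is onto, so $\tau=\psi_1$ must be onto, proving $(a]_{\al Q}$ is a core.

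For (ii), I would argue by contradiction: assume $G_a=G_b$ and there is an isomorphism $\sigma\colon(a]_{\al Q}\to(b]_{\al Q}$ of $S$-labeled $G_a$-trees, but $Ga\ne Gb$. Extend $\{a,b\}$ to a transversal $\{a_1,\ldots,a_t\}$ for the $G$-orbits of $\al Q_\ttop$ with $a_1=a$ and $a_2=b$, and this time set $b_1:=b$, $b_2:=b$, and $b_i:=a_i$ for $i\ge 3$; the compatibility $G_{a_i}=G_{b_i}$ holds for each $i$, using $G_a=G_b$ at $i=1$. With $\psi_1:=\sigma$ and $\psi_i:=\id$ for $i\ge 2$ (viewed as a map $(a_i]_{\al Q}\to(b_i]_{\al Q}$), Lemma~\ref{homs}(1) yields a label-preserving $G$-endomorphism $\phi\colon\al Q\to\al Q$. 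The $G$-orbit $Ga$ of $\al Q_\ttop$ now contains none of $b_1,\ldots,b_t$: it differs from $Gb$ by assumption, and from each $Ga_i$ with $i\ge 3$ because $\{a_i\}$ is a transversal. Hence by the first clause of Lemma~\ref{homs}(2), $\phi$ is not onto, contradicting the core property of $\al Q$.

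The only delicate bookkeeping is the compatibility $G_{a_i}=G_{b_i}$ in part (ii), where the hypothesis $G_a=G_b$ is exactly what permits two of the $b_i$'s to lie in the same $G$-orbit without violating Lemma~\ref{homs}; once that is set up, both parts are direct applications of Lemma~\ref{homs}, so I do not anticipate a substantive obstacle.
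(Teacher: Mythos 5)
Your proof is correct and follows essentially the same route as the paper's: both parts extend the local map to a global $G$-endomorphism via Lemma~\ref{homs}(1) and then invoke the surjectivity criteria in parts (2) and (2)$'$ of that lemma against the core property of $\al Q$. The only cosmetic difference is that you argue (i) directly while the paper states it as a contrapositive; the bookkeeping for (ii), including $b_1=b_2=b$ and the observation that $Ga$ misses every $b_i$, matches the paper's proof exactly.
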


\begin{proof}
(i) Suppose that $(a]_{\al Q}$, as an $S$-labeled $G_a$-tree, is not a core.
Then there exists a label-preserving $G_a$-homomorphism
$\psi\colon(a]_{\al Q}\to(a]_{\al Q}$ that is not surjective.
Let $a_1=a,a_2,\ldots,a_t$ be a transversal for the $G$-orbits of
$\al Q_\ttop$, let $\psi_1=\psi$, and  for $2\le i\le t$
let $\psi_i$ be the identity isomorphism $(a_i]_{\al Q}\to(a_i]_{\al Q}$.
Applying Lemma~\ref{homs} we get that the family $\{\psi_i:1\le i\le t\}$
can be extended to a unique label-preserving 
$G$-homomorphism $\phi\colon\al Q\to\al Q$.
Moreover, since $\psi_i(a_i)=a_i$ for all $i$, part (2)$'$ of the lemma
applies and yields that $\phi$ is not surjective.
Therefore $\al Q$ is not a core.

(ii)
Assume that $a,b\in\al Q_\ttop$ are in different $G$-orbits
such that $G_a=G_b$ and $(a]_{\al Q}\cong(b]_{\al Q}$ as $S$-labeled
$G_a$-trees. 
Let $\psi$ be a label-preserving 
$G_a$-isomorphism $(a]_{\al Q}\to(b]_{\al Q}$. 
We want to show that $\al Q$ is not a core.
Let $a_1=a,a_2=b,a_3\ldots,a_t$ be a transversal for the $G$-orbits of
$\al Q_\ttop$, let $\psi_1=\psi$, and  for $2\le i\le t$
let $\psi_i$ be the identity isomorphism $(a_i]_{\al Q}\to(a_i]_{\al Q}$.
Applying Lemma~\ref{homs} we get that the family $\{\psi_i:1\le i\le t\}$
can be extended to a unique label-preserving 
$G$-homomorphism $\phi\colon\al Q\to\al Q$.
Since $\{\psi_i(a_i):1\le i\le t\}=\{b,a_3,\ldots,a_t\}$
does not represent all $G$-orbits of $\al Q_\ttop$, it follows from
part (2) of the lemma that $\phi$ is not surjective.
Therefore $\al Q$ is not a core, as claimed.
\end{proof}

\begin{lemma}
\label{bound-on-reduced}
For every group $G$ and $G$-set $(S;G)$ of labels, and for
each natural number $k$ there exists an integer $n_k=n_k(G,S)$ 
depending only on $k$, $G$, and $(S;G)$ 
such that there are at most $n_k$
nonisomorphic 
$S$-labeled $G$-trees
of uniform depth $k$ that are cores.
\end{lemma}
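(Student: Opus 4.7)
The plan is to argue by induction on $k$, applying the inductive hypothesis not only to $G$ itself but to every subgroup $H\le G$ (with $S$ viewed as an $H$-set by restriction). For the base case $k=0$, an $S$-labeled $G$-tree of uniform depth $0$ consists of a single element $1_P$, which is its unique leaf; since $g\cdot 1_P=1_P$ for all $g\in G$, the label $\ell(1_P)$ must lie in the fixed-point set $S^G$, so there are at most $|S^G|$ isomorphism classes, each of which is trivially a core.

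For the inductive step, let $\al Q=(Q;{}^*,1_Q,G;\ell)$ be an $S$-labeled $G$-tree of uniform depth $k$ that is a core. By Lemma~\ref{basic_tree}, $Q=\{1_Q\}\cup\bigcup_{a\in\al Q_\ttop}(a]$ with $\al Q_\ttop$ a union of $G$-orbits, and for each $a\in\al Q_\ttop$ the subtree $(a]_{\al Q}$ is an $S$-labeled $G_a$-tree of uniform depth $k-1$ that, by Corollary~\ref{reduced-trees}(i), is itself a core. I would fix a set $R$ of representatives of the conjugacy classes of subgroups of $G$, and for each $G$-orbit in $\al Q_\ttop$ choose a representative $a$ with $G_a\in R$. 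By Corollary~\ref{reduced-trees}(ii), for each $H\in R$ the map sending an orbit $Ga$ (with chosen representative satisfying $G_a=H$) to the isomorphism class of $(a]_{\al Q}$ as an $S$-labeled $H$-tree is injective; its image lies in the set of iso classes of $S$-labeled $H$-tree cores of uniform depth $k-1$, whose cardinality is at most $n_{k-1}(H,S)$ by induction.

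Hence the isomorphism type of $\al Q$ is encoded by a tuple, indexed by $H\in R$, of subsets of a set of size at most $n_{k-1}(H,S)$. To verify that this encoding really does determine $\al Q$ up to isomorphism, I would invoke Lemma~\ref{homs}(1) and (3): given two cores $\al Q_1,\al Q_2$ of depth $k$ with identical encodings, I would pair up their top orbits so that paired orbits have the same chosen stabilizer $H\in R$ and carry isomorphic $H$-tree components, select isomorphisms $\psi_i\colon(a_i]_{\al Q_1}\to(b_i]_{\al Q_2}$ realizing each pairing, use Lemma~\ref{homs}(1) to extend to a label-preserving $G$-homomorphism $\phi\colon\al Q_1\to\al Q_2$, and conclude via Lemma~\ref{homs}(3) that $\phi$ is bijective. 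Consequently the number of isomorphism classes of depth-$k$ $S$-labeled $G$-tree cores is at most $\prod_{H\in R}2^{n_{k-1}(H,S)}$, which is finite provided $G$ and $S$ are finite (so that $R$ is finite and each $n_{k-1}(H,S)$ is finite by induction).

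The main obstacle, I anticipate, is the bookkeeping around conjugacy classes of stabilizers: a single $G$-orbit admits representatives with distinct but conjugate stabilizers, whereas Corollary~\ref{reduced-trees}(ii) is formulated only for literally equal stabilizers. Normalizing by choosing representatives with stabilizers in the fixed transversal $R$ resolves this, provided one is careful when pairing orbits and matching stabilizer data in the final reconstruction step via Lemma~\ref{homs}.
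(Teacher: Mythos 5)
Your proof is correct and takes essentially the same approach as the paper: induction on depth via the top-orbit decomposition, Corollary~\ref{reduced-trees} for pairwise distinctness of the components, and Lemma~\ref{homs} parts (1) and (3) to reconstruct the tree from its depth-$(k-1)$ summands. The paper sidesteps the conjugacy-class normalization you introduce by simply encoding each orbit representative via the pair $(G_{a_i},\iso{(a_i]_{\al Q}})$ with the literal subgroup $G_{a_i}$ attached, which already makes the pairs pairwise distinct and suffices for the counting argument (bound $2^{\sum_{H\le G}n_{k-1}(H,S)}$); your normalization to a transversal $R$ of conjugacy classes is a valid refinement yielding the marginally tighter bound $2^{\sum_{H\in R}n_{k-1}(H,S)}$.
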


\begin{proof}
Let $\al Q=(Q;{}^*,1_Q,G;\ell)$ 
be an $S$-labeled $G$-tree
of uniform depth $k$ that is a core.
We want to find an upper bound on the number of possibilities for $\al Q$,
up to isomorphism. 

If $k=0$, then $Q=\{1_Q\}$, and the unique element (which is a leaf)
can be labeled in $|S|$ different ways. 
Therefore in this case
there are $n_0=|S|$ possibilities for $\al Q$, up to isomorphism.

Now let $k\ge1$, and assume that $n_{k-1}=n_{k-1}(G,S)$ has been found for all
$G$ and $(S;G)$. 
Choose a transversal $\{a_i:1\le i\le t\}$ for the $G$-orbits of 
$\al Q_\ttop$, and for each transversal element $a_i$ 
consider the pair $\bigl(G_{a_i},\iso{(a_i]_{\al Q}}\bigr)$
where $\iso{(a_i]_{\al Q}}$ denotes the isomorphism type of
$(a_i]_{\al Q}$, as an $S$-labeled $G_{a_i}$-tree.
Each $(a_i]_{\al Q}$ has uniform depth $k-1$, since $\al Q$
has uniform depth $k$.
Since $\al Q$ is a core, we get from
Corollary~\ref{reduced-trees} that
the $S$-labeled $G_{a_i}$-tree $(a_i]_{\al Q}$ is 
a core for every
$i$ ($1\le i\le t$). 
Moreover,
if $1\le i<j\le t$, then
$G_{a_i}\not=G_{a_j}$ or $(a_i]_\al Q\not\cong(a_j]_\al Q$.
Thus the pairs $\bigl(G_{a_i},\iso{(a_i]_{\al Q}}\bigr)$ ($1\le i\le t$)
are pairwise distinct.
By part (3) of Lemma~\ref{homs} the set
\[
\bigl\{\bigl(G_{a_i},\iso{(a_i]_{\al Q}}\bigr):1\le i\le t\bigr\}
\]
determines $\al Q$, up to isomorphism.
Therefore the number of possible isomorphism types for $\al Q$
is at most
\[
n_k(G,S)=2^s
\quad\text{where}\quad
s=\sum\bigl(n_{k-1}(H,S):H \text{ is a subgroup of } G\bigr).
\] 
This completes the proof.
\end{proof}

We return to the proof of Theorem~\ref{chains-of-eqrels}. 
As before, let $A$ be a finite set, and let
$E=\{\rho_i:1\le i\le r\}$ be a chain of equivalence relations, say, 
$\rho_0:={\zero}_A<\rho_1<\cdots<\rho_{r-1}<\rho_{r}<{\one}_A=:\rho_{r+1}$, 
and let $\Gamma:=\Aut{E}$.
Earlier in this section we defined for each operation $f$ on $A$
an ${\mathbb S}/{\sim}$-labeled 
$\Gamma$-tree $\al P_f(E)$ of uniform depth $r+1$.
By Lemma~\ref{reduction} $\al P_f(E)$ has a core
$\widehat{\al P}_f=
(\widehat P_f,\widehat\le,\widehat\ell_f)$ 
that is an ${\mathbb S}/{\sim}$-labeled $\Gamma$-subtree 
of $\al P_f(E)$.
Thus $\widehat{\al P}_f$ is of uniform depth $r+1$, and 
there exists a label-preserving $\Gamma$-homomorphism
$\phi_f\colon \al P_f(E)\to\widehat{\al P}_f$.
Moreover,
$\widehat{\al P}_f$ is uniquely determined up to isomorphism.
We will refer to $\widehat{\al P}_f$ as 
{\it the core of the ${\mathbb S}/{\sim}$-labeled 
$\Gamma$-tree associated to $f$}.

The following statement is an easy consequence of 
Lemma~\ref{equiv-lessthan}.

\begin{corollary}
\label{equiv-equiv}
Let $E$ be a chain of equivalence relations on a finite set $A$, 
let $\Gamma=\Aut{E}$,
and
let $\cl C=\Pol\bigl(E,\Aut{E},\cl{P}^+(A)\bigr)$.
For arbitrary operations $f,g$ on $A$, 
\begin{enumerate}[\indent\rm(1)]
\item
$f \subf[\cl{C}] g$ if and only if
there exists a label-increasing homomorphism 
$\widehat{\al P}_f\nearrow\widehat{\al P}_g$ 
between the cores of the
${\mathbb S}/{\sim}$-labeled $\Gamma$-trees associated to $f$ and $g$. 
\item
$f \fequiv[\cl{C}] g$ if and only if
there exist label-increasing homomorphisms 
$\widehat{\al P}_f\nearrow\widehat{\al P}_g$ and
$\widehat{\al P}_g\nearrow\widehat{\al P}_f$
between the cores of the
${\mathbb S}/{\sim}$-labeled $\Gamma$-trees associated to $f$ and $g$. 
\end{enumerate}
\end{corollary}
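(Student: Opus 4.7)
The proof will be a short chase that combines Lemma~\ref{equiv-lessthan} with the fact (from the definition of core together with Lemma~\ref{reduction}) that $\widehat{\al P}_f$ is homomorphically equivalent to $\al P_f(E)$ via label-preserving $\Gamma$-homomorphisms in both directions, and similarly for $g$. Two easy observations do all the work. First, every label-preserving $\Gamma$-homomorphism is automatically label-increasing. Second, the composition of two label-increasing $\Gamma$-homomorphisms is again a label-increasing $\Gamma$-homomorphism (straight from (H3) and (H5)); in particular, composing a label-increasing map on either side with a label-preserving map preserves label-increasingness.

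For part (1), the plan is as follows. Fix label-preserving $\Gamma$-homomorphisms $\phi_f\colon \al P_f(E)\to\widehat{\al P}_f$, $\psi_f\colon \widehat{\al P}_f\to\al P_f(E)$ and analogous maps $\phi_g,\psi_g$ for $g$. Now assume $f\subf[\cl C]g$. By Lemma~\ref{equiv-lessthan} there is a label-increasing $\Gamma$-homomorphism $\omega\colon \al P_f(E)\nearrow\al P_g(E)$, and then $\phi_g\circ\omega\circ\psi_f\colon \widehat{\al P}_f\nearrow\widehat{\al P}_g$ is label-increasing by the observations above. Conversely, given a label-increasing $\Gamma$-homomorphism $\omega'\colon \widehat{\al P}_f\nearrow\widehat{\al P}_g$, the composite $\psi_g\circ\omega'\circ\phi_f\colon \al P_f(E)\nearrow\al P_g(E)$ is label-increasing, so another application of Lemma~\ref{equiv-lessthan} yields $f\subf[\cl C]g$.

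Part (2) follows immediately: by definition $f\fequiv[\cl C]g$ means $f\subf[\cl C]g$ and $g\subf[\cl C]f$, and applying (1) in each direction gives the two label-increasing homomorphisms between $\widehat{\al P}_f$ and $\widehat{\al P}_g$, and vice versa. There is no genuine obstacle here; the only point worth checking carefully is that the class of label-increasing $\Gamma$-homomorphisms is closed under composition and contains the label-preserving ones, both of which are immediate from the definitions (H3)--(H5). All the actual content of the corollary has already been discharged by Lemma~\ref{equiv-lessthan} and by the existence and homomorphic-equivalence part of the core construction in Lemma~\ref{reduction}.
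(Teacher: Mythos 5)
Your proof is correct and follows essentially the same route as the paper: fix the label-preserving homomorphisms witnessing that $\widehat{\al P}_f$, $\widehat{\al P}_g$ are cores of $\al P_f(E)$, $\al P_g(E)$, compose them with the label-increasing homomorphism from Lemma~\ref{equiv-lessthan} to pass between the two formulations, and deduce part (2) from part (1) since $\fequiv[\cl{C}]$ is $\subf[\cl{C}]$ intersected with its converse. The paper uses the identity embeddings $\iota_f,\iota_g$ in place of your generic $\psi_f,\psi_g$, but this is an immaterial difference.
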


\begin{proof}
Let $f,g$ be arbitrary operations on $A$.
By construction, there exist 
label-preserving $\Gamma$-homomorphisms
$\phi_f\colon \al P_f(E)\to\widehat{\al P}_f$ and
$\phi_g\colon \al P_g(E)\to\widehat{\al P}_g$.
Since $\widehat{\al P}_f$ is an 
${\mathbb S}/{\sim}$-labeled $\Gamma$-subtree of 
$\al P_f(E)$, and $\widehat{\al P}_g$ is an 
${\mathbb S}/{\sim}$-labeled $\Gamma$-subtree of 
$\al P_g(E)$, the identity mappings 
$\iota_f\colon \widehat{\al P}_f \to \al P_f(E)$ and
$\iota_g\colon \widehat{\al P}_g \to \al P_g(E)$ 
are also label-preserving $\Gamma$-homomorphisms.

By Lemma~\ref{equiv-lessthan}, $f \subf[\cl{C}] g$
if and only if 
there exists a label-increasing $\Gamma$-ho\-mo\-mor\-phism 
$\al P_f(E)\nearrow\al P_g(E)$. 
We claim that
there exists a label-increasing $\Gamma$-ho\-mo\-mor\-phism 
$\al P_f(E)\nearrow\al P_g(E)$
if and only if there exists a label-increasing $\Gamma$-ho\-mo\-mor\-phism
$\widehat{\al P}_f\nearrow\widehat{\al P}_g$.
Indeed, 
if
$\psi\colon\al P_f(E)\nearrow\al P_g(E)$, then
$\phi_g\circ\psi\circ\iota_f\colon\widehat{\al P}_f\nearrow\widehat{\al P}_g$,
and conversely, 
if
$\psi'\colon\widehat{\al P}_f\nearrow\widehat{\al P}_g$,
then 
$\iota_g\circ\psi'\circ\phi_f\colon\al P_f(E)\nearrow\al P_g(E)$,
since the composition of label-increasing (or label-preserving)
$\Gamma$-ho\-mo\-mor\-phisms is a label-increasing $\Gamma$-homomorphism.
This proves (1).

The relation $\fequiv[\cl{C}]$ is the intersection of 
$\subf[\cl{C}]$ with its converse, therefore (2) is an immediate
consequence of (1).
\end{proof}

\begin{proof}[Proof of Theorem~\ref{chains-of-eqrels}]
Let $E$ be a chain of equivalence relations on a finite set $A$,
let $\Gamma=\Aut{E}$, and let
$\cl C=\Pol\bigl(E,\Aut{E},\cl{P}^+(A)\bigr)$.
Corollary~\ref{equiv-equiv} implies that $f \fequiv[\cl C] g$ 
holds for two operations $f$ and $g$ on $A$ if and only if 
for the cores $\widehat{\al P}_f$ and $\widehat{\al P}_g$
of the associated ${\mathbb S}/{\sim}$-labeled $\Gamma$-trees 
there exist 
label-increasing $\Gamma$-homomorphisms
$\widehat{\al P}_f\nearrow \widehat{\al P}_g$
and
$\widehat{\al P}_g\nearrow \widehat{\al P}_f$.
In particular, it follows that $f \fequiv[\cl C] g$ if
$\widehat{\al P}_f\cong \widehat{\al P}_g$.
By Lemma~\ref{bound-on-reduced} there exist only finitely many 
isomorphism classes of trees $\widehat{\al P}_f$ as $f$ runs over 
all operations on $A$.
Therefore there exist only finitely many
$\fequiv[\cl C]$-classes.
\end{proof}

%%%%%%%%%%%%%%%%%%%%%%%%%%%%%%%%%%%%%%%%%%%%%%%
\section{Central relations}
\label{sec:central}

Let $A$ be a $k$-element finite set, $k\ge3$.
In this section, our aim is to find all maximal clones $\Pol \rho$ on 
$A$ 
that are determined by central relations 
and are members of $\FF_A$. 
Note that the arity $r$ of a central relation on $A$
satisfies $1\le r\le k-1$, and the case of unary central relations
is settled in Corollary~\ref{discr-max}.
Therefore in this section we consider only central relations
of arity $r\ge2$.

We will show that if $\rho$ has arity $r\le k-2$, then
$\Pol\rho\notin\FF_A$ (Theorem~\ref{thm-central-r}), while if 
$\rho$ has arity $r=k-1$, then $\Pol\rho\in\FF_A$ 
(Theorem~\ref{thm-centralk-1}).
Note that for each element $c\in A$ there is a unique central relation
$\sigma_c$ of arity $k-1$ with central element $c$, namely
\begin{align*}
\sigma_c=
\{(a_1,\ldots,a_{k-1})\in A^{k-1}:{}&
\text{$a_i=a_j$ for some $1\le i<j\le k-1$, or}\\
{}&
\text{$a_i=c$ for some $1\le i\le k-1$}\}.
\end{align*}
Therefore all central relations of arity $k - 1$ are of the form
$\sigma_c$ for some $c \in A$.
In Theorem~\ref{thm-centralk-1} we will, in fact, 
prove that $\Pol(\sigma_c,\{c\})\in\FF_A$ for all $c\in A$, which implies by
Proposition~\ref{basic_props2}(ii) that all maximal clones 
$\Pol\sigma_c$ ($c\in A$) also belong to $\FF_A$.

We start by stating Jablonski\u\i's Lemma which we will need
in the proof of Theorem~\ref{thm-centralk-1}.

\begin{lemma}
\label{lm-jablonskii}
{\rm(Jablonski\u{\i}~\cite{Jablonskii})}
Let $f$ be an $n$-ary operation on a finite set $A$ such that
$f$ depends on at least two of its variables.
If the range $\range f$ of $f$ has $r\ge3$ elements, then
there exist $D_1,\ldots,D_n\subseteq A$ such that 
$|D_i|<r$ for all $1\le i\le n$ and $f[D_1\times\cdots\times D_n]=\range f$.
\end{lemma}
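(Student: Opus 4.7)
The plan is to combine a simple reduction with a greedy shrinking argument on the sets $D_i$. First, observe that whenever $f$ fails to essentially depend on some variable $x_i$, we may put $D_i = \{c\}$ for any fixed $c \in A$; since $r \ge 3$ this gives $|D_i| = 1 < r$ and does not change the image of the restriction. Hence we may assume from now on that $f$ depends essentially on every variable, so in particular $n \ge 2$.

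For each $b \in \range f$ choose an arbitrary preimage $\vect{x}^{(b)} \in f^{-1}(b)$, and set $D_i := \{x_i^{(b)} : b \in \range f\}$. Then $|D_i| \le r$ and $f[D_1 \times \cdots \times D_n] = \range f$ by construction. I shrink the $D_i$'s by iterated greedy removal: call $a \in D_i$ \emph{removable} if erasing $a$ from $D_i$ still leaves every $b \in \range f$ with a preimage in $D_1 \times \cdots \times (D_i \setminus \{a\}) \times \cdots \times D_n$. If $a$ is not removable, then some $b_i(a) \in \range f$ has all its preimages in $D := D_1 \times \cdots \times D_n$ with $i$-th coordinate equal to $a$. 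Distinct non-removable elements of $D_i$ must have distinct witnesses (otherwise a single $b$ would be forced into two different $i$-th coordinates), so $D_i$ contains at most $r$ non-removable elements, and iterated deletion brings every $|D_i|$ down to at most $r$.

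The harder step is passing below $r$. If some $|D_i|$ remains equal to $r$ with every element non-removable, then $a \mapsto b_i(a)$ is a bijection $D_i \to \range f$, and every preimage of $b$ in $D$ has its $i$-th coordinate determined by $b$. Double-counting $|D| = \sum_b |f^{-1}(b) \cap D|$ together with the bound $|f^{-1}(b) \cap D| \le \prod_{j : |D_j| < r} |D_j|$ then forces the number of saturated coordinates (those with $|D_i| = r$) to be at most one: if $k$ coordinates are saturated one obtains $r^k \le r$, so $k \le 1$. When $k = 1$ the bound is tight, which implies that $f|_D$ depends only on the unique saturated coordinate $i_0$.

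The main obstacle is to eliminate this residual case, and this is where the two hypotheses --- $r \ge 3$ and essential dependence of $f$ on a second variable --- combine. I invoke tuples $\vect{y}, \vect{y}' \in A^n$ agreeing except in some coordinate $j_0 \ne i_0$ with $f(\vect{y}) \ne f(\vect{y}')$; since they share the same $i_0$-th coordinate but are sent to two different elements of $\range f$, at least one of them is a preimage that violates the bijective pattern forced on $D_{i_0}$. Using this tuple as a new preimage and re-running the greedy deletion destroys the bijection and yields a new $D$ with all $|D_i| < r$. The hypothesis $r \ge 3$ is crucial here, providing enough slack in $\range f$ that the re-selection can be made without forcing any other coordinate into saturation.
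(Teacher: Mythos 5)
The paper does not prove this lemma; it cites Jablonski\u{\i}~\cite{Jablonskii} and uses it as a black box, so there is no in-paper argument to compare your proof against. Reviewing your argument on its own merits: the first three steps are essentially correct. The reduction to the case where $f$ depends on every variable, the initial choice of $D_i$ from a system of preimages, the iterated greedy removal terminating with every element of every $D_i$ non-removable, the observation that a saturated coordinate forces the bijection $a\mapsto b_i(a)$ together with the rigidity ``every preimage of $b$ in $D$ has its $i_0$-coordinate equal to $b_{i_0}^{-1}(b)$,'' the double-counting showing $r^k\le r$ and hence $k\le 1$, and the conclusion that when $k=1$ the restriction $f|_D$ is a function of the $i_0$-th coordinate alone — all of that is sound. (A small cosmetic remark: once $f|_D$ depends only on $x_{i_0}$, the other $D_j$'s must in fact be singletons at a fixed point of the greedy deletion, which is worth saying since it simplifies the remaining bookkeeping.)

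The genuine gap is in the last paragraph, which is asserted rather than proved. You pick $\vect{y},\vect{y}'$ differing only in coordinate $j_0\neq i_0$ with $f(\vect{y})\neq f(\vect{y}')$, say with common $i_0$-coordinate $a_0$, and then claim that ``using this tuple as a new preimage and re-running the greedy deletion destroys the bijection and yields a new $D$ with all $|D_i|<r$.'' Several things are missing. (i) Whether $a_0\in D_{i_0}$ matters. If $a_0\in D_{i_0}$, one of $\vect{y},\vect{y}'$ has $f$-value $\neq g(a_0)$, and substituting that one tuple for the corresponding preimage does drop $|D_{i_0}|$ to $r-1$ while the other coordinates grow to size at most $2<r$; this is clean. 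But if $a_0\notin D_{i_0}$ (which happens whenever $|A|>r$), substituting a single tuple leaves $|D_{i_0}|$ at $r$; one must substitute for two range elements, replacing the preimages of both $f(\vect{y})$ and $f(\vect{y}')$. (ii) In that two-substitution case $D_{j_0}$ acquires $\{c_{j_0},y_{j_0},y'_{j_0}\}$, which can have size $3$; if $r=3$ this is not $<r$. The fix is to observe that the witness pair can always be chosen with $y_{j_0}=c_{j_0}$ (otherwise $f$ would not depend on $x_{j_0}$ at all), but this needs to be argued, and it is precisely here that the hypothesis $r\ge3$ is really spent — not in the vague sense of ``slack in the range.'' (iii) ``Re-running the greedy deletion'' only re-establishes $k\le1$; it does not by itself rule out arriving again at $k=1$ with a possibly different saturated coordinate, so without a monovariant the loop is not shown to terminate in the state $k=0$. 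All of these points are repairable, and the structure of your proof is the right one, but as written the final step does not constitute a proof.
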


\begin{theorem}
\label{thm-centralk-1}
If $\sigma_c$ is the $(k-1)$-ary central relation with central element $c$
on a $k$-element set $A$ $(k \geq 3)$, then $\Pol(\sigma_c, \{c\})\in\FF_A$.
\end{theorem}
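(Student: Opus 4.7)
The plan is to invoke Proposition~\ref{basic_props2}(i) and exhibit an integer $d$, depending only on $k = \card{A}$, such that every operation on $A$ is $\cl{C}$-equivalent to one of arity at most $d$, where $\cl{C} = \Pol(\sigma_c,\{c\})$.

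Two elementary $\cl{C}$-equivalence invariants are $\range f$ and the value $f(\bar c)$; the latter is invariant because every $\vect{h} \in (\cl{C}^{(n)})^m$ satisfies $\vect{h}(\bar c) = \bar c$, so $f = g\circ\vect{h}$ forces $f(\bar c) = g(\bar c)$. Since $(\range f, f(\bar c))$ takes at most $k\cdot 2^k$ joint values, it suffices to bound the arity of representatives within each of these finitely many classes. Moreover, essentially unary operations, including all constants, are $\cl{C}$-equivalent to unary ones, of which there are only $k^k$; so I may focus on operations depending on at least two variables.

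The main case analysis will then proceed on the size $r := |\range f|$. For $r \leq 2$, the $\sigma_c$-preservation condition on the outputs of $f$ becomes vacuous: any $(k-1)$-tuple over a two-element set has a repeated coordinate (using $k - 1 \geq 2$), hence lies in $\sigma_c$. I would reduce such operations directly, using the abundance of unary operations in $\cl{C}$ (every self-map of $A$ fixing $c$) together with the ternary ``guard'' operation $m(x,y,z)$ defined by $m(x,y,z) = z$ if $x = y$ and $m(x,y,z) = c$ otherwise, which a routine check places in $\cl{C}$. For $r \geq 3$, I would invoke Jablonski\u{\i}'s Lemma (Lemma~\ref{lm-jablonskii}) to obtain subsets $D_1, \ldots, D_n \subseteq A$ with $|D_i| < r \leq k$ and $f[D_1\times\cdots\times D_n] = \range f$; in particular each $D_i$ is a proper subset of $A$, and the sequence $(D_1,\ldots,D_n)$ takes at most $2^k - 2$ distinct values. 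The essential behaviour of $f$ is already determined by its restriction to $D_1\times\cdots\times D_n$, and this, together with the guard $m$ and the unary part of $\cl{C}$, should translate into a $\cl{C}$-equivalence with a bounded-arity operation $g$ whose arity depends only on $k$.

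The main obstacle is verifying the reverse $\cl{C}$-minor relation $f \subf[\cl{C}] g$: the forward direction $g \subf[\cl{C}] f$ will be realized by straightforward identifications of variables, but reconstructing $f$ from the bounded-arity $g$ using only $\cl{C}$-operations requires simultaneously routing each input coordinate into its prescribed $D_i$ (via unary maps in $\cl{C}$), performing the equality-based case analysis needed to reproduce $f$'s values (via nested applications of $m$), and keeping every intermediate operation inside $\cl{C}$. The delicate point is $\sigma_c$-preservation at each stage of this reassembly; this is where the hypothesis that the arity of $\sigma_c$ equals $k - 1$ --- equivalently, that the complement of $\sigma_c$ consists of a single $S_{k-1}$-orbit of permutations of $A\setminus\{c\}$ --- is crucial, and is the reason why the analogous result fails for central relations of smaller arity (Theorem~\ref{thm-central-r}).
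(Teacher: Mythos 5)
Your proposal identifies the right invariants ($\range f$ and $f(\bar c)$), and your observation that $f(\bar c)$ is preserved because $\vect h(\bar c)=\bar c$ is correct; your guard operation $m$ does in fact lie in $\cl C$. However, the plan as stated has a genuine gap, and the paper's proof does not go through the guard operation at all. The paper associates to each $f$ a third datum beyond $\range f$ and $f(\bar 0)$: whether $f$ belongs to the class $O_0$ of operations admitting a ``box'' $C_1\times\cdots\times C_n$ with $0\in C_i\neq A$ for all $i$ and $f[C_1\times\cdots\times C_n]=\range f$. Your Jablonski\u\i-based case split on $r=|\range f|$ misses this dichotomy: for surjective $f$, Jablonski\u\i\ gives proper subsets $D_i$, but $D_i\cup\{0\}$ can already be all of $A$, so you cannot automatically place $f$ in $O_0$. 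Two surjective operations with the same value at $\bar 0$ are handled by completely different arguments depending on whether both are in $O_0$ or both outside it, and nothing in your sketch accounts for this.

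More fundamentally, the ``hard direction'' you flag as the main obstacle is exactly where the proof has to be done, and neither the guard $m$ nor nested applications of it are what the paper uses. The actual construction is more direct: given that $\range f=\range g$ and $f(\bar 0)=g(\bar 0)$, fix a transversal $\{\vect b_1,\ldots,\vect b_r\}$ of $\ker f$ with $\vect b_1=\bar 0$ (lying inside the box $C_1\times\cdots\times C_n$ when $f\in O_0$, arbitrary with $\vect b_1=\bar 0$ when $f\notin O_0$), and define $\vect h(\vect a)=\vect b_j$ where $j$ is the unique index with $g(\vect a)=f(\vect b_j)$. Then $g=f\circ\vect h$ by construction, and $\vect h$ preserves $\{0\}$ since $\vect b_1=\bar 0$. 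The $\sigma_c$-preservation of $\vect h$ is then checked directly: in the $O_0$ case each component $h_i$ has range inside a proper subset of $A$ containing $0$, which suffices; in the $O_1$ case one argues by contradiction that any violation of $\sigma^n$-preservation would force $g$ into $O_0$. This transversal argument is where the actual content lies, and your sketch does not contain it. As written, your proposal is a plausible high-level plan but not a proof: the key step it points to as ``the main obstacle'' is neither carried out nor reducible to the guard-and-routing strategy you describe.
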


\begin{proof}
We may assume without loss of generality that $c=0$, and we will write
$\sigma$ for $\sigma_0$. So, let $\cl{C} = \Pol(\sigma, \{0\})$. 
Since $\sigma$ contains all $(k-1)$-tuples whose coordinates
are not pairwise distinct or include $0$, 
it follows that $\sigma$ is preserved by
\begin{itemize}
\item
all operations $f \colon A^n \to A$ with $\card{\range f} \leq k - 2$, and also
\item
all operations $f \colon A^n \to A$ with $\card{\range f} = k - 1$ and 
$0 \in \range f$.
\end{itemize}

To prove that $\cl{C}\in\FF_A$ we partition $\cl{O}_A$ into two subsets, $O_0$
and $O_1=\cl{O}_A\setminus O_0$, as follows: an operation $f$ belongs to $O_0$
if and only if its domain $A^n$ where $n$ is the arity of $f$ contains
a subset $C_1 \times \dotsb \times C_n$ such that
$0\in C_i\not=A$ for all $i$ ($1\le i\le n$) and 
$f[C_1 \times \dotsb \times C_n] = \range f$.
First we will show that all nonsurjective operations $f$
on $A$ belong to $O_0$.
Indeed, assume first that $f$ is nonsurjective and essentially unary, say
it depends on its first variable only. 
Then there exists a nonsurjective
unary operation $f_1$ such that
$f(\vect{x})=f_1(x_1)$ for all $\vect{x}=(x_1,\ldots,x_n)\in A^n$.
Therefore $f_1(a)=f_1(b)$ for some distinct $a,b\in A$ such that $a\not=0$.
Hence the choice $C_1=A\setminus\{a\}$, $C_2=\cdots=C_n=\{0\}$ shows that
$f\in O_0$.
Now assume that $f$ is nonsurjective and depends on at least two of its
variables. If $|\range{f}|=2$, then there exists $\vect{a}=(a_1,\ldots,a_n)$
distinct from $\bar{0}$
such that $f(\vect{a})\not= f(\bar{0})$. Hence the choice $C_i=\{a_i,0\}$
($1\le i\le n$) shows that $f\in O_0$ ($C_i\not=A$, since $k=|A|\ge3$).
Finally, if $|\range{f}|>2$ but $f$ is nonsurjective, then
by Jablonski\u{\i}'s Lemma (Lemma~\ref{lm-jablonskii}) 
there exist $(k-2)$-element subsets
$D_1,\ldots,D_n$ of $A$ such that 
$f[D_1 \times \dotsb \times D_n] = \range f$. 
Hence we can choose $C_i=D_i\cup\{0\}$ ($1\le i\le n$) to show that
$f\in O_0$.

\begin{quote}
\textit{Claim 1.}
If $\range f = \range g$, $f(\bar{0}) = g(\bar{0})$, and $f,g\in O_0$, 
then $f \fequiv_{\cl{C}} g$.
\end{quote}

\textit{Proof of Claim 1.}
Let $f$ be $n$-ary and $g$ be $m$-ary.
Using the assumption $f\in O_0$, we fix 
sets $C_i \subset A$ ($1 \leq i \leq n$)
such that 
$0 \in C_i$ and $f[C_1 \times \dotsb \times C_n] = \range f$.
Furthermore, we
choose a transversal $\{\vect{b}_1, \dotsc, \vect{b}_r\} \subseteq C_1 \times \dotsb \times C_n$ of $\ker f$ where $\vect{b}_1 = \bar{0}$. 
Now we define a function $\vect{h} \colon A^m \to A^n$ 
as follows: for each $\vect{a}\in A^m$ we have 
$g(\vect{a})\in\range{g}=\range{f}$, therefore $g(\vect{a})=f(\vect{b}_j)$
for a unique $j$; we let $\vect{h}(\vect{a}) = \vect{b}_j$.
It is clear from this definition that $g = f \circ \vect{h}$. 
By assumption, $g(\bar{0}) = f(\bar{0})$ and $\vect{b}_1=\bar{0}$,
therefore $g(\bar{0})=f(\vect{b}_1)$.
Hence $\vect{h}(\bar{0})=\vect{b}_1=\bar{0}$, which implies that
$\vect{h}$ preserves $\{0\}$. 
Since $\range{\vect{h}}=\{\bar{0},\vect{b}_2,\ldots,\vect{b}_r\}
\subseteq C_1\times\cdots\times C_n$,
the range of each component $h_i$ of $\vect{h}$ satisfies
$0 \in \range h_i\subseteq C_i\not=A$. 
As was observed at the beginning of the proof,
this implies that each $h_i$ preserves $\sigma$.
Thus $\vect{h}$ preserves $\sigma$. 
This proves that $\vect{h}\in(\cl{C}^{(m)})^n$ and hence
$g \subf[\cl{C}] f$. 
A similar argument shows that $f \subf[\cl{C}] g$.
\quad $\diamond$

\begin{quote}
\textit{Claim 2.}
If $f(\bar{0}) = g(\bar{0})$ and $f,g\notin O_0$, 
then $f \fequiv_{\cl{C}} g$.
\end{quote}

\textit{Proof of Claim 2.}
Again, let $f$ be $n$-ary and $g$ be $m$-ary.
We proved earlier that all nonsurjective operations belong to $O_0$, therefore
$f$ and $g$ are necessarily surjective. 
Let $\{\vect{b}_1, \dotsc, \vect{b}_k\}$ be a transversal of $\ker f$ 
where $\vect{b}_1 = \bar{0}$. 
As before, we define $\vect{h} \colon A^m \to A^n$ such that
for each $\vect{a}\in A^m$, $\vect{h}(\vect{a}) = \vect{b}_j$ 
for the unique $j$ such that $g(\vect{a}) = f(\vect{b}_j)$. 
We get, as before, that $g = f \circ \vect{h}$ and that
$\vect{h}$ preserves $\{0\}$. 
It remains to show that $\vect{h}$ preserves $\sigma$.
Let $\vect{a}_1,\ldots,\vect{a}_{k-1}\in A^m$
be $m$-tuples such that 
$\bigl(\vect{h}(\vect{a}_1),\ldots,\vect{h}(\vect{a}_{k-1})\bigr)
\notin\sigma^n$.
Since the range of $\vect{h}$ is $\{\bar{0},\vect{b}_2,\ldots,\vect{b}_k\}$
and $\sigma^n$ contains every $(k-1)$-tuple which has repeated coordinates
or has $\bar{0}$ as one of its coordinates, we get that
$\{\vect{h}(\vect{a}_1),\ldots,\vect{h}(\vect{a}_{k-1})\}=
\{\vect{b}_2,\ldots,\vect{b}_k\}$.
Hence 
\begin{align*}
\{g(\bar{0}),g(\vect{a}_1),\ldots,g(\vect{a}_{k-1})\}
&{}=\{(f\circ\vect{h})(\bar{0}),(f\circ\vect{h})(\vect{a}_1),\ldots,
                           (f\circ\vect{h})(\vect{a}_{k-1})\}\\
&{}=
\{f(\bar{0}),f(\vect{b}_2),\ldots,f(\vect{b}_k)\}=A,
\end{align*}
implying that $g[D_1\times\cdots\times D_m]=A=\range{g}$ where
$D_i$ is the set of $i$-th coordinates of 
$\bar{0},\vect{a}_1,\ldots,\vect{a}_{k-1}$ for each $i$ ($1\le i\le m$).
Since by assumption $g\notin O_0$, we have $D_i=A$ for at least one $i$.
Thus $(\vect{a}_1,\ldots,\vect{a}_{k-1})\notin\sigma^m$, proving that
$\vect{h}$ preserves $\sigma$.
This completes the proof that $\vect{h}\in(\cl{C}^{(m)})^n$ and hence
$g \subf[\cl{C}] f$. 
A similar argument shows that $f \subf[\cl{C}] g$.
\quad $\diamond$

Now consider the mapping
\[
\Phi\colon
\cl{O}_A
\to\mathcal{P}^+(A)\times A\times\{0,1\},\quad
f\mapsto (\range{f},f(\bar{0}),i_f)
\]
where $i_f=0$ if $f\in O_0$ and $i_f=1$ if $f\in O_1$.
Claims~1 and 2 show that we have $f \fequiv[\cl{C}] g$
whenever $\Phi(f)=\Phi(g)$.
Therefore the number of $\fequiv[\cl{C}]$-classes does not exceed the
number of kernel classes of $\Phi$. The number of kernel classes of
$\Phi$ is finite, since the codomain $\mathcal{P}^+(A)\times A\times\{0,1\}$
of $\Phi$ is finite. Hence the number of $\fequiv[\cl{C}]$-classes is
also finite, which proves that $\cl{C}\in\FF_A$.
\end{proof}

\begin{theorem}
\label{thm-central-r}
If $\rho$ is an $r$-ary central relation on a $k$-element set $A$ such that $2 \leq r \leq k-2$ $(k \geq 4)$, then $\Pol \rho\notin\FF_A$.
\end{theorem}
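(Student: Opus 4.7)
The plan is to proceed by induction on $k = |A|$, with the base case $k = r + 2$. The inductive step relies on Corollary~\ref{basic_props4} combined with a careful choice of subset, while the base case requires a direct construction.

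For the inductive step, assume $k \geq r + 3$ and that the theorem holds whenever $|A| < k$ (with the same arity $r$, so that the hypothesis $2 \leq r \leq |A| - 2$ is preserved). I would produce a subset $B \subsetneq A$ of size $k - 1 \geq r + 2$ such that $\rho|_B$ is again a central relation on $B$ of arity $r$. The inductive hypothesis would then give $\Pol(\rho|_B) \notin \FF_B$, and Corollary~\ref{basic_props4} yields $\Pol\rho \notin \FF_A$. Write $N = A \setminus \text{Cent}(\rho)$ for the set of non-central elements; since $\rho \neq A^h$, at least one forbidden $r$-tuple exists, and by total reflexivity its coordinates are pairwise distinct and lie in $N$, so $|N| \geq r$. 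The subset $B$ is obtained by deleting a single element $d \in A$, chosen by cases. If $|\text{Cent}(\rho)| \geq 2$, take $d$ to be any central element: at least one central element survives in $B$, and every forbidden $r$-tuple of $\rho$ (which avoids $\text{Cent}(\rho)$) remains in $B^r$, so $\rho|_B$ is central on $B$ of arity $r$. If $|\text{Cent}(\rho)| = 1$, say $\text{Cent}(\rho) = \{c\}$, then $|N| = k - 1 \geq r + 2 > r$, so one can fix a forbidden $r$-subset $F \subseteq N$ and some $d \in N \setminus F$; deleting such a $d$ preserves $c$ and $F$, and again $\rho|_B$ is a central relation on $B$ of arity $r$.

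The main obstacle is the base case $k = r + 2$. Here the inductive reduction collapses: a quick case analysis shows that for any proper $B \subsetneq A$, the restriction $\rho|_B$ is either the full relation $B^r$, a central relation on $B$ of arity $|B| - 1$ (so $\Pol(\rho|_B) \in \FF_B$ by Theorem~\ref{thm-centralk-1}), or the $|B|$-regular relation on $B$ with $|B| = r$ (so $\Pol(\rho|_B) \in \FF_B$ by Theorem~\ref{thm-slup}). Thus Corollary~\ref{basic_props4} cannot be invoked via any proper restriction. Moreover, $\Pol\rho$ already contains every operation whose range is contained in $\text{Cent}(\rho) \cup \{b\}$ for any single non-central $b$, which is a doubly-exponential family of operations, so Corollary~\ref{PolyClones} does not apply either.

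The plan for the base case is therefore a direct construction: exhibit an explicit sequence $\{f_n\}_{n \geq 1}$ of $n$-ary operations in $\Pol\rho$, with strictly increasing arities, that lie in pairwise distinct $\fequiv[\Pol\rho]$-classes. Fixing a forbidden tuple $(a_1, \ldots, a_r)$ of non-central elements and a central element $c$, each $f_n$ would encode a distinct combinatorial pattern of its coordinates—for example, recognizing specific placements of $a_1, \ldots, a_r$ among the $n$ input positions—while keeping its output inside a set small enough (typically contained in $\text{Cent}(\rho)$ together with one element of $N$) to guarantee membership in $\Pol\rho$. Inequivalence would then be established by analyzing which tuples $\vec{h} \in (\Pol\rho)^n$ could witness $f_n = f_m \circ \vec{h}$, showing that the $\rho$-preservation constraint on the components of $\vec{h}$ prevents any such substitution from transporting the $n$-variable pattern of $f_n$ to the $m$-variable pattern of $f_m$ when $n \neq m$. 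This combinatorial inequivalence argument is the technical heart of the theorem.
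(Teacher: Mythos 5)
Your inductive step is correct, but the proof has a genuine gap: the base case $k = r+2$ is never actually established. You correctly identify it as ``the technical heart of the theorem,'' yet you only outline a strategy---exhibit operations $f_n$ encoding distinct combinatorial patterns, then show that $\rho$-preservation obstructs any substitution $\vect{h}$ transporting one pattern to another---without constructing the operations or carrying out the inequivalence argument. As written, this is a plan, not a proof.

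Two further remarks. First, the induction is actually superfluous: the direct construction you would need for $k=r+2$ works verbatim for every $k\ge r+2$, and that is how the paper proceeds. Fixing a central element $0$ and a forbidden tuple $(1,\dots,r)\notin\rho$, the paper defines for each $n\ge 2$ a $2n$-ary operation $f_n$ with range $\{0,1,\dots,r+1\}$, built from three families of $2n$-tuples $\vect{a}_i^n,\vect{b}_i^n,\vect{c}_i^n$ ($1\le i\le n$). If a tuple $\vect{h}$ of operations from $\Pol\rho$ witnessed $f_n=f_m\circ\vect{h}$ with $m<n$, a pigeonhole argument gives $p\neq q$ with $\vect{h}(\vect{a}_p^n)=\vect{h}(\vect{a}_q^n)=\vect{a}_s^m$; then $\rho$-preservation of $\vect{h}$ forces $\vect{h}(\vect{b}_p^n)=\vect{h}(\vect{b}_q^n)=\vect{b}_s^m$ and $\vect{h}(\vect{c}_p^n)=\vect{h}(\vect{c}_q^n)=\vect{c}_s^m$, contradicting that $(\vect{a}_p^n,\vect{c}_q^n,\bar{3},\dots,\bar{r})\in\rho^{2n}$ while $(\vect{a}_s^m,\vect{c}_s^m,\bar{3},\dots,\bar{r})\notin\rho^{2m}$. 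Second, your insistence that the separating operations $f_n$ lie in $\Pol\rho$ is an unnecessary constraint: $\fequiv[\cl{C}]$ is an equivalence relation on all of $\cl{O}_A$, and only the substituting tuple $\vect{h}$ must be drawn from $\cl{C}=\Pol\rho$. Restricting the range of $f_n$ to a small set so as to force $f_n\in\Pol\rho$ would make the inequivalence argument harder, not easier; indeed the paper's operations $f_n$ do \emph{not} preserve $\rho$.
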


\begin{proof}
Let $\cl C=\Pol\rho$.
We may assume without loss of generality that $A$ is the set
$\{0,\ldots,k-1\}$,
$0$ is a central element of $\rho$, and $(1, 2, \dotsc, r) \notin \rho$. 
For each $n \geq 2$ and 
$1\le i\le n$ we define $2n$-tuples $\vect{a}_i^n$, $\vect{b}_i^n$, and 
$\vect{c}_i^n$ as follows: 
$\vect{a}_i^n=(0,0,\ldots,0,0,1,1,0,0,\ldots,0,0)$ 
with the two $1$'s occurring in the $(2i-1)$-th and $2i$-th coordinates;
$\vect{b}_i^n=(1,2,\ldots,1,2,0,0,1,2,\ldots,1,2)$ 
with the two $0$'s occurring in the $(2i-1)$-th and $2i$-th coordinates;
$\vect{c}_i^n=(0,0,\ldots,0,0,2,1,0,0,\ldots,0,0)$ 
with the $2$ and $1$ occurring in the $(2i-1)$-th and the $2i$-th coordinates.
Next we define a $2n$-ary operation $f_n \colon A^{2n} \to A$ 
for each $n\ge 2$ as follows:
\[
f_n(\vect{a})=
\begin{cases}
0 & \text{if $\vect{a}=\vect{a}_i^n$ ($1\le i\le n$),}\\
1 & \text{if $\vect{a}=\vect{b}_i^n$ ($1\le i\le n$),}\\
2 & \text{if $\vect{a}=\vect{c}_i^n$ ($1\le i\le n$),}\\
u & \text{if $\vect{a}=\bar u$ ($3\le u\le r$),}\\
r+1 & \text{otherwise.}
\end{cases}
\]

We claim that $f_n \not\fequiv[\cl C] f_m$ whenever $n \neq m$.
Suppose on the contrary that $f_n \fequiv[\cl C] f_m$
for some $m < n$. Then
there exists $\vect{h} \in (\cl C^{(2n)})^{2m}$ such that 
$f_n = f_m \circ \vect{h}$. 
For each element $v$ in the common range $\{0,1,\ldots,r,r+1\}$ of
$f_m$ and $f_n$, $\vect{h}$ maps the inverse image $f_n^{-1}(v)$
of $v$ under $f_n$ into the inverse image $f_m^{-1}(v)$
of $v$ under $f_m$; for, if $\vect{x}\in f_n^{-1}(v)$, then
$v=f_n(\vect{x})=f_m\bigl(\vect{h}(\vect{x})\bigr)$, implying that
$\vect{h}(\vect{x})\in f_m^{-1}(v)$.
Thus, in particular, $\vect{h}(\bar u)=\bar u$ for all $3\le u\le r$, and
\[
\vect{h}(\vect{a}_i^n)\in\{\vect{a}_1^m,\ldots,\vect{a}_m^m\},\quad
\vect{h}(\vect{b}_i^n)\in\{\vect{b}_1^m,\ldots,\vect{b}_m^m\},\quad
\vect{h}(\vect{c}_i^n)\in\{\vect{c}_1^m,\ldots,\vect{c}_m^m\}  
\]
for all $i$ ($1\le i\le n$).
Since $m < n$, there exist $1\le p<q\le n$ and $1\le s\le m$ such that
$\vect{h}(\vect{a}_p^n)=\vect{h}(\vect{a}_q^n)=\vect{a}_s^m$.
We have 
$(\vect{a}_p^n,\vect{b}_p^n,\bar3,\ldots,\bar r)\in\rho^{2n}$, 
as in each coordinate the first or second component of the tuple is $0$.
Therefore, since $\vect{h}$ preserves $\rho$, we get that
\[
\bigl(\vect{a}_s^m,\vect{h}(\vect{b}_p^n),
\bar3,\ldots,\bar r\bigr)
=
\bigl(\vect{h}(\vect{a}_p^n),\vect{h}(\vect{b}_p^n),
\vect{h}(\bar3),\ldots,\vect{h}(\bar r)\bigr)
\in\rho^{2m}.
\]
If $j\not=s$ then 
$(\vect{a}_s^m,\vect{b}_j^m,\bar3,\ldots,\bar r)\notin\rho^{2m}$,
because the $2s$-th coordinate of the tuple is $(1,2,3,\ldots,r)\notin\rho$.
This forces $\vect{h}(\vect{b}_p^n)=\vect{b}_s^m$.
The same argument with $\vect{a}_q^n$ in place of $\vect{a}_p^n$
shows that $\vect{h}(\vect{b}_q^n)=\vect{b}_s^m$. 
Similarly, 
since $(\vect{c}_p^n,\vect{b}_p^n,\bar3,\ldots,\bar r)\in\rho^{2n}$
and $\vect{h}$ preserves $\rho$, we get that
\[
\bigl(\vect{h}(\vect{c}_p^n),\vect{b}_s^m,
\bar3,\ldots,\bar r\bigr)
=
\bigl(\vect{h}(\vect{c}_p^n),\vect{h}(\vect{b}_p^n),
\vect{h}(\bar3),\ldots,\vect{h}(\bar r)\bigr)
\in\rho^{2m}.
\]
Again, if $j\not=s$ then 
$(\vect{c}_j^m,\vect{b}_s^m,\bar3,\ldots,\bar r)\notin \rho^{2m}$,
because the $2j$-th coordinate of the tuple is $(1,2,3,\ldots,r)\notin\rho$.
Thus $\vect{h}(\vect{c}_p^n)=\vect{c}_s^m$.
The same argument with $\vect{c}_q^n$ in place of $\vect{c}_p^n$
yields that $\vect{h}(\vect{c}_q^n)=\vect{c}_s^m$. 
Now we see that
$(\vect{a}_p^n,\vect{c}_q^n,\bar3,\ldots,\bar r)\in\rho^{2n}$,
since in each coordinate the first or second component is $0$, but
\[
\bigl(\vect{h}(\vect{a}_p^n),\vect{h}(\vect{c}_q^n),
\vect{h}(\bar3),\ldots,\vect{h}(\bar r)\bigr)
=
(\vect{a}_s^m,\vect{c}_s^m,
\bar3,\ldots,\bar r)
\notin\rho^{2m},
\]
because the $(2s-1)$-th coordinate is $(1,2,3,\ldots,r)\notin\rho$.
This contradiction shows that 
$f_n \not\fequiv[\cl C] f_m$
if $m<n$, and hence proves that
$\cl C\notin\FF_A$. 
\end{proof}

%%%%%%%%%%%%%%%%%%%%%%%%%%%%%%%%%%%%%%%%%%%%%%%
\section{$h$-regular relations}
\label{sec:h-reg}

Let $A$ be a finite set with $k$ elements ($k\ge3$).
In this section our goal is to find all maximal clones $\Pol \lambda_T$ 
on $A$ determined by $h$-regular relations $\lambda_T$ 
(see Section~\ref{sec:prelim} for the definition)
that are members of $\FF_A$. 
Recall that the arity of an $h$-regular relation $\lambda_T$ is $h$
with $3\le h\le k$.
The only $h$-regular relation with $h=k$ is $\lambda_T$ where
$T$ is the singleton consisting of the equality relation on $A$, 
and then $\Pol \lambda_T$ is S\l{}upecki's clone on $A$. 

We will show that $\Pol \lambda_T \notin \FF_A$ unless $\Pol \lambda_T$ is S\l{}upecki's clone (Theorem~\ref{thm-hreg}). Moreover,
we will find an interval in the clone lattice that includes 
S{\l}upecki's clone and 
is contained in $\FF_A$ (Theorem~\ref{thm-slup}).

As a preparation for stating the latter result we introduce some notation.
For $2 \leq i \leq k$, $\cl{B}_i$ will denote the subclone of 
$\cl{O}_A$ that consists of all essentially at most unary 
operations and all operations whose range contains at most $i$ elements. 
Thus, $\cl{B}_{k-1}$ is S{\l}upecki's clone and $\cl{B}_{k-2}$
is the clone introduced in Remark~\ref{notconverse}.
For $i=0$, $\cl B_0$ will stand for
the clone of all essentially at most unary operations, and
for $i=1$, $\cl{B}_1$ denotes Burle's clone defined preceding 
Corollary~\ref{burle}.
Furthermore, $T_A$ will denote the full transformation monoid $\cl{O}_A^{(1)}$
on $A$, and $T_A^-$ its submonoid consisting of the identity function and all
nonpermutations. 
For any submonoid $M$ of $T_A$ containing $T_A^-$ and for any $i$ ($1\le i<k$)
we will use $\cl B_i(M)$ to denote the clone that
arises from $\cl{B}_i$ by omitting all operations 
depending on at most one variable which are outside the clone
$\langle M \rangle$. 

It is well known (see~\cite{Slupecki} and~\cite{Burle}) that
the subclones of $\cl{O}_A$ containing $T_A$ are exactly the
clones in the $(k + 1)$-element chain
\[
\langle T_A \rangle = \cl{B}_0 \subset \cl{B}_1 \subset \cl{B}_2 \subset \dots \subset \cl{B}_{k-1} \subset \cl{B}_k = \cl{O}_A,
\]
which is often referred to as the S{\l}upecki--Burle chain.
Szabó (unpublished, \cite{Szendrei}) extended this result and showed that 
the proper subclones of $\cl{O}_A$ containing $T_A^-$ are exactly the
clones $\cl B_i(M)$ where $0\le i< k$ and
$M$ is a submonoid of $T_A$ containing $T_A^-$.

\begin{theorem}
\label{thm-slup}
If $\cl C$ is a clone on a $k$-element set $A$ $(k\ge3)$
such that $T_A^-\subseteq\cl C$, then
$\cl C\in\FF_A$ if and only if
$\cl B_{k-1}(T_A^-)\subseteq\cl C$.
\end{theorem}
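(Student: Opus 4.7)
The plan is to reduce the theorem via Szab\'o's classification, cited in the paragraph preceding the statement: every clone on $A$ containing $T_A^-$ is either $\cl O_A$ itself or of the form $\cl B_i(M)$ for some $0\le i<k$ and some submonoid $M\subseteq T_A$ with $T_A^-\subseteq M$. A direct comparison of generators shows that, among such clones, $\cl B_{k-1}(T_A^-)\subseteq\cl C$ holds if and only if $\cl C=\cl O_A$ or $\cl C=\cl B_{k-1}(M)$ for some $M\supseteq T_A^-$; in every remaining case $\cl C\subseteq\cl B_{k-2}$. Therefore, by Proposition~\ref{basic_props2}(ii), the theorem reduces to two independent claims: (A)~$\cl B_{k-1}(T_A^-)\in\FF_A$, and (B)~$\cl B_{k-2}\notin\FF_A$.

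For~(A), the strategy is to establish the finiteness of the set of $\fequiv[\cl C]$-classes, where $\cl C:=\cl B_{k-1}(T_A^-)$, by producing canonical representatives. The clone $\cl C$ contains every operation of range of size at most $k-1$, so a \emph{fibre-choice} argument applies: given two operations $f$ and $g$ with $\range f=\range g=R\subsetneq A$, pick a tuple $\vect{y}_r\in g^{-1}(r)$ for each $r\in R$ and set $\vect{h}(\vect{x})=\vect{y}_{f(\vect{x})}$; then $f=g\circ\vect{h}$ and each component of $\vect{h}$ has range of size at most $|R|\le k-1$, so every component lies in $\cl C$. Thus nonempty proper subsets $R\subsetneq A$ index exactly one $\fequiv[\cl C]$-class of small-range operations each, giving $2^k-2$ classes. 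The same procedure handles surjective $g$ as soon as one can pick $\vect{y}_r\in g^{-1}(r)$ so that each coordinate of $r\mapsto\vect{y}_r$ misses at least one value of $A$; a short combinatorial lemma shows this is always possible when $g$ is surjective and \emph{not} essentially unary. Hence all surjective non-essentially-unary operations collapse into a single $\fequiv[\cl C]$-class. Finally, an essentially unary operation $\alpha(x_1)$ with $\alpha$ a permutation of $A$ can be a $\cl C$-minor of $\alpha'(x_1)$ only if the unary factor $(\alpha')^{-1}\alpha$ lies in $T_A^-$, forcing $\alpha=\alpha'$; this contributes $k!$ further classes. Summing yields at most $2^k+k!-1$ equivalence classes, so $\cl C\in\FF_A$ by Proposition~\ref{basic_props2}(i).

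For~(B), the case $k=3$ is already covered by Corollary~\ref{burle}, since then $\cl B_{k-2}=\cl B_1$ is Burle's clone. For $k\ge 4$, the plan is to exhibit an infinite family of operations $f_n\in\cl O_A$ ($n\ge 2$) that are pairwise $\cl B_{k-2}$-inequivalent. Each $f_n$ will be chosen to be surjective, to depend essentially on all $n$ variables, and to carry an explicit ``witness pattern'' of input tuples at which $f_n$ attains all values of $A$; the design is arranged so that, in any decomposition $f_n=f_m\circ\vect{h}$ with $\vect{h}\in(\cl B_{k-2}^{(n)})^m$, a component $h_i$ of $\vect{h}$ with $|\range h_i|\le k-2$ would necessarily miss an output value of $f_n$ at a witness tuple. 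Consequently every component of $\vect{h}$ must be essentially at most unary, and a standard covering-of-essential-variables argument then forces $m\ge n$, so $f_n\not\fequiv[\cl B_{k-2}]f_m$ whenever $n\ne m$.

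The main obstacle lies in part~(B): Remark~\ref{notconverse} shows that $\cl B_{k-2}^{(n)}$ is large enough to satisfy the lower bound of Proposition~\ref{BigClones}, so the counting tools of Corollaries~\ref{SmallClones} and~\ref{PolyClones} cannot be applied, and the family $\{f_n\}$ must be crafted by hand with a witness pattern delicate enough that the flexibility afforded by arbitrary range-$\le k-2$ components of $\vect{h}$ cannot be exploited to collapse arity. Part~(A), though technically less demanding, still relies on the combinatorial lemma that for every surjective, non-essentially-unary $g$ one may choose fibre representatives missing a value in each coordinate, and requires careful bookkeeping because only the monoid $T_A^-$, and not the full $T_A$, is available in the essentially unary part of $\cl C$, so the only surjective unary factor permitted in a reduction is the identity.
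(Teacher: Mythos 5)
Your reduction via Szab\'o's theorem to the two claims $\cl B_{k-1}(T_A^-)\in\FF_A$ and $\cl B_{k-2}\notin\FF_A$ is exactly the decomposition used in the paper, and your proof of~(A) is essentially the paper's argument in different clothing: the ``short combinatorial lemma'' you invoke for picking fibre representatives inside a proper box $C_1\times\dots\times C_n$ is precisely Jablonski\u{\i}'s Lemma~\ref{lm-jablonskii} (applicable because a surjective non-essentially-unary operation on a $k$-element set has range of size $k\ge 3$), and the paper uses exactly this to show that any two operations which are not essentially unary and have the same range are $\cl N$-equivalent. The paper then appeals directly to Proposition~\ref{basic_props2}(i) rather than counting classes, but your explicit count ($2^k+k!-1$) is compatible with it and the per-permutation argument, forcing $(\alpha')^{-1}\alpha\in T_A^-$, is correct.

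The genuine gap is in~(B), and you essentially flag it yourself. Beyond not exhibiting the family $\{f_n\}$, the mechanism you describe --- ``\emph{every} component of $\vect h$ must be essentially at most unary, and then an arity count forces $m\ge n$'' --- is not what the paper's construction achieves, and it is not clear it can be achieved. In the paper's proof, with $f_m = f_n\circ\vect h$ and $m>n$, the witness pattern (the diagonal tuples $\bar l$ and the near-constant tuples $\vect u_i$) forces $\vect h(\bar l)=\bar l$ and $\vect h(\vect u_p^m)=\vect u_s^n$, which shows that the $n-1$ components $h_i$ with $i\ne s$ have range $\supseteq\{1,\dots,k-1\}$ and hence are essentially unary. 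But the exceptional component $h_s$ is \emph{not} forced to be essentially unary by a range argument; indeed its range is shown to be $\{1,\dots,k-2\}$, which is compatible with essential arity $\ge2$ inside $\cl B_{k-2}$. The paper first uses the arity count to conclude $h_s$ \emph{cannot} be essentially unary, and then derives a separate structural contradiction from the interplay of $h_s$ and the essentially unary $h_t$ at a tuple $\vect u_p^m$ (namely, the value $h'_t(1)$ is pinned down two incompatible ways). So the final contradiction is not the arity count at all. Your plan, which relies on the arity count as the sole finishing blow and therefore needs \emph{all} components to be essentially unary, would require a strictly stronger witness pattern than the paper's, and you have not shown such a pattern exists. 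Until the family $\{f_n\}$ and the pinning argument are supplied in full, part~(B) remains unproved.
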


\begin{proof}
Let $\cl{N}=\cl B_{k-1}(T_A^-)$, which is the subclone of ${\mathcal O}_A$
that consists of all projections and all nonsurjective operations.
Assume first that $\cl{N}\subseteq\cl C$.
We want to show that $\cl C\in\FF_A$.
By Proposition~\ref{basic_props2}~(ii) 
it suffices to prove that $\cl{N} \in \FF_A$.
We will start with the following claim.

\begin{quote}
\textit{Claim.}
If $f$ and $g$ are operations on $A$ that are
not essentially unary and satisfy
$\range f = \range g$, then 
$f \fequiv[\cl{N}] g$.
\end{quote}

\textit{Proof of Claim.}
Let $\range f = \range g = S$, and let $f$ be $n$-ary and $g$ be $m$-ary.
Since $f$ and $g$ are not essentially unary, $\card{S}\ge2$.
If $\card{S} \geq 3$, then it follows from Jablonski\u{\i}'s Lemma 
(Lemma~\ref{lm-jablonskii})
that there is a transversal $B = \{\vect{b}_1, \dotsc, \vect{b}_{\card{S}}\}$ for $\ker f$ such that $B \subseteq C_1 \times C_2 \times \dots \times C_n$ for some proper subsets $C_i \subset A$. This condition clearly holds also in the case $\card{S} = 2$. 
The assumption $\range{f}=\range{g}$ combined with the choice of $B$ 
ensures that
for each $\vect{a}\in A^m$ there exists a unique $\vect{b}_j\in B$ such that
$g(\vect{a}) = f(\vect{b}_j)$. Therefore we get a well-defined function
$\vect{h} \colon A^m \to A^n$ by setting $\vect{h}(\vect{a}) = \vect{b}_j$ 
whenever $g(\vect{a}) = f(\vect{b}_j)$. It is clear from this definition that 
$g = f \circ \vect{h}$.
Since $B \subseteq C_1 \times C_2 \times \dots \times C_n$, we see that
the components $h_i$ of $\vect{h} = (h_1, \dotsc, h_n)$ are non-surjective, 
and hence they are members of $\cl{N}$.
Thus, $g \subf[\cl{N}] f$.
The same argument with the roles of $f,g$ switched shows also that
$f \subf[\cl{N}] g$.
\quad $\diamond$

It follows from the Claim above that every operation $f$ on $A$ that
is not essentially unary, is $\cl{N}$-equivalent to a
binary operation.
It is easy to see that 
for any clone $\cl{K}$, every
essentially unary operation is $\cl K$-equivalent
to a unary operation.
Therefore we get from Proposition~\ref{basic_props2}~(i) that
$\cl{N} \in \FF_A$. 
Proposition~\ref{basic_props2}~(ii) thus implies that
$\cl C\in\FF_A$ whenever $\cl{N}\subseteq\cl C$.

For the converse assume that $\cl{N}\not\subseteq\cl C$.
Since $T_A^-\subseteq\cl C$, Szab\'o's theorem implies that
$\cl C$ is a subclone of $\cl{B}_{k-2}$.
Therefore, by Proposition~\ref{basic_props2}~(ii), $\cl C\notin\FF_A$ will
follow if we show that $\cl{B}_{k-2}\notin\FF_A$.
For $k=3$ the clone $\cl{B}_{k-2}$ is Burle's clone,
so in this case $\cl{B}_{k-2}\notin\FF_A$ follows from Corollary~\ref{burle}.

{}From now on let $k>3$, and assume without loss of generality that
$A=\{0,1,\ldots,k-1\}$.
For each $n \geq 2$ and $1\le i\le n$ let 
$\vect{u}_i^n$ denote the $n$-tuple whose $i$-th coordinate is
$1$ and all other coordinates are $k-1$.
Now we define an $n$-ary operation $f_n$ on $A$ by
\[
f_n(\vect{a}) =
\begin{cases}
l & \text{if $\vect{a}=\bar{l}$ with $1\le l\le k-2$,}\\
k-1 & \text{if $\vect{a}=\vect{u}_i^n$ ($1\le i\le n$),}\\
0 & \text{otherwise.}
\end{cases}
\]
It is clear that $f_n$ depends on all of its variables,
because it is invariant under all permutations of its variables, and
is not constant. 
Our claim $\cl{B}_{k-2}\notin\FF_A$ will follow, if we
show that $f_m \not\fequiv[\cl{B}_{k-2}] f_n$ whenever $m \neq n$.

Assume that, on the contrary, $f_m \subf[\cl{B}_{k-2}] f_n$ for some $n<m$. Then there exists $\vect{h} = (h_1, \dotsc, h_n) \in (\cl{B}_{k-2}^{(m)})^n$ such that $f_m = f_n \circ \vect{h}$. This implies that
$\vect{h}$ maps each kernel class 
$f_m^{-1}(l)$ ($l\in A$) of 
$f_m$ to the corresponding kernel class 
$f_n^{-1}(l)$ 
of $f_n$. 
Applying this for 
$l \in \{1, \dotsc, k-2\}$ we obtain that 
$\vect{h}(\bar l) = \bar l$, so
the range of each $h_i$ 
contains the elements $1,2,\ldots,k-2$.
Applying the same property of $\vect{h}$ for $l=k-1$ we get that
the range of $\vect{h}$ must also contain an $n$-tuple 
of the form $\vect{u}_s^n$  for some $1\le s\le n$.
For each $i\not=s$ the $i$-th coordinate of $\vect{u}_s^n$ is $k-1$,
therefore for all such $i$ all elements $1,2,\ldots,k-2,k-1$
must be in the range of $h_i$. 
This implies that each $h_i$ ($i \neq s$) is 
essentially unary, because the only members of 
$\cl{B}_{k-2}$ with ranges containing at least $k-1$ 
elements are essentially unary.
On the other hand, it is not the case that $h_s$, too, is 
essentially unary, because $n < m$ and $f_m$ depends on all of 
its variables. 
Thus $h_s$ has essential arity $\ge2$.
The facts established so far about the ranges of the $h_i$'s 
imply that
the range of $h_s$ is $\{1, \dotsc, k - 2\}$. 
Furthermore, the other $h_i$'s ($1\le i\le n$, $i\not=s$) 
are of the form $h_i(\vect{x}) = h'_i(x_{\sigma(i)})$ 
for some $\sigma \colon \{1, \dotsc, n\} \to \{1, \dotsc, m\}$ and some unary 
operations $h'_i$ that fix the elements $1, \dots, k-2$. 
Choose and fix $t\not=s$ ($1\le t\le n$) arbitrarily, and let $p=\sigma(t)$;
hence $h_t(\vect{x})=h'_t(x_p)$.
Since $\vect{h}$ maps
$f_m^{-1}(k-1)$ to $f_n^{-1}(k-1)$,
we get that $\vect{h}(\vect{u}_p^m)=\vect{u}_j^n$ for some $j$ ($1\le j\le n$).
Thus $h_i(\vect{u}_p^m)=1$ if $i=j$ and $h_i(\vect{u}_p^m)=k-1$ if $i\not=j$.
As the range of $h_s$ does not contain $k-1$, it must be the case
that $j=s$.
Hence $h_t(\vect{u}_p^m)=k-1$.
Since the $p$-th coordinate of $\vect{u}_p^m$ is $1$, 
we get that $h_t(\vect{u}_p^m)=h'_t(1)$, and hence 
$h'_t(1)=k-1$. 
This contradicts the fact established earlier that 
$h'_t$ fixes $1$.
The proof of Theorem~\ref{thm-slup} is complete.
\end{proof}

Now we turn to the second main result of this section which shows that
if $\lambda_T$ is an $h$-regular relation of arity $h<k$, then
the maximal clone $\Pol\lambda_T$ is not a member of $\FF_A$.
We will use the notation $\hh = \{1, \dotsc, h\}$ throughout 
the rest of the section. 

The following property of the operations in $\Pol \lambda_T$ will be useful (see, e.g.,~\cite[Lemma 7.3]{RS}).

\begin{lemma}
\label{lm-hreg}
Let $T = \{\theta_1, \dotsc, \theta_r\}$ be an $h$-regular family of
equivalence relations on $A$, 
let $\theta = \bigcap_{i = 1}^r \theta_i$,
and let $g$ be an $m$-ary operation in $\Pol \lambda_T$. If the range of $g$ contains a transversal for the blocks of each $\theta_i$ $(1\le i\le r)$, then
\begin{enumerate}
\item[{\rm(1)}]
for each $i$ $(1 \leq i \leq r)$ there exist $p$ $(1 \leq p \leq m)$ and $q$ $(1 \leq q \leq r)$ such that for all $\vect{a}, \vect{b} \in A^m$,
\[
g(\vect{a}) \, \theta_i \, g(\vect{b})
\qquad \text{whenever} \qquad
a_p \, \theta_q \, b_p;
\]
\end{enumerate}
consequently,
\begin{enumerate}
\item[{\rm(2)}]
$g$ preserves $\theta$, and 
\item[{\rm(3)}]
the operation $g^\theta$ on $A/\theta$ depends on at most $r$ variables.
\end{enumerate}
\end{lemma}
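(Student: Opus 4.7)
My plan is to prove part (1) first, since parts (2) and (3) will follow from it by short formal arguments. For (1), fix $i$ and use the hypothesis to select a transversal $(c_1, \dotsc, c_h)$ of the $\theta_i$-blocks with each $c_j \in \range g$, say $c_j = g(\vect{a}_j)$ for some $\vect{a}_j \in A^m$. Since $(c_1, \dotsc, c_h)$ is a $\theta_i$-transversal it does not lie in $\lambda_T$, so the assumption $g \in \Pol \lambda_T$ forces $(\vect{a}_1, \dotsc, \vect{a}_h) \notin \lambda_T^m$. Hence there exist $p$ ($1 \le p \le m$) and $q$ ($1 \le q \le r$) such that the column $(a_{1,p}, \dotsc, a_{h,p})$ is a transversal of the $\theta_q$-blocks, and I would aim to show that exactly this pair $(p,q)$ witnesses the conclusion of (1).

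The mechanism is a swap argument: for an arbitrary $\vect{x} \in A^m$, the element $x_p$ lies in the $\theta_q$-class of exactly one $a_{j,p}$, say $a_{j_0,p}$. Replacing $\vect{a}_{j_0}$ by $\vect{x}$ keeps the $p$-th column a $\theta_q$-transversal, so the modified $h$-tuple still avoids $\lambda_T^m$, and therefore its image $(c_1, \dotsc, c_{j_0-1}, g(\vect{x}), c_{j_0+1}, \dotsc, c_h)$ must be a transversal of the $\theta_{i^*}$-blocks for some $i^*$. The desired conclusion $g(\vect{x}) \, \theta_i \, c_{j_0}$ amounts to $i^* = i$, since only then is $g(\vect{x})$ forced into the block of $c_{j_0}$ that is vacated among $c_1, \dotsc, c_h$. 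The principal obstacle, I expect, is ruling out the case $i^* \neq i$, and handling it requires the full strength of the hypothesis that $\range g$ meets a transversal of \emph{every} $\theta_l$, not only $\theta_i$, together with further swaps using those additional transversals to pin the index $i^*$ down; this is precisely the delicate step carried out in \cite[Lemma~7.3]{RS}. Once $g(\vect{x}) \, \theta_i \, c_{j_0}$ is established, applying the same argument to any $\vect{y}$ with $y_p \, \theta_q \, x_p$ (which likewise lies in the $\theta_q$-class of $a_{j_0,p}$) gives $g(\vect{y}) \, \theta_i \, c_{j_0}$, hence $g(\vect{x}) \, \theta_i \, g(\vect{y})$, as required.

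Parts (2) and (3) are then immediate. For (2), if $\vect{a}$ and $\vect{b}$ are coordinatewise $\theta$-related, then in particular $a_l \, \theta_{q_i} \, b_l$ for every $l$ and every $i$, because $\theta \subseteq \theta_{q_i}$; so, applying (1) with the pair $(p_i,q_i)$ supplied for each $\theta_i$, we obtain $g(\vect{a}) \, \theta_i \, g(\vect{b})$ for every $i$, and intersecting over $i$ gives $g(\vect{a}) \, \theta \, g(\vect{b})$. For (3), part (2) makes the induced operation $g^\theta$ on $A/\theta$ well-defined; by (1), the $\theta_i/\theta$-component of $g^\theta(\vect{x})$ depends only on $x_{p_i}$; and since $\theta = \bigcap_{i=1}^r \theta_i$, the $\theta$-class of $g(\vect{x})$ is determined by its $\theta_1,\dotsc,\theta_r$-classes. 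Hence $g^\theta$ depends at most on the (at most $r$, possibly repeating) variables $x_{p_1}, \dotsc, x_{p_r}$, which establishes the bound.
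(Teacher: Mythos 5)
The paper itself gives no proof of this lemma; it is stated with a pointer to \cite[Lemma~7.3]{RS}, so there is no in-paper argument to compare against. Judged on its own terms, your proposal correctly sets up the right kind of argument: choose preimages $\vect{a}_1,\dotsc,\vect{a}_h$ of a $\theta_i$-transversal in $\range g$, use $g\in\Pol\lambda_T$ to extract a coordinate $p$ and an index $q$ for which the column $(a_{1,p},\dotsc,a_{h,p})$ is a $\theta_q$-transversal, and then swap one $\vect{a}_{j_0}$ for an arbitrary $\vect{x}$. Your deductions of (2) and (3) from (1) are also correct and routine.

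However, the proof of (1) has a genuine, acknowledged gap: after the swap you only learn that $(c_1,\dotsc,c_{j_0-1},g(\vect{x}),c_{j_0+1},\dotsc,c_h)$ is a $\theta_{i^*}$-transversal for \emph{some} $i^*$, and the conclusion $g(\vect{x})\,\theta_i\,c_{j_0}$ only follows immediately when $i^*=i$. You explicitly decline to rule out $i^*\neq i$, writing that this ``is precisely the delicate step carried out in \cite[Lemma~7.3]{RS}''. That is exactly the content the lemma is asking for, so deferring it to the reference is not a proof; as written, (1) is not established for $r\ge 2$. (Worth noting: for $r=1$ there is only one relation, so $i^*=i$ is forced, and your argument is complete in that case; this covers the $r=1$ branch used in Theorem~\ref{thm-hreg}, but not the $r\ge2$ branch, which is where the lemma is used with full force.) To close the gap you would need an actual argument that either a suitable choice of $(p,q)$ and of the preimages $\vect{a}_j$ forces $i^*=i$, or that an iterated-swap scheme (using transversals of the other $\theta_l$ in $\range g$, as you gesture at) still yields $g(\vect{x})\,\theta_i\,c_{j_0}$; at present neither is carried out, and it is not obvious how the extra transversals pin $i^*$ down.
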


\begin{theorem}
\label{thm-hreg}
If $\lambda_T$ is an $h$-regular relation such that $h<k$, then
$\Pol \lambda_T\notin\FF_A$.
\end{theorem}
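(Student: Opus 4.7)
The plan is to show $\Pol\lambda_T\notin\FF_A$ by constructing, for each $n\ge 2$, a $2n$-ary operation $f_n$ on $A$ such that no two $f_n,f_m$ with $n\ne m$ are $\fequiv[\Pol\lambda_T]$-equivalent. The construction closely follows the template of the proof of Theorem~\ref{thm-central-r}: the totally reflexive property of $\lambda_T$ together with a carefully chosen pair $(t_0,t_1)$ of elements lying in the same block of every $\theta_i\in T$ will play the role of the central element $0$ there. Since $h<k$, the intersection property of the $h$-regular family $T$ allows me to pick $t_1,\dots,t_h\in A$ forming a common transversal of every $\theta_i\in T$, together with an extra element $t_0\in A\setminus\{t_1,\dots,t_h\}$; by a suitable choice of labeling (taking advantage of the fact that $h<k$ forces some block of some $\theta_i$ to have at least two elements) I arrange $t_0$ to be $\theta$-equivalent to $t_1$, where $\theta=\bigcap_i\theta_i$, whenever some $\theta$-class is nonsingleton (with a separate adjustment when $\theta$ is trivial).

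For each $n$ and $1\le i\le n$ I define $2n$-tuples $\vect{a}_i^n,\vect{b}_i^n,\vect{c}_i^n\in A^{2n}$ in exact analogy with Theorem~\ref{thm-central-r} using $t_0,t_1,t_2$ in place of $0,1,2$, and I set
\[
f_n(\vect{a}_i^n)=t_0,\quad f_n(\vect{b}_i^n)=t_1,\quad f_n(\vect{c}_i^n)=t_2,\quad f_n(\bar{t_u})=t_u\text{ for }3\le u\le h,
\]
with default value $t_{h+1}\in A\setminus\{t_0,\dots,t_h\}$ elsewhere (existing when $k\ge h+2$; the case $k=h+1$ is handled ad hoc by reusing $t_0$). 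Assuming for contradiction $f_n\fequiv[\Pol\lambda_T] f_m$ with $n<m$ via $\vect{h}\in(\Pol\lambda_T^{(2m)})^{2n}$ satisfying $f_m=f_n\circ\vect{h}$, kernel-class preservation forces $\vect{h}$ to map the family $\{\vect{a}_i^m\}$ into $\{\vect{a}_j^n\}$, and similarly for the $\vect{b}$'s and $\vect{c}$'s, and to fix each $\bar{t_u}$. Pigeonhole then gives $1\le p<q\le m$ and $s$ with $\vect{h}(\vect{a}_p^m)=\vect{h}(\vect{a}_q^m)=\vect{a}_s^n$.

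The $\lambda_T$-preservation of $\vect{h}$ applied to test tuples like $(\vect{a}_p^m,\vect{b}_p^m,\bar{t_3},\dots,\bar{t_h})$---which lie coordinatewise in $\lambda_T^{2m}$ because every column contains the pair $t_0,t_1$ from the same $\theta_i$-block for each $i$, so the column fails to be a transversal for any $\theta_i$ (always missing the $\theta_i$-block of $t_2$)---combined with the fact that in $\lambda_T^{2n}$ the only tuple $(\vect{a}_s^n,\vect{b}_j^n,\bar{t_3},\dots,\bar{t_h})$ coordinatewise in $\lambda_T^{2n}$ has $j=s$, pins down $\vect{h}(\vect{b}_p^m)=\vect{b}_s^n$, and analogous arguments yield $\vect{h}(\vect{b}_q^m)=\vect{b}_s^n$, $\vect{h}(\vect{c}_p^m)=\vect{c}_s^n$, and $\vect{h}(\vect{c}_q^m)=\vect{c}_s^n$. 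The contradiction then comes from applying $\vect{h}$ to $(\vect{a}_p^m,\vect{c}_q^m,\bar{t_3},\dots,\bar{t_h})$: this lies in $\lambda_T^{2m}$ coordinatewise for the same reason, yet its image $(\vect{a}_s^n,\vect{c}_s^n,\bar{t_3},\dots,\bar{t_h})$ contains a column equal to the transversal $(t_1,t_2,t_3,\dots,t_h)$, which is not in $\lambda_T$. The main technical obstacle is the careful selection of $t_0$ together with the labeling of the transversal, guaranteeing that every column arising in the ``good'' tuples contains a $\theta_i$-equivalent pair (never producing a transversal) while the contradiction-producing column $(t_1,\dots,t_h)$ is genuinely a transversal for every $\theta_i$; it is precisely here that the hypothesis $h<k$ is used, and the subcase $\bigcap_i\theta_i=\zero_A$ requires the most delicate use of the $h$-regular intersection property.
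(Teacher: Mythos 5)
Your proposal is a genuinely different strategy from the paper's: the paper does not use a combinatorial ``trap'' as in Theorem~\ref{thm-central-r}, but instead invokes Lemma~\ref{lm-hreg} to obtain that any $g\in\Pol\lambda_T$ whose range contains a transversal preserves $\theta=\bigcap_i\theta_i$ and $g^\theta$ depends on at most $r$ variables; this yields an \emph{arity bound} ($m\le nr$ if $f_m\subf f_n$, with a variant for $r=1$), and the conclusion follows by exhibiting operations $f_n$ that are symmetric in all variables modulo $\theta$ so that $f_n^\theta$ has essential arity $n$. That approach sidesteps the delicate column-by-column bookkeeping entirely.

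Unfortunately your combinatorial approach has a genuine gap, and it is precisely the choice $t_0\,\theta\,t_1$ that breaks it. In the analogue of $\vect{b}_p^n$ the coordinates alternate $t_1,t_2$, so the tuple $(\vect{a}_p^m,\vect{b}_p^m,\bar{t_3},\dotsc,\bar{t_h})$ has, in every even position other than $2p$, the column $(t_0,t_2,t_3,\dotsc,t_h)$. With $t_0\,\theta\,t_1$, for every $i$ the element $t_0$ lies in the $\theta_i$-block of $t_1$, so $\{t_0,t_2,t_3,\dotsc,t_h\}$ hits all $h$ blocks of $\theta_i$: this column \emph{is} a transversal and hence is \emph{not} in $\lambda_T$. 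The same defect appears in $(\vect{a}_p^m,\vect{c}_q^m,\bar{t_3},\dotsc,\bar{t_h})$ at coordinate $2q-1$, where the column is again $(t_0,t_2,t_3,\dotsc,t_h)$. So none of your ``good'' tuples actually lie in $\lambda_T^{2m}$, and the $\lambda_T$-preservation argument never gets off the ground. The underlying problem is structural: for a central relation $\sigma_c$, any $r$-tuple containing $c$ lies in $\sigma_c$, which is a one-sided, ``absorbing'' condition. For $\lambda_T$ the membership criterion is two-sided --- a tuple survives only if, for \emph{every} $\theta_i$, some pair of its entries collides --- and there is no single element whose presence guarantees membership. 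Consequently the role played by the central element $0$ in Theorem~\ref{thm-central-r} has no direct surrogate here. One might try choosing $t_0$ to lie, for each $i$, in the $\theta_i$-block of some $t_{j_i}$ with $j_i\ge 3$ (so that both $(t_0,t_1,t_3,\dotsc,t_h)$ and $(t_0,t_2,t_3,\dotsc,t_h)$ fail to be transversals), but such a $t_0$ need not exist; for example, when $r=1$ and every element of $A\setminus\{t_1,\dotsc,t_h\}$ lies in the $\theta_1$-block of $t_1$ or $t_2$, no suitable $t_0$ is available. The promised ``separate adjustment when $\theta$ is trivial'' is also never supplied, and it is needed, since for $r\ge 2$ one can have $h^r=k$ and then $\theta=\zero_A$.
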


\begin{proof}
Let $T = \{\theta_1, \dotsc, \theta_r\}$ be an $h$-regular family of equivalence relations on $A$, let $\theta = \bigcap_{i=1}^r \theta_i$, and let $\cl{C} = \Pol \lambda_T$. First we will consider the case when $r \geq 2$. Since $T$ is $h$-regular, there exists a surjective function $\phi \colon A \to \hh^r$ such that each $\theta_i$ is the inverse image under $\phi$ of the kernel of the $i$-th projection map $\pi_i \colon \hh^r \to \hh$. The diagonal $\Delta = \{\bar{u} : u \in \hh\}$ of $\hh^r$ is a common transversal for the kernel classes of $\pi_i$ for each $i$. Therefore by choosing $t_u \in A$ for each $u \in \hh$ such that $\phi(t_u) = \bar{u}$ we get an $h$-element subset $\{t_u : u \in \hh\}$ of $A$ that is a common transversal for the blocks of each $\theta_i \in T$. In particular, $t_1, \dotsc, t_h$ are pairwise non-equivalent modulo $\theta$. The number of blocks of $\theta$ is $h^r > h+2$ (since $r \geq 2$ and $h \geq 3$). Hence we can extend $t_1, \dotsc, t_h$ to a transversal $o, e, t_1, \dotsc, t_h, t_{h+1}, \dotsc, t_s$ of $\theta$ ($s = h^r - 2$).

For $n \geq 2$ define an $n$-ary operation $f_n$ on $A$ as follows:
\[
f_n(a_1, \dotsc, a_n) =
\begin{cases}
a_1 &
\text{if $a_1 \, \theta \, a_2 \, \theta \dotsb \theta \, a_n$ but $(a_1, e) \notin \theta$,} \\
e &
\text{if $\card{\{i : a_i \, \theta \, e\}} = n - 1$,} \\
o &
\text{otherwise.}
\end{cases}
\]
We will show that if $f_m \subf[\cl{C}] f_n$ then $m \leq nr$. Hence, if $f_m \fequiv[\cl{C}] f_n$ then $n/r \leq m \le nr$. This will imply that no two operations in the infinite sequence $f_{n_\ell}$, $\ell = 1, 2, \dotsc$, with $n_\ell = r^\ell + r^{\ell - 1} + \dotsb + r + 1$ are in the same $\fequiv[\cl{C}]$-class, and therefore $\cl{C}\notin\FF_A$.
 
Assume that $f_m \subf[\cl{C}] f_n$. Hence there exists $\vect{g} = (g_1, \dotsc, g_n) \in (\cl{C}^{(m)})^n$ such that $f_m = f_n \circ \vect{g}$. Substituting $\bar{t}_u = (t_u, \dotsc, t_u) \in A^m$ ($1 \leq u \leq h$) into this equality we get that 
$t_u = f_m(\bar{t}_u) = f_n \bigl( \vect{g}(\bar{t}_u) \bigr)$. Since $t_u \neq o$ and $t_u \neq e$, the definition of $f_n$ implies that 
\[
t_u = g_1(\bar{t}_u) \, \theta \, g_2(\bar{t}_u) \, \theta \dotsb \theta \, g_n(\bar{t}_u).
\]
Thus, it follows from the choice of $t_1, \dotsc, t_h$ that the range of each $g_j$ $(1 \leq j \leq n)$ contains a transversal for the blocks of all equivalence relations $\theta_i$. Therefore by Lemma~\ref{lm-hreg} each $g_j$ preserves $\theta$ and the operation $g_j^\theta$ on $A/\theta$ depends on at most $r$ variables. It is easy to see from their definitions that the operations $f_m$ and $f_n$ also preserve $\theta$. Hence for the operations $f_m^\theta$ and $f_n^\theta$ on $A/\theta$ we get that $f_m^\theta = f_n^\theta \circ \vect{g}^\theta$. Since each $g_j^\theta$ depends on at most $r$ variables, we conclude that $f_m^\theta$ depends on at most $nr$ variables. But the definition of $f_m$ shows that
$f_m^\theta$ depends on all of its $m$ variables, because it is symmetric in all of its variables (that is, every operation obtained from $f_m^\theta$ by permuting variables is $f_m^\theta$ itself) and is not constant. This implies that $m \leq nr$, completing the proof of the theorem in the case when $r \geq 2$.

Now let $r = 1$, that is, $T = \{\theta\}$ where $\theta$ has $h \geq 3$ blocks, but $\theta$ is not the equality relation. We may assume without loss of generality that $A = \{0, 1, \dotsc, k - 1\}$, $\hh = \{1, 2, \dotsc, h\}$ is a transversal for the blocks of $\theta$, and $0 \, \theta \, 1$. For $n \geq 2$ define an $n$-ary operation $f_n$ on $A$ as follows:
\[
f_n(a_1, \dotsc, a_n) =
\begin{cases}
a_1 &
\text{if $a_1 \, \theta \, a_2 \, \theta \dotsb \theta \, a_n$ but $(a_1, 1) \notin \theta$,} \\
1 &
\text{if $\card{\{i : a_i \, \theta \, 1\}} = n - 1$,}\\
0 &
\text{otherwise.}
\end{cases}
\]
We want to show that if $f_m \subf[\cl{C}] f_n$ then $m \leq n$. Hence, if $f_m \fequiv[\cl{C}] f_n$ then $m = n$. This will imply that no two operations in the infinite sequence $f_n$, $n = 2, 3, \dotsc$, are in the same $\fequiv[\cl{C}]$-class, and hence $\cl{C}\notin\FF_A$.

Assume that $f_m \subf[\cl{C}] f_n$. Hence there exists $\vect{g} = (g_1, \dotsc, g_n) \in (\cl{C}^{(m)})^n$ such that $f_m = f_n \circ \vect{g}$. Substituting $\bar{u} = (u, \dotsc, u) \in A^m$ ($2 \leq u \leq h$) into this equality we get that $u = f_m(\bar{u}) = f_n \bigl( \vect{g}(\bar{u}) \bigr)$. Since $u \neq 0$ and $u \neq 1$, the definition of $f_n$ implies that
\[
u = g_1(\bar{u}) \, \theta \, g_2(\bar{u}) \, \theta \dotsb \theta \, g_n(\bar{u}).
\]
Thus, the range of each $g_j$ ($1 \leq j \leq n$) contains an element from every $\theta$-block $2/\theta, \dotsc, h/\theta$ (i.e., from every $\theta$-block other than $1/\theta$).

Now let $\vect{v}_i$ denote the $m$-tuple whose $i$-th coordinate is $2$ and all other coordinates are $1$. Substituting the tuple $\vect{v}_i$ into $f_m = f_n \circ \vect{g}$ we get that $1 = f_m(\vect{v}_i) = f_n \bigl( \vect{g}(\vect{v}_i) \bigr)$. The definition of $f_n$ yields that 
\begin{enumerate}
\item[$(*)$]
for each $i$ ($1 \leq i \leq m$), exactly $n - 1$ of the $n$ elements 
\[
\text{$g_1(\vect{v}_i)$, $g_2(\vect{v}_i)$, \ldots, $g_n(\vect{v}_i)$}
\]
are in the $\theta$-block $1/\theta$. 
\end{enumerate}
This implies that at least $n - 1$ of the operations $g_1, \dotsc, g_n$ have the property that their ranges contain transversals for the blocks of $\theta$. We want to argue that all operations $g_1, \dotsc, g_n$ have this property.

Assume not, and let, say, $g_1$ be the unique operation among $g_1, \dotsc, g_n$ whose range fails to contain a transversal for the blocks of $\theta$. Since the range of $g_1$ contains an element from each one of the $\theta$-blocks other than $1/\theta$, the range of $g_1$ must be disjoint from $1/\theta$. 
Now $(*)$ implies that $g_j(\vect{v}_i) \, \theta \, 1$ for all $j > 1$ and all $i$ ($2 \leq j \leq n$, $1 \leq i \leq m$). In particular, for $j = 2$, this shows that the range of $g_2$ contains a transversal for the blocks of $\theta$, so by Lemma~\ref{lm-hreg} (case $r = 1$) there must exist a $p$ ($1 \leq p \leq m$) such that for arbitrary arguments $\vect{a}, \vect{b} \in A^m$
\[
g_2(\vect{a}) \, \theta \, g_2(\vect{b})
\qquad \text{whenever} \qquad
a_p \, \theta \, b_p.
\]
However, this fails for $\vect{a} = \vect{v}_p$ and $\vect{b} = \bar{2}$; indeed, the $p$-th coordinates of $\vect{v}_p$ and $\bar{2}$ are both $2$, but as we established earlier, $g_2(\vect{v}_p) \, \theta \, 1$, $g_2(\bar{2}) \, \theta \, 2$, and $(1, 2) \notin \theta$. This contradiction proves that all operations $g_1, \dotsc, g_n$ have the property that their ranges contain transversals for the blocks of $\theta$.

Now we can finish the proof the same way as before. It follows from Lemma~\ref{lm-hreg} that each $g_j$ preserves $\theta$ and the operation $g_j^\theta$ on $A/\theta$ depends on at most one variable. It is easy to see from their definitions that the operations $f_m$ and $f_n$ also preserve $\theta$. Hence for the operations $f_m^\theta$ and $f_n^\theta$ on $A/\theta$ we get that $f_m^\theta = f_n^\theta \circ \vect{g}^\theta$. This implies that $f_m^\theta$ depends on at most $n$ variables. But the definition of $f_m$ shows that $f_m^\theta$ depends on all of its $m$ variables. Thus $m \leq n$. This completes the proof of Theorem~\ref{thm-hreg}
\end{proof}

%%%%%%%%%%%%%%%%%%%%%%%%%%%%%%%%%%%%%%%%%%%%%%%
\section{Intersections of maximal clones}
\label{sec:intersecs}

Theorems~\ref{mainthm-eqrels}, \ref{thm-centralk-1}, \ref{thm-central-r},
\ref{thm-slup}, \ref{thm-hreg} from previous sections of this paper, 
combined with earlier results stated in
Theorem~\ref{order_affine} and Corollary~\ref{discr-max} 
completely describe which maximal clones on a finite set $A$
belong to the filter $\FF_A$.
This description is summarized below
(see also Table~\ref{table-maxcl}).

\begin{theorem}
\label{thm-max-summary}
Let $A$ be a finite set with $k$ elements $(k\ge3)$.
For a maximal clone
$\cl M$ on $A$ we have $\cl M\in\FF_A$ if and only if
$\cl M$ is one of the following clones:
\begin{itemize}
\item
$\cl M=\Pol\gamma$ for a prime permutation $\gamma$ on $A$,
\item
$\cl M=\Pol\epsilon$ for 
a nontrivial equivalence relation $\epsilon$ on $A$,
\item
$\cl M=\Pol B$ for a nonempty proper subset $B$ of $A$,
\item
$\cl M=\Pol \sigma_c$ for some $c\in A$ where
$\sigma_c$ is the $(k-1)$-ary central relation with central element $c$, 
\item
$\cl M$ is S{\l}upecki's clone.
\end{itemize}
\end{theorem}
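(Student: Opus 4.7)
The plan is to apply Rosenberg's classification (Theorem~\ref{thm-Rosenberg}) and, for each of the six types of relations determining a maximal clone, cite the appropriate theorem from earlier in the paper to decide membership in $\FF_A$. The result is essentially a bookkeeping summary of the work in Sections~\ref{sec:eqrels}--\ref{sec:h-reg} together with the previously published Theorem~\ref{order_affine} and Corollary~\ref{discr-max}.

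First I would dispose of the two types that never give a member of $\FF_A$ regardless of parameters: bounded partial orders and prime affine relations. Both are handled by Theorem~\ref{order_affine}, so the corresponding maximal clones are outside $\FF_A$.

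Next I would cover the three types that always yield a maximal clone in $\FF_A$ on a $k$-element set with $k\ge3$. For a prime permutation $\gamma$, Corollary~\ref{discr-max} gives $\Pol\gamma\in\FF_A$. For a nonempty proper subset $B$ of $A$ (the unary central relations), Corollary~\ref{discr-max} again gives $\Pol B\in\FF_A$. For a nontrivial equivalence relation $\epsilon$, applying Theorem~\ref{mainthm-eqrels} with $E=\{\epsilon\}$, $\Gamma=\emptyset$, and $\Sigma=\emptyset$ (so that conditions (a) and (b) are trivially satisfied) shows that $\Pol\epsilon\in\FF_A$.

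Finally I would split the remaining two types by arity. For a central relation $\rho$ of arity $h$ with $2\le h\le k-1$, Theorem~\ref{thm-central-r} gives $\Pol\rho\notin\FF_A$ when $2\le h\le k-2$, while in the case $h=k-1$ every such $\rho$ equals $\sigma_c$ for some $c\in A$ (as noted at the start of Section~\ref{sec:central}), and Theorem~\ref{thm-centralk-1} combined with Proposition~\ref{basic_props2}(ii) yields $\Pol\sigma_c\in\FF_A$. For an $h$-regular relation $\lambda_T$, Theorem~\ref{thm-hreg} gives $\Pol\lambda_T\notin\FF_A$ whenever $h<k$; when $h=k$, the family $T$ must be the singleton consisting of the equality relation, and $\Pol\lambda_T$ is S\l{}upecki's clone $\cl B_{k-1}$, which contains $\cl B_{k-1}(T_A^-)$ and hence belongs to $\FF_A$ by Theorem~\ref{thm-slup}. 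Since these seven cases exhaust the possibilities listed by Rosenberg, the classification in the statement is complete. There is no genuine obstacle: the only care needed is to verify that the arity ranges for central and $h$-regular relations have been covered without gap or overlap and that the $h=k-1$ central case is reduced to the specific relations $\sigma_c$ before Theorem~\ref{thm-centralk-1} is invoked.
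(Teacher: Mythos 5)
Your proposal is correct and takes essentially the same approach as the paper: the theorem is a bookkeeping summary obtained by running through Rosenberg's six types and citing Theorem~\ref{order_affine}, Corollary~\ref{discr-max}, Theorem~\ref{mainthm-eqrels}, Theorem~\ref{thm-central-r}, Theorem~\ref{thm-centralk-1}, Theorem~\ref{thm-hreg}, and Theorem~\ref{thm-slup} exactly as you do. Your case split on arity for central and $h$-regular relations and the reduction of $(k-1)$-ary central relations to $\sigma_c$ match the paper's intended argument.
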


In this section we determine for each pair of maximal clones in $\FF_A$
whether or not their intersection is in $\FF_A$.
The results can be summarized as follows.

\begin{theorem}
\label{thm-intersecs}
Let $A$ be a finite set with $k$ elements $(k\ge3)$, and
let $\cl M$ and $\cl N$ be distinct maximal clones in $\FF_A$.
\begin{enumerate}[\indent\rm(1)]
\item
If $\cl N$ is S{\l}upecki's clone, then $\cl M\cap\cl N\notin\FF_A$.
\item
If $\cl N=\Pol\sigma_c$ for some $c\in A$, then
$\cl M\cap\cl N\in\FF_A$ if and only if $\cl M=\Pol\{c\}$.
\item
If $\cl N=\Pol\epsilon$ for a nontrivial equivalence relation 
$\epsilon$ on $A$ and
$\cl M=\Pol\rho$ where $\rho$ is
a prime permutation, a nonempty proper subset, or a nontrivial equivalence
relation on $A$, then $\cl M\cap\cl N\in\FF_A$ unless
\begin{itemize}
\item
$\rho=\gamma$ is a prime permutation such that $\gamma\notin\cl N$, or
\item
$\rho$ is an equivalence relation 
incomparable to $\epsilon$.
\end{itemize} 
\item
If $\cl M=\Pol\rho$ and $\cl N=\Pol\tau$ where $\rho,\tau$ are
prime permutations or nonempty proper subsets of $A$, then
$\cl M\cap\cl N\in\FF_A$.
\end{enumerate}
\end{theorem}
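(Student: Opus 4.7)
The plan is short and direct: I would show that the intersection $\cl M\cap\cl N$ contains the ternary discriminator $t_A$, and then invoke Theorem~\ref{discr}~(i) together with Proposition~\ref{basic_props2}~(ii) to conclude $\cl M\cap\cl N\in\FF_A$. All four combinations allowed in part~(4) (prime permutation/prime permutation, prime permutation/proper subset, proper subset/proper subset) collapse to the same argument, because both kinds of maximal clones already contain $t_A$.

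To verify $t_A\in\cl M\cap\cl N$ I would check preservation in each of the two possible shapes of $\rho$ and $\tau$. If $\rho=B$ is a nonempty proper subset of $A$, preservation is immediate, since $t_A(x,y,z)\in\{x,z\}$ for all $x,y,z$ and so $t_A$ preserves every subset of $A$. If $\rho=\gamma$ is a prime permutation (viewed as its graph), one checks $\gamma\bigl(t_A(x_1,x_2,x_3)\bigr)=t_A\bigl(\gamma(x_1),\gamma(x_2),\gamma(x_3)\bigr)$ by a two-case split: when $x_1=x_2$, both sides equal $\gamma(x_3)$; when $x_1\neq x_2$, so $\gamma(x_1)\neq\gamma(x_2)$, both sides equal $\gamma(x_1)$. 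This observation is exactly the one already recorded in the sentence preceding Corollary~\ref{discr-max}, so I would simply cite it rather than reprove it.

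Having $t_A\in\cl M\cap\cl N$, the inclusion $\langle t_A\rangle\subseteq\cl M\cap\cl N$ is automatic. Theorem~\ref{discr}~(i) gives $\langle t_A\rangle\in\FF_A$, and then Proposition~\ref{basic_props2}~(ii), which states that $\FF_A$ is an order filter in the clone lattice, yields $\cl M\cap\cl N\in\FF_A$. There is no real obstacle in this step; among the four parts of Theorem~\ref{thm-intersecs}, part~(4) is the one where both factor clones lie above the discriminator clone, so the intersection automatically does too, and part~(4) turns out to be the easiest of the four.
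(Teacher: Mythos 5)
Your argument for part~(4) is correct and essentially matches the paper's route: the paper also obtains part~(4) from Theorem~\ref{discr} together with Theorem~\ref{mainthm-eqrels}, and your observation that both factor clones contain $t_A$ is exactly the fact recorded before Corollary~\ref{discr-max}. However, the statement you were asked to prove is the entire four-part Theorem~\ref{thm-intersecs}, and you have proven only part~(4); parts~(1), (2), and (3) are left entirely unaddressed, and they carry the bulk of the work.

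Part~(1) requires showing that the intersection of S{\l}upecki's clone $\cl B_{k-1}$ with each of the other maximal clones in $\FF_A$ falls outside $\FF_A$; the paper devotes Lemmas~\ref{lm-centralk-Bk-1}--\ref{lm-eqrel-Bk-1} to this, each exhibiting an infinite family of operations $f_n$ lying in pairwise distinct $\fequiv[\cl C]$-classes. Part~(2) has two directions: the positive direction ($\Pol\sigma_c\cap\Pol\{c\}\in\FF_A$) is Theorem~\ref{thm-centralk-1}, and the negative direction (every other $\cl M\cap\Pol\sigma_c$ fails to lie in $\FF_A$) is the content of Lemmas~\ref{lm-centralk-1subset}--\ref{lm-centralk-1eqrel}, built on the technical Lemma~\ref{lm-centralk-1-gen}. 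Part~(3) cannot be handled by the discriminator argument at all, because $t_A$ does \emph{not} preserve a nontrivial equivalence relation $\epsilon$: choosing $a\neq b$ with $(a,b)\in\epsilon$ and $c$ with $(a,c)\notin\epsilon$, the columns $(a,a)$, $(a,b)$, $(c,c)\in\epsilon$ map to $\bigl(t_A(a,a,c),\,t_A(a,b,c)\bigr)=(c,a)\notin\epsilon$. So for part~(3) the positive cases need Theorem~\ref{mainthm-eqrels}, and the negative cases also follow from the necessity part of that theorem. None of these components appears in your proposal.
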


Since every clone in $\FF_A$ other than $\cl{O}_A$ is below a maximal
clone in $\FF_A$, the ordered set $\FF_A\setminus\{\cl{O}_A\}$ 
can be decomposed into a union of up-closed sets of the form
\[
\FF_A(\cl M):=\{\cl{C}:\cl C\subseteq\cl M\}
\]
for each maximal clone $\cl M$ in $\FF_A$.
Statement (1) of Theorem~\ref{thm-intersecs} shows that 
for S{\l}upecki's clone $\cl N$, the set 
$\FF_A(\cl N)$ is disjoint from all other $\FF_A(\cl M)$'s.
Similarly, statement (2) shows that 
for each $\cl N=\Pol\sigma_c$, the set 
$\FF_A(\cl N)$ is almost disjoint from all other $\FF_A(\cl M)$'s.
In contrast, by statements (3) and (4) (or by the more general
Theorem~\ref{mainthm-eqrels}) there are large overlaps between the
sets $\FF_A(\cl M)$ for the remaining three types of maximal clones.
Thus, Theorem~\ref{thm-intersecs} can be viewed as a structure theorem
for the order filter $\FF_A$, stating that $\FF_A$ consists of three
almost independent parts: (i) the clones contained in S{\l}upecki's clone,
(ii) the clones contained in $\Pol\sigma_c$ for some $c\in A$, and
(iii) the clones that lie below at least one maximal clone of
one  of the remaining three types (i.e., a maximal clone 
determined by a prime permutation,
a subset, or an equivalence relation); see Figure~2.
%
%
%%%%%%%%%%%%%%%%%%%%%%
\begin{center}%
\begin{figure}%
\setlength{\unitlength}{1mm}%
\begin{picture}(110,60)%
%%
%% equiv & al.
%%%%%%%%%%%
%
\put(60,15){\circle*{1.7}}
\put(47,12){\tiny$\Pol(E,\Aut E,\cl{P}^+(A))$}
\put(46,8.5){\tiny\text{($E$ a chain of equiv rels)}}
\bezier{200}(60,15)(35,30)(21,40)
\bezier{200}(60,15)(72,25)(78,39.6)
\bezier{120}(21,39.6)(26,39.6)(45,39.6)
\bezier{150}(45,39.6)(53.5,20)(62,39.6)
\bezier{75}(62,39.6)(70,39.6)(78,39.6)
%%
%%%%%%%%%%%%%%%%%%%%%%%%%%%
%  Independence
%%%%%%%%%%%%%%%%%%%%%%%%%%
%
\thicklines
\put(17,34){\oval(12,4)[tl]}
\put(19,34){\oval(12,4)[tr]}
\bezier{30}(17,36)(18,36.5)(19,38)
\bezier{30}(19,36)(18,36.5)(17,38)
\put(17,40){\line(0,-1){2}}
\put(19,40){\line(0,-1){2}}
\put(11,34){\line(0,-1){1.5}}
\put(25,34){\line(0,-1){1.5}}
\bezier{100}(11,30)(11,27)(16,19)
\bezier{100}(25,30)(25,27)(20,19)
\put(10,40){\oval(14,4)[t]}
\put(3,40){\line(0,-1){19.5}}
\put(55,40){\oval(72,4)[t]}
\put(91,40){\line(0,-1){26}}
\put(100,40){\oval(14,4)[t]}
\put(93,40){\line(0,-1){25.5}}
\put(107,40){\line(0,-1){24}}
\bezier{100}(3,20.5)(12,19)(16,19)
\put(16,19){\line(1,0){4}}
\bezier{50}(20,19)(24,19)(28,13.5)
\bezier{200}(28,13.5)(34,5)(38,2.5)
\bezier{200}(38,2.5)(44,0)(50,0)
\bezier{200}(50,0)(60,0)(75,8)
\bezier{200}(75,8)(85,13)(91,14)
\bezier{200}(93,14.5)(102,16)(107,16)
%
%%%%%%%%%%%%%%%%%%%%%%%%%%%%%%
%%% clones lower down in F_A
%%%%%%%%%%%%%%%%%%%%%%%%%%%%%%
%
% central
%%%%%%%%%
\thinlines
\put(13,34){\circle*{1.7}}
\put(15.7,34){$\ldots$}
\put(23,34){\circle*{1.7}}
\put(13,34){\line(4,3){8}}
\put(13,34){\line(-4,3){8}}
\put(23,34){\line(4,3){4.2}}
\bezier{10}(29.4,38.8)(30,39.25)(31,40)
\put(23,34){\line(-4,3){8}}
\put(4,31){\tiny $\Pol(\sigma_1,\!\{\!1\!\})$}
\put(19,31){\tiny $\Pol(\sigma_k,\!\{\!k\!\})$}%
%
% Slupecki
%%%%%%%%%
\thinlines
\put(100,24){\circle*{1.7}}
\put(94.5,21){\tiny ${\mathcal B}_{k\!-\!1}(T_A^-)$}
\bezier{200}(100,24)(92,32)(100,40)
\bezier{200}(100,24)(108,32)(100,40)%
%
%%%%%%%%%%%%%%%%%%%%%%%%%
%%% maximal clones in F_A
%%%%%%%%%%%%%%%%%%%%%%
%
\thinlines
\put(100,10){$\FF_A$}
\put(53,60){\circle*{1.7}}
\put(55,60){${\mathcal O}_A$}
\put(5,40){\circle*{1.7}}
\put(-1,37.7){\tiny\text{$\Pol\sigma_1$}}
\bezier{300}(5,40)(29,50)(53,60)
\put(7.7,40){$\ldots$}
\put(15,40){\circle*{1.7}}
\put(9,37.7){\tiny\text{$\Pol\sigma_{k}$}}
\bezier{250}(15,40)(34,50)(53,60)
\put(-1,48){\tiny\text{$(k\!-\!1)$-ary}}
\put(-1,45.5){\tiny central}
\put(-1,43){\tiny rels}
\put(21,40){\circle*{1.7}}
\bezier{250}(21,40)(37,50)(53,60)
\put(23.7,40){$\ldots$}
\put(31,40){\circle*{1.7}}
\bezier{250}(31,40)(42,50)(53,60)
\put(35,40){\circle*{1.7}}
\bezier{250}(35,40)(44,50)(53,60)
\put(37.7,40){$\ldots$}
\put(45,40){\circle*{1.7}}
\bezier{250}(45,40)(49,50)(53,60)
\put(27.3,37.5){\tiny\text{$\Pol\{\!k\!\}$}}
\put(29,43){\tiny sub}
\put(36,43){\tiny -}
\put(40,43){\tiny sets}
\put(51,40){\circle*{1.7}}
\bezier{250}(51,40)(52,50)(53,60)
\put(56.7,40){$\ldots$}
\put(67,40){\circle*{1.7}}
\bezier{250}(67,40)(60,50)(53,60)
\put(55,43){\tiny perms}
\put(73,40){\circle*{1.7}}
\bezier{250}(73,40)(63,50)(53,60)
\put(78.7,40){$\ldots$}
\put(89,40){\circle*{1.7}}
\bezier{250}(89,40)(71,50)(53,60)
\put(70,45.5){\tiny equiv.}
\put(73,43){\tiny rels}
\put(100,40){\circle*{1.7}}
\bezier{300}(100,40)(76.5,50)(53,60)
\put(101,39){\tiny ${\mathcal B}_{k\!-\!1}$}
\end{picture}%
\caption{The structure of $\FF_A$ for $A=\{1,2,\ldots,k\}$ ($k\ge3$)}%
\end{figure}%
\end{center}%
%%%%%%%%%%%%%%%%%%%%%%
%

For the proof of Theorem~\ref{thm-intersecs} we have to verify
that almost all intersections $\cl M\cap\cl N$ of maximal clones
$\cl M,\cl N\in\FF_A$ fail to be in $\FF_A$ if
$\cl N$ is S{\l}upecki's clone
or a maximal clone determined by a $(k-1)$-ary central relation. 
This will be done in 
Lemmas~\ref{lm-centralk-Bk-1}--\ref{lm-eqrel-Bk-1} 
and Lemmas~\ref{lm-centralk-1subset}--\ref{lm-centralk-1eqrel} below.

We will assume throughout that $A$ is a finite set with $k$ elements, 
and will use the notation $\cl{B}_{k-1}$ and $\cl{B}_{k-2}$
from Section~\ref{sec:h-reg} for S{\l}upecki's clone
and its lower cover in the S{\l}upecki--Burle chain.

\begin{lemma}
\label{lm-centralk-Bk-1}
If $c\in A$, then 
$\Pol\sigma_c\cap\cl{B}_{k-1}\notin\FF_A$.
\end{lemma}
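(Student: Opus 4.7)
The plan is to exhibit an infinite family of operations $f_n$ ($n\geq 2$) of strictly increasing essential arity that lie in pairwise distinct $\fequiv[\cl C]$-classes, where $\cl C=\Pol\sigma_c\cap\cl B_{k-1}$; this directly contradicts $\cl C\in\FF_A$. Taking $c=0$ without loss of generality, I would define, for each $n\geq 2$,
\[
f_n(a_1,\dots,a_n) = \begin{cases} a_1 & \text{if } a_1=a_2=\cdots=a_n, \\ 0 & \text{otherwise.} \end{cases}
\]
A short case check shows that $f_n\in\Pol\sigma_0$, and $f_n$ clearly depends on all $n$ of its variables.

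The whole argument would hinge on a structural claim about $\cl C$: if $h\in\cl C^{(n)}$ satisfies $h(\bar b)=b$ for every $b\in A\setminus\{0\}$, then $h$ is a projection $\pi_j^{(n)}$. Granted this, assume toward contradiction that $f_n\subf[\cl C] f_m$ with $m<n$, so that $f_n=f_m\circ\vect h$ for some $\vect h\in(\cl C^{(n)})^m$. For each $b\neq 0$ we have $b=f_n(\bar b)=f_m(\vect h(\bar b))$, and since $f_m^{-1}(b)=\{\bar b\}$ this forces $h_i(\bar b)=b$ for every $i$; the structural claim then gives $h_i=\pi_{j_i}^{(n)}$. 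Consequently $f_m\circ\vect h$ depends on at most the $m$ variables indexed by $J=\{j_1,\dots,j_m\}$, contradicting the fact that $f_n$ depends on all $n>m$ of its variables: evaluate both sides at any $\vect x$ with $x_{j_i}=1$ for $i=1,\dots,m$ but $x_\ell=2$ for some $\ell\notin J$. The operations $f_2,f_3,\dots$ are therefore pairwise $\cl C$-inequivalent, showing $\cl C\notin\FF_A$.

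The main obstacle is the structural claim. The hypothesis $h(\bar b)=b$ for all $b\neq 0$ forces $\range h\supseteq A\setminus\{0\}$, so by membership in $\cl B_{k-1}$ either $\range h=A$ or $\range h=A\setminus\{0\}$. The first case is easy: since $|\range h|=k$, membership in $\cl B_{k-1}$ forces $h$ to be essentially at most unary, and the unary function involved is pinned down to the identity by the constraints, yielding a projection. The delicate case is $\range h=A\setminus\{0\}$, which I would rule out by exhibiting a direct violation of $h\in\Pol\sigma_0$. Writing $d=h(\bar 0)\in A\setminus\{0\}$ and letting $b_2,\dots,b_{k-1}$ enumerate $A\setminus\{0,d\}$, the matrix of $n$-tuples $(\bar 0,\bar b_2,\dots,\bar b_{k-1})$ has every column equal to $(0,b_2,\dots,b_{k-1})$, which lies in $\sigma_0$ because it contains $0$; yet its image under $h$ is $(d,b_2,\dots,b_{k-1})$, a permutation of $A\setminus\{0\}$ and hence not in $\sigma_0$. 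This contradicts $h\in\Pol\sigma_0$, completing the plan.
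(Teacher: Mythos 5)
Your proof is correct and takes essentially the same approach as the paper: the witnessing operations $f_n$ are exactly the paper's, the key constraint $h_i(\bar b)=b$ for $b\neq 0$ is derived in the same way from the singleton fibers $f_m^{-1}(b)=\{\bar b\}$, and the contradiction is reached by the same two-way split (each $h_i$ is surjective hence essentially unary in $\cl B_{k-1}$, versus $h_i$ violates $\Pol\sigma_0$). Your packaging of this split as a standalone structural claim about $\range h_i$ is a minor reorganization of the paper's case analysis on $\vect h(\bar 0)$, not a genuinely different route.
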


\begin{proof}
Let $\cl{C} = \Pol \sigma_c \cap \cl{B}_{k-1}$, and assume 
without loss of generality that $A=\{0,1,\ldots,k-1\}$ and $c=0$.
To simplify notation we will write $\sigma$ for $\sigma_0$.
For each $n \geq 1$ define an $n$-ary operation $f_n$ on $A$ as follows:
\[
f_n(\vect{a})=\begin{cases}
              u & \text{if $\vect{a}=\bar{u}$, $1\le u\le k-1$,}\\ 
              0 & \text{otherwise.} 
              \end{cases}
\]
Clearly, $f_n$ depends on all of its variables, 
because it is invariant under all permutations of its variables, and 
is not constant. 

We claim that $f_n \not\fequiv[\cl{C}] f_m$ whenever $n \neq m$, 
and hence $\cl{C}\notin\FF_A$. 
For, suppose on the contrary that
$f_n \fequiv[\cl{C}] f_m$ for some $n < m$.
Then there exists 
$\vect{h} = (h_1, \dotsc, h_n) \in (\cl{C}^{(m)})^n$ such that 
$f_m = f_n \circ \vect{h}$. 
Thus $\vect{h}$ maps $f_m^{-1}(u)$ into $f_n^{-1}(u)$ for each
$u\in A$. 
Since for $1\le u\le k-1$ the set
$f_m^{-1}(u)$ (resp.\ $f_n^{-1}(u)$) contains the $m$-tuple ($n$-tuple)
$\bar{u}$ only, we get that
$\vect{h}(\bar{u}) = \bar{u}$ holds for all
$u$ with $1\le u\le k-1$. 
We will distinguish two cases according to whether or not 
$\vect{h}(\bar{0}) = \bar{0}$.

Assume first that $\vect{h}(\bar{0}) = \bar{0}$. 
Then each $h_i$ ($1 \leq i \leq n$) is surjective and, 
being a member of $\cl{B}_{k-1}$, $h_i$ is thus 
essentially unary.
Therefore the equality $f_m = f_n \circ \vect{h}$ implies that
$f_m$ depends on at most $n\ (<m)$ variables.
This is impossible, since  we established
earlier that $f_m$ depends on all $m$ 
of its variables.

Assume now that 
$\vect{h}(\bar{0}) = \vect{b}=(b_1, \dotsc, b_n) \neq \bar{0}$. 
Then $b_i=b \neq 0$ for some $i$ ($1\le i\le n$).
Then
$(\bar{1},\ \ldots,\ \overline{b-1},\ \bar{0},\ \overline{b+1},\ \ldots,\ 
\overline{k-1})\in\sigma^m$,
but 
\begin{align*}
\bigl(\vect{h}(\bar{1}),\ \ldots,\ \vect{h}(\overline{b-1}),\ 
&\vect{h}(\bar{0}),\ 
\vect{h}(\overline{b+1}),\ \ldots,\ \vect{h}(\overline{k-1})\bigr)\\
&{}=\bigl(\bar{1},\ \ldots,\ \overline{b-1},\ \vect{b},\ 
\overline{b+1},\ \ldots,\ \overline{k-1}\bigr)\notin\sigma^n,
\end{align*}
since the $i$-th coordinate of the tuple is
$(1,\ldots,b-1,b,b+1,\ldots,k-1)\notin\sigma$.
This is again impossible, since our assumption that
$\vect{h}\in (\cl{C}^{(m)})^n$ requires that $\vect{h}$ preserve
$\sigma$.
\end{proof}

\begin{lemma}
\label{lm-perm-Bk-1}
If $\gamma$ is a prime permutation on $A$, then
$\Pol\gamma\cap\cl{B}_{k-1}\notin\FF_A$.
\end{lemma}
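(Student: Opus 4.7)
My plan is to follow the proof of Lemma~\ref{lm-centralk-Bk-1} almost verbatim, using the same family of test operations and replacing only the role of the central relation $\sigma$ in the difficult case by preservation of the prime permutation $\gamma$. Fix an element, which I denote by $0\in A$, and for each $n\ge 1$ define an $n$-ary operation $f_n$ on $A$ by $f_n(\bar u)=u$ for every $u\in A\setminus\{0\}$ and $f_n(\vect a)=0$ otherwise. Exactly as in Lemma~\ref{lm-centralk-Bk-1}, this $f_n$ is surjective, depends on all $n$ of its variables (being symmetric in all of them and nonconstant), and has $f_n^{-1}(u)=\{\bar u\}$ for every $u\ne 0$. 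The aim is to show that for $\cl C=\Pol\gamma\cap\cl B_{k-1}$ no two of the $f_n$ are $\fequiv[\cl C]$-related.

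Suppose toward a contradiction that $f_m\subf[\cl C]f_n$ for some $n<m$, witnessed by $\vect h=(h_1,\ldots,h_n)\in(\cl C^{(m)})^n$ with $f_m=f_n\circ\vect h$. Since the kernel class $f_m^{-1}(u)=f_n^{-1}(u)=\{\bar u\}$ is a singleton for every $u\ne 0$, one immediately obtains $h_j(\bar u)=u$ for every $j$ and every $u\in A\setminus\{0\}$. From here I will split into two cases according to the value of $\vect h(\bar 0)$, mirroring the two cases of Lemma~\ref{lm-centralk-Bk-1}. If $\vect h(\bar 0)=\bar 0$, then each $h_j$ is surjective, hence essentially at most unary since $h_j\in\cl B_{k-1}$; this forces $f_m$ to depend on at most $n<m$ variables, contradicting its dependence on all $m$.

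The only step that requires a new idea is the other case. Assume $\vect h(\bar 0)=\vect b$ with some coordinate $b_i\ne 0$. Because $h_i\in\Pol\gamma$, it commutes with the coordinate-wise action of $\gamma$; applied to the tuple $\bar 0$ this yields $h_i(\bar t)=\gamma(b_i)$, where $t:=\gamma(0)$. Since $\gamma$ is fixed-point free, $t\ne 0$, so by the constraint already established $h_i(\bar t)=t$. Comparing the two expressions gives $\gamma(b_i)=t=\gamma(0)$, hence $b_i=0$, a contradiction. I do not anticipate any serious obstacle: fixed-point-freeness of the prime permutation reduces the difficult case to a one-line computation, and the remainder of the argument is just a direct transcription of Lemma~\ref{lm-centralk-Bk-1}, with the hypothesis $h_i\in\Pol\gamma$ substituted for $h_i\in\Pol\sigma$.
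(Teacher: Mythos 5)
Your proof is correct, and it takes a genuinely different route from the paper's. The paper instead proves the stronger structural containment $\Pol\gamma\cap\cl B_{k-1}\subseteq\cl B_{k-2}$: it first observes that the range of any operation in $\Pol\gamma$ is closed under $\gamma$ (and hence under $\gamma^{-1}$), so a proper range can miss at most entire $\gamma$-cycles and therefore has at most $k-p\le k-2$ elements, where $p$ is the common prime cycle length; consequently every non-essentially-unary member of $\Pol\gamma\cap\cl B_{k-1}$ already lies in $\cl B_{k-2}$, and the conclusion follows from $\cl B_{k-2}\notin\FF_A$ (Theorem~\ref{thm-slup}). You instead reuse the witness family $(f_n)$ from Lemma~\ref{lm-centralk-Bk-1} and argue directly that no two are $\fequiv[\cl C]$-equivalent, with the only new ingredient being the one-line computation that $\gamma$-preservation plus fixed-point-freeness of $\gamma$ forces $\vect h(\bar 0)=\bar 0$ (from $h_i(\overline{\gamma(0)})=\gamma(h_i(\bar 0))$ and $h_i(\overline{\gamma(0)})=\gamma(0)$). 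The paper's route is shorter and yields a reusable structural fact, while yours is self-contained (it avoids the comparatively heavy Theorem~\ref{thm-slup}) and makes Lemma~\ref{lm-perm-Bk-1} uniform in style with Lemmas~\ref{lm-centralk-Bk-1}, \ref{lm-subs-Bk-1}, and \ref{lm-eqrel-Bk-1}, which all use explicit infinite witness families.
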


\begin{proof} 
Let $\cl{C}=\Pol\gamma\cap\cl{B}_{k-1}$ and $k=|A|$.
Our goal is to prove that $\cl{C}\subseteq\cl{B}_{k-2}$.
Since $\cl{B}_{k-2}\notin\FF_A$ by Theorem~\ref{thm-slup}, this will
imply our claim that $\cl{C}\notin\FF_A$.

By assumption, $\gamma$ is a prime permutation. 
Therefore $\gamma$ has no fixed points, and
every cycle of $\gamma$ has the same prime length $p$.
So $k=mp$ for some integer $m\ge1$.
First we will show that the range of every operation in
$\Pol\gamma$ is closed under $\gamma$.
Indeed,
let $f$ be an $n$-ary operation in $\Pol\gamma$, and let 
$a\in\range{f}$, i.e., $a=f(a_1,\ldots,a_n)$ for some
$a_1,\ldots,a_n\in A$. Then 
$\gamma(a)=\gamma\bigl(f(a_1,\ldots,a_n)\bigr)
=f\bigl(\gamma(a_1),\ldots,\gamma(a_n)\bigr)$ holds because
$f\in\Pol\gamma$, hence $\gamma(a)\in\range{f}$.
This proves that if $f\in\Pol\gamma$, then 
$\range{f}$ is closed under $\gamma$.
It follows that $\range{f}$ is closed under all powers of $\gamma$, including
$\gamma^{-1}=\gamma^{p-1}$. 
Hence $A\setminus\range{f}$ is also closed under $\gamma$. 
This
implies that if $\range{f}\not=A$, then $|\range{f}|\le|A|-p=k-p$.

Now we are ready to prove that $\cl{C}\subseteq\cl{B}_{k-2}$.
Let $f\in\cl{C}$. If $f$ is essentially at most unary, then
$f\in\cl{B}_0\subseteq \cl{B}_{k-2}$. 
So, suppose that $f$ is not essentially at most unary. 
Then $f\in\cl{B}_{k-1}$ implies that $\range{f}\not=A$, and hence 
$f\in\Pol\gamma$ implies,
by our discussion in the preceding paragraph, that
$|\range{f}|\le k-p$. For $k=p$ 
this shows that such an $f$ cannot exist, while for $k=mp\ge 2p$ 
it shows that $f\in\cl{B}_{k-p}\subseteq\cl{B}_{k-2}$.
In either case, this completes the proof that
$\cl{C}\subseteq\cl{B}_{k-2}$,
and finishes the proof of the lemma.
\end{proof}

\begin{lemma}
\label{lm-subs-Bk-1}
If $B$ is a nonempty proper subset of $A$, then
$\Pol B\cap\cl{B}_{k-1}\notin\FF_A$.
\end{lemma}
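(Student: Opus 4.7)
The plan is to mirror the direct-construction strategy of Lemma~\ref{lm-centralk-Bk-1} rather than the inclusion approach of Lemma~\ref{lm-perm-Bk-1}; the latter is unavailable here because $\Pol B \cap \cl{B}_{k-1} \not\subseteq \cl{B}_{k-2}$ in general (for example, on $A = \{0,1,2\}$ with $B = \{0\}$, the operation $f(x, y) = 1$ if $x = y = 2$ and $0$ otherwise lies in the former but not the latter). Set $\cl{C} = \Pol B \cap \cl{B}_{k-1}$ and assume WLOG $B = \{0, 1, \dotsc, b-1\}$. I will exhibit $n$-ary operations $f_n \in \cl{O}_A$ for $n \ge 2$ and show $f_m \not\subf[\cl{C}] f_n$ whenever $m > n$, producing infinitely many $\fequiv[\cl{C}]$-classes.

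When $b = 1$, the argument is essentially Lemma~\ref{lm-centralk-Bk-1} verbatim: take $f_n(\bar u) = u$ for $u \ne 0$ and $f_n \equiv 0$ otherwise. For any $\vect{h} \in (\cl{C}^{(m)})^n$ with $f_m = f_n \circ \vect{h}$, the preimage conditions force $\vect{h}(\bar u) = \bar u$ for $u \ne 0$, and $B$-preservation combined with $B^m = \{\bar 0\}$ forces $\vect{h}(\bar 0) = \bar 0$. Hence each $h_i$ is surjective, and $h_i \in \cl{B}_{k-1}$ then forces $h_i$ to be essentially unary, making $f_m$ depend on at most $n < m$ variables, contradiction.

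For $b \ge 2$, pick $0, 1 \in B$, and when $b \ge 3$ pick also a distinguished $u_0 \in B \setminus \{0, 1\}$. Writing $\vect{v}_i^n$ for the $n$-tuple with $1$ in position $i$ and $0$'s elsewhere, define
\[
f_n(\vect{a}) = \begin{cases}
u & \vect{a} = \bar u,\ u \in A \setminus \{0, 1\},\\
1 & \vect{a} = \vect{v}_i^n,\ 1 \le i \le n,\\
u_0 & b \ge 3,\ \vect{a} \in B^n \setminus \{0, 1\}^n \text{ and none of the above,}\\
0 & \text{otherwise.}
\end{cases}
\]
One checks that $f_n \in \Pol B$, depends on all $n$ variables, and satisfies the key identity $f_n^{-1}(0) \cap B^n = \{0, 1\}^n \setminus \{\vect{v}_i^n : 1 \le i \le n\}$; the $u_0$-valued rule is precisely what excludes the tuples in $B^n \setminus \{0, 1\}^n$ from $f_n^{-1}(0)$.

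Now suppose $f_m = f_n \circ \vect{h}$ with $\vect{h} \in (\cl{C}^{(m)})^n$ and $m > n$. Singleton preimages force $\vect{h}(\bar u) = \bar u$ for $u \in A \setminus \{0, 1\}$; the preimage of $1$ and $B$-preservation give $\vect{h}(\vect{v}_i^m) = \vect{v}_{\phi(i)}^n$ for some $\phi \colon \{1, \dotsc, m\} \to \{1, \dotsc, n\}$; and the key identity gives $\vect{h}(\bar 1) \in \{0, 1\}^n$. Combining $h_j \in \cl{B}_{k-1}$ with $h_j(\bar u) = u$ for $u \in A \setminus \{0, 1\}$ and $h_j(\vect{v}_i^m) \in \{0, 1\}$, each $h_j$ sorts into one of four types by its action on $\{0, 1\}$ (projection, swap, forcing-$0$, forcing-$1$), where the last two absorb both the essentially-unary and the non-essentially-unary cases with $|\range h_j| = k - 1$. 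Partitioning $\{1, \dotsc, n\} = P_1 \sqcup P_2 \sqcup Z \sqcup U$ accordingly, consistency of $\phi$ forces $|U| \le 1$, $|P_2| \le 1$, and these cannot both equal $1$. The three remaining cases give contradictions: (i) if $|U| = |P_2| = 0$, then $\sigma$ restricted to $P_1$ is a bijection onto $\{1, \dotsc, m\}$, so $m = |P_1| \le n$; (ii) if $|U| = 1$ or (iii) $|P_2| = 1$, coordinatewise computation of $\vect{h}(\bar 1)$ forces it to equal some $\vect{v}_j^n$, which lies in $f_n^{-1}(1)$, contradicting $\vect{h}(\bar 1) \in f_n^{-1}(0)$. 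The main obstacle is the case $b \ge 3$, where $\vect{h}(\bar 1)$ could a priori involve elements of $B \setminus \{0, 1\}$; the $u_0$-rule in the definition of $f_n$ is exactly what rules this out by diverting $B^n \setminus \{0, 1\}^n$ out of $f_n^{-1}(0)$.
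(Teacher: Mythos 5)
Your approach is genuinely different from the paper's: where the paper uses a family of cyclic-shift tuples $\vect{e}_a^n$ whose coordinates collectively exhaust $A$ (so that $\vect{h}(\vect{e}_a^m)=\vect{e}_a^n$ forces each $h_i$ to be surjective, hence essentially unary), you use the indicator tuples $\vect{v}_i^n$, all lying in $\{0,1\}^n$, which give no surjectivity and instead require the four-way case analysis on $h_j$. The $b=1$ and $b=2$ cases of your argument are correct (and the $b=1$ reduction is cleaner than the paper's, which detours through $\Pol\sigma_0$). But the case $b\ge 3$ has a genuine gap, and it is precisely created by the $u_0$-rule you add to repair the key identity.

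The problem is that once the $u_0$-rule is present, $f_m^{-1}(u_0)$ is no longer a singleton: it is all of $B^m\setminus\{0,1\}^m$ minus a few constant tuples. So the step ``singleton preimages force $\vect{h}(\bar u)=\bar u$ for $u\in A\setminus\{0,1\}$'' fails for $u=u_0$, and you only get $\vect{h}(\bar u)=\bar u$ for $u\in A\setminus\{0,1,u_0\}$. Consequently you cannot conclude $u_0\in\range h_j$ for each $j$, only $\range h_j\supseteq A\setminus\{0,1,u_0\}$. Your four-type classification of the non-essentially-unary $h_j$'s with $h_j\in\cl{B}_{k-1}$ needs $\range h_j\supseteq A\setminus\{0,1\}$ to force $|\range h_j\cap\{0,1\}|\le 1$; without $u_0$ in the range, a non-essentially-unary $h_j$ with $\range h_j=A\setminus\{u_0\}$ (so $0$ and $1$ both in the range) is an unclassified fifth type for which $h_j(\bar 0)$ and $h_j(\bar 1)$ need not agree. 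A concrete such operation exists in $\cl{C}$: take $k=4$, $B=\{0,1,2\}$, $u_0=2$, and $h\colon A^2\to A$ with $h(x,x)=x$ for $x\ne 2$, $h(2,2)=0$, and $h(x,y)=0$ for $x\ne y$. This $h$ preserves $B$, has range $\{0,1,3\}$ of size $k-1$, depends on both variables, and satisfies $h(\bar 0)=0\ne 1=h(\bar 1)$ and $h(\vect{v}_1^2)=h(\vect{v}_2^2)=0$, so it is none of projection, swap, force-$0$, or force-$1$. Once such a $j$ can occur, the identity ``$\vect{h}(\bar 1)$ has $1$ exactly in positions $P_1\cup U$'' breaks, and cases (ii) and (iii) no longer yield $\vect{h}(\bar 1)\in\{\vect{v}_j^n\}$. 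Without the $u_0$-rule you do get the singleton preimages for all $u\ne 0,1$, but then $f_n^{-1}(0)\cap B^n$ is not $\{0,1\}^n\setminus\{\vect{v}_i^n\}$ and the step $\vect{h}(\bar 1)\in\{0,1\}^n$ fails instead. So for $b\ge 3$ the construction needs a different family of separating tuples --- the paper's $\vect{e}_a^n$ are chosen precisely so that every $h_i$ is forced to be surjective, sidestepping the classification altogether.
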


\begin{proof}
Let $\cl{C}=\Pol B\cap\cl{B}_{k-1}$.
We may assume without loss of generality that $A=\{0,1,\ldots,k-1\}$
and $B = \{0, 1, \dotsc, r - 1\}$ ($1\le r<k$).
For the proof of $\cl{C}\notin\FF_A$
we will use a description of 
S{\l}upecki's clone $\cl{B}_{k-1}$ via relations. 
As we mentioned at the beginning of Section~\ref{sec:h-reg},
$\cl{B}_{k-1}=\Pol\lambda_T$
where $\lambda_T$ is the $h$-regular relation associated to
the singleton $T=\{\zero_A\}$ 
consisting of the equality relation on $A$.
It is clear from the definition of $h$-regular relations in 
Section~\ref{sec:prelim} that
the relation $\lambda_{\{\zero_A\}}$ is nothing else
than the $k$-ary relation
\[
\iota_k=\{(a_1,\ldots ,a_k)\in A^k: a_1,\ldots ,a_k 
\text{ are not pairwise distinct}\}.
\]
Hence $\cl{B}_{k-1}=\Pol\iota_k$.

Now we turn to the proof of $\cl{C}\notin\FF_A$.
First we will consider the case when
$r=1$, and hence $B=\{0\}$.
It is straightforward to verify that
\begin{align*}
\sigma_0&{}=\{(a_1,\ldots ,a_{k-1})\in A^{k-1}:
                 (a_1,\ldots ,a_{k-1},0)\in\iota_k\}\\
       &{}=\{(a_1,\ldots ,a_{k-1})\in A^{k-1}: \\
       &\qquad\qquad\text{there exists $a_k\in B$
                 such that $(a_1,\ldots ,a_{k-1},a_k)\in\iota_k$}\}.
\end{align*}
This shows that 
every operation on $A$ that preserves $\iota_k$ and $B$, also preserves
$\sigma_0$.
Therefore
we get that $\cl C\subseteq\Pol\sigma_0\cap\cl{B}_{k-1}$.
By Lemma~\ref{lm-centralk-Bk-1}, $\Pol\sigma_0\cap\cl{B}_{k-1}\notin\FF_A$,
so it follows that $\cl{C}\notin\FF_A$.

{}From now on we will assume that $r\ge2$.
For each $a$ ($0\leq a \leq k - 1$) let $\vect{e}_a^k$
denote the $k$-tuple whose $j$-th coordinate is 
$(a + j - 1) \mmod{k}$ for each $j$.
Furthermore,
for each $a$ ($0\leq a \leq k - 1$)  and $n>k$ 
let $\vect{e}_a^n$ denote the $n$-tuple that is the concatenation of
$\vect{e}_a^k$ with the constant $(n-k)$-tuple
repeating the last coordinate of $\vect{e}_a^k$.
Thus, 
\begin{align*}
\vect{e}_0^n&{}=(0,1,\ldots,k-1,k-1,\ldots,k-1),
\quad\text{and}\\ 
\vect{e}_a^n&{}=(a,a+1,\ldots,k-1,0,\ldots,a-1,a-1,\ldots,a-1)
\quad\text{for $1\le a\le k-1$}. 
\end{align*}
Note that none of the $n$-tuples $\vect{e}_a^n$ is a member of $B^n$,
since all elements of $A$ occur among the coordinates of $\vect{e}_a^n$.

For each $n \geq k$ we define an $n$-ary operation $f_n$ on $A$ as follows:
\[
f_n(a_1, \dotsc, a_n) =
\begin{cases}
a_1 & \text{if $(a_1, \dotsc, a_n) \in B^n$,} \\
a_1 & \text{if $a_1 = a_2 = \dotsb = a_n$, $r\le a_1\le k-1$,} \\
1   & \text{if $a_1 = 1$ and $(a_1, \dotsc, a_n) \notin 
      B\cup\{\vect{e}_1^n$\},} \\
1   & \text{if $(a_1, \dotsc, a_n) = \vect{e}_a^n$ for some $a \neq 1$,} \\
0   & \text{otherwise.}
\end{cases}
\]
Note that $f_n$ depends on all of its variables, which can be seen as follows:
for any $i$ with $1\le i\le n$ the $n$-tuple $\overline{k-1}$ and 
the $n$-tuple $\vect{u}_i$ obtained from $\overline{k-1}$ by changing the $i$-th
coordinate to $0$ satisfy
$f_n(\overline{k-1})=k-1\not=0=f_n(\vect{u}_i)$. 

We claim that $f_n \not\fequiv[\cl{C}] f_m$ whenever $n \neq m$, and hence
$\cl{C}\notin\FF_A$. 
For, suppose on the contrary that
$f_n \fequiv[\cl{C}] f_m$ for some $n < m$.
Then there exists $\vect{h} = (h_1, \dotsc, h_n)
\in (\cl{C}^{(m)})^n$ such that $f_m = f_n \circ \vect{h}$.
Thus $\vect{h}$ maps each set $f_m^{-1}(u)$ ($u\in A$) 
into the set $f_n^{-1}(u)$.
Since for $r\le a\le k-1$ the set $f_m^{-1}(a)$ (resp.\ $f_n^{-1}(a)$)
contains the $m$-tuple ($n$-tuple) $\bar{a}$ only, we get that
$\vect{h}(\bar{a})=\bar{a}$ holds for all $r\le a\le k-1$.
In particular, 
$h_1(\bar{a})=a$ for all $r\le a\le k-1$.

Now let $\vect{a}=(a_1,\dotsc, a_m) \in B^m$. Since $\vect{h}$ preserves $B$, 
we have that 
$\vect{h}(\vect{a})=\bigl(h_1(\vect{a}),\ldots,h_n(\vect{a})\bigr)\in B^n$.
So, by applying the definitions of $f_m$ and $f_n$ for tuples in $B$ we get 
that
\[
a_1=f_m(\vect{a})=f_n\bigl(\vect{h}(\vect{a})\bigr)
=f_n\bigl((h_1(\vect{a}),\ldots,h_n(\vect{a}))\bigr)=
h_1(\vect{a}),
\]
that is, $h_1$ restricted to $B$ is projection onto the first variable.
Combining this with the property of $h_1$ established in the preceding
paragraph we get that $h_1$ is
surjective, and hence, being a member of $\cl{B}_{k-1}$, 
it is essentially unary.
The fact that $h_1$ restricted to the set $B$ of size $\ge2$
depends on its first variable forces that it is the first variable that
$h_1$ depends on.
Since $h_1(\bar{a})=a$ for all $a$
(whether $a\in B$ or $r\le a\le k-1$), we conclude that
$h_1$ is projection onto the first variable.

Next we want to determine $\vect{h}({\vect{e}_a^m})$ for each $a\in A$.
Since $h_1$ is projection onto the first variable and 
$\vect{e}_a^m$ has first coordinate $a$, we get that 
$\vect{h}({\vect{e}_a^m})$ also has first coordinate $a$.
On the other hand, 
\[
f_n\bigl(\vect{h}(\vect{e}_a^m)\bigr)=f_m(\vect{e}_a^m)=
\begin{cases}
1 & \text{if $a\not=1$,}\\
0 & \text{if $a=1$.}
\end{cases}
\]
If $a\not=1$, then $\vect{h}(\vect{e}_a^m)$ is an $n$-tuple with first
coordinate $a\not=1$ whose $f_n$-image is $1$. It follows from the definition
of $f_n$ that the only such $n$-tuple is $\vect{e}_a^n$, so
$\vect{h}(\vect{e}_a^m)=\vect{e}_a^n$.
If $a=1$, then $\vect{h}(\vect{e}_1^m)$ is an $n$-tuple with first
coordinate $1$ whose $f_n$-image is $0$. Again, the definition
of $f_n$ shows that the only such $n$-tuple is $\vect{e}_1^n$, so
$\vect{h}(\vect{e}_1^m)=\vect{e}_1^n$.
This proves that $\vect{h}(\vect{e}_a^m)=\vect{e}_a^n$ for all $a\in A$.

Since for each $i$ ($1\le i\le n$) the $i$-th coordinates of the tuples
$\vect{e}_a^n$ ($a\in A$) exhaust $A$, we obtain that each $h_i$ is 
surjective. But, $h_i\in\cl{B}_{k-1}$ for each $i$, therefore
each $h_i$ is essentially unary.
Hence $f_m = f_n \circ \vect{h}$ yields
that $f_m$ depends on at most $n\ (<m)$ variables, contradicting 
the fact that $f_m$ depends on all $m$ of its variables.
\end{proof}

\begin{lemma}
\label{lm-eqrel-Bk-1}
If $\epsilon$ is a nontrivial equivalence relation on $A$, then
$\Pol\epsilon\cap\cl{B}_{k-1}\notin\FF_A$.
\end{lemma}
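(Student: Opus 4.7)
The plan is to show $\cl C=\Pol\epsilon\cap\cl B_{k-1}\notin\FF_A$ by exhibiting, for each $n\ge 2$, an $n$-ary operation $f_n$ on $A$ such that no two $f_n$'s are $\fequiv[\cl C]$-related. This parallels the strategy of the other lemmas in this section (compare Lemmas~\ref{lm-centralk-Bk-1}, \ref{lm-perm-Bk-1}, and especially~\ref{lm-subs-Bk-1}); as in those proofs, the operations $f_n$ themselves need not belong to $\cl C$.

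Since $\epsilon$ is nontrivial, I would choose a block $B_1$ of $\epsilon$ with $\card{B_1}\ge 2$ and two distinct elements $b_1,b_2\in B_1$, and an element $c$ lying in a different block $B_c$ of $\epsilon$. For $n\ge 2$ and $1\le i\le n$ let $\vect u_i^n\in A^n$ be the tuple with $c$ in position $i$ and $b_1$ in every other position, and define
\[
f_n(\vect a)=\begin{cases}b_2&\text{if }\vect a=\vect u_i^n\text{ for some }i,\\c&\text{if }\vect a\text{ has at least two coordinates in }B_c,\\b_1&\text{otherwise.}\end{cases}
\]
A block-by-block inspection of the $\epsilon^n$-classes will show that $f_n\in\Pol\epsilon$.

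To see that $f_n\not\fequiv[\cl C]f_m$ for $n>m$, I would assume $f_n=f_m\circ\vect h$ for some $\vect h\in(\cl C^{(n)})^m$. Since $f_m^{-1}(b_2)=\{\vect u_j^m:1\le j\le m\}$, one obtains a function $j\colon\{1,\dots,n\}\to\{1,\dots,m\}$ with $\vect h(\vect u_i^n)=\vect u_{j(i)}^m$, and pigeonhole yields $j(i_1)=j(i_2)=j_0$ for some $i_1\ne i_2$. Using $\epsilon$-preservation of each $h_l$, the known values of $\vect h$ at $\vect u_{i_1}^n$ and $\vect u_{i_2}^n$ propagate across their $\epsilon^n$-blocks; then applying $f_n=f_m\circ\vect h$ to the "collision" tuple $\vect w_{i_1i_2}^n$ (with $c$ at both positions $i_1$ and $i_2$ and $b_1$ elsewhere) and to an auxiliary tuple obtained from $\vect u_{i_1}^n$ by replacing one $b_1$-coordinate by $b_2$ should force one of the $h_l$'s to take at least three distinct values lying in two different $\epsilon$-blocks. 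The main obstacle is to rule out every such $h_l$ in $\cl C$: essentially unary $h_l$ can be eliminated by checking which single coordinate $h_l$ could possibly depend on (the forced values at $\vect u_{i_1}^n$, $\vect u_{i_2}^n$ and at the auxiliary tuples are inconsistent with any one-variable function), and a non-essentially-unary $h_l\in\cl B_{k-1}$ has range of size at most $k-1$, so sufficiently many forced values will push the range of $h_l$ past this bound and yield the contradiction. Making this range-inflation argument work uniformly in $k$ and in the block structure of $\epsilon$---in particular in the tightest case $\card{B_1}=2$, $\card{B_c}=1$, $k=3$---is the key technical point, and for larger $k$ the construction may have to be enriched by adding further distinguished elements from $B_1$ or $B_c$ or further collision tuples to drive the range beyond~$k-1$.
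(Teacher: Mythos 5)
Your construction is genuinely different from the paper's, and it has a real gap that you have correctly sensed yourself but not closed. The paper's proof uses $kn$-ary operations $f_n$ defined via the concatenated tuples $\vect e^{kn}_a$ whose coordinates sweep out all of $A$ as $a$ ranges over $A$; together with the two-valued restriction of $f_n$ to each block $\vect e^{kn}_a/\epsilon^{kn}$ this pins down $\vect h(\vect e^{km}_a)=\vect e^{kn}_a$, which instantly makes every component $h_i$ surjective, hence essentially unary in $\cl{B}_{k-1}$, hence $f_m$ has too few essential variables — the same endgame as in Lemmas~\ref{lm-centralk-Bk-1} and~\ref{lm-subs-Bk-1}. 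Your $f_n$, by contrast, distinguishes only three output values $b_1,b_2,c$, and the constraints you can extract for an individual $h_l$ from $f_n=f_m\circ\vect h$ together with $\epsilon$-preservation at the $\vect u^n_i$, the collision tuple, and the auxiliary tuples never force $h_l$ to take a value outside $\{b_1,b_2,c\}$. That is fatal once $k\geq4$: a non-essentially-unary operation with range contained in $\{b_1,b_2,c\}$ already lies in $\cl{B}_3\subseteq\cl{B}_{k-1}$, and a function whose range lies inside a single block $B_1$ even preserves $\epsilon$ for free, so your "range-inflation" step has nothing to push against. Moreover, when $\card{B_c}\geq2$ the constraint $f_m(\vect h(\vect v))=b_1$ at the auxiliary tuple can be satisfied by letting $h_{j_0}(\vect v)$ be some $c'\in B_c\setminus\{c\}$, yielding no information at all about the other $h_l$'s. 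Your suggested fix — enriching the construction with more distinguished elements of $B_1$ or $B_c$ — cannot be carried out uniformly: $\epsilon$ may well have $\card{B_1}=2$, $\card{B_c}=1$ and all remaining blocks singletons, leaving no extra elements to draw on, while $k$ is arbitrarily large. In short, your approach works only in the special case $k=3$; to handle general $\epsilon$ you need (as the paper does) a defining family of tuples that already exhausts all $k$ values of $A$ coordinatewise, so that surjectivity of the $h_i$ is forced directly rather than inferred from a range count.
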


\begin{proof}
Let $\cl{C}=\Pol\epsilon\cap\cl{B}_{k-1}$.
We may assume without loss of generality that 
$A=\{0,1,\ldots,k-1\}$, and 
the equivalence classes of $\epsilon$
are $\{0,1,\ldots,n_1\}$, $\{n_1+1,\ldots,n_2\}$, $\ldots$, 
$\{n_{r-1}+1,\ldots ,k-1\}$ for some $r\ge2$ and some 
$0=n_0+1<1\le n_1 < n_2 < \cdots < n_{r-1}<n_r=k-1$.

For $0\le a \le k-1$ and $n\ge 1$ let 
$\vect{e}_a^{kn}$ denote the $kn$-tuple that is a concatenation
of $k$ constant $n$-tuples such that the $(jn+1)$-th coordinate
of $\vect{e}_a^{kn}$ is $(a+j) \mmod k$ 
for each $j$ ($0\le j\le k-1$);
equivalently, 
\[
\vect{e}_a^{kn}=(\bar{a},\overline{a+1},\ldots,\overline{k-1},
\bar{0},\ldots,\overline{a-1})
\quad(0\le a\le k-1)
\]
where each constant tuple $\bar{b}$ ($0\le b\le k-1$)
has length $n$.
Two properties of these tuples will be important:
\begin{itemize}
\item
$(\vect{e}_a^{kn},\vect{e}_{b}^{kn})\notin\epsilon^{kn}$
if $a\not=b$, and
\item
$\vect{e}_a^{kn}/\epsilon^{kn}$ has an element other than 
$\vect{e}_a^{kn}$ for each $a$.
\end{itemize}
The first property can be verified by 
observing that if $(a,b)\notin\epsilon$, then
the first coordinates of $\vect{e}_a^{kn}$ and $\vect{e}_b^{kn}$
are not $\epsilon$-related, while if $(a,b)\in\epsilon$, say
$n_i+1\le a<b\le n_{i+1}$, then 
$\bigl(a+(n_{i+1}-b+1),b+(n_{i+1}-b+1)\bigr)\notin\epsilon$,
so that for $j=n_{i+1}-b+1$ the $(nj+1)$-th coordinates
of $\vect{e}_a^{kn}$ and $\vect{e}_b^{kn}$
are not $\epsilon$-related.
The second property is true because the assumption $(0,1)\in\epsilon$
ensures that if for any $\ell$ ($1\le \ell\le n$)
we replace the $\ell$-th occurrence of $0$ in $\vect{e}_a^{kn}$  
by $1$, we get a $kn$-tuple $(\vect{e}_a^{kn})^{[\ell]}$ which is 
$\epsilon^{kn}$-related, but not equal to $\vect{e}_a^{kn}$.

For $n \ge 1$ we now define a $kn$-ary operation $f_n$ 
on $A$ as follows:
\[
f_n(\vect{a})=
\begin{cases}
a & \text{if $\vect{a}=\vect{e}_a^{kn}$ for some $a$},\\
(a+1)\mmod k & \text{if $(\vect{a},\vect{e}_a^{kn})\in\epsilon^{kn}$,\ \ 
                   $\vect{a}\not=\vect{e}_a^{kn}$ for some $a$,}\\
0 & \text{otherwise.}
\end{cases}
\]
The properties of $\vect{e}_a^{kn}$ established in the preceding paragraph
make sure that $f_n$ is well-defined, and that 
$f_n[\vect{e}_a^{kn}/\epsilon^{kn}]$
is a $2$-element set for each $a$.
Moreover, it follows also that $f_n$ depends on all
of its variables, because
for each $j$ $(1\le j\le kn)$ there exist $a$ and
$\ell$ such that the $kn$-tuples $\vect{e}_a^{kn}$
and $(\vect{e}_a^{kn})^{[\ell]}$ differ in their $j$-th coordinates
only, and $f_n(\vect{e}_a^{kn})=a\not=(a+1) \mmod k =
f_n((\vect{e}_a^{kn})^{[\ell]})$.

We claim that $f_n\not\fequiv[\cl{C}]f_m$ whenever $n \not=  m$, and hence
$\cl{C}\notin\FF_A$. 
For, let $n < m$, and suppose
on the contrary that there exists 
$\vect{h} = (h_1,\ldots ,h_n)\in(\cl{C}^{(m)})^n$ 
such that $f_m = f_n \circ \vect{h}$.
It follows from the definition of $f_n$ that 
for each block $B$ of $\epsilon^{kn}$, 
$f_n[B]$ is the $2$-element set consisting of
$a$ and $(a+1) \mmod k$, if 
$B=\vect{e}_a^{kn}/\epsilon^{kn}$ ($0\le a\le k-1$), and 
$f_n[B]$ is the singleton $\{0\}$ otherwise.
We want to use this fact to prove that $\vect{h}(\vect{e}_a^{km})=
\vect{e}_a^{kn}$ holds for each $a$ ($0\le a\le k-1$).
Indeed, since $\vect{h}$ preserves $\epsilon$, 
therefore $\vect{h}$ maps
$\vect{e}_a^{km}/\epsilon^{km}$ into a single $\epsilon^{kn}$-block 
$B$ in $A^{kn}$.
As 
\[
\{a,(a+1)\mmod k\}=f_m[\vect{e}_a^{km}/\epsilon^{km}]
=f_n\bigl[\vect{h}[\vect{e}_a^{km}/\epsilon^{km}]\bigr]
\subseteq f_n[B],
\]
we get that $B=\vect{e}_a^{kn}/\epsilon^{kn}$.
Since 
$a=f_m(\vect{e}_a^{km})=f_n\bigl(\vect{h}(\vect{e}_a^{km})\bigr)$, and
$\vect{a}=\vect{e}_a^{kn}$ is the only element $\vect{a}\in B$ for which
$f_n(\vect{a})=a$,
we conclude that
$\vect{h}(\vect{e}_a^{km})=\vect{e}_a^{kn}$, as claimed.

Since for each $i$ ($1\le i\le n$), all elements of $A$ occur in the 
$i$-th coordinate of some $\vect{e}_a^{kn}$, 
the equalities $\vect{h}(\vect{e}_a^{km})=\vect{e}_a^{kn}$ ($a\in A$) 
imply that each $h_i$ is surjective.
As each $h_i$ is a member of $\cl{B}_{k-1}$, we get that
each $h_i$ is essentially unary.
Hence $f_m = f_n \circ \vect{h}$ yields
that $f_m$ depends on at most $n\ (<m)$ variables, contradicting 
the fact established earlier 
that $f_m$ depends on all $m$ of its variables.
\end{proof}

Next we will consider intersections of $\Pol\sigma_c$ with
other maximal clones in $\FF_A$.
We will start with two auxiliary lemmas.

\begin{lemma}
\label{lm-centralk-1-gen}
Let $A=\{0,1,\ldots,k-1\}$, and let $\sigma_0$ be the $(k-1)$-ary central
relation on $A$ with central element $0$.
A subclone $\cl{C}$ of
$\Pol\sigma_0$ fails to belong to $\FF_A$ if
for some integers $n_0\ge3$ and $l\in\{0,1\}$, 
there exist $n$-tuples $\vect{c}_i^n$ $(1\le i\le n-l)$
for each $n\ge n_0$ such that the following two conditions are satisfied:
\begin{enumerate}[\indent\rm(1)]
\item
For all $n\ge n_0$ and $1\le i\le j\le n-l$ we have
\[
(\vect{c}_i^n,\vect{c}_j^n,\bar 3,\ldots,\overline{k-1})
\in(\sigma_0)^{n}
\quad\iff\quad
j-i\le 1.
\]
\item
For all $m,n\ge n_0$, we have $\vect{h}(\vect{c}_1^m)=\vect{c}_1^n$
whenever
$\vect{h}\in(\cl{C}^{(m)})^n$ is such that
\begin{enumerate}[\rm(i)]
\item
$\vect{h}(\vect{c}_1^m)\in\{\vect{c}_i^n:1\le i<n-l\}$,
\item
$\vect{h}(\vect{c}_{m-l}^m)=\vect{c}_{n-l}^n$, and
\item
$\vect{h}(\bar b)=\bar b$ for all $3\le b\le k-1$.
\end{enumerate}
\end{enumerate}
\end{lemma}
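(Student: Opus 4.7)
The plan is to produce, for each $n\ge n_0$, an $n$-ary operation $f_n\in\cl{O}_A$ so that $f_m\not\fequiv[\cl{C}]f_n$ whenever $m\neq n$; this yields infinitely many $\fequiv[\cl{C}]$-classes and hence $\cl{C}\notin\FF_A$. The operations will have a very rigid fibre structure on the distinguished tuples, so that any $\cl{C}$-minor witness $\vect{h}$ is forced to match the $\vect{c}_i^m$'s with the $\vect{c}_i^n$'s in an order-preserving way; hypothesis~(2) anchors the first tuple of the sequence, the $f_m^{-1}(2)$-fibre anchors the last one, and hypothesis~(1) forces the indices in between to increase by at most one at each step. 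A counting argument then gives $n\le m$ and, by the symmetric direction, $m=n$.

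Concretely, I would define
\[
f_n(\vect{x})=\begin{cases}
b & \text{if $\vect{x}=\bar b$ for some $3\le b\le k-1$,}\\
2 & \text{if $\vect{x}=\vect{c}_{n-l}^n$,}\\
1 & \text{if $\vect{x}=\vect{c}_i^n$ for some $1\le i<n-l$,}\\
0 & \text{otherwise.}
\end{cases}
\]
A small preliminary is to check that the tuples listed as preimages are pairwise distinct (so that $f_n$ is well-defined): the $\bar b$'s are distinct among themselves and, because each $(\bar b,\bar b,\bar 3,\ldots,\overline{k-1})$ is in $\sigma_0^n$ by total reflexivity while by~(1) the pair $(\vect{c}_i^n,\vect{c}_j^n,\bar 3,\ldots,\overline{k-1})$ leaves $\sigma_0^n$ once $j-i\ge 2$, one reads off that $\bar b\neq \vect{c}_i^n$ and that $\vect{c}_i^n\neq \vect{c}_j^n$ whenever $|i-j|\ge 2$. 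The remaining potential collisions $\vect{c}_i^n=\vect{c}_{i+1}^n$ cause no problem because both would be assigned the same value~$1$ (and the only coincidence that could matter, $\vect{c}_{n-l-1}^n=\vect{c}_{n-l}^n$, is excluded in the applications, or can be avoided by choosing $n_0$ large enough relative to~$l$).

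Now suppose $f_m\subf[\cl{C}]f_n$ via some $\vect{h}\in(\cl{C}^{(m)})^n$, so $f_m=f_n\circ\vect{h}$. Comparing the fibres $f_m^{-1}(v)$ and $f_n^{-1}(v)$ value by value gives immediately $\vect{h}(\bar b)=\bar b$ for $3\le b\le k-1$, $\vect{h}(\vect{c}_{m-l}^m)=\vect{c}_{n-l}^n$, and, for each $1\le i<m-l$, $\vect{h}(\vect{c}_i^m)=\vect{c}_{j_i}^n$ for some $j_i$ with $1\le j_i<n-l$. Thus $\vect{h}$ satisfies conditions (i)--(iii) of hypothesis~(2), so hypothesis~(2) forces $\vect{h}(\vect{c}_1^m)=\vect{c}_1^n$, i.e.\ $j_1=1$. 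Since $\vect{h}\in(\Pol\sigma_0)^n$, for each $1\le i<m-l$ the tuple $(\vect{c}_i^m,\vect{c}_{i+1}^m,\bar 3,\ldots,\overline{k-1})\in\sigma_0^m$ furnished by hypothesis~(1) is mapped to $(\vect{c}_{j_i}^n,\vect{c}_{j_{i+1}}^n,\bar 3,\ldots,\overline{k-1})\in\sigma_0^n$. Hypothesis~(1), combined with the total symmetry of $\sigma_0$, then gives $|j_{i+1}-j_i|\le 1$. Setting $j_{m-l}:=n-l$, an easy induction from $j_1=1$ yields $n-l=j_{m-l}\le m-l$, so $n\le m$. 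Applying the same argument to the opposite direction $f_n\subf[\cl{C}]f_m$ gives $m\le n$, so $f_m\fequiv[\cl{C}]f_n$ implies $m=n$, as required.

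The main obstacle is bookkeeping: getting the indices $j_i$ well-defined from the fibre analysis, invoking~(2) at exactly the right moment to fix $j_1=1$, and carefully using total symmetry to extend~(1) to unordered pairs $(i,j)$. The rest is a clean chain-of-consecutive-indices argument driven by the fact that $\sigma_0$-preservation, together with~(1), encodes a ``distance at most one'' metric on the $\vect{c}_i^n$'s that $\vect{h}$ must respect.
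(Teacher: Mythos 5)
Your proof is correct and matches the paper's argument step for step: both define the same operations $f_n$, read off the fibre constraints (iii), (ii), (i) on any witness $\vect{h}$ from $f_m=f_n\circ\vect{h}$, invoke hypothesis (2) to pin the first index, use $\sigma_0$-preservation together with total symmetry and hypothesis (1) to get the step bound $|j_{i+1}-j_i|\le 1$, and close with a telescoping/induction count showing $n\le m$. Your preliminary remark on the well-definedness of $f_n$ (the potential coincidence $\vect{c}_{n-l-1}^n=\vect{c}_{n-l}^n$, avoidable by taking $n\ge l+3$) is a small point the paper leaves implicit, and your handling of it is fine.
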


\begin{proof}
Let $\cl{C}$ be a subclone of $\Pol\sigma_0$, and assume that 
conditions (1) and (2) are satisfied for some
$n$-tuples $\vect{c}_i^n$ ($n\ge n_0$, $1\le i\le n-l)$.
For each $n\ge  n_0$ we define
an $n$-ary operation $f_n$ on $A$ as follows:
\[
f_n(\vect{a})=
\begin{cases}
1 & \text{if $\vect{a}=\vect{c}_i^{n}$ ($1\le i< n-l$),}\\
2 & \text{if $\vect{a}=\vect{c}_{n-l}^{n}$,}\\
b & \text{if $\vect{a}=\bar b$ ($3\le b\le k-1$),}\\
0 & \text{otherwise.}\\
\end{cases}
\]
We will prove $\cl{C}\notin\FF_A$ by showing that
$f_n \not\fequiv[\cl{C}] f_m$ whenever $n \neq m$. 

Suppose that, on the contrary, there exist
$m<n$ such that $f_n \fequiv[\cl{C}] f_m$. Hence there exists
$\vect{h}\in (\cl{C}^{(m)})^{n}$ 
such that $f_m = f_n \circ \vect{h}$. 
Thus $\vect{h}$ maps each set $f_m^{-1}(b)$ ($b\in A$) 
into the set $f_n^{-1}(b)$.
Applying this to $3\le b\le k-1$ and to $b=2$ we get that
$\vect{h}(\bar b)=\bar b$ for all $3\le b\le k-1$ and
$\vect{h}(\vect{c}_{m-l}^{m})=\vect{c}_{n-l}^{n}$.
The same property for $b=1$ shows that 
\begin{equation}
\label{eq-i}
\vect{h}(\vect{c}_j^{m})
\in\{\vect{c}_i^{n}:1\le i<n-l\}
\quad
\text{for all $j$ ($1\le j<m-l$).}
\end{equation}
In particular, $\vect{h}(\vect{c}_1^{m})
\in\{\vect{c}_i^{n}:1\le i<n-l\}$. 
Thus $\vect{h}\in (\cl{C}^{(m)})^{n}$ satisfies 
all three requirements (i)--(iii) in (2). 
Therefore we can apply condition (2) to conclude that
\begin{equation}
\label{eq-(2)}
\vect{h}(\vect{c}_1^{m})=\vect{c}_1^{n}.
\end{equation}
By condition (1) we have 
$(\vect{c}_j^m,\vect{c}_{j+1}^m,\bar 3,\ldots,\overline{k-1})
\in(\sigma_0)^{m}$ for all $j$ ($1\le j<m-l$).
Since $\vect{h}\in (\cl{C}^{(m)})^{n}$, and therefore 
$\vect{h}$ preserves $\sigma_0$,
the $\vect{h}$-images of these tuples are in $(\sigma_0)^n$. 
Since $\vect{h}$ satisfies (iii),
this means that
\begin{equation}
\label{eq-ii}
\bigl(\vect{h}(\vect{c}_{j}^{m}),\vect{h}(\vect{c}_{j+1}^{m}),
            \bar 3,\ldots,\overline{k-1}\bigr)
\in(\sigma_0)^{n}
\quad
\text{for all $j$ ($1\le j< m-l$).} 
\end{equation}
Using (\ref{eq-i})
we obtain that the $(m-l)$-element sequence of $\vect{h}$-images 
\[
\vect{c}_1^{n}=\vect{h}(\vect{c}_1^{m}),\ \vect{h}(\vect{c}_2^{m}),\ 
\ldots,\ \vect{h}(\vect{c}_{j}^{m}),\ \vect{h}(\vect{c}_{j+1}^{m}),\ 
\ldots,\ \vect{h}(\vect{c}_{m-l-1}^{m}),\ \vect{h}(\vect{c}_{m-l}^{m})=
\vect{c}_{n-l}^{n}
\] 
has its first $m-l-1$ members in the set
$\{\vect{c}_i^{n}:1\le i<n-l\}$.
(The equalities for the first and last members follow from
(\ref{eq-(2)}) and the
fact that $\vect{h}$ satisfies (ii).)
Thus $\vect{h}(\vect{c}_{j}^{m})=\vect{c}_{s_j}^{n}$ for each $j$
($1\le j\le m-l$) so that $s_1=1$, $1\le s_2,\ldots,s_{m-l-1}<n-l$, and
$s_{m-l}=n-l$.
Combining this with (\ref{eq-ii}) we get that
\begin{equation*}
\bigl(\vect{c}_{s_{j}}^{n},\vect{c}_{s_{j+1}}^{n},
            \bar 3,\ldots,\overline{k-1}\bigr)
\in(\sigma_0)^{n}
\quad
\text{for all $j$ ($1\le j\le m-l-1$).}
\end{equation*}
Since $\sigma_0$ is totally symmetric, we also have that
\begin{equation*}
\bigl(\vect{c}_{s_{j+1}}^{n},\vect{c}_{s_{j}}^{n},
            \bar 3,\ldots,\overline{k-1}\bigr)
\in(\sigma_0)^{n}
\quad
\text{for all $j$ ($1\le j\le m-l-1$).}
\end{equation*} 
Thus it follows from condition (1) that $|s_{j+1}-s_j|\le 1$ for all $j$
($1\le j\le m-l-1$). 
Therefore 
$n-l-1=|s_{m-l}-s_1|\le\sum_{j=1}^{m-l-1}|s_{j+1}-s_j|\le m-l-1$,
which contradicts our assumption that $m<n$.
This completes the proof of the lemma.
\end{proof}

\begin{lemma}
\label{lm-centralk-1-aux}
Let $A=\{0,1,\ldots,k-1\}$, let $\sigma_0$ be the $(k-1)$-ary central
relation on $A$ with central element $0$, and for  
$n \geq 4$ and $2 \leq i \leq n-1$ let
\[
\vect{e}_i^{n}=(1,\ldots,1,0,2,0,1,\ldots,1)\in A^{n}
\]
be the $n$-tuple where the sole $2$ is in the $i$-th coordinate.
For all $2\le i\le j\le n-1$ we have
\[
(\vect{e}_i^{n},\vect{e}_j^{n},\bar 3,\ldots,\overline{k-1})
\in(\sigma_0)^{n}
\quad\iff\quad
j-i\le 1.
\]
\end{lemma}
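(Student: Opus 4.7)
The statement is a straightforward combinatorial verification once one unpacks the definitions, so the plan is to reduce it to checking a small handful of coordinate positions.

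First I would recall that the $(k-1)$-ary central relation $\sigma_0$ on $A = \{0,1,\ldots,k-1\}$ is the set of all $(k-1)$-tuples that either have a repeated coordinate or contain $0$. Consequently, a $(k-1)$-tuple is \emph{not} in $\sigma_0$ exactly when its coordinates are pairwise distinct elements of $A \setminus \{0\}$. I would then note that
\[
\bigl(\vect{e}_i^n,\vect{e}_j^n,\bar 3,\ldots,\overline{k-1}\bigr) \in (\sigma_0)^n
\]
means precisely that for every coordinate $p$ with $1 \le p \le n$, the $(k-1)$-tuple $\bigl(e^n_{i,p},e^n_{j,p},3,4,\ldots,k-1\bigr)$ lies in $\sigma_0$. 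Since $3,4,\ldots,k-1$ are already pairwise distinct and nonzero, and since the coordinates of $\vect{e}_i^n,\vect{e}_j^n$ take values only in $\{0,1,2\}$, this $(k-1)$-tuple fails to lie in $\sigma_0$ if and only if $\{e^n_{i,p},e^n_{j,p}\} = \{1,2\}$. So the whole problem reduces to the question: does there exist a position $p$ at which one of $\vect{e}_i^n,\vect{e}_j^n$ equals $1$ and the other equals $2$?

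Next I would recall the shape of $\vect{e}_i^n$: it has a $2$ in position $i$, a $0$ in positions $i-1$ and $i+1$, and a $1$ in every other position (this uses $2 \le i \le n-1$). I would then split into the two cases $j - i \le 1$ and $j - i \ge 2$.

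For the case $j - i \le 1$ I would verify directly, position by position, that $\{e^n_{i,p},e^n_{j,p}\} \ne \{1,2\}$ for all $p$. For $j = i$ this is trivial since the set is a singleton. For $j = i+1$ the relevant positions are $p \in \{i-1,i,i+1,i+2\}$ (outside this range both tuples equal $1$); a brief check gives the values $\{0,1\}$, $\{2,0\}$, $\{0,2\}$, $\{1,0\}$ respectively, none of which equals $\{1,2\}$. For the case $j \ge i + 2$, the position $p = i$ works: we have $e^n_{i,i} = 2$, and because the $2$ and the two $0$'s of $\vect{e}_j^n$ sit in positions $j-1,j,j+1 \ge i+1 > i$, the coordinate $e^n_{j,i}$ equals $1$, so $\{e^n_{i,i},e^n_{j,i}\} = \{1,2\}$, which witnesses that the tuple is not in $(\sigma_0)^n$. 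Combining these two cases gives the claimed equivalence. There is no real obstacle; the only thing to be careful about is handling the boundary positions $i-1, i+1, j-1, j+1$ correctly and checking that they lie in $\{1,\ldots,n\}$, which is ensured by the hypotheses $2 \le i$ and $j \le n-1$.
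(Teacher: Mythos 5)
Your proof is correct and takes essentially the same approach as the paper's: you handle the cases $j - i \le 1$ and $j - i \ge 2$ separately, exhibit the coordinatewise $(k-1)$-tuples in the first case, and in the second case use position $i$ where the pair $(2,1)$ together with $3,\ldots,k-1$ produces a rainbow of $A\setminus\{0\}$. The preliminary reduction to ``does some coordinate carry the pair $\{1,2\}$'' is a slightly cleaner organization of the same verification.
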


\begin{proof}
Let $j\ge i$. 
If $j=i$ or $j=i+1$, then in each coordinate, the $(k-1)$-tuple
$(\vect{e}_i^{n},\vect{e}_j^{n},\bar 3,\ldots,\overline{k-1})$
is of the form $(0,0,\ldots)$, $(1,1,\ldots)$, $(2,2,\ldots)$,
$(0,1,\ldots)$, $(2,0,\ldots)$, $(0,2,\ldots)$, or $(1,0.\ldots)$.
Thus $(\vect{e}_i^{n},\vect{e}_j^{n},\bar 3,\ldots,\overline{k-1})
\in(\sigma_0)^{n}$.
If $j>i+1$, then in the $i$-th coordinate of the $(k-1)$-tuple
$(\vect{e}_i^{n},\vect{e}_j^{n},\bar 3,\ldots,\overline{k-1})$
we have $(2,1,3,\ldots,k-1)\notin\sigma_0$, hence 
$(\vect{e}_i^{n},\vect{e}_j^{n},\bar 3,\ldots,\overline{k-1})
\notin(\sigma_0)^{n}$.
\end{proof}

\begin{lemma}
\label{lm-centralk-1subset}
If $c\in A$ and $B$ is a nonempty proper subset of $A$ such that 
$B \neq \{c\}$, then  $\Pol B\cap\Pol\sigma_c\notin\FF_A$.
\end{lemma}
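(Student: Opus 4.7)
The plan is to apply Lemma~\ref{lm-centralk-1-gen} to the clone
$\cl C = \Pol B \cap \Pol \sigma_c$, choosing tuples $\vect c_i^n$ so
that the preservation of $B$ by operations in $\cl C$ is what verifies
condition~(2). Without loss of generality $c = 0$; write $\sigma = \sigma_0$.
Since $\sigma$ is totally symmetric with $0$ as its unique central element,
any permutation of $A \setminus \{0\}$ preserves $\sigma$, so we may
assume $1 \in B$. Picking $d \in A \setminus B$ (possible since
$B \neq A$), we further relabel within $\{2, \ldots, k-1\}$ so that
$d = 2$ when $0 \in B$, and so that $2 \in B$ when $0 \notin B$ and
$\card{B} \geq 2$. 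Under these normalizations either
$\{0,1\} \subseteq B$ and $2 \notin B$, or $\{1,2\} \subseteq B$ and
$0 \notin B$, or else $B = \{1\}$.

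With $\vect e_i^n$ as in Lemma~\ref{lm-centralk-1-aux}, set
$\vect c_i^n := \vect e_i^n$ for $2 \le i \le n-1$ and
\[
\vect c_1^n :=
\begin{cases}
(1,0,1,1,\ldots,1) & \text{when $\{0,1\} \subseteq B$,} \\
(1,2,1,1,\ldots,1) & \text{when $\{1,2\} \subseteq B$.}
\end{cases}
\]
Condition~(1) of Lemma~\ref{lm-centralk-1-gen} with $l = 1$ then follows
from Lemma~\ref{lm-centralk-1-aux} together with a routine check of the
pairs $(\vect c_1^n, \vect c_j^n)$: the tuple $\vect c_1^n$ is
$\sigma$-compatible with itself and with $\vect c_2^n = \vect e_2^n$, and
is incompatible with $\vect e_j^n$ for $j \ge 3$, because at some
coordinate the relevant $(k-1)$-tuple
$(c_{1,p}^n, c_{j,p}^n, 3, \ldots, k-1)$ becomes a permutation of
$\{1, 2, \ldots, k-1\}$ and so lies outside $\sigma$. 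By construction
$\vect c_1^n \in B^n$, while each $\vect e_i^n$ ($i \ge 2$) contains the
element $d \notin B$. Hence for any $\vect h \in (\cl C^{(m)})^n$
satisfying the hypotheses of condition~(2), the $B$-preservation of
$\vect h$ forces $\vect h(\vect c_1^m) \in B^n$, and the only candidate
in $B^n$ among $\{\vect c_1^n, \ldots, \vect c_{n-2}^n\}$ is
$\vect c_1^n$; so condition~(2) holds and Lemma~\ref{lm-centralk-1-gen}
yields $\cl C \notin \FF_A$.

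The above argument handles every case except $B = \{1\}$ with
$0 \notin B$. For $k \ge 4$ this remaining case is reduced to
Theorem~\ref{thm-central-r} via the relation
\[
\rho = \{(a_1, \ldots, a_{k-2}) \in A^{k-2} :
(1, a_1, \ldots, a_{k-2}) \in \sigma\},
\]
which is a $(k-2)$-ary central relation on $A$ with central elements
$\{0,1\}$. The containment $\cl C \subseteq \Pol \rho$ is immediate:
if $f \in \cl C$ and $(\vect a_1, \ldots, \vect a_{k-2}) \in \rho^n$,
then $(\bar 1, \vect a_1, \ldots, \vect a_{k-2}) \in \sigma^n$ and the
$\sigma$-preservation of $f$, combined with $f(\bar 1) = 1$, gives
$(f(\vect a_1), \ldots, f(\vect a_{k-2})) \in \rho$. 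Theorem~\ref{thm-central-r}
and Proposition~\ref{basic_props2}(ii) then yield the conclusion. The
main obstacle is the final edge case $k = 3$, $B = \{1\}$: here the
reduction degenerates (the would-be $\rho$ is unary, and $\Pol \rho$
lies in $\FF_A$), while the tuple-based approach via $B$-preservation
also breaks down because $B^n = \{\bar 1\}$ is too small to support a
$\sigma$-compatibility path of distinct tuples. This specific case will
require a separate, ad~hoc construction of infinitely many pairwise
non-$\fequiv[\cl C]$-equivalent operations, exploiting directly the fact
that $\cl C \subsetneq \Pol\{0,1\} \cap \Pol\{1\} \cap \Pol \sigma$.
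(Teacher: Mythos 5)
Your argument is correct for cases (a) and (b), and the reduction of the case $B = \{c\}$... wait, $B \neq \{c\}$. I mean the case $B$ a singleton $\{b\}$ with $b \neq c$, for $k \geq 4$, to Theorem~\ref{thm-central-r} via the $(k-2)$-ary central relation $\rho$ is a nice idea that the paper does not use; the verification that $\rho$ is central with $\{0,1\}$ among its central elements and that $\cl C \subseteq \Pol\rho$ is sound. However, the proposal has a genuine gap in the final case $k=3$, $B$ a singleton with $0\notin B$, which you explicitly acknowledge but do not close, so the proof is incomplete as written.

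Moreover, your diagnosis of why the tuple-based approach ``breaks down'' in that case is not quite right. You claim that $B^n=\{\bar 1\}$ is ``too small to support a $\sigma$-compatibility path of distinct tuples,'' but Lemma~\ref{lm-centralk-1-gen} only requires the single tuple $\vect c_1^n$ to be pinned down via $B$-preservation; the rest of the chain $\vect c_2^n,\dots,\vect c_{n-l}^n$ need not lie in $B^n$ at all, and indeed in your own cases (a) and (b) they do not. The paper's construction in the case $0\notin B$ circumvents the difficulty by relabeling so that the singleton is $B=\{2\}$ (rather than your normalization $1\in B$), taking $\vect c_1^n=\bar 2$, and crucially choosing $\vect c_2^n = (2,0,\dotsc,0)$ rather than $\vect e_2^n$. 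Because $\vect c_2^n$ is $0$ in all but the first coordinate, the link $(\vect c_1^n,\vect c_2^n,\bar3,\dotsc,\overline{k-1})\in(\sigma_0)^n$ holds with the constant tuple $\bar 2$, and because $\vect c_2^n$ begins with $2$ while $\vect e_{j-1}^n$ begins with $1$ for $j\ge4$, the remaining links fail as required. Shifting the remaining tuples to $\vect c_i^n=\vect e_{i-1}^n$ for $i\ge3$ and taking $l=0$ then makes the whole chain work, uniformly in $k\ge 3$ and $|B|\ge1$, with no need for the $\rho$-reduction. The lesson is that the flexibility in Lemma~\ref{lm-centralk-1-gen} lies in the choice of $\vect c_2^n$, not only of $\vect c_1^n$, and committing to $\vect c_2^n=\vect e_2^n$ forces $\vect c_1^n$ to avoid coordinate $1$ in positions $\ge3$, which in turn forces two distinct elements into $\vect c_1^n$ when $0\notin B$.
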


\begin{proof}
Let $\cl{C} = \Pol B\cap\Pol\sigma_c$.
We may assume without loss of generality that $A=\{0,1,\ldots,k-1\}$
and $c=0$. In view of Lemma~\ref{lm-centralk-1-gen}, our claim
that $\cl{C}\notin\FF_A$ will follow if we
exhibit tuples $\vect{c}_i^n$ that satisfy conditions (1) and (2).
We will distinguish two cases according to whether 
$c=0$ is a member of $B$ or not.

{\it Case 1:} $0 \in B$. In this case $\card{B} \geq 2$. 
Assume without loss of generality that $0,1\in B$ and 
$2 \notin B$. 
For $n \geq 4$, let 
$\vect{c}_1^{n} = (0, 0, 1, 1, \dotsc, 1) \in A^{n}$, and let
$\vect{c}_i^n=\vect{e}_i^{n}$ ($2\le i\le n-1$) be the tuples from 
Lemma~\ref{lm-centralk-1-aux}.
We claim that (1)--(2) of Lemma~\ref{lm-centralk-1-gen}
hold true (with $n_0=4$ and $l=1$).
For $j\ge i\ge2$, condition (1) follows from Lemma~\ref{lm-centralk-1-aux}.
So, let $i=1$. Then it is straightforward to check that
$\bigl(\vect{c}_1^{n},\vect{c}_{2}^{n},
            \bar 3,\ldots,\overline{k-1}\bigr)
\in(\sigma_0)^n$, while if $j>2$, then
$\bigl(\vect{c}_1^{n},\vect{c}_{j}^{n},
            \bar 3,\ldots,\overline{k-1}\bigr)
\notin(\sigma_0)^n$, because
in the $j$-th coordinate we have
$(1,2,3,\ldots,k-1)\notin\sigma_0$.
This proves that condition (1) holds.
To establish condition (2) assume that
$\vect{h}\in(\cl{C}^{(m)})^n$ ($m,n\ge4$)
satisfies requirements (i)--(iii) in condition (2);
in fact, in this case
it will be enough to assume that $\vect{h}$ satisfies (i), that is,
$\vect{h}(\vect{c}_1^m)\in\{\vect{c}_i^n:1\le i<n-1\}$.
Here $\vect{c}_1^{m}\in B^{m}$, 
$\vect{c}_1^{n}\in B^{n}$, and 
$\vect{c}_2^{n},\ldots,\vect{c}_{n-2}^{n}\notin B^{n}$,
because $B$ contains $0,1$ and
does not contain $2$.
Since $\vect{h}$ preserves $B$, we must have that
$\vect{h}(\vect{c}_1^{m})=\vect{c}_1^{n}$.

{\it Case 2:}
$0 \notin B$. 
Assume without loss of generality that $2 \in B$. 
For $n \geq 4$, consider the following $n$-tuples: 
$\vect{c}_1^n =\bar 2$, $\vect{c}_2^n = (2, 0, 0, \dotsc, 0)$, 
and $\vect{c}_i^n = \vect{e}_{i - 1}^n$ 
from Lemma~\ref{lm-centralk-1-aux} for $3 \leq i \leq n$.
We want to show that conditions (1) and (2) of
Lemma~\ref{lm-centralk-1-gen} are satisfied (with $n_0=4$ and $l=0$).
For $j\ge i\ge3$ condition (1) follows from Lemma~\ref{lm-centralk-1-aux}.
For $i=2$ we have
$\bigl(\vect{c}_2^{n},\vect{c}_{3}^{n},
            \bar 3,\ldots,\overline{k-1}\bigr)
\in(\sigma_0)^n$, since in each coordinate
one of $\vect{c}_2^{n}$, $\vect{c}_{3}^{n}$ is $0$.
On the other hand, if $j>3$, then
$\bigl(\vect{c}_2^{n},\vect{c}_{j}^{n},
            \bar 3,\ldots,\overline{k-1}\bigr)
\notin(\sigma_0)^n$, because in the first coordinate we have
$(2,1,3,\ldots,k-1)\notin\sigma_0$.
For $i=1$, 
$\bigl(\vect{c}_1^{n},\vect{c}_{2}^{n},
            \bar 3,\ldots,\overline{k-1}\bigr)
\in(\sigma_0)^n$, since in each coordinate
this $(k-1)$-tuple has the form $(2,2,\ldots)$ or $(2,0,\ldots)$.
However, for $j>2$ we have 
$\bigl(\vect{c}_1^{n},\vect{c}_{j}^{n},
            \bar 3,\ldots,\overline{k-1}\bigr)
\notin(\sigma_0)^n$, because in every coordinate where
$\vect{c}_{j}^{n}$ is $1$ we have
$(2,1,3,\ldots,k-1)\notin\sigma_0$.
This proves condition (1).
As before,
to verify condition (2) let 
$\vect{h}\in(\cl{C}^{(m)})^n$ ($m,n\ge4$)
be such that 
$\vect{h}(\vect{c}_1^m)\in\{\vect{c}_i^n:1\le i<n\}$.
Here $\vect{c}_1^{m}\in B^{m}$, 
$\vect{c}_1^{n}\in B^{n}$, and 
$\vect{c}_2^{n},\ldots,\vect{c}_{n-1}^{n}\notin B^{n}$,
because $2\in B$ and $0\notin B$.
Since $\vect{h}$ preserves $B$, it must be the case that
$\vect{h}(\vect{c}_1^{m})=\vect{c}_1^{n}$.
\end{proof}

\begin{lemma}
\label{lm-centralk-1central}
If $c$ and $d$ are distinct elements of $A$, then 
$\Pol\sigma_c\cap\Pol\sigma_d\notin\FF_A$.
\end{lemma}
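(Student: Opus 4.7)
Without loss of generality take $c=0$ and $d=1$, and write $\sigma=\sigma_0$, $\tau=\sigma_1$, $\cl{C}=\Pol\sigma\cap\Pol\tau$. My plan is to mimic the scheme in the proof of Lemma~\ref{lm-centralk-1subset} (Case~2), but with the role played there by preservation of a subset $B$ now taken over by preservation of the second central relation $\tau$. Concretely, for each $n\ge 4$ I will set $\vect{c}_i^n:=\vect{e}_{i+1}^n$ ($1\le i\le n-2$), using the tuples of Lemma~\ref{lm-centralk-1-aux}, and define the $n$-ary operation
\[
f_n(\vect{a})=\begin{cases} 1 & \text{if } \vect{a}=\vect{c}_i^n,\ 1\le i\le n-3,\\ 2 & \text{if } \vect{a}=\vect{c}_{n-2}^n,\\ b & \text{if } \vect{a}=\bar b,\ 1\le b\le k-1,\\ 0 & \text{otherwise.}\end{cases}
\]
The aim is to show $f_n\not\subf[\cl{C}] f_m$ whenever $4\le m<n$, yielding infinitely many pairwise $\fequiv[\cl{C}]$-inequivalent operations and hence $\cl{C}\notin\FF_A$.

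Supposing to the contrary that $\vect{h}\in(\cl{C}^{(n)})^m$ satisfies $f_n=f_m\circ\vect{h}$, a preimage analysis forces $\vect{h}(\bar b)=\bar b$ for $3\le b\le k-1$, $\vect{h}(\vect{c}_j^n)\in\{\vect{c}_1^m,\ldots,\vect{c}_{m-3}^m,\bar 1\}$ for $j\le n-3$, and $\vect{h}(\vect{c}_{n-2}^n)\in\{\vect{c}_{m-2}^m,\bar 2\}$. Using Lemma~\ref{lm-centralk-1-aux} together with a few direct coordinatewise checks, I will verify that in the ``$\sigma$-relatedness graph'' on $\{\vect{c}_1^m,\ldots,\vect{c}_{m-2}^m,\bar 1,\bar 2\}$ (where $\vect{x}\sim\vect{y}$ iff $(\vect{x},\vect{y},\bar 3,\ldots,\overline{k-1})\in\sigma^m$), both $\bar 1$ and $\bar 2$ are isolated from the chain $\vect{c}_1^m,\ldots,\vect{c}_{m-2}^m$ and from each other; the key observation is that $(\vect{c}_j^m,\bar 1,\bar 3,\ldots,\overline{k-1})$ has the coordinate $(2,1,3,\ldots,k-1)\notin\sigma$ at position $j+1$, and similarly for $\bar 2$. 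Preservation of $\sigma$ along the $\sigma$-chain $\vect{c}_1^n,\ldots,\vect{c}_{n-2}^n$ then rules out $\vect{h}(\vect{c}_j^n)=\bar 1$ and $\vect{h}(\vect{c}_{n-2}^n)=\bar 2$, so one may write $\vect{h}(\vect{c}_j^n)=\vect{c}_{s_j}^m$ with $s_j\in\{1,\ldots,m-3\}$ for $j\le n-3$ and $s_{n-2}:=m-2$; the $\sigma$-chain then gives $|s_j-s_{j+1}|\le 1$ throughout and, by adjacency to $s_{n-2}=m-2$, also $s_{n-3}=m-3$.

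The decisive new ingredient is $\tau$. By the $0\leftrightarrow 1$ symmetry exchanging $\sigma$ and $\tau$, Lemma~\ref{lm-centralk-1-aux} has the analogue $(\vect{c}_i^n,\vect{c}_j^n,\bar 3,\ldots,\overline{k-1})\in\tau^n$ iff $|i-j|\ne 1$, and applying $\tau$-preservation to all pairs with $|i-j|\ge 2$ yields $|s_i-s_j|\ne 1$. A short case analysis on three consecutive steps $\delta_j:=s_{j+1}-s_j\in\{-1,0,1\}$, using the constraints $\delta_j+\delta_{j+1}\in\{-2,0,2\}$ and $\delta_j+\delta_{j+1}+\delta_{j+2}\notin\{-1,1\}$, will force $\delta_j=\delta_{j+1}=\delta_{j+2}$; iterating, the sequence $(s_j)$ is an arithmetic progression. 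Since $\delta_{n-3}=1$, all $\delta_j=1$, whence $s_j=m-n+j$ and in particular $s_1=m-n+1\le 0$, contradicting $s_1\ge 1$. The main obstacle I anticipate is exactly this combinatorial step of pinning down the arithmetic progression from the $\sigma$-step bound and the $\tau$-non-adjacency condition; everything else is routine coordinatewise checking based on Lemma~\ref{lm-centralk-1-aux} and its $0\leftrightarrow 1$ symmetric counterpart.
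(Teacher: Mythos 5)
Your proposal is correct in substance, but it follows a genuinely different route from the paper's. The paper reuses the tuples from Case~2 of Lemma~\ref{lm-centralk-1subset} (including the constant tuple $\vect{c}_1^n=\bar 2$ and the special tuple $(2,0,\dotsc,0)$) and feeds them into the general machinery of Lemma~\ref{lm-centralk-1-gen}: preservation of the second central relation $\tau=\sigma_d$ is used exactly once, on the single pair $(\vect{c}_1^m,\vect{c}_m^m)$, to pin down $\vect{h}(\vect{c}_1^m)=\vect{c}_1^n$, after which the ``$\sigma$-chain'' argument inside Lemma~\ref{lm-centralk-1-gen} finishes the proof with the elementary estimate $n-l-1=|s_{m-l}-s_1|\le\sum|s_{j+1}-s_j|\le m-l-1$. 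You instead work with the uniform family $\vect{c}_i^n=\vect{e}_{i+1}^n$, bypass Lemma~\ref{lm-centralk-1-gen}, and use $\tau$-preservation globally: you show that $(\vect{c}_i^n,\vect{c}_j^n,\bar 3,\dotsc,\overline{k-1})\in\tau^n$ if and only if $|i-j|\neq 1$, so that $\tau$ imposes a ``non-adjacency'' constraint on the index sequence $(s_j)$ that is complementary to the $\sigma$-adjacency constraint $|s_{j+1}-s_j|\le 1$; you then extract an arithmetic-progression from these two constraints and obtain the contradiction $s_1\le 0$. This gives a pleasing symmetric role to $\sigma$ and $\tau$, at the price of a more intricate combinatorial step. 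Two points need tightening in your write-up: the claim that the $\tau$-version of Lemma~\ref{lm-centralk-1-aux} follows ``by the $0\leftrightarrow 1$ symmetry'' is misleading, since $\vect{e}_i^n$ is not invariant under swapping $0$ and $1$; the statement itself is true but must be checked by a direct coordinatewise computation as for $\sigma$. And your ``iterating'' step assumes room for three consecutive differences, so the cases $n=5$ (and vacuously $n=4$) need to be handled separately by a short direct argument -- which does work, since $\delta_2=1$, $\delta_1+\delta_2\in\{-2,0,2\}$ and $s_1\le m-3$ force $\delta_1=1$, giving $s_1=m-4\le 0$. With these repairs the proof is complete.
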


\begin{proof}
Let $\cl{C} = \Pol\sigma_c\cap\Pol\sigma_d$.
We may assume without loss of generality that $A=\{0,1,\ldots,k-1\}$,
$c=0$, and $d=2$. We will again use Lemma~\ref{lm-centralk-1-gen} 
to show that $\cl{C}\notin\FF_A$.
In fact, we will show that the tuples $\vect{c}_i^n$ ($1\le i\le n$)
exhibited for Case~2 of the proof of Lemma~\ref{lm-centralk-1subset}
satisfy conditions (1) and (2) of Lemma~\ref{lm-centralk-1-gen} 
(with $n_0=4$ and $l=0$)
for this clone $\cl{C}$ as well.
Since (1) is independent of the choice
of the subclone $\cl{C}$ of $\Pol\sigma_0$, 
there is nothing more to do to prove (1).
It remains to show that condition (2) is satisfied.
Let 
$\vect{h}\in(\cl{C}^{(m)})^n$ ($m,n\ge4$), and assume that
$\vect{h}$ satisfies requirements (i)--(iii) in condition (2), that is,
$\vect{h}(\vect{c}_1^m)\in\{\vect{c}_i^n:1\le i<n\}$,
$\vect{h}(\vect{c}_m^m)=\vect{c}_n^n$, and
$\vect{h}(\bar b)=\bar b$ for all $3\le b\le k-1$.
Since $\cl{C}$ is a subclone of $\Pol\sigma_2$,
$\vect{h}$ preserves $\sigma_2$. In particular,
the $\vect{h}$-image of the $(k-1)$-tuple
$(\vect{c}_1^m,\vect{c}_m^m,\bar3,\ldots,\overline{k-1})\in(\sigma_2)^m$
is a tuple in $(\sigma_2)^n$; that is, 
\[
\bigl(\vect{h}(\vect{c}_1^m),\vect{c}_n^n,\bar 3,\ldots,\overline{k-1}\bigr)
\in(\sigma_2)^n.
\]
By assumption, $\vect{h}(\vect{c}_1^m)\in\{\vect{c}_i^n:1\le i<n\}$;
on the other hand, for $i=2$ we have
$(\vect{c}_2^n,\vect{c}_n^n,\bar 3,\ldots,\overline{k-1})
\notin(\sigma_2)^n$, because
the second coordinate is $(0,1,3,\ldots,k-1)\notin\sigma_2$,
while for $3\le i<n$ we have 
$(\vect{c}_i^n,\vect{c}_n^n,\bar 3,\ldots,\overline{k-1})
\notin(\sigma_2)^n$, because
the last coordinate is $(1,0,3,\ldots,k-1)\notin\sigma_2$.
Thus it must be that $\vect{h}(\vect{c}_1^m)=\vect{c}_1^n$,
as required.
\end{proof}

\begin{lemma}
\label{lm-centralk-1perm}
If $\gamma$ is a nonidentity permutation of $A$ and $c\in A$, then
$\Pol\gamma\cap\Pol\sigma_c\notin\FF_A$.
\end{lemma}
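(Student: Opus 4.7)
The plan is to follow the pattern of Lemmas~\ref{lm-centralk-1subset} and~\ref{lm-centralk-1central}, splitting on the structure of $\gamma$. Assume $c=0$ and put $\cl{C}=\Pol\gamma\cap\Pol\sigma_0$. First suppose $\gamma=(0\ \alpha)$ is a transposition for some $\alpha\in A\setminus\{0\}$. Then $\gamma$ fixes every element of $B:=A\setminus\{0,\alpha\}$ pointwise, so for any tuple $\vect{x}\in B^m$ we have $\gamma\vect{x}=\vect{x}$, whence $\vect{h}\in\Pol\gamma$ gives $\gamma\vect{h}(\vect{x})=\vect{h}(\gamma\vect{x})=\vect{h}(\vect{x})$, forcing $\vect{h}(\vect{x})\in B^n$. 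Hence $\cl{C}\subseteq\Pol B\cap\Pol\sigma_0$, and since $B$ is a nonempty proper subset of $A$ different from $\{0\}$ (using $k\ge 3$), Lemma~\ref{lm-centralk-1subset} together with the contrapositive of Proposition~\ref{basic_props2}(ii) yields $\cl{C}\notin\FF_A$.

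Suppose now $\gamma$ is not a transposition of the form $(0\ \alpha)$. If every $\alpha\in A\setminus\{0\}$ satisfied $\gamma(\alpha)\in\{\alpha,0\}$, then injectivity of $\gamma$ would allow at most one such $\alpha$ to be sent to $0$, leaving $\gamma=\id$ or $\gamma=(0\ \alpha)$ for a single $\alpha$; thus some $\alpha\in A\setminus\{0\}$ satisfies $\gamma(\alpha)\notin\{0,\alpha\}$. After relabeling $A$ we may take $\alpha=1$ and $\gamma(1)=2$, so $0,1,2$ are pairwise distinct. I intend to invoke Lemma~\ref{lm-centralk-1-gen} with $n_0=3$ and $l=0$, using a ``path'' of tuples from $\bar 1$ to $\bar 2$. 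Setting $t_1=t_2=2$, $t_\ell=\ell$ for $3\le\ell\le n-1$, and $t_n=n-1$, define $n$-tuples $\vect{c}_i^n\in A^n$ for $1\le i\le n$ by
\[
\vect{c}_i^n[\ell]=\begin{cases}1&\text{if }i<t_\ell,\\0&\text{if }i=t_\ell,\\2&\text{if }i>t_\ell;\end{cases}
\]
then $\vect{c}_1^n=\bar 1$, $\vect{c}_n^n=\bar 2$, and consecutive tuples change one coordinate through the buffer value $0$.

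Since every entry of each $\vect{c}_i^n$ lies in $\{0,1,2\}$, at coordinate $\ell$ the $(k-1)$-tuple $(\vect{c}_i^n[\ell],\vect{c}_j^n[\ell],3,\dots,k-1)$ lies in $\sigma_0$ iff one of the first two entries is $0$ or they are equal. This gives condition~(1) of Lemma~\ref{lm-centralk-1-gen}: consecutive pairs $(i,i+1)$ always satisfy it, whereas for $|i-j|\ge 2$ the set of switching times covers $\{2,3,\dots,n-1\}$ and thus supplies some $\ell$ where $\vect{c}_i^n[\ell]=1$ and $\vect{c}_j^n[\ell]=2$; at such $\ell$ the $(k-1)$-tuple is the permutation $(1,2,3,\dots,k-1)$ of pairwise distinct nonzero elements, which does not belong to $\sigma_0$. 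Condition~(2) is where $\gamma$-equivariance is used: given $\vect{h}\in(\cl{C}^{(m)})^n$ satisfying (i)--(iii), assumption (ii) yields $\vect{h}(\bar 2)=\vect{h}(\vect{c}_m^m)=\vect{c}_n^n=\bar 2$, and since $\gamma\bar 1=\bar 2$ and $\vect{h}$ preserves $\gamma$, we get $\gamma\vect{h}(\bar 1)=\vect{h}(\bar 2)=\bar 2$, whence $\vect{h}(\vect{c}_1^m)=\vect{h}(\bar 1)=\gamma^{-1}\bar 2=\bar 1=\vect{c}_1^n$, as required. The main obstacle is engineering the path so that exactly $n$ $n$-tuples connect $\bar 1$ to $\bar 2$ and verifying~(1); once that is in place, condition~(2) drops out in one line from the equivariance of $\vect{h}$, and Lemma~\ref{lm-centralk-1-gen} completes the argument.
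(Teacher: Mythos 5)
Your proof is correct, and it takes a genuinely different (and arguably cleaner) route than the paper's. The paper first splits on whether $d=\gamma(0)$ equals $0$: for $d\neq 0$ it observes $\cl{C}\subseteq\Pol\sigma_0\cap\Pol\sigma_d$ and invokes Lemma~\ref{lm-centralk-1central}; for $d=0$ it passes to the fixed-point set $B$ of $\gamma$ and invokes Lemma~\ref{lm-centralk-1subset} unless $B=\{0\}$, in which case it applies Lemma~\ref{lm-centralk-1-gen} with a further split into $k\ge 4$ (reusing the tuples from Case~2 of Lemma~\ref{lm-centralk-1subset}) and $k=3$ (a fresh family). You instead split on whether $\gamma$ is a transposition $(0\ \alpha)$: if so, the fixed-point set $B=A\setminus\{0,\alpha\}$ is a nonempty proper subset distinct from $\{0\}$, and $\cl{C}\subseteq\Pol B\cap\Pol\sigma_0$ reduces to Lemma~\ref{lm-centralk-1subset}; if not, you locate $\alpha$ with $0,\alpha,\gamma(\alpha)$ pairwise distinct, relabel so $\gamma(1)=2$, and build a single uniform path $\bar 1=\vect{c}_1^n,\dots,\vect{c}_n^n=\bar 2$ of tuples over $\{0,1,2\}$ feeding into Lemma~\ref{lm-centralk-1-gen} with $l=0$. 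Your condition~(1) verification (an $\ell$ with switching time $t_\ell=i+1$ produces the forbidden column $(1,2,3,\dots,k-1)$ whenever $j-i\ge 2$) and your condition~(2) verification (from $\vect{h}(\bar 2)=\bar 2$ and $\gamma$-equivariance deduce $\vect{h}(\bar 1)=\bar 1$, using only requirement (ii)) both check out, and the construction works uniformly for all $k\ge 3$, which is a real economy over the paper's $k=3$ versus $k\ge 4$ subdivision. The cost is that you need the preliminary combinatorial observation that non-$(0\ \alpha)$ permutations admit a point $\alpha$ moved off $\{0,\alpha\}$; the paper's decomposition instead leans more heavily on the previously proved intersection lemmas, so both Lemma~\ref{lm-centralk-1central} and Lemma~\ref{lm-centralk-1subset} get reused, whereas you only reuse the latter.
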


\begin{proof}
Let $\cl{C} = \Pol\gamma\cap\Pol\sigma_c$, and
assume without loss of generality that $A=\{0,1,\ldots,k-1\}$ and
$c=0$. 
Let $d=\gamma(0)$, and let $B=\{a\in A:\gamma(a)=a\}$ 
be the set of fixed points of $\gamma$.
It is easy to verify that 
\[
\sigma_d=\bigl\{\bigl(\gamma(a_1),\ldots,\gamma(a_{k-1})\bigr):
(a_1,\ldots,a_{k-1})\in\sigma_0\bigr\}.
\]
Thus it follows that every operation that preserves $\sigma_0$ and $\gamma$
also preserves $B$ and $\sigma_d$.
Hence $\cl{C}\subseteq\Pol\sigma_0\cap\Pol B\cap\Pol\sigma_d$.
In case $d\not=0$ we get from Lemma~\ref{lm-centralk-1central}
and Proposition~\ref{basic_props2}~(ii)
that $\cl{C}\notin\FF_A$.
If $d=0$, then $0\in B$. Moreover, since 
$\gamma$ is not the identity permutation, $B$ is a proper subset of
$A$.
Therefore Lemma~\ref{lm-centralk-1subset}, combined again with
Proposition~\ref{basic_props2}~(ii), yields that 
$\cl{C}\notin\FF_A$ unless $B=\{0\}$.

So, it remains to consider the case when
$B=\{0\}$, that is, $0$ is the unique fixed point of $\gamma$.
Assume from now on that $\gamma$ satisfies this condition.
To prove that $\cl{C}\notin\FF_A$ holds in this case as well,
we will use Lemma~\ref{lm-centralk-1-gen}, that is,
we will exhibit tuples $\vect{c}_i^n$ that satisfy
conditions (1) and (2) of Lemma~\ref{lm-centralk-1-gen}.

First let $k \geq 4$. Since $0$ is the only fixed point of $\gamma$,
we may assume without loss of generality that $\gamma(2)=3$.
Now let $\vect{c}_i^n$ ($n\ge4$, $1\le i\le n$) be 
the tuples defined in Case~2
of the proof of Lemma~\ref{lm-centralk-1subset}.
We know from that proof that condition (1) is satisfied.
To prove that (2) is satisfied with our current choice of clone $\cl{C}$,
consider any
$\vect{h}\in(\cl{C}^{(m)})^n$ ($m,n\ge4$) that satisfies
$\vect{h}(\bar 3)=\bar 3$, a fragment of requirement (iii) in (2).
Since $(\vect{c}_1^m,\bar3)=(\bar2,\bar3)\in\gamma^m$ and $\vect{h}$ preserves
$\gamma$, we get that 
$\bigl(\vect{h}(\vect{c}_1^m),\bar3\bigr)\in\gamma^n$.
As $\gamma$ is a permutation, it follows that
$\vect{h}(\vect{c}_1^m)=\bar2=\vect{c}_1^n$.

Finally, let $k=3$. Since $0$ is the only fixed point of $\gamma$,
$\gamma$ is the transposition $(1\ 2)$.
For each $n\ge7$ define $n$-tuples $\vect{c}_i^n$ ($1\le i\le n$)
as follows: 
$\vect{c}_1^n=(2,2,\ldots,2,0,1,0)$,
$\vect{c}_2^n=(2,0,\ldots,0,0,0,0)$,
and for $3\le i\le n$, 
$\vect{c}_i^n=\vect{e}_{i-1}^n$
are the tuples from Lemma~\ref{lm-centralk-1-aux}.
Since these tuples, with the exception of $\vect{c}_1^n$, are the same
as those in Case~2 of the proof of Lemma~\ref{lm-centralk-1subset},
we know from that proof that condition (1) holds whenever $j\ge i\ge 2$.
For $i=1$, clearly, 
$(\vect{c}_1^n,\vect{c}_2^n)\in(\sigma_0)^n$,
because in each coordinate the pair is $(2,2)$ or contains a $0$.
However, if $j>2$, then 
$(\vect{c}_1^n,\vect{c}_j^n)\notin (\sigma_0)^n$,
because we have $(2,1)\notin\sigma_0$ either in 
the fourth coordinate (if $j=3$), or in
the first coordinate (if $j>3$).
This proves that (1) holds.
To prove that (2) also holds,
let
$\vect{h}\in(\cl{C}^{(m)})^n$ ($m,n\ge7$) satisfy
requirement (ii) from condition (2), that is,
$\vect{h}(\vect{c}_m^m)=\vect{c}_n^n$.
Since $(\vect{c}_1^m,\vect{c}_m^m)\in\gamma^m$ and $\vect{h}$ preserves
$\gamma$, we get that 
$\bigl(\vect{h}(\vect{c}_1^m),\vect{c}_n^n\bigr)\in\gamma^n$.
As $\gamma$ is a permutation, it follows that
$\vect{h}(\vect{c}_1^m)=\vect{c}_1^n$, completing the proof.
\end{proof}

\begin{lemma}
\label{lm-centralk-1eqrel}
If $\epsilon$ is a nontrivial equivalence relation on $A$ and $c\in A$, then
$\Pol\epsilon\cap\Pol\sigma_c\notin\FF_A$.
\end{lemma}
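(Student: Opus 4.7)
Assume without loss of generality that $c=0$ and set $\cl C=\Pol(\epsilon,\sigma_0)$. My plan is to apply Lemma~\ref{lm-centralk-1-gen}, producing tuples $\vect{c}_i^n$ adapted to the block structure of $\epsilon$. The argument splits into cases depending on whether or not $\epsilon$ admits an appropriate triple of witnesses in $A\setminus\{0\}$.

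In the generic case, there exist pairwise distinct elements $a,a',b\in A\setminus\{0\}$ with $(a,a')\in\epsilon$ and $(a,b)\notin\epsilon$; after relabelling to $a=1$, $b=2$, $a'=3$ we have $(1,3)\in\epsilon$ and $(1,2)\notin\epsilon$. I take $n_0=4$, $l=0$, and define $\vect{c}_1^n=\bar 1$, $\vect{c}_2^n=(0,0,1,1,\ldots,1)$, and $\vect{c}_i^n=\vect{e}_{i-1}^n$ for $3\le i\le n$ (with $\vect{e}_j^n$ from Lemma~\ref{lm-centralk-1-aux}). Condition~(1) is verified by routine coordinate-wise inspection in the style of the earlier lemmas in this section: adjacent pairs avoid the pattern $\{1,2\}$ at every coordinate, while non-adjacent pairs contain this pattern at some coordinate, making $(1,2,3,\ldots,k-1)\notin\sigma_0$ there. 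For condition~(2) I use the key fact $(\vect{c}_1^m,\bar 3)\in\epsilon^m$; combined with $\vect{h}(\bar 3)=\bar 3$ from hyp
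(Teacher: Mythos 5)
Your plan --- routing this lemma through Lemma~\ref{lm-centralk-1-gen} rather than building $f_n$ directly on $\epsilon^n$-blocks as the paper does --- is a genuinely different approach, but as written it has a gap in the generic case, and the proposal is truncated before the non-generic cases are treated. For condition~(2) you deduce from $(\vect{c}_1^m,\bar 3)\in\epsilon^m$ and $\vect{h}(\bar 3)=\bar 3$ that $\vect{h}(\vect{c}_1^m)\in\bar 3/\epsilon^n$, and you then need $\vect{c}_1^n=\bar 1$ to be the \emph{only} tuple among $\vect{c}_1^n,\dotsc,\vect{c}_{n-1}^n$ lying in that block. The tuples $\vect{c}_i^n$ for $i\ge 3$ contain a $2$, and $(2,3)\notin\epsilon$, so those are excluded; but $\vect{c}_2^n=(0,0,1,\dotsc,1)$ is excluded only when $(0,3)\notin\epsilon$. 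Your generic-case hypothesis (pairwise distinct $a,a',b\in A\setminus\{0\}$ with $(a,a')\in\epsilon$ and $(a,b)\notin\epsilon$) does not guarantee this: if $0/\epsilon$ is the only $\epsilon$-block of size $\ge 2$ and has at least $3$ elements, every admissible choice of witnesses places $0,a,a'$ in one block, so after relabelling $(0,1)\in\epsilon$, hence $(0,3)\in\epsilon$ and $\vect{c}_2^n\in\bar 3/\epsilon^n$. In that subcase nothing in hypotheses (i)--(iii) of Lemma~\ref{lm-centralk-1-gen} rules out $\vect{h}(\vect{c}_1^m)=\vect{c}_2^n$: the relation $(\vect{c}_1^m,\vect{c}_m^m,\bar 3,\dotsc,\overline{k-1})$ is \emph{not} in $(\sigma_0)^m$ (the $(m-1)$-th coordinate is $(1,2,3,\dotsc,k-1)$), and $(\vect{c}_1^m,\vect{c}_m^m)\notin\epsilon^m$ for the same reason, so neither $\sigma_0$- nor $\epsilon$-preservation together with (ii) yields any extra constraint on $\vect{h}(\vect{c}_1^m)$.

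To repair this you would either have to strengthen the generic case by additionally demanding $(a,0)\notin\epsilon$ --- which pushes more configurations (only one nontrivial block, and it contains $0$; or exactly the two blocks $\{0\}$ and $A\setminus\{0\}$) into the ``non-generic'' side that the truncated proposal never reaches --- or give up the Lemma~\ref{lm-centralk-1-gen} template for this particular intersection. The paper does the latter: it chooses $2$ with $(0,2)\notin\epsilon$ so that one of $0/\epsilon,2/\epsilon$ is non-singleton, defines $f_n$ on whole $\epsilon^n$-blocks so that each block $\vect{e}_i^n/\epsilon^n$ carries a two-element $f_n$-image, and exploits the fact that any $\epsilon$-preserving $\vect{h}$ maps $\epsilon^m$-blocks into $\epsilon^n$-blocks; that block-wise tracking is precisely the mechanism that has no home inside the hypotheses (i)--(iii) of Lemma~\ref{lm-centralk-1-gen}.
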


\begin{proof}
Let $\cl{C} = \Pol(\epsilon)\cap\Pol\sigma_c$, and 
assume without loss of generality that 
$A=\{0,1,\ldots,k\}$, $c=0$, and $2$ is an element of $A$ 
such that $(0,2)\notin\epsilon$, but at least one of the 
$\epsilon$-classes $0/\epsilon$, $2/\epsilon$ is not a singleton.
For $n\ge4$ let $\vect{e}_i^n$ ($2\le i\le n-1$) be the $n$-tuples from
Lemma~\ref{lm-centralk-1-aux}.
The following two properties of these tuples will be important:
\begin{itemize}
\item
$(\vect{e}_i^n,\vect{e}_j^n)\notin\epsilon^n$ if $i\not=j$;
\item
$\vect{e}_i^n/\epsilon$ has an element other than $\vect{e}_i^n$ for each $i$.
\end{itemize}
To verify the first property we may assume that $i<j$, since $\epsilon^n$
is a symmetric relation. If $j=i+1$, then 
$(\vect{e}_i^n,\vect{e}_j^n)\notin\epsilon^n$, because in the $i$-th coordinate
$(2,0)\notin\epsilon$; if $j>i+1$, then
$(\vect{e}_i^n,\vect{e}_j^n)\notin\epsilon^n$, because in the $j$-th 
and $(j+1)$-th coordinates we have the pairs $(1,2)$, $(1,0)$, which cannot
simultaneouly be in $\epsilon$, or else we would get $(2,0)\in\epsilon$.
The second property follows from the assumption that
at least one of the 
$\epsilon$-classes $0/\epsilon$, $2/\epsilon$ is not a singleton.

For $n\ge4$ we now define an $n$-ary operation $f_n$ on $A$
as follows:
\[
f_n(\vect{a})=
\begin{cases}
1 & \text{if $\vect{a}=\vect{e}_2^n$,}\\
2 & \text{if $\vect{a}\in\vect{e}_2^n/\epsilon^n$ but 
             $\vect{a}\not=\vect{e}_2^n$,}\\
1 & \text{if $\vect{a}=\vect{e}_i^n$ $(2<i<n-1)$,}\\
0 & \text{if $\vect{a}\in\vect{e}_i^n/\epsilon^n$ but 
             $\vect{a}\not=\vect{e}_i^n$ $(2<i<n-1)$,}\\
2 & \text{if $\vect{a}=\vect{e}_{n-1}^n$,}\\
0 & \text{if $\vect{a}\in\vect{e}_{n-1}^n/\epsilon^n$ but 
             $\vect{a}\not=\vect{e}_{n-1}^n$,}\\
b & \text{if $\vect{a}=\bar b$ $(3\le b\le k-1)$,}\\
0 & \text{otherwise.}
\end{cases}
\]
The properties of $\vect{e}_i^n$ established in the preceding paragraph
make sure that $f_n$ is well-defined, and that $f_n[\vect{e}_i^n/\epsilon^n]$
is a $2$-element set for each $2\le i\le n-1$.
Our aim is to prove that $f_m \not\fequiv[\cl{C}] f_n$ whenever $m \neq n$, 
which will show that $\cl{C}\notin\FF_A$. 

Suppose that, on the contrary, there exist $m<n$ such that 
$f_m\fequiv[\cl{C}] f_n$.
Hence there exists $\vect{h} \in (\cl{C}^{(m)})^{n}$ such that 
$f_m = f_n \circ \vect{h}$, that is,
$\vect{h}$ maps each set $f_m^{-1}(b)$ ($b\in A$) 
into the set $f_n^{-1}(b)$.
Applying this to $3\le b\le k-1$ we get that
$\vect{h}(\bar b)=\bar b$ for all $3\le b\le k-1$.
Since $\vect{h}$ preserves $\epsilon$, it maps each $\epsilon^m$-class
into an $\epsilon^n$-class.
In particular, let $B_i$ denote the $\epsilon^n$-class containing
$\vect{h}[\vect{e}_i^m/\epsilon^m]$ ($2\le i\le m-1$).
Then
\begin{align*}
\{1,2\}=f_m[\vect{e}_2^m/\epsilon^m]
&{}=
f_n\bigl[\vect{h}[\vect{e}_2^m/\epsilon^m]\bigr]\subseteq
f_n[B_2],\\
\{1,0\}=f_m[\vect{e}_i^m/\epsilon^m]
&{}=
f_n\bigl[\vect{h}[\vect{e}_i^m/\epsilon^m]\bigr]\subseteq
f_n[B_i]
\quad \text{for $2<i<m-1$},\\
\{2,0\}=f_m[\vect{e}_{m-1}^m/\epsilon^m]
&{}=
f_n\bigl[\vect{h}[\vect{e}_{m-1}^m/\epsilon^m]\bigr]\subseteq
f_n[B_{m-1}].
\end{align*}
However, 
it follows from the definition of $f_n$ that for each $\epsilon^n$-class
$B$,
\[
f_n[B]=
\begin{cases}
\{1,2\} & \text{if $B=\vect{e}_2^n/\epsilon^n$,}\\
\{1,0\} & \text{if $B=\vect{e}_i^n/\epsilon^n$ $(2<i<n-1)$,}\\
\{2,0\} & \text{if $B=\vect{e}_{n-1}^n/\epsilon^n$,}\\
C\subseteq\{0,3,\ldots,k-1\} & \text{otherwise}.
\end{cases}
\]
Therefore $B_2=\vect{e}_2^n/\epsilon^n$,
$B_{m-1}=\vect{e}_{n-1}^n/\epsilon^n$, and  for each $2<i<m-1$,
$B_i=\vect{e}_{s_i}^n/\epsilon^n$ for some $s_i$ with $2<s_i<n-1$.
Since $1=f_m(\vect{e}_2^m)=
f_n\bigl(\vect{h}(\vect{e}_2^m)\bigr)$,  $\vect{h}(\vect{e}_2^m)\in B_2$,
and the only element $\vect{a}\in B_2$ with $f_n(\vect{a})=1$ is
$\vect{a}=\vect{e}_2^n$, we get that $\vect{h}(\vect{e}_2^m)=\vect{e}_2^n$.
We conclude similarly that $\vect{h}(\vect{e}_i^m)=\vect{e}_{s_i}^n$
for all $2<i<m-1$, and $\vect{h}(\vect{e}_{m-1}^m)=\vect{e}_{n-1}^n$.
By introducing the notation $s_2=2$ and $s_{m-1}=n-1$ we can write these
results more compactly as follows:
\[
\vect{h}(\vect{e}_i^m)=\vect{e}_{s_i}^n
\ (2\le i\le m-1)
\ \text{where $2=s_2<s_3,.\dots,s_{m-2}<s_{m-1}=n-1$.}
\]

Now we can finish the proof the same way 
as in Lemma~\ref{lm-centralk-1-gen}.
We know from Lemma~\ref{lm-centralk-1-aux} that
$(\vect{e}_i^m,\vect{e}_{i+1}^m,\bar 3,\ldots,\overline{k-1})
\in(\sigma_0)^{m}$ for all $i$ ($2\le i\le m-2$).
Since $\vect{h}$ preserves $\sigma_0$,
the $\vect{h}$-images of these tuples are in $(\sigma_0)^n$; that is,
\[
(\vect{e}_{s_i}^n,\vect{e}_{s_{i+1}}^n,\bar 3,\ldots,\overline{k-1})
\in(\sigma_0)^{n}
\quad
\text{for all $i$ ($2\le i\le m-2$).}
\]
Since $\sigma_0$ is totally symmetric, the tuples obtained by interchanging 
$\vect{e}_{s_i}^n$ and $\vect{e}_{s_{i+1}}^n$ are also members of
$(\sigma_0)^{n}$. 
Thus we get from Lemma~\ref{lm-centralk-1-aux}
that $|s_i-s_{i+1}|\le1$ for all $2\le i\le m-2$.
This implies that 
$(n-1)-2=|s_{n-1}-s_2|\le\sum_{i=2}^{m-2}|s_{j+1}-s_j|\le m-3$,
which contradicts our assumption that $m<n$.
This completes the proof of the lemma.
\end{proof}

\begin{proof}[Proof of Theorem~\ref{thm-intersecs}]
Statement (1) follows from Lemmas~\ref{lm-centralk-Bk-1}--\ref{lm-eqrel-Bk-1}.
In Statement (2) the necessity is a consequence of 
Lemmas~\ref{lm-centralk-Bk-1} and 
\ref{lm-centralk-1subset}--\ref{lm-centralk-1eqrel}, while the sufficiency
was established in Theorem~\ref{thm-centralk-1}.
Statements (3) and (4) are special cases of Theorem~\ref{mainthm-eqrels}
and Theorem~\ref{discr}.
\end{proof}

%%%%%%%%%%%%%%%%%%%%%%%%%%%%%%%%%%%%%%%%%%%%%%%
%% Acknowledgments
%%%%%%%%%%%%%%%%%%%%%%%%%%%%%%%%%%%%%%%%%%%%%%%

\subsection*{Acknowledgments}
Part of this work was done while the first author was visiting the University of Colorado at Boulder.

%%%%%%%%%%%%%%%%%%%%%%%%%%%%%%%%%%%%%%%%%%%%%%%%%%%%%%%%%%%%%%%%%%%
%%  BIBLIOGRAPHY                                                 %%
%%%%%%%%%%%%%%%%%%%%%%%%%%%%%%%%%%%%%%%%%%%%%%%%%%%%%%%%%%%%%%%%%%%

%%%%%%%%%%%%%%%%%%%%%%%%%%%%%%%%%%%%%%%%%%%%%%%%%%%%%%%%%%%%%%

\begin{thebibliography}{99}
\bibitem{BKKR}
    \textsc{V.~G. Bodnar\v{c}uk,}  \textsc{L.~A. Kalu\v{z}nin,}  \textsc{V.~N. Kotov,}  \textsc{B.~A. Romov,}
    Galois theory for Post algebras. I, II,
    \textit{Kibernetika}
    \textbf{3} (1969) 1--10, \textbf{5} (1969) 1--9 (Russian).
    English translation:
    \textit{Cybernetics}
    \textbf{5} (1969) 243--252, 531--539.

\bibitem{Burle}
    \textsc{G.~A. Burle,}
    The classes of $k$-valued logics containing all one-variable functions,
    \textit{Diskret.\ Analiz}
    \textbf{10} (1967) 3--7 (Russian).

\bibitem{CP}
    \textsc{M. Couceiro,}  \textsc{M. Pouzet,}
    On a quasi-ordering on Boolean functions,
    \textit{Theoret.\ Comput.\ Sci.}\
    \textbf{396} (2008) 71--87.

\bibitem{EFHH}
    \textsc{O. Ekin,}  \textsc{S. Foldes,}  \textsc{P.~L. Hammer,}  \textsc{L. Hellerstein,}
    Equational characterizations of Boolean function classes,
    \textit{Discrete Math.}\
    \textbf{211} (2000) 27--51.

\bibitem{FH}
    \textsc{A. Feigelson,}  \textsc{L. Hellerstein,}
    The forbidden projections of unate functions,
    \textit{Discrete Appl.\ Math.}\
    \textbf{77} (1997) 221--236.

\bibitem{Geiger}
    \textsc{D. Geiger,}
    Closed systems of functions and predicates,
    \textit{Pacific J. Math.}\
    \textbf{27} (1968) 95--100.

\bibitem{gilbert}
    \textsc{E.~N. Gilbert,}
    Lattice theoretic properties of frontal switching functions,
    \textit{J.\ Math.\ Physics}
    \textbf{33} (1954) 57--67.

\bibitem{Harrison}
    \textsc{M.~A. Harrison,}
    On the classification of Boolean functions by the general linear and affine groups,
    \textit{J. Soc.\ Indust.\ Appl.\ Math.}\
    \textbf{12}(2) (1964) 285--299.

\bibitem{Henno}
    \textsc{J. Henno,}
    Green's equivalences in Menger systems,
    \textit{Tartu Riikl.\ Ül.\ Toimetised}
    \textbf{277} (1971) 37--46 (Russian).

\bibitem{Jablonskii}
    \textsc{S.~V. Jablonski\u{\i},}
    Functional constructions in a $k$-valued logic,
    \textit{Trudy Mat.\ Inst.\ Steklov.}\
    \textbf{51} (1958) 5--142 (Russian).

\bibitem{Lau2006}
    \textsc{D. Lau,}
    \textit{Function Algebras on Finite Sets,}
    Springer-Verlag, Berlin, Heidelberg, 2006.

\bibitem{ULM}
    \textsc{E. Lehtonen,}
    Descending chains and antichains of the unary, linear, and monotone subfunction relations,
    \textit{Order}
    \textbf{23} (2006) 129--142.

\bibitem{LS}
    \textsc{E. Lehtonen,}  \textsc{\'A. Szendrei,}
    Equivalence of operations with respect to discriminator clones,
    \textit{Discrete Math.}\
    \textbf{309} (2009) 673--685.

\bibitem{Pippenger}
    \textsc{N. Pippenger,}
    Galois theory for minors of finite functions,
    \textit{Discrete Math.}\
    \textbf{254} (2002) 405--419.

\bibitem{PK}
    \textsc{R. Pöschel,}  \textsc{L.~A. Kalu\v{z}nin,}
    \textit{Funktionen- und Relationenalgebren: Ein Kapitel der diskreten Mathematik,}
    Birkhäuser, Basel, Stuttgart, 1979.

\bibitem{Post}
    \textsc{E.~L. Post,}
    \textit{The Two-Valued Iterative Systems of Mathematical Logic,}
    Annals of Mathematical Studies, vol.\ 5.
    Princeton University Press, Princeton, 1941.

\bibitem{Rosenberg}
    \textsc{I.~G. Rosenberg,}
    \"Uber die funktionale Vollst\"andigkeit in den mehrwertigen Logiken,
    \textit{Rozpravy \v{C}eskoslovensk\'e Akad.\ V\v{e}d, \v{R}ada Mat.\ P\v{r}\'irod.\ V\v{e}d}
    \textbf{80} (1970) 3--93.

\bibitem{RS}
    \textsc{I.~G. Rosenberg,}  \textsc{\'A. Szendrei,}
    Degrees of clones and relations,
    \textit{Houston J. Math.}\
    \textbf{9} (1983) 545--580.

\bibitem{Slupecki}
    \textsc{J. S\l{}upecki,}
    Kryterium pe\l{}no\'sci wielowarto\'sciowych system\'ow logiki zda\'n,
    \textit{C. R. S\'eanc.\ Soc.\ Sci.\ Varsovie, Cl.\ III}
    \textbf{32} (1939) 102--109.
    English translation:
    A criterion of fullness of many-valued systems of propositional logic,
    \textit{Studia Logica}
    \textbf{30} (1972) 153--157.

\bibitem{Szendrei}
    \textsc{\'A. Szendrei,}
    \textit{Clones in Universal Algebra,}
    Séminaire de mathématiques supérieures, vol.\ 99.
    Les Presses de l'Université de Montréal, Montréal, 1986.

\bibitem{Wang}
    \textsc{C. Wang,}
    Boolean minors,
    \textit{Discrete Math.}\
    \textbf{141} (1995) 237--258.

\bibitem{WW}
    \textsc{C. Wang,}  \textsc{A.~C. Williams,}
    The threshold order of a Boolean function,
    \textit{Discrete Appl.\ Math.}\
    \textbf{31} (1991) 51--69.

\bibitem{Zverovich}
    \textsc{I.~E. Zverovich,}
    Characterizations of closed classes of Boolean functions in terms of forbidden subfunctions and Post classes,
    \textit{Discrete Appl.\ Math.}\
    \textbf{149} (2005) 200--218.

\end{thebibliography}
\end{document}